\documentclass[11pt]{article}


\usepackage{parskip}
\usepackage{lmodern}
\usepackage{anyfontsize}

\usepackage{hyperref}
\usepackage{fullpage}
\usepackage{amsmath}
\usepackage{amsthm}
\usepackage{thm-restate}
\usepackage{mathtools}
\usepackage{newpxmath}
\usepackage{newpxtext}

\usepackage{amsfonts}
\usepackage{bm}
\usepackage{enumitem}
\usepackage{color}
\usepackage{comment}
\usepackage[capitalize]{cleveref}
\usepackage[table,xcdraw,dvipsnames]{xcolor}
\usepackage{float}
\usepackage[T1]{fontenc}
\usepackage{todonotes}
\usepackage{asymptote}
\usepackage{mdframed}
\usepackage[most]{tcolorbox}
\usepackage{hyperref}
\usepackage{enumitem}
\usepackage{framed}
\usepackage{mdframed}
\usepackage{bbm}
\usepackage{tablefootnote}

\usepackage{wrapfig}
\usepackage{multicol}
\usepackage{cutwin}

\usepackage[capitalize]{cleveref}

\usepackage{mathrsfs}

\usepackage[T1]{fontenc}

\definecolor{asparagus}{rgb}{0.53, 0.66, 0.42}
\definecolor{sacramentostategreen}{rgb}{0.0, 0.3, 0.15}
\definecolor{teal}{rgb}{0.0, 0.5, 0.5}
\definecolor{forestgreen}{rgb}{0.13, 0.6, 0.13}

\usepackage{hyperref}
\hypersetup{
    colorlinks=true,
    linkcolor=sacramentostategreen,
    filecolor=sacramentostategreen,
    citecolor=teal,
    urlcolor=teal,
}

\newtheorem{theorem}{Theorem}[section]

\newtheorem{definition}[theorem]{Definition}

\newtheorem{lemma}[theorem]{Lemma}

\newtheorem{corollary}[theorem]{Corollary}

\newtheorem{problem}{Problem}
\newtheorem*{problem*}{Problem}
\newtheorem{fact}[theorem]{Fact}

\theoremstyle{remark}

\Crefname{theorem}{Theorem}{Theorems}
\Crefname{claim}{Claim}{Claims}
\Crefname{lemma}{Lemma}{Lemmas}
\Crefname{proposition}{Proposition}{Propositions}\usepackage[symbol]{footmisc}
\Crefname{corollary}{Corollary}{Corollaries}
\Crefname{definition}{Definition}{Definitions}


\newcommand{\E}{\mathbb{E}}



\newcommand{\N}{\mathbb{N}}
\newcommand{\R}{\mathbb{R}}
\newcommand{\Z}{\mathbb{Z}}

\newcommand{\cD}{\mathcal{D}}

\newcommand{\cG}{\mathcal{G}}

\newcommand{\cL}{\mathcal{L}}
\newcommand{\cM}{\mathcal{M}}
\newcommand{\cN}{\mathcal{N}}


\newcommand{\mA}{\mathbf{A}}
\newcommand{\mB}{\mathbf{B}}

\newcommand{\mD}{\mathbf{D}}

\newcommand{\mF}{\mathbf{F}}

\newcommand{\mI}{\mathbf{I}}

\newcommand{\mM}{\mathbf{M}}

\newcommand{\mR}{\mathbf{R}}

\newcommand{\mU}{\mathbf{U}}
\newcommand{\mV}{\mathbf{V}}
\newcommand{\mW}{\mathbf{W}}


\newcommand{\vmu}{\bm{\mu}}

\newcommand{\va}{\bm{a}}
\newcommand{\vb}{\bm{b}}

\newcommand{\ve}{\bm{e}}
\newcommand{\vf}{\bm{f}}
\newcommand{\vg}{\bm{g}}

\newcommand{\vm}{\bm{m}}

\newcommand{\vs}{\bm{s}}

\newcommand{\vu}{\bm{u}}
\renewcommand{\vv}{\bm{v}}
\newcommand{\vw}{\bm{w}}
\newcommand{\vx}{\bm{x}}
\newcommand{\vy}{\bm{y}}
\newcommand{\vz}{\bm{z}}


\def\to{\rightarrow}
\def\eps{\varepsilon}

\def\eps{\varepsilon}


\usepackage{algorithm}
\usepackage[noend]{algpseudocode}

\newcommand{\onev}{\mathbbm{1}}

\newcommand{\argmin}[1]{\underset{#1}{\arg\min}}

\newcommand{\suchthat}{{\;\; : \;\;}}

\newcommand{\inparen}[1]{\left(#1\right)}             
\newcommand{\inbraces}[1]{\left\{#1\right\}}           
\newcommand{\insquare}[1]{\left[#1\right]}             

\newcommand{\abs}[1]{\ensuremath{\left\lvert #1 \right\rvert}}

\newcommand{\norm}[1]{\ensuremath{\left\lVert #1 \right\rVert}}

\newcommand{\ip}[1]{\left\langle #1 \right\rangle}

\newcommand{\myfunc}[3]{#1\colon#2\to#3}

\usepackage{nicefrac}

\let\nfrac=\nicefrac


\renewcommand{\Pr}{\mathsf{Pr}}

\newcommand{\exv}[1]{\E\insquare{#1}}
\newcommand{\exvv}[2]{\underset{#1}{\E}\insquare{#2}}
\newcommand{\expv}[1]{\mathsf{exp}\inparen{#1}}
\newcommand{\prv}[1]{\Pr\insquare{#1}}
\newcommand{\prvv}[2]{\underset{#1}{\Pr}\insquare{#2}}

\newcommand{\logv}[1]{\log\inparen{#1}}
\newcommand{\lnv}[1]{\ln\inparen{#1}}

\makeatletter
\newcommand{\customlabel}[2]{%
   \protected@write \@auxout {}{\string \newlabel {#1}{{#2}{\thepage}{#2}{#1}{}} }%
   \hypertarget{#1}{#2}
}
\makeatother



\newcounter{casenum}

\newcounter{subcasenum}

\newcounter{casenump}

\newcommand{\casep}[2]{
    \ifthenelse{\equal{\value{casenump}}{0}}{
    \vskip.5\baselineskip\par\noindent
    }{}
    {\it Case \arabic{casenump}:} {\it #1}
    \vskip0.1\baselineskip
    \begin{addmargin}[1.5em]{1em}
    #2
    \end{addmargin}
    \addtocounter{casenump}{1}
}

\newcounter{subcasenump}


\newcommand{\lecture}[4]{
  \pagestyle{myheadings}
  \thispagestyle{plain}
  \newpage
  \setcounter{page}{1}
  \noindent
  \begin{center}
    \framebox{
      \vbox{\vspace{2mm}
      \hbox to 6.28in {{\bf Hypergraph Sparsification
                          \hfill Note #4}}
      \vspace{6mm}
      \hbox to 6.28in {{\Large \hfill #1  \hfill }}
      \vspace{5mm}
      \hbox to 6.28in {{#2 \hfill #3}}
      \vspace{2mm}}
    }
  \end{center}
  \markboth{#1}{#1}
  \vspace*{4mm}
}

\newcommand{\ceil}[1]{\left\lceil #1 \right\rceil}
\newcommand{\rank}[1]{\mathsf{rank}\inparen{#1}}

\usepackage{cleveref}

\usepackage{ulem}

\normalem

\newcommand{\xhat}{\widehat{\vx}}

\renewcommand{\vmu}{\bm{\mu}}

\newcommand{\dtwo}{d_2}

\newcommand{\vbrho}{\eps^{-2}\inparen{\log n}^2\logv{\nfrac{n}{\eps}}}
\newcommand{\vlambda}{\bm{\lambda}}
\newcommand{\mLambda}{\mathbf{\Lambda}}
\newcommand{\mSigma}{\mathbf{\Sigma}}

\newcommand{\zstar}{\vz^{\star}}
\newcommand{\Fstar}{F^{\star}}
\newcommand{\mtilde}{\widetilde{m}}

\newcommand{\vrho}{\bm{\rho}}
\newcommand{\valpha}{\bm{\alpha}}

\newcommand{\polynorm}[1]{\norm{#1}_{\cG,\vrho,\infty, S}}
\newcommand{\subgnorm}[1]{\norm{#1}_{\psi_2}}
\newcommand{\gnorm}[2]{\norm{#1}_{\cG_{#2}}}
\newcommand{\gnorml}[2]{\norm{#1}_{\cG_{#2}(\vlambda)}}
\newcommand{\nnz}[1]{\mathsf{nnz}\inparen{#1}}

\newcommand{\xstar}{\vx^{\star}}
\newcommand{\opt}{\mathsf{OPT}}

\definecolor{ForestGreen}{RGB}{34, 139, 34}

\definecolor{forest}{RGB}{0,155,85}

\newcommand{\fixout}{\bgroup\markoverwith{\textcolor{forest}{\rule[0.5ex]{2pt}{0.4pt}}}\ULon}

\renewcommand{\phi}{\varphi}

\newif\ifnotes
\makeatletter
\newcommand{\nnote}[1]{\@bsphack\ifnotes{$\ll$\textsf{\color{blue} Naren: { #1}}$\gg$}\fi\@esphack}
\newcommand{\mnote}[1]{\@bsphack\ifnotes{$\ll$\textsf{\color{magenta} Max: { #1}}$\gg$}\fi\@esphack}
\newcommand{\anote}[1]{\@bsphack\ifnotes{$\ll$\textsf{\color{green} Antares: { #1}}$\gg$}\fi\@esphack}
\makeatother
\notesfalse

\usepackage{tikz}

\usepackage[margin=1in]{geometry}

\setlength{\parindent}{2mm}
\setlength{\parskip}{1.5mm}

\allowdisplaybreaks


\usepackage[
backref=true,
natbib=true,
style=trad-alpha,
maxalphanames=8,
sorting=nyt
]{biblatex}
\addbibresource{ref.bib}

\setlength{\emergencystretch}{3em}
\numberwithin{equation}{section}

\notestrue 

\begin{document}

\title{The Change-of-Measure Method, Block Lewis Weights, and Approximating Matrix Block Norms}
\author{Naren Sarayu Manoj\thanks{TTIC. Email: \url{nsm@ttic.edu}. Supported by NSF Graduate Research Fellowship and NSF Award ECCS-2216899}\and Max Ovsiankin\thanks{TTIC. Email: \url{maxov@ttic.edu}. Supported by NSF Award ECCS-2216899}}
\maketitle
\begin{abstract}\noindent
Given a matrix $\mathbf{A} \in \mathbb{R}^{k \times n}$, a partitioning of $[k]$ into groups $S_1,\dots,S_m$, an outer norm $p$, and a collection of inner norms such that either $p \ge 1$ and $p_1,\dots,p_m \ge 2$ or $p_1=\dots=p_m=p \ge 1/\log n$, we prove that there is a sparse weight vector $\bm{\beta} \in \mathbb{R}^{m}$ such that $\sum_{i=1}^m \bm{\beta}_i \cdot \|\mathbf{A}_{S_i}\bm{x}\|_{p_i}^p \approx_{1\pm\varepsilon} \sum_{i=1}^m \|\mathbf{A}_{S_i}\bm{x}\|_{p_i}^p$, where the number of nonzero entries of $\bm{\beta}$ is at most $O_{p,p_i}(\varepsilon^{-2}n^{\max(1,p/2)}(\log n)^2(\log(n/\varepsilon)))$. When $p_1\dots,p_m \ge 2$, this weight vector arises from an importance sampling procedure based on the \textit{block Lewis weights}, a recently proposed generalization of Lewis weights. Additionally, we prove that there exist efficient algorithms to find the sparse weight vector $\bm{\beta}$ in several important regimes of $p$ and $p_1,\dots,p_m$. Our results imply a $\widetilde{O}(\varepsilon^{-1}\sqrt{n})$-linear system solve iteration complexity for the problem of minimizing sums of Euclidean norms, improving over the previously known $\widetilde{O}(\sqrt{m}\log({1/\varepsilon}))$ iteration complexity when $m \gg n$.

Our main technical contribution is a substantial generalization of the \textit{change-of-measure} method that Bourgain, Lindenstrauss, and Milman used to obtain the analogous result when every group has size $1$. Our generalization allows one to analyze change of measures beyond those implied by D. Lewis's original construction, including the measure implied by the block Lewis weights and natural approximations of this measure.
\end{abstract}

\newpage
\tableofcontents
\newpage

\section{Introduction}

Suppose we are given a large dataset that is computationally inconvenient to work with in a downstream task. To alleviate this, we can try to randomly sample a small representative subset of the original dataset. The design and analysis of randomized sampling algorithms for this purpose is well-explored (for example, see \cite{ss08,mmwy21,wy22_oneshot,wy23_sens} for preserving $\ell_p$ objectives, \cite{fl11} for preserving objectives for $k$-median, projective clustering, subspace approximation, and more, \cite{ss08,kkty21a,jls22,lee22} for preserving graph and hypergraph $\ell_2$-energy, and \cite{jlls23} for sums (of powers) of general norms).

In order to design randomized sampling algorithms, we first need to understand the properties of the original dataset we want to preserve. To this end, we study the problem of preserving \textit{block $p$-norm objectives}. Let $\cG = (\mA \in \R^{k\times n}, S_1,\dots,S_m, p_1,\dots,p_m)$ be a dataset consisting of a matrix $\mA \in \R^{k\times n}$. Consider a partitioning of $[k]$ into groups $S_1,\dots,S_m$ and consider positive numbers $p_1,\dots,p_m$. Let $\mA$ have rows $\va_1,\dots,\va_k$ and denote by $\mA_{S_i}$ the matrix in $\R^{\abs{S_i} \times n}$ whose rows are the rows of $\mA$ indexed by $S_i$. Consider the function $\gnorm{\mA\vx}{p}$ on some input vector $\vx \in \R^n$:
\begin{align}
    \gnorm{\mA\vx}{p}^p \coloneqq \sum_{i=1}^m \norm{\mA_{S_i}\vx}_{p_i}^p \label{eq:energy}
\end{align}
We use the norm notation because we can easily verify that for $p \ge 1$ and $p_i \ge 1$ for all $i$, $\gnorm{\cdot}{p}$ is a norm. We remark that objectives of the form of \eqref{eq:energy} are widely studied in geometric functional analysis, theoretical computer science, and data science. In \Cref{sec:results}, we go over one important application of the objective \eqref{eq:energy}. We defer a broader discussion of more applications and connections to \Cref{sec:related_works}.

Our goal in this paper is to design and analyze randomized sampling algorithms to output a weighted subset that preserves \eqref{eq:energy} for all $\vx \in \R^n$. We give a formal problem statement for the general problem we study in \Cref{prob:main_problem}. 

\begin{problem}[$\ell_p$ block norm sampling]
\label{prob:main_problem}
We are given as input $\cG = \inparen{\mA \in \R^{k\times n}, S_1,\dots,S_m, p_1,\dots,p_m}$, $p > 0$, and an error parameter $\eps$. For all $i \in [m]$, we must output a probability distribution $\vrho_1,\dots,\vrho_m$ over $[m]$ such that if we choose a collection of groups $\cM = (i_1,\dots,i_{\mtilde})$ where each $i_h$ is independently distributed according to $\vrho_i$, then the following holds with probability $\ge 1-\delta$:
\begin{align}
    \text{for all } \vx \in \R^n:\quad\quad\inparen{1-\eps}\gnorm{\mA\vx}{p}^p \le \frac{1}{\mtilde}\sum_{i\in \cM} \frac{1}{\vrho_i}\cdot\norm{\mA_{S_i}\vx}_{p_i}^p \le \inparen{1+\eps}\gnorm{\mA\vx}{p}^p\label{eq:main_objective}
\end{align}
We would like $\mtilde$ to be small with probability $1-\delta$ (for example, $\mtilde$ should not depend on $m$ and the dependence on $\delta^{-1}$ should be polylogarithmic).
\end{problem}
Observe that the formulation of \Cref{prob:main_problem} is an instantiation of an \textit{importance sampling} framework. Specifically, we can think of the distribution $\cD$ as consisting of importance scores for each group. We form our sparse approximation by sampling group $i$ with probability $\vrho_i$ and reweighting appropriately so that the function we return is an unbiased estimator of $\gnorm{\mA\vx}{p}$. We call $\mtilde$ the \textit{sparsity} of the procedure described in \Cref{prob:main_problem}. Additionally, in the statement of our results, we will assume that $p$ is a constant (and thus any function solely of $p$ will treated as a constant in any $O\inparen{\cdot}$ or $\Omega\inparen{\cdot}$ terms). 

In this paper, we give new results for Problem \ref{prob:main_problem} and show how these imply faster algorithms for commonly implemented optimization problems.

\subsection{Our results}
\label{sec:results}

For a quick summary of our existence results for the block norm sampling problem (\Cref{prob:main_problem}), see \Cref{table:comparison}. 

We begin with stating our main result\footnote{In the statement of \Cref{thm:one_shot_lewis}, writing the lower bound $p \ge 1/\log n$ instead of $p > 0$ is somewhat arbitrary -- we choose this lower bound to make our calculations easier later on.}, \Cref{thm:one_shot_lewis}.

\begin{restatable}[Block Lewis weight sampling]{mainthm}{oneshotlewis}
\label{thm:one_shot_lewis}
Let $\cG = (\mA \in \R^{k \times n}, S_1,\dots,S_m, p_1,\dots,p_m)$ where $S_1,\dots,S_m$ form a partition of $[k]$. Suppose at least one of the following holds:
\begin{itemize}
    \item $1 \le p < \infty$ and $p_1,\dots,p_m \ge 2$;
    \item $1/\log n \le p_1 = \dots = p_m = p < \infty$;
    \item $p_1 = \dots = p_m = 2$ and $1/\log n \le p < \infty$.
\end{itemize}
Let $P \coloneqq \max\inparen{1, \max_{i \in [m]} \min(p_i,\log\abs{S_i})}$. Then, there exists a probability distribution $\cD = \inparen{\vrho_1,\dots,\vrho_m}$ such that if
\begin{align*}
    \mtilde &= \Omega\inparen{\logv{\nfrac{1}{\delta}}\vbrho P \cdot  n^{\max(1,p/2)}},
\end{align*}
and if we sample $\cM \sim \cD^{\mtilde}$, then, with probability $\ge 1-\delta$,
\begin{align*}
    \text{for all } \vx \in \R^n,\quad (1-\eps)\gnorm{\mA\vx}{p}^p \le \frac{1}{\mtilde}\sum_{i \in \cM} \frac{1}{\vrho_i} \cdot \norm{\mA_{S_i}\vx}_{p_i}^p \le (1+\eps)\gnorm{\mA\vx}{p}^p.
\end{align*}
\end{restatable}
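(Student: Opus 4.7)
The plan is to construct the sampling distribution $\cD = (\vrho_1,\dots,\vrho_m)$ from the \emph{block Lewis weights} of $\cG$, whose definition and existence we assume have been established earlier in the paper. In the principal case $p_1,\dots,p_m \ge 2$ I take $\vrho_i$ proportional to the $i$-th block Lewis weight $\vw_i$; in the uniform-$p_i$ cases I take the natural $\ell_p$-block analogue. The defining fixed-point property of these weights should yield both a bound $\sum_i \vw_i = O(n \cdot P)$ and a \emph{change-of-measure} identity that expresses each block contribution $\norm{\mA_{S_i}\vx}_{p_i}^p$ in a form that is well-conditioned when viewed as a random variable drawn from $\cD$.

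Next I reduce the uniform-in-$\vx$ guarantee to a single concentration inequality on the empirical process indexed by the unit ball $B \coloneqq \{\vx \in \R^n : \gnorm{\mA\vx}{p} \le 1\}$. After a standard symmetrization, the task becomes to upper bound
\[
\E \sup_{\vx \in B}\, \left|\, \frac{1}{\mtilde} \sum_{h=1}^{\mtilde} \frac{\varepsilon_h}{\vrho_{i_h}}\norm{\mA_{S_{i_h}}\vx}_{p_{i_h}}^{p} \,\right|
\]
by a small multiple of $\eps$, where $\varepsilon_h$ are independent Rademachers and $(i_h)$ are independently drawn from $\cD$. This should follow from Dudley-style chaining together with covering-number estimates for $B$ in the metric induced by the change-of-measure; the resulting $(\log n)^2 \log(n/\eps)$ factor in the complexity is precisely what one expects to lose in the chaining integral. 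A Bernstein or Talagrand inequality then promotes the expected supremum into the $1-\delta$ high-probability statement, producing the $\log(1/\delta)$ factor in $\mtilde$.

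The main technical obstacle, and the reason the paper's generalized change-of-measure method is needed, lies in controlling the variance terms that appear in the chaining argument. In the classical BLM case (blocks of size $1$) each summand is a rank-one quantity $\vw_i^{-1}\abs{\ip{\va_i,\vx}}^p$ whose variance under $\cD$ is bounded directly by the standard Lewis weight fixed-point identity. In our setting each summand depends nonlinearly on the entire submatrix $\mA_{S_i}$, and one must produce an appropriate block analogue of this variance bound. I would handle this by linearizing $\norm{\mA_{S_i}\vx}_{p_i}^p$ via the dual characterization of $\ell_{p_i}$ and then invoking the block Lewis weight identity on the resulting vector-valued process --- this is exactly the generalized change-of-measure step advertised in the introduction. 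Once that lemma is available as a black box, the three cases in the hypothesis fall out in a unified manner, with the factor $P$ arising from a truncation step needed to absorb blocks of very large effective dimension, and the case $p < 1$ handled by a separate scaling reduction to $p = 1$.
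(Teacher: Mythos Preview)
Your high-level outline---block Lewis weights for the sampling distribution, symmetrization, Dudley chaining, and a change-of-measure to control the resulting covering numbers---matches the paper. But the specifics you propose for the ``main technical obstacle'' diverge from what the paper actually does, and in a couple of places are incorrect.

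First, the paper does \emph{not} linearize $\norm{\mA_{S_i}\vx}_{p_i}^p$ via the dual characterization of $\ell_{p_i}$. Instead, the change-of-measure is geometric: one chooses a probability measure $\vlambda$ and a rounding matrix $\mW$ so that the ellipsoid $\widehat{B_2}=\{\vx:\norm{\mW^{1/2}\mLambda^{1/2-1/p}\mA\vx}_2\le 1\}$ sits well with respect to the family of balls $B_r$ (defined via the $\vlambda$-weighted outer $r$-norm). The covering numbers $\log\cN(B_p,\eta K)$ are then bounded by splitting through $\widehat{B_2}$ and applying the dual Sudakov inequality; when $p<2$, an interpolation argument using log-convexity of the $r\mapsto\gnorml{\cdot}{r}$ norms reduces $\log\cN(B_p,t\widehat{B_2})$ to a sum of $\log\cN(\widehat{B_2},\delta_h B_r)$ terms. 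No duality of $\ell_{p_i}$ is invoked on the block level, and it is not clear a linearization approach would deliver the needed bound without introducing extra factors.

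Second, two smaller inaccuracies: the block Lewis weights sum to exactly $n$, not $O(n\cdot P)$; the factor $P=\max_i\min(p_i,\log|S_i|)$ enters only through the subgaussian norm of each block functional $\vg\mapsto\norm{\mA_{S_i}\mR\vg}_{p_i}$ in the dual Sudakov step, not through the total mass. And there is no ``scaling reduction to $p=1$'' for $p<1$: the paper treats the range $1/\log n\le p<2$ uniformly via the same interpolation machinery, with the Lewis measure (or its block analogue) supplying the $\Fstar=n$ overestimate directly.
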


We prove \Cref{thm:one_shot_lewis} in \Cref{sec:lewis}. It will follow from \Cref{thm:concentration} (stated and proven in \Cref{sec:generic_chaining}), which is a more general but more technical statement that also includes a description of the relevant distributions $\cD$.

We remark that when $p \ge 1$ and $p_1,\dots,p_m \ge 2$, the sampling probabilities \(\vrho\) mentioned in \Cref{thm:one_shot_lewis} can be found using the optimality conditions of a particular optimization problem that was stated and analyzed by \citet[Section 4]{jlls23}. That problem itself can be viewed as the natural generalization of the determinant maximization problem that yields the existence of Lewis's measure (see \cite[Section 2]{sz01} for details). However, \cite{jlls23} did not address the question of whether sparsification guarantees could be obtained with these weights beyond the case where the ``outer norm'' satisfies $p=2$.

Additionally, although \cite{jlls23} study sparsification of sums of norms and sums of powers $p > 1$ of uniformly smooth norms, we obtain an improved sparsity in the case entailed by \Cref{prob:main_problem} (by a factor of $\psi_n\logv{\nfrac{n}{\eps}}^{\min(p-1,2)}$, where $\psi_n$ is the KLS ``constant'' in $n$ dimensions). We defer a more detailed comparison of our existence results to \Cref{sec:related_works}.

Furthermore, it is well-known that the polynomial terms in the sparsities in \Cref{thm:one_shot_lewis} are optimal. In particular, \citet[Corollary 1.6 and Theorem 1.7]{lww19} show that $\Omega(n^{\max(1,p/2)}+\eps^{-2}\mathrm{polylog}(\eps^{-1})n)$ rows must be chosen in order to satisfy the requirement imposed by (\ref{eq:main_objective}).

Finally, the setting where $p = p_1 = \dots = p_m$ is a particularly important case of \Cref{prob:main_problem}. Here, we see that $\norm{\mA\vx}_p = \gnorm{\mA\vx}{p}$, and so \Cref{prob:main_problem} amounts to finding an $\ell_p$ subspace embedding under a group constraint (that certain rows must be kept together in the subsample). This might be a useful notion in practice, where the $S_i$ denote related observations that should be kept together for some downstream application. Moreover, this can be viewed as a higher-rank analog of $\ell_p$ row sampling, somewhat similarly to how the matrix Chernoff bound gives a higher-rank analog for the concentration of sums of bounded random matrices when compared to the rank-$1$ variant of \citet{rudelson1996}.

\paragraph{Computing sampling probabilities.}
The previous results show the existence of sampling probabilities \(\vrho_1, \ldots, \vrho_m\) such that sampling using those probabilities gives a sparsifier in the setting of \Cref{prob:main_problem}.
To get a sparsification \textit{algorithm}, we need to also compute (or approximate) the sampling probabilities.

We give efficient algorithms to do so in natural cases.

\begin{restatable}[Computation of block Lewis weights]{mainthm}{computeblw}
\label{thm:computeblw}
Consider the setting of \Cref{thm:one_shot_lewis} and suppose at least one of the following holds:
\begin{itemize}
    \item $p = 2$ and $p_1,\dots,p_m \ge 2$;
    \item $1/\log n \le p_1=\dots=p_m=p < \infty$;
    \item $p_1 = \dots = p_m = 2$ and $1/\log n \le p < \infty$.
\end{itemize}
Let $P = \max\inparen{1, \max_{i \in [m]} \min(p_i,\log\abs{S_i})}$ and set
\begin{align*}
    \mtilde &= O\inparen{\logv{\nfrac{1}{\delta}}\vbrho P \cdot n^{\max(1,p/2)}}.
\end{align*}
Then, there is an algorithm that outputs a probability distribution $\cD = (\vrho_1,\dots,\vrho_m)$ such that sampling a multiset $\cM \sim \cD^{\mtilde}$ satisfies, with probability \(1 - \delta\),
\begin{align*}
    \text{for all } \vx \in \R^n,\quad (1-\eps)\gnorm{\mA\vx}{p}^p \le \frac{1}{\mtilde}\sum_{i \in \cM} \frac{1}{\vrho_i} \cdot \norm{\mA_{S_i}\vx}_{p_i}^p \le (1+\eps)\gnorm{\mA\vx}{p}^p,
\end{align*}
Further, the algorithm to find $\cD$ performs at most \(\text{polylog}(k,n,m)\) leverage score overestimate computations or linear system solves.
\end{restatable}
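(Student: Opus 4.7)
The plan is to identify a fixed-point characterization of the sampling distribution $\vrho = (\vrho_1, \ldots, \vrho_m)$ from \Cref{thm:one_shot_lewis} in each of the three cases, and then compute an approximate fixed point via a Cohen-Peng style iteration whose each step can be implemented using $O(1)$ leverage score overestimate computations or linear system solves. The existence theorem tells us that the optimal $\vrho$ arises from a convex optimization problem (e.g., for $p=2$, $p_i \ge 2$ this is the determinant-maximization program of \citet{jlls23}); the algorithmic task is to extract the optimality conditions as a fixed-point equation and iterate.

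In the case $p_1 = \cdots = p_m = p$, I expect the block Lewis weights to satisfy a fixed-point of the form $\vrho_i \propto \trv{\mA_{S_i} (\mA^T \mD(\vrho) \mA)^{-1} \mA_{S_i}^T}^{p/2}$, where $\mD(\vrho)$ is a block-diagonal reweighting determined by the current iterate. Iterating this map reduces to a block leverage score computation per step, which can be approximated in nearly-matrix-multiplication time via a Johnson-Lindenstrauss sketch and a constant number of linear system solves on $\mA^T \mD \mA$. For $p < 4$, a Cohen-Peng contraction argument, suitably generalized to the block setting, should yield convergence in $\polylog(k,n,m)$ iterations. For $p \ge 4$, I would use a homotopy method: initialize with $p=2$ block Lewis weights and gradually increase the exponent, with each small increment requiring only a small number of iterations, yielding polylog total iterations overall. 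For the $p = 2$, $p_i \ge 2$ case the probabilities come directly from the KKT conditions of the \cite{jlls23} determinant program, so one can run a standard convex solver (e.g., mirror descent on the appropriate potential) in which each oracle call is again a reweighted leverage score computation. Finally, for $p_i = 2$ with arbitrary $p$, the $i$th block contributes $\norm{\mA_{S_i}\vx}_2^p = \inparen{\vx^T \mA_{S_i}^T \mA_{S_i} \vx}^{p/2}$, so we may apply the standard $\ell_p$ Lewis weight iteration to the ``virtual'' instance whose effective rows are the positive semidefinite blocks $\mA_{S_i}^T \mA_{S_i}$, reducing to a well-studied computation.

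Throughout, the algorithmic cost of one iteration must be translated into leverage score overestimates rather than exact leverage scores, which is handled by showing that the iteration is robust to multiplicative $(1 \pm \eps)$ perturbations — this is standard for Cohen-Peng style analyses and should carry over to the block setting since the relevant quantities (block leverage scores) are still monotone functions of the weights. The only subtle point is ensuring that overestimates on the block leverage scores translate into overestimates on $\vrho_i$ without inflating the sparsity budget $\mtilde$ by more than a constant factor.

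The main obstacle is the regime $1/\log n \le p < 1$ in cases two and three, where Cohen-Peng style contraction breaks down because the iteration map is no longer a contraction in the natural metric. Here I would either (i) employ a regularized iteration in the spirit of \citet{jls22}, which trades a slightly larger iteration count for unconditional convergence, or (ii) carry out a change of variables $p \mapsto 1/p$ that converts the low-$p$ problem into a high-$p$ problem where the homotopy method applies, since the block Lewis weight fixed-point has a symmetry under this transformation. Either route should give polylog iterations, but checking that the change-of-measure guarantees used in \Cref{thm:one_shot_lewis} still apply to the \emph{overestimated} block weights produced by the algorithm will require careful bookkeeping of the constants in the generic chaining argument.
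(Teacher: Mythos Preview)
Your high-level plan---extract a fixed-point characterization and iterate with a leverage-score oracle---is the right framework, and the Cohen--Peng contraction you propose does handle the case $p_1=\cdots=p_m=2$, $0<p<4$ (this is essentially the paper's Algorithm~1). But two points are off.

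First, your concern about $p<1$ is misplaced. The iteration $\vb_i \leftarrow \bigl(\sum_{j\in S_i}\va_j^\top(\mA^\top\mB^{1-2/p}\mA)^{-1}\va_j\bigr)^{p/2}$ is a contraction in the $\ell_\infty$-log-metric with factor $\abs{p/2-1}$, which is strictly below $1$ for every $0<p<4$, including $p<1$. No regularization or $p\mapsto 1/p$ trick is needed there, and no such symmetry of the block Lewis fixed point exists.

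Second, and more importantly, homotopy is not known to give $\mathrm{polylog}$ iterations for $p\ge 4$, and ``run a standard convex solver / mirror descent'' on the determinant program for $p=2$, $p_i\ge 2$ is too vague to deliver the stated bound. The paper handles both of these non-contractive regimes with the convex-potential averaging technique of \cite{ccly19,jls21,jls22}. Concretely, one shows that the potentials
\[
\phi_i(\vb)=\ln\Bigl(\vb_i^{-1}\sum_{j\in S_i}\tau_j\bigl(\mB^{1/2-1/p}\mA\bigr)\Bigr)
\quad\text{(for $p_i=2$, $p\ge 2$)}
\]
and the row-level analogue $\phi_j(\vu)=\ln\bigl(\vu_j^{-1}\tau_j(\mV(\vu)^{1/2}\mA)\bigr)$ (for $p=2$, $p_i\ge 2$) are \emph{convex} in the weight vector. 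One then runs the natural fixed-point update, \emph{averages} the iterates, and applies Jensen's inequality; the resulting telescoping of $\ln(\vb_i^{(t+1)}/\vb_i^{(t)})$ gives $\phi_i(\overline\vb)\le O(\log m)/T$, so $T=O(\log m)$ iterations yield constant-factor overestimates that plug directly into \Cref{thm:concentration}. You mention the JLS22-style iteration only as a fallback for the easy $p<1$ case; in fact it is the central tool precisely where contraction fails. Your mirror-descent idea might be made to work, but it would require identifying the right potential with suitable smoothness, which is exactly what the convexity lemma above already supplies directly.
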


We prove \Cref{thm:computeblw} in \Cref{sec:alg}.

We formally define a \textit{leverage score overestimate computation} in \Cref{def:lev_score_over}. Alternately, these can be implemented using linear system solvers that solve systems of the form $\mA^{\top}\mD\mA\vy=\vz$ for diagonal $\mD$ (see \cite{ls19} for details). Although the runtime of this primitive depends on the structure of the input, each such iteration runs in $\widetilde{O}(\mathsf{nnz}(\mA) + n^{\omega})$ time. Moreover, in the special case where the matrix $\mA$ is a graph edge-incidence matrix, the runtime improves to $\widetilde{O}(\mathsf{nnz}(\mA))$.


Finally, we note that our algorithms are faster than the log-concave sampling-based routines given in \cite{jlls23} for calculating sparse approximations to sums (of powers of) more general norms, when the outer norm $p$ satisfies $1 \le p \le 2$ (they do not give algorithms for the case where $p > 2$). In particular, while their algorithm applies to a more general setting, the runtime is $\widetilde{O}(m + n^5)$. In contrast, since our algorithms only depend on a polylogarithmic number of leverage score overestimate computations or linear system solves, we can obtain much faster runtimes (in particular improved powers of $n$). This means that we can apply our algorithms to downstream optimization tasks where the main computational primitive is a linear system solver (as is the case for many general frameworks for convex programming).

\paragraph{Applications to minimizing sums of Euclidean norms.} A well-studied regression task is the \textit{minimizing sums of Euclidean norms} (MSN) problem. We are given \(\mA \in \R^{k \times n}\) and \(\vb \in \R^k\), and a partition \(S_1, \ldots, S_m\) of \([k]\).
In this problem, we would like to find
\begin{align}
    \quad\quad\min_{\vx\in\R^n} \sum_{i=1}^m \norm{\mA_{S_i} \vx-\vb_{S_i}}_2. \label{eq:msn}
\end{align}
Solving the MSN objective \eqref{eq:msn} subsumes several widely implemented optimization problems such as variants of Euclidean single facility location, Euclidean multifacility location, Euclidean Steiner minimum tree under a given topology, and plastic collapse analysis. See the long line of work on this problem \cite{and96,xy97,acco00,qsz02} for a more detailed discussion. Additionally, observe that if all $\abs{S_i}=1$, then \eqref{eq:msn} is nothing but $\ell_1$ regression (i.e., $\min_{\vx\in\R^n} \norm{\mA\vx-\vb}_1$). Thus, \eqref{eq:msn} is a generalization of $\ell_1$ regression. Finally, notice that \eqref{eq:msn} subsumes the \textit{stochastic robust approximation problem} when the norm in question is the Euclidean norm and the design $\mA$ assumes a finite number of values -- see \cite[Section 6.4.1]{boyd2004convex} for further discussion.

In this paper, we will be interested in algorithms that return a $(1+\eps)$-multiplicative approximation to the objective -- namely, we desire a point $\widehat{\vx} \in \R^n$ such that
\begin{align*}
    \sum_{i=1}^m \norm{\mA_{S_i}\widehat{\vx}-\vb_{S_i}}_2 \le (1+\eps)\min_{\vx\in\R^n} \sum_{i=1}^m \norm{\mA_{S_i} \vx-\vb_{S_i}}_2.
\end{align*}
To our knowledge, the best known algorithms based on interior point methods output a $(1+\eps)$-approximate solution to \eqref{eq:msn} $\widetilde{O}(\sqrt{m}\logv{\nfrac{1}{\eps}})$ calls to a linear system solver \cite{and96,xy97} for matrices of the form $\mA^{\top}\mD\mA$ for block-diagonal matrices $\mD$.

By applying \Cref{thm:computeblw} on the matrices $\insquare{\mA_{S_i} \vert \vb_{S_i}} \in \R^{(n+1) \times \abs{S_i}}$ with $p_1=\dots=p_m=2$ and $p=1$, observe that within $\widetilde{O}(1)$ linear system solves in matrices $\mA^{\top}\mD\mA$ for nonnegative diagonal $\mD$, we obtain an objective with $\widetilde{O}(\eps^{-2} \cdot n)$ terms that approximates \eqref{eq:msn} up to a $(1\pm\eps)$ multiplicative factor on all vectors $\vx \in \R^{n+1}$ whose last coordinate is $1$. This immediately implies \Cref{thm:msnalg}.

\begin{restatable}[Minimizing sums of Euclidean norms]{mainthm}{msnalg}
\label{thm:msnalg}
Let $\mA \in \R^{k \times n}$ and \(\vb \in \R^k\), and $S_1, \dots, S_m$ be a partition of $k$. There exists an algorithm that, with probability $\ge 1-\delta$, returns $\widehat{\vx}$ such that
\begin{align*}
    \sum_{i=1}^m \norm{\mA_{S_i}\widehat{\vx}-\vb_{S_i}}_2 \le (1+\eps)\min_{\vx\in\R^n} \sum_{i=1}^m \norm{\mA_{S_i} \vx-\vb_{S_i}}_2.
\end{align*}
The algorithm runs in $\widetilde{O}\inparen{\nfrac{\sqrt{n}}{\eps} \cdot \sqrt{\logv{\nfrac{1}{\delta}}}}$ calls to a linear system solver in matrices of the form $\mA^{\top}\mD\mA$ for block-diagonal matrices $\mD$, where each block has size $(\abs{S_i}+1) \times (\abs{S_i}+1)$.
\end{restatable}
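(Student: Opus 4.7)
The plan is to reduce the MSN problem to a much smaller instance via \Cref{thm:computeblw}, and then run on that reduced instance the $\sqrt{m}$-iteration interior-point method of \cite{and96,xy97}. First I would homogenize the affine objective: form the augmented matrix $\mA' \coloneqq \insquare{\mA \vert \vb} \in \R^{k \times (n+1)}$, let $\mA'_{S_i} \coloneqq \insquare{\mA_{S_i} \vert \vb_{S_i}}$, and note that for any $\vx \in \R^n$ and $\yhat \coloneqq (\vx;-1) \in \R^{n+1}$, we have $\mA'_{S_i}\yhat = \mA_{S_i}\vx - \vb_{S_i}$. Therefore the MSN objective equals $\sum_{i=1}^m \norm{\mA'_{S_i}\yhat}_2$ pointwise.

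Next I would apply \Cref{thm:computeblw} to $\cG' = (\mA', S_1,\ldots,S_m, 2,\ldots,2)$ with outer norm $p = 1$ and error $\eps/3$, which falls under the third bullet of its hypothesis. Here $P \le 2$ and $n^{\max(1,1/2)} = n$, so the sampled multiset $\cM$ has size $\mtilde = \widetilde{O}(\eps^{-2}n\logv{\nfrac{1}{\delta}})$, and the distribution $\cD$ is computed in $\widetilde{O}(1)$ solves of linear systems of the form $(\mA')^\top \mD' \mA'$ for nonnegative diagonal $\mD'$. Critically, the resulting weighted sum of $\ell_2$-norms is a $(1\pm \eps/3)$-factor approximation of $\sum_i \norm{\mA'_{S_i}\vy}_2$ for \emph{every} $\vy \in \R^{n+1}$, and in particular on the affine slice $\{(\vx;-1):\vx\in\R^n\}$ that encodes our MSN instance.

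Finally I would run the IPM of \cite{and96,xy97} on the sparsified MSN instance --- which has only $\mtilde$ groups --- to accuracy $\eps/3$. Its iteration complexity is $\widetilde{O}(\sqrt{\mtilde}\logv{\nfrac{1}{\eps}}) = \widetilde{O}(\nfrac{\sqrt{n}}{\eps}\cdot\sqrt{\logv{\nfrac{1}{\delta}}})$ linear system solves in matrices of the form $\mA^\top \mD \mA$ with $\mD$ block-diagonal of the advertised sizes. Composing the sparsification error with the optimization error and rescaling $\eps$ yields a $(1+\eps)$-multiplicative approximation to the original MSN objective within the stated budget; the failure probabilities of the two phases combine into the final $\delta$.

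Rather than a substantive obstacle, the only points needing care are routine: (i) the sparsification guarantee holds uniformly on $\R^{n+1}$, so in particular on the affine slice corresponding to our MSN instance, which is exactly the content of \Cref{thm:computeblw}; and (ii) each per-iteration linear system on the sparsified, reweighted problem retains the claimed block-diagonal form with blocks of size $(\abs{S_i}+1) \times (\abs{S_i}+1)$, which is immediate because group sampling and positive reweighting preserve the block structure of each $\mA_{S_i}$ and the standard SOCP reformulation of MSN adds exactly one scalar variable $t_i$ per surviving group. The approach is essentially a direct composition, and no genuine technical obstacle arises beyond carefully bookkeeping the error rescalings.
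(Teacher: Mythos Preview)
Your proposal is correct and matches the paper's approach: homogenize via $\insquare{\mA_{S_i}\mid\vb_{S_i}}$, sparsify with \Cref{thm:computeblw} at $p=1,\ p_i=2$, then run the $\sqrt{m}$-iteration IPM on the reduced instance. The one point you gloss over that the paper does handle explicitly is that the \cite{xy97} guarantee is \emph{additive}, not multiplicative; the paper inserts a short lemma (\Cref{lemma:ipm_log_barrier}) that first computes the least-squares minimizer $\vx_0$, rescales the instance by $V=\norm{\mA\vx_0-\vb}_2$, and shows that after this normalization the optimum is at least $1$ and the offset norms are polynomially bounded, so additive $\eps$ becomes multiplicative $(1+\eps)$ within $\widetilde{O}(\sqrt{m}\log(1/\eps))$ solves. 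This is routine but not quite pure ``bookkeeping,'' so you should state it rather than absorb it into the IPM black box.
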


We prove \Cref{thm:msnalg} in \Cref{sec:applications_msn}.

\Cref{thm:msnalg} improves over the best-known iteration complexities for solving \eqref{eq:msn} when the number of summands is much larger than the input dimension, i.e., $m \gg n$. Furthermore, the iteration complexity stated in \Cref{thm:msnalg} matches the iteration complexity for $\ell_1$ regression up to the $\eps^{-1}$ term \cite{bllssw21}. It is an interesting (but probably challenging) open problem to design and analyze an algorithm for \eqref{eq:msn} with iteration complexity $\widetilde{O}(\sqrt{n}\logv{\nfrac{1}{\eps}})$, which would exactly match what is known for $\ell_1$ regression.

Finally, we note that in the special case of the geometric median, where all the $\mA_{S_i} = \mI_d$ for some fixed dimension \(d\), an algorithm with runtime $\widetilde{O}(\mathsf{nnz}(\vb)\logv{\nfrac{1}{\eps}}^3)$ is known due to \citet{clmps16}. The algorithm is a long-step interior point method with a custom analysis and follows from different techniques from ours.

\paragraph{Outline.} The rest of this paper is organized as follows. In the remainder of this section, we establish notation that we use throughout the rest of the paper (\Cref{sec:notation}), give an overview of our technical methods (\Cref{sec:overview}), and discuss some prior and related works (\Cref{sec:related_works}). In \Cref{sec:background}, we give background from linear algebra, convex geometry, and probability that we rely on for the rest of the paper. In \Cref{sec:coverings}, we prove bounds on geometric quantities known as \textit{covering numbers}. These play a crucial role in our concentration arguments. In \Cref{sec:generic_chaining}, we prove that our general sampling scheme concentrates and therefore preserves the original objective on all $\vx \in \R^n$, with high probability. In \Cref{sec:applications}, we show how to apply our general sampling scheme to the problems we discuss in \Cref{sec:results}. Finally, in \Cref{sec:alg}, we describe our algorithmic results. 

\subsection{Notation and definitions}
\label{sec:notation}

\paragraph{General notation.} For positive integer $N$, we let $[N]$ denote the set $\inbraces{i \in \Z \suchthat 1 \le i \le N}$. All $\log$s are base $2$; we use $\ln$ to denote the natural logarithm. We let $\ve_1,\dots,\ve_n$ denote the standard basis vectors in $\R^n$. When we write $a \lesssim b$, we mean that $a \le Cb$ for some universal constant $C > 0$.

\paragraph{Linear algebra notation.} In this paper, we work extensively with matrices and vectors. We always denote matrices with capital letters in boldface (e.g. $\mA$) and vectors with lowercase letters in boldface (e.g. $\vx$). With a few exceptions, we write the rows of a matrix using the lowercase boldface version of the same letter used to write the matrix along with a subscript denoting which index the row corresponds to. For example, $\va_i$ denotes the $i$th row of matrix $\mA$. In a slight abuse of notation, for a symmetric matrix $\mM$, we let $\mM^{-1} \coloneqq \sum_{i=1}^{\rank{\mM}} \lambda_i^{-1}\vu_i\vu_i^{\top}$, where $\vu_i$ is the $i$th eigenvector of $\mM$. In other words, we write $\mM^{-1}$ to denote the pseudoinverse of $\mM$ when $\mM$ is symmetric. We will never use the inverse notation $\mM^{-1}$ for a non-symmetric matrix $\mM$.

\subsection{Technical overview}
\label{sec:overview}

In this subsection, we give a bird's eye view of the technical methods behind our proof of \Cref{thm:one_shot_lewis}.

\subsubsection{Concentration}
\label{sec:overview_concentration}

We begin with an explanation of our concentration proof. This type of argument has become standard in the line of work on sparsification (particularly in \cite{lee22,jlls23}), but we include a description for completeness.

Let $B_p \coloneqq \inbraces{\vx\in\R^n\suchthat\gnorm{\mA\vx}{p}\le1}$. By a standard symmetrization reduction, it suffices to fix $i_1,\dots,i_{\mtilde}$ (not necessarily distinct) and argue that for independent $R_1,\dots,R_{\mtilde}$ where $R_h \sim \mathsf{Unif}\inparen{\pm 1}$ we have
\begin{align}
    \exvv{R_h}{\sup_{\vx \in B_{p}} \abs{\sum_{h=1}^{\mtilde} R_h \cdot \frac{\norm{\mA_{S_{i_h}}\vx}_{p_{i_h}}^p}{\vrho_{i_h}}}} \le \mtilde\cdot\eps.\label{eq:intro_empirical_process}
\end{align}
Intuitively, satisfying \eqref{eq:intro_empirical_process} means that for the rebalancing of the groups given by the $\vrho_{i_h}$, a Rademacher average of the groups evaluated on every point in $\vx$ is close to $0$. It is straightforward to check that the above instantiation is a subgaussian process under an appropriately chosen distance function $\myfunc{\dtwo}{\R^{n} \times \R^{n}}{\R}$. Thus, we will apply chaining \cite{tal21}, which can be thought of as simultaneously controlling \eqref{eq:intro_empirical_process} on $\eps$-nets of $B_p$ using the metric $\dtwo$, for all $\eps>0$.

To apply chaining, the main technical task is to understand the \textit{entropy numbers} $e_N(B_{p},\dtwo)$. The entropy numbers $e_N(B_p, \dtwo)$ are the values $\eta$ that answer the question, ``what is the smallest $\eta$ such that $B_p$ can be covered by at most $2^{2^N}$ balls of radius $\eta$ in $\dtwo$?'' (or see \Cref{defn:entropy_numbers}).

\subsubsection{Covering numbers}
\label{sec:overview_geometry}

In this subsection, we explain how to control the entropy numbers as required by \Cref{sec:overview_concentration}. We first define the sampling body (\Cref{defn:sampling_polyhedron}).

\begin{definition}[Sampling body]
\label{defn:sampling_polyhedron}
Let $S$ be some subset of $[m]$ and $\vrho_1,\dots,\vrho_m$ be a probability distribution. Define the norm $\polynorm{\vx} \coloneqq \max_{i \in S} \vrho_i^{-1/p}\norm{\mA_{S_i}\vx}_{p_i}$. We call the unit norm ball of $\polynorm{\vx}$ the \textit{sampling body}.
\end{definition}

Recall that the covering number $\cN(K_1,K_2)$ for two symmetric convex bodies $K_1$ and $K_2$ is the minimum number of translates of $K_2$ required to cover $K_1$. Additionally, recall from the previous subsection the notion of \textit{entropy numbers} (which we will define in \Cref{defn:entropy_numbers}). We will reduce controlling \eqref{eq:intro_empirical_process} to bounding the entropy numbers
\begin{align*}
    e_N\inparen{\inbraces{\vx\in\R^n\suchthat\gnorm{\mA\vx}{p}\le 1}, \inbraces{\vx\in\R^n\suchthat\polynorm{\vx}\le 1}}
\end{align*}
when $N$ is small. This places us in the setting where a simple volume-based argument becomes suboptimal. In this range, the dual Sudakov inequality (\Cref{fact:sudakov}) is the technical workhorse that allows us to get sharper bounds than what we would get if we applied just a volume-based bound. It states that if $B$ is the Euclidean ball in $n$ dimensions and $K$ is some symmetric convex body in $n$ dimensions, then we have
\begin{align*}
    \log\cN(B,\eta K) \lesssim \eta^{-2} \exvv{\vg\sim\cN(0,\mI_n)}{\norm{\vg}_K}^2,
\end{align*}
where $\norm{\cdot}_K$ is the \textit{gauge norm} for $K$, defined by $\norm{\vx}_K \coloneqq \inf\inbraces{t > 0 \suchthat \vx/t \in K}$.

However, applying the dual Sudakov inequality requires that we analyze covering numbers of the form $\log\cN(B, K)$ where $B$ is the Euclidean ball in $n$ dimensions and $K$ is some symmetric convex body. Denoting $\{\vx\in\R^n\suchthat\polynorm{\vx}\le 1\}$ by $K$, we see that we cannot immediately apply the dual Sudakov inequality to bound $\log\cN(B_p,\eta K)$. This is because $B_p$ is not a (linear transformation of a) Euclidean ball. The work of \cite{jlls23} resolve this by generalizing the dual Sudakov inequality to cover arbitrary symmetric convex bodies. Unfortunately, this approach is not optimal in every setting. One source of the loss arises from exploiting the concentration of Lipschitz functionals of isotropic log-concave random vectors -- improving the bounds on this concentration depends on further progress on the KLS conjecture. Another is that the one-dimensional marginals of isotropic log-concave random variables, without any further assumptions, are only subexponential.

To escape these inefficiencies, we will want to try to find a way to apply the dual Sudakov inequality as-is. We may then exploit the concentration of Lipschitz functionals of Gaussian random vectors, which we do have a tight understanding of (for a precise statement, see \Cref{fact:lip_concentration}). A natural attempt is to first observe that for any $t > 0$,
\begin{align}
    \log\cN\inparen{B_p, \eta K} \le \log\cN\inparen{B_p, t\widehat{B_2}} + \log\cN\inparen{t\widehat{B_2}, \eta K} = \log\cN\inparen{B_p, t\widehat{B_2}} + \log\cN\inparen{\widehat{B_2}, \frac{\eta}{t}\cdot K}.\label{eq:intro_split_covering}
\end{align}
We will choose $\widehat{B_2}$ to be a linear transformation of a Euclidean ball so that we can control $\log\cN\inparen{\widehat{B_2}, \eta/t \cdot K}$ using the dual Sudakov inequality. 

Here, we will split our argument based on whether $p \ge 2$. When $p \ge 2$, it will become clear later on that it will be sufficient to choose $\widehat{B_2}$ so that $B_p \subseteq \widehat{B_2}$. Then, it is easy to see that when $t=1$, we get $\log\cN(B_p,t\widehat{B_2}) = 0$. Hence, we have $\log\cN(B_p, \eta K) \le \log\cN(\widehat{B_2}, \eta K)$, and the required bound will follow from exploiting the concentration of Lipschitz functionals of Gaussian random vectors and then applying the dual Sudakov inequality.

However, when $p < 2$, we are still left with a pesky $\log\cN(B_p,t \widehat{B_2})$ term. Loosely, this is almost dual to the statement of the dual Sudakov inequality. Now, because it is known that covering number duality does hold when one of the bodies in question is the Euclidean ball, it may be tempting to simply write $\log\cN(B_p, t\widehat{B_2}) = \log\cN(\widehat{B_2},t B_q)$ where $B_q$ is the dual ball to $B_p$ after applying some linear transformation to map $\widehat{B_2}$ to $B$. The challenge here is that we do not believe that the gauge of the resulting $B_q$ has a form that is amenable to analysis. We will therefore need to be more careful, and we describe our alternative approach in \Cref{sec:overview_measure}.

\subsubsection{The change-of-measure principle and norm interpolation}
\label{sec:overview_measure}

Recall from the previous part that our goal is to bound $\log\cN(B_p,t\widehat{B_2})$ when $p < 2$.

We are now ready to introduce our main conceptual message -- \textit{by changing the measure under which we take norms, we can almost automatically identify a linear transformation of a Euclidean ball $\widehat{B_2}$ that is a good approximation to $B_p$}. This sort of idea has already been used by \citet{blm89} and \citet{sz01} to obtain the required $\widehat{B_2}$ in the special case where all the $S_i$ are singletons. We will generalize this machinery to give similar results for the block norm sampling problem.

Let us describe this idea further. Let $\vlambda_1,\dots,\vlambda_m$ denote a probability measure over the groups. Let $\mLambda \in \R^{k\times k}$ be the diagonal matrix such that if $j \in S_i$, then $\mLambda_{jj} = \vlambda_i$. Finally, for any $r > 0$ and $\vy\in\R^k$, let $\gnorml{\vy}{r} = (\sum_{i \le m} \vlambda_i\norm{\vy_{S_i}}_{p_i}^{r})^{1/r}$ and $B_r \coloneqq \{\vx\in\R^n \suchthat \gnorml{\mLambda^{-1/p}\mA\vx}{r} \le 1\}$. Notice that under this definition, we  still have \(B_p\) as before. We will first describe the argument when the groups are singletons, then explain how to move onto the general case. We will take $\widehat{B_2} = B_2$; it is easy to see that this is a linear transformation of a Euclidean ball.

Next, notice that by log-convexity of norms, if we choose $0 < \theta < p$ and $r > 2$ for which $1/2 = (\theta/2)/p + (1-\theta/2)/r$, we have
\begin{align}
    \gnorml{\vy}{2}^2 \le \gnorml{\vy}{p}^{\theta} \cdot \gnorml{\vy}{r}^{2-\theta}.\label{intro:compact_rounding_interpolation}
\end{align}
We will exploit this observation as follows. For all integers $h \ge 0$, we will show that there exists a set $\cL_h$ that is (a subset of) the unit ball of $B_2$ such that every pair of points in $\cL_h$ is $\delta_h$-separated according to $\gnorml{\cdot}{r}$. We will find $\delta_h$ according to the interpolation inequality \eqref{intro:compact_rounding_interpolation}. Furthermore, we will generate $\cL_h$ using a sort of compactness argument arising from a $B_2$-maximally separated subset of $B_p$. This means we get, for every $h \ge 0$,
\begin{align*}
    \log\cN(\widehat{B_2}, \delta_h B_r) \ge \log\abs{\cL_h} \ge \logv{\frac{\cN(B_p,8^h t \widehat{B_2})}{\cN(B_p, 8^{h+1} t \widehat{B_2})}}.
\end{align*}
Then, summing over $h \ge 0$ (noting that once $h$ is sufficiently large, $\cN(B_p, 8^ht \widehat{B_2}) = 1$), we have
\begin{align*}
    \log\cN(B_p, t\widehat{B_2}) \le \sum_{h \ge 0} \log\cN(\widehat{B_2},\delta_h B_r).
\end{align*}
Notice that the right hand side can be evaluated using the dual Sudakov inequality\footnote{For technical reasons that will be clearer in \Cref{sec:coverings}, we will have to do this after another interpolation step.} (recall the previous section), so it suffices to show that $\log\cN(\widehat{B_2},\eta B_r)$ is small.

This is where the choice of measure becomes crucial. Since both $\widehat{B_2}$ and $B_r$ are dependent on our choice of measure $\vlambda$, we will need to carefully choose the measure so that our covering numbers are well-behaved. A classical result of \citet{Lewis1978} establishes the existence of a change-of-measure under which we simultaneously get:
\begin{equation}\label{intro:rounding}
  \begin{aligned}
    n^{1/2-1/r}B_r &\subset \widehat{B_2} \subset B_r & \text{for all } r < 2 \\
    B_r &\subset \widehat{B_2} \subset n^{1/2-1/r}B_r & \text{for all } r > 2
  \end{aligned}
\end{equation}
This change-of-measure corresponds to the ``$\ell_p$ Lewis weights'' of $\mA$ (in particular, if $\vw_i$ is the $i$th $\ell_p$ Lewis weight, then we set $\vlambda_i = \vw_i/n$). It will turn out that this choice of $\vlambda$ is enough for us to ensure that $\cN(\widehat{B_2}, \eta B_r)$ is sufficiently small for our purposes, which eventually follows from \eqref{intro:rounding}.

\paragraph{Handling general $S_i$.} The main challenge with directly porting this argument to the block norm sampling problem is that $B_2$ is not a linear transformation of a Euclidean ball unless $p_1=\dots=p_m=2$. We will therefore have to choose $\widehat{B_2}$ to be a ``rounding'' of $B_2$ such that $B_2 \subseteq \widehat{B_2}$. Observe that the interpolation step \eqref{intro:compact_rounding_interpolation} will continue to hold here, as we will get $\norm{\vy}_{\widehat{B_2}} \le \gnorml{\vy}{2}$. However, if $\widehat{B_2}$ is chosen suboptimally, then there could a large loss in the interpolation step \eqref{intro:compact_rounding_interpolation}.

To understand what we need from our measure and rounding, let us try to derive a version of \eqref{intro:rounding} for general $S_i$. We show an example of this calculation for $r = p \le 2$; the other cases follow similarly. Let $\vlambda \in \R^m_{\ge 0}$ denote a probability measure. Let $\mW$ be a diagonal ``rounding matrix'' so that for all $\vx \in \R^n$, we have
\begin{align*}
    \norm{\mW^{1/2}\mLambda^{1/2-1/p}\mA\vx}_2 \le \gnorm{\mLambda^{1/2-1/p}\mA\vx}{2} = \gnorml{\mLambda^{-1/p}\mA\vx}{2}.
\end{align*}
Letting $\widehat{B_2} = \inbraces{\vx\in\R^n\suchthat \norm{\mW^{1/2}\mLambda^{1/2-1/p}\mA\vx}_2 \le 1}$, the above inequality gives $B_2 \subseteq \widehat{B_2}$, as desired. Next, observe that since $\vlambda$ is a probability measure, we get $B_2 \subseteq B_p$ for free. For the other direction, we write
\begin{align*}
    \gnorml{\mLambda^{-1/p}\mA\vx}{2}^2 &= \gnorm{\mLambda^{1/2-1/p}\mA\vx}{2}^2 = \sum_{i=1}^m \vlambda_i\norm{\vlambda_i^{-1/p}\mA\vx}_{p_i}^2 = \sum_{i=1}^m \vlambda_i\norm{\vlambda_i^{-1/p}\mA\vx}_{p_i}^{p}\norm{\vlambda_i^{-1/p}\mA\vx}_{p_i}^{2-p} \\
    &\le \sum_{i=1}^m \vlambda_i\norm{\vlambda_i^{-1/p}\mA\vx}_{p_i}^{p} \cdot \max_{i \in [m]} \norm{\vlambda_i^{-1/p}\mA\vx}_{p_i}^{2-p} = \gnorm{\mA\vx}{p}^p \cdot \max_{i \in [m]} \norm{\vlambda_i^{-1/p}\mA\vx}_{p_i}^{2-p} \\
    &\le \gnorm{\mA\vx}{p}^p \cdot \max_{i \in [m]} \inparen{\max_{\vx \in \R^n} \frac{\norm{\vlambda_i^{-1/p}\mA\vx}_{p_i}^{2}}{\gnorm{\mLambda^{1/2-1/p}\mA\vx}{2}^2}}^{1-p/2} \cdot \gnorm{\mLambda^{1/2-1/p}\mA\vx}{2}^{2-p}.
\end{align*}
We combine the $\gnorm{\mLambda^{1/2-1/p}\mA\vx}{2}$ terms and take the $p$th root of both sides, giving
\begin{align*}
    \gnorm{\mLambda^{1/2-1/p}\mA\vx}{2} &\le \gnorm{\mA\vx}{p} \cdot \max_{i \in [m]} \inparen{\max_{\vx \in \R^n} \frac{\norm{\vlambda_i^{-1/p}\mA\vx}_{p_i}^{2}}{\gnorm{\mLambda^{1/2-1/p}\mA\vx}{2}^2}}^{1/p-1/2} \\
    &= \gnorm{\mA\vx}{p} \cdot \max_{i \in [m]} \inparen{\frac{1}{\vlambda_i} \cdot \underbrace{\max_{\vx \in \R^n} \frac{\norm{\vlambda_i^{1/2-1/p}\mA\vx}_{p_i}^{2}}{\gnorm{\mLambda^{1/2-1/p}\mA\vx}{2}^2}}_{\widehat{\tau}_i(\mLambda^{1/2-1/p}\mA)}}^{1/p-1/2}.
\end{align*}
We may think of the quantity $\widehat{\tau}_i$ as a generalized \textit{leverage score}. Specifically, it upper bounds the contribution of the term $\|\vlambda_i^{1/2-1/p}\mA\vx\|_{p_i}^{2}$ to the objective $\gnorm{\mLambda^{1/2-1/p}\mA\vx}{2}^2$. The above calculation shows us that if we make $\widehat{\tau}_i/\vlambda_i$ small for all $i$, then we can get a tight relationship between $B_2$ and $B_p$. A slight weakening of the definition of the $\widehat{\tau}_i$ motivates the notion of a \textit{block Lewis overestimate} that we use in the remainder of the paper.

\begin{restatable}[Block Lewis overestimate]{definition}{blocklewisoverestimate}
\label{defn:block_lewis_overestimate}
Let $\tau_j(\mM)$ denote the leverage score of the $j$th row of $\mM$. Let \(F^{\star} > 0\). For $p > 0$ and $p_i > 0$, we say the probability measure $\vlambda$ and rounding $\mW$ form an $\Fstar$-block Lewis overestimate if for all $i \in [m]$, we have
\begin{align*}
    \frac{1}{\vlambda_i}\inparen{\sum_{j \in S_i} \inparen{\frac{\tau_j(\mW^{1/2}\mLambda^{1/2-1/p}\mA)}{\vw_j}}^{p_i/2}}^{2/p_i} \le \Fstar.
\end{align*}
\end{restatable}
Following the above argument, establishing a probability measure $\vlambda$ and a rounding matrix $\mW$ that form an $\Fstar$-block Lewis overestimate will imply
\begin{equation}\label{intro:blw_rounding}
  B_2 \subseteq \widehat{B_2}\quad \text{ and }\quad
  \begin{aligned}
    (\Fstar)^{1/2-1/r}B_r &\subset B_2 \subset B_r & \text{for all } r < 2 \\
    B_r &\subset B_2 \subset (\Fstar)^{1/2-1/r}B_r & \text{for all } r > 2
  \end{aligned}.
\end{equation}
With \eqref{intro:blw_rounding} in hand, we at least have enough reason to believe that establishing $(\vlambda, \mW)$ that form an $\Fstar$-block Lewis overestimate may yield the requisite control over $\log\cN(\widehat{B_2},\eta B_r)$. To actually get this, by the dual Sudakov inequality, we estimate $\norm{\vg}_{B_r}$ for $\vg \sim \cN(0,\mI_n)$. Doing so is a matter of applying again the fact that the concentration of Lipschitz functionals of Gaussian vectors is determined entirely by the Lipschitz parameter of the functional. 

To actually find $\vlambda$ and $\mW$ with a small value of $\Fstar$, we split into cases. When $p \ge 1$ and $p_1=\dots=p_m \ge 2$, we extract the relevant $\vlambda$ and $\mW$ from the analysis in the proof of \cite[Lemma 4.2]{jlls23}, which yields $\Fstar = n$. When $p_1=\dots=p_m=p\ge1/\log n$ or $p_1=\dots=p_m=2$ and $p\ge1/\log n$, we separately prove that we can find $\vlambda$ and $\mW$ satisfying \Cref{defn:block_lewis_overestimate}, again with $\Fstar = n$. Hence, in all cases, the control we get over $\log\cN(\widehat{B_2},\eta B_r)$ is essentially as good as what we get in the case where all the $S_i$ are singletons. 

\paragraph{Change-of-measures in functional analysis.} We note that other change-of-measure arguments are used throughout the study of finite-dimensional subspaces of $L_p$, as they are a very useful way to compare the $L_p$ norm to some other norm of interest. See the survey by \citet[Section 1.2]{js01} for more information.

\subsection{Prior results, related works, and connections}
\label{sec:related_works}

\paragraph{Relevance of matrix block norms.} We discuss the importance of the matrix block norm objective \eqref{eq:energy} to functional analysis, theoretical computer science, and data science, beyond our previous discussion of the MSN problem \eqref{eq:msn}.

In the special case where all the $p_i$ are equal to one another (call this value $q$), the set of $\vx$ for which $\gnorm{\mA\vx}{p} < \infty$ yields a subspace of a mixed $p,q$ norm space (sometimes notated as $\ell_p(\ell_q)$). Observe that \Cref{thm:one_shot_lewis} implies a finite-dimensional subspace embedding result for finite-dimensional subspaces of infinite-dimensional $L_p(\ell_q^r)$ (where $r$ is finite).  Spaces of the form $\ell_p(\ell_q)$ are widely studied in the geometric functional analysis and approximation theory communities; see, e.g., \cite{ps12,kv15,mu19,jkb22} and the references therein. 

As mentioned in those works, a central motivation for studying $\ell_p(\ell_q)$ is that they are natural testbeds with which to evaluate and further our understanding of the geometry of symmetric convex bodies in high dimensions. Consequently, studying block norm subspace embedding problems (\Cref{prob:main_problem}) is a fruitful direction through which to improve our geometric handle of subspaces of $\ell_p(\ell_q)$ and symmetric convex bodies in general. We note that understanding the correct $\mathrm{polylog}$ dependencies in $n$ for this problem typically requires new geometric insights. For instance, the necessity of additional polylogarithmic dependencies on the dimension $n$ is not even totally understood when $\abs{S_i} = 1$ and $p\neq 2$, and resolving them likely requires significant new geometric ideas \cite[Conjecture 2]{hrr22}. 

The matrix block norms are also ubiquitous in both theoretical computer science and data science. For example, the block norm objective has been studied in the context of hypergraph Laplacians. One recovers this by choosing $p=2$ and $p_1=\dots=p_m=\infty$; see the discussion in \cite[Section 1.2]{jls22} to see how to rewrite the hypergraph Laplacian in the form of \eqref{eq:energy}. Within data science, the block norms are used to encourage structured solutions to underdetermined linear systems (i.e., in a noiseless setting, we can set up and solve the convex optimization problem\footnote{This is similar to how basis pursuit can be seen as encouraging sparsity in a noiseless setting, while LASSO does so in the presence of noise \cite[Section 7.2]{Wainwright_2019}.}, ``find a vector $\vy$ in the affine space $\mB\vy = \vb$ minimizing $\gnorm{\vy}{p}^p$''); see, e.g., \cite{yy06,bach07,nie2010efficient,sfht13} and other applications mentioned in \cite{sra12}. As a concrete candidate application of our results to such settings, inspired by \cite{cd21,mmwy21}, we believe that our results can be used as subroutines to give runtime and query-efficient algorithms for active $\gnorm{\cdot}{p}$ regression when $p > 0$ and $p_1=\dots=p_m=2$ (generalizing the basis pursuit equivalent of the group Lasso objective) or when $p=2$ and $p_1,\dots,p_m \ge 2$. 

On a more conceptual level, we are optimistic that some of our results will be useful for designing faster algorithms for norm-constrained optimization problems. This is partly motivated by our discussion around the MSN problem \eqref{eq:msn} and is similar to how an improved geometric understanding of Lewis weights improved the iteration complexities for linear programming and $\ell_p$ regression \cite{ls19,jls21}.

\paragraph{Lewis weights for $\ell_p$ row sampling.} When each group $S_i$ has size $1$, notice that we have $\gnorm{\mA\vx}{p}^p = \sum_{i=1}^m \abs{\ip{\va_i,\vx}}^p = \norm{\mA\vx}_p^p$. Consequently, in this special case, satisfying (\ref{eq:main_objective}) is exactly equivalent to computing an \textit{$\ell_p$ subspace embedding} for $\mA$. There is a long line of work studying computing $\ell_p$ subspace embeddings using Lewis weights, starting with that of \citet{blm89}. For the details of this argument, see \cite[Section 7]{blm89} and \cite{sz01}. 


\paragraph{Sparsifying sums of norms.} The work perhaps most closely related to ours is \cite{jlls23}. There, the authors give existence results for sparse approximations to sums (of powers) of norms. It is easy to see that this is a more general problem than the one we study. However, this generality comes at a cost. In particular, the sparsity given by our \Cref{thm:one_shot_lewis} improves over theirs by a factor of $\psi_n\logv{\nfrac{n}{\eps}}^{\min(p-1,2)}$, where $\psi_n$ denotes the KLS ``constant'' in $n$ dimensions (which is currently $\sqrt{\log n}$, due to \citet{kla23}). See their Theorem 1.3 for more details. And, as mentioned earlier, we believe understanding \Cref{prob:main_problem} down to the correct polylogarithmic dependencies in $n$ is an important geometric question.

The authors also define the block Lewis weights as the natural generalization of the determinant-maximization program that \citet{sz01} use to prove the existence of Lewis's measure for all $p > 0$. They use this to give results for sparsification of sums of certain powers of arbitrary norms (obtaining a sparsity of $\sim n^{2-1/p}$ when $1 \le p \le 2$) and for \Cref{prob:main_problem} when the outer norm $p=2$ (obtaining a sparsity of $\sim n$). We note that the result they obtain when $p=2$ provides logarithmic-factor improvements over ours, as the main technical primitive they use is a chaining estimate developed by \citet{lee22} that meaningfully exploits the fact that the space of events is a subset of a $2$-uniformly convexity set. However, they did not address whether the block Lewis weights could yield to sparsification guarantees for \Cref{prob:main_problem}. Additionally, their construction of the block Lewis weights does not yield a change-of-measure that allows for sparsification when the inner norms $p_i \le 2$ or when the outer norm $p \le 1$.

\paragraph{Summary for sparsifying sums of norms.} See \Cref{table:comparison} for a comparison between our new results and a selection of the most relevant prior work on sparsifying sums of norms. We focus on results concerning \(\ell_p\)-norms specifically, although \cite{jlls23} has results for more general classes of norms.
By ``sampling'', we mean the work provides an analysis that shows how sampling according to some sampling probabilities gives a sparsifier of size $\widetilde{O}(\eps^{-2}n^{\max(1,p/2)})$ with good probability.
By ``fast computation'', we mean the work provides an algorithm to compute sampling probabilities with \(\text{polylog}(k, m,n )\) leverage score computations or linear system solves (or some other primitive that can be implemented in time $\widetilde{O}(\mathsf{nnz}(\mA) + n^{\omega})$). For works that only explicitly handle \(\abs{S_i}=1\), we leave \(p_1, \ldots, p_m\) blank because the choice of inner norms does not affect
the objective.
\begin{table}[H]
\centering
\begin{tabular}{l c c c c r}
\hline
\textbf{Block size} & \(p\) & \(p_1, \ldots, p_m\) & \textbf{Sampling} & \textbf{Fast computation} & \\
\hline
\(1\) & \(1 \le p < \infty \) &  & \checkmark & & \cite{blm89} \\
\(1\) & \(0 < p < 1\) &  & \checkmark & & \cite{sz01} \\
\(1\) & \(0 < p < 4 \) & & & \checkmark & \cite{cp15} \\
\(\ge 1\) & \(p = 2\) & \(p_1 = \cdots = p_m = \infty\) & \checkmark & & \cite{lee22} \\
\(\ge 1\) & \(p=2\) & \(p_1=  \cdots = p_m = \infty\) & \checkmark & \checkmark & \cite{jls22} \\
\(\ge 1\) & \(1 \le p < \infty\) & \(p_1, \ldots, p_m \ge 2\) & \checkmark & & \cite{jlls23} \\
\rowcolor{blue!5} \(\ge 1\) & \(1 \le p < \infty\) & \(p_1, \ldots, p_m \ge 2\) & \checkmark & & This work \\
\rowcolor{blue!5} \(\ge 1\) & \(\nfrac{1}{\log n} \le p < \infty\) & \(p_1, \ldots, p_m = p\) & \checkmark & \checkmark & This work \\
\rowcolor{blue!5} \(\ge 1\) & \(\nfrac{1}{\log n} \le p < \infty\) & \(p_1, \ldots, p_m = 2\) & \checkmark & \checkmark & This work \\
\rowcolor{blue!5} \(\ge 1\) & \(p=2\) & \(p_1, \ldots, p_m \ge 2\) & \checkmark & \checkmark & This work \\
\hline
\end{tabular}
\label{table:comparison}
\end{table}

\section{Preliminaries}
\label{sec:background}

In this section, we set up and review definitions and existing facts that will play crucial roles in our analyses. In Section \ref{sec:linalg_background}, we review material from linear algebra, and in Section \ref{sec:convex_geo_background}, we review material from convex geometry.

\subsection{Linear algebra background}
\label{sec:linalg_background}

We introduce a few definitions concerning \textit{leverage scores} (Definition \ref{defn:lev_score}). 
\begin{definition}
\label{defn:lev_score}
For a matrix $\mA \in \R^{m\times n}$, we let $\tau_i(\mA) \coloneqq \va_i^{\top}\inparen{\mA^{\top}\mA}^{-1}\va_i$ denote the \textit{leverage score} of row $\va_i$ with respect to the matrix $\mA$. When $\mA$ is clear from context, we omit it and simply write $\tau_i$ in place of $\tau_i(\mA)$.
\end{definition}

The following are well-known properties of leverage scores.

\begin{fact}
\label{fact:lev_score_facts}
For a matrix $\mA \in \R^{m\times n}$, we have:
\begin{itemize}
    \item $\sum_{i=1}^m \tau_i(\mA) = \rank{\mA}$;
    \item $\tau_i(\mA) = \max_{\vx\in\R^n\setminus\inbraces{0}} \frac{\abs{\ip{\va_i,\vx}}^2}{\norm{\mA\vx}_2^2}$ for all $i$;
    \item $0 \le \tau_i(\mA) \le 1$;
    \item For any positive constant $C$, we have $\tau_i(C\mA) = \tau_i(\mA)$ for all $i$.
\end{itemize}
\end{fact}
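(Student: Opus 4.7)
The plan is to verify each of the four properties in sequence, exploiting the definition $\tau_i(\mA) = \va_i^{\top}(\mA^{\top}\mA)^{-1}\va_i$ together with properties of the pseudoinverse.

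First, for property (1), I will rewrite the sum as a trace:
\begin{align*}
\sum_{i=1}^m \tau_i(\mA) = \sum_{i=1}^m \trace\inparen{(\mA^{\top}\mA)^{-1}\va_i\va_i^{\top}} = \trace\inparen{(\mA^{\top}\mA)^{-1}\mA^{\top}\mA},
\end{align*}
and then observe that $(\mA^{\top}\mA)^{-1}\mA^{\top}\mA$ is the orthogonal projection onto the row span of $\mA$, whose trace is exactly $\rank{\mA}$. For property (4), substituting $C\mA$ into the definition gives $\tau_i(C\mA) = (C\va_i)^{\top}(C^2\mA^{\top}\mA)^{-1}(C\va_i) = \tau_i(\mA)$, where the constants cancel directly since scaling a symmetric matrix by $C^2$ scales the pseudoinverse by $C^{-2}$.

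For property (2), the variational characterization, I will factor through the square root of $\mA^{\top}\mA$. Assume without loss of generality that $\vx$ lies in the row span of $\mA$ (vectors in the kernel contribute zero to both numerator and denominator). Set $\vy = (\mA^{\top}\mA)^{1/2}\vx$, so that $\norm{\mA\vx}_2^2 = \vx^{\top}\mA^{\top}\mA\vx = \norm{\vy}_2^2$. Then
\begin{align*}
\abs{\ip{\va_i,\vx}}^2 = \abs{\va_i^{\top}(\mA^{\top}\mA)^{-1/2}\vy}^2 \le \norm{(\mA^{\top}\mA)^{-1/2}\va_i}_2^2 \cdot \norm{\vy}_2^2 = \tau_i(\mA) \cdot \norm{\mA\vx}_2^2,
\end{align*}
by Cauchy-Schwarz. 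Equality is attained (on the row span) by choosing $\vy$ parallel to $(\mA^{\top}\mA)^{-1/2}\va_i$, i.e. $\vx = (\mA^{\top}\mA)^{-1}\va_i$, which gives the maximum exactly $\tau_i(\mA)$.

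Finally, for property (3), the lower bound $\tau_i(\mA) \ge 0$ is immediate since $(\mA^{\top}\mA)^{-1}$ is positive semidefinite. For the upper bound, I will use property (2): for any $\vx \neq 0$ in the row span,
\begin{align*}
\norm{\mA\vx}_2^2 = \sum_{j=1}^m \abs{\ip{\va_j,\vx}}^2 \ge \abs{\ip{\va_i,\vx}}^2,
\end{align*}
so the supremum in (2) is at most $1$. No step here is really an obstacle; the only mildly subtle point is handling the pseudoinverse correctly when $\mA$ does not have full column rank, which is why I restrict $\vx$ to the row span at the outset.
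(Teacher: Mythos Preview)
Your proof is correct. The paper does not actually prove \Cref{fact:lev_score_facts}; it is stated as a collection of well-known properties of leverage scores without proof, so there is no paper argument to compare against. Your verification of each of the four items is standard and complete, including the care you take with the pseudoinverse when $\mA$ is rank-deficient (restricting to the row span and noting that each $\va_i$ lies there).
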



We will also need the following fact relating the leverage scores of a matrix $\mA$ to its singular value decomposition.

\begin{fact}
\label{fact:whitening}
Let $\mA\in\R^{n\times m}$ and $\mU\mSigma\mV^{\top}$ be a singular value decomposition for $\mA$, where $\mU\in\R^{m\times n}$ and $\mSigma,\mV\in\R^{n\times n}$. Then, $\tau_i(\mA) = \norm{\vu_i}_2^2$.
\end{fact}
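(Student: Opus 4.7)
The plan is to carry out the SVD-based calculation directly from the definition of leverage scores. First I would substitute the SVD into the Gram matrix, obtaining $\mA^{\top}\mA = \mV\mSigma^2\mV^{\top}$, and then identify its pseudoinverse as $(\mA^{\top}\mA)^{-1} = \mV\mSigma^{-2}\mV^{\top}$ (using the convention introduced in Section~\ref{sec:notation} that on symmetric matrices $\mM^{-1}$ means the Moore--Penrose pseudoinverse, so that zero singular values are simply discarded rather than inverted). For this step it is cleanest to work with the thin SVD, replacing $\mU,\mSigma,\mV$ by $\mU_r,\mSigma_r,\mV_r$ where $r=\rank{\mA}$, so $\mSigma_r$ is genuinely invertible and the orthogonality relations $\mU_r^{\top}\mU_r=\mI_r$ and $\mV_r^{\top}\mV_r=\mI_r$ hold.

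Next I would express the $i$th row of $\mA$ through the SVD as $\va_i = \mA^{\top}\ve_i = \mV_r\mSigma_r\mU_r^{\top}\ve_i = \mV_r\mSigma_r\vu_i$, where $\vu_i$ now denotes the $i$th row of $\mU_r$ written as a column vector. Substituting into Definition~\ref{defn:lev_score} gives
\begin{align*}
  \tau_i(\mA) = \va_i^{\top}(\mA^{\top}\mA)^{-1}\va_i = \vu_i^{\top}\mSigma_r\mV_r^{\top}\mV_r\mSigma_r^{-2}\mV_r^{\top}\mV_r\mSigma_r\vu_i.
\end{align*}
The inner factors $\mV_r^{\top}\mV_r$ collapse to $\mI_r$, and then $\mSigma_r\mSigma_r^{-2}\mSigma_r=\mI_r$, leaving $\vu_i^{\top}\vu_i = \norm{\vu_i}_2^2$, as required.

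The only subtlety is the rank-deficient case: one must be careful that the pseudoinverse convention lines up with the computation. Since $\va_i$ lies in the column space of $\mV_r$ by construction, no component of $\va_i$ is projected away by $(\mA^{\top}\mA)^{-1}$, and the cancellation goes through as written. This is the one place that merits explicit comment but is not a real obstacle. Beyond that, the proof is a single line of SVD algebra.
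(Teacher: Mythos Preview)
Your proposal is correct and takes essentially the same approach as the paper: both substitute the SVD into the definition $\tau_i(\mA) = \va_i^{\top}(\mA^{\top}\mA)^{-1}\va_i$ and simplify. The paper expands $(\mA^{\top}\mA)^{-1}$ as $\sum_j \sigma_j^{-2}\vv_j\vv_j^{\top}$ and computes the inner products $\ip{\va_i,\vv_j}=\sigma_j\ip{\ve_i,\mU\ve_j}$ coordinate-wise, whereas you collapse $\mV_r^{\top}\mV_r=\mI_r$ at the matrix level; these are the same computation in slightly different bookkeeping, and your thin-SVD handling of the rank-deficient case is a clean touch the paper glosses over.
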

\begin{proof}[Proof of \Cref{fact:whitening}]
To understand why this equality might hold, observe that we can think of $\mU$ as the resulting matrix from applying a statistical whitening transform to $\mA$. More precisely, recall that
\begin{align*}
    \tau_i(\mA) &= \va_i^{\top} \inparen{\mV\mSigma^2\mV^{\top}}^{-1}\va_i^{\top} = \va_i^{\top}\inparen{\sum_{j=1}^n \frac{1}{\sigma_i^2} \cdot \vv_j\vv_j^{\top}}\va_i^{\top} = \sum_{j=1}^n \frac{1}{\sigma_j^2}\ip{\va_i,\vv_j}^2.
\end{align*}
We now calculate $\ip{\va_i,\vv_j}$. Notice that
\begin{align*}
    \ip{\va_i,\vv_j} = \ip{\vv_j, \ve_i^{\top}\sum_{j'=1}^n \sigma_{j'}\mU\ve_{j'}\vv_{j'}^{\top}} = \ip{\vv_j, \sum_{j'=1}^n \sigma_{j'}\ip{\ve_i,\mU\ve_{j'}}\vv_{j'}^{\top}} = \sigma_{j}\ip{\ve_i,\mU\ve_j}.
\end{align*}
Substituting this back in gives
\begin{align*}
    \tau_i(\mA) = \sum_{j=1}^n \ip{\ve_i,\mU\ve_j}^2 = \norm{\vu_i}_2^2.
\end{align*}
This concludes the proof of \Cref{fact:whitening}.
\end{proof}

\subsection{Convex geometry background}
\label{sec:convex_geo_background}

In this subsection, we review foundational facts regarding convex geometry we use throughout the remainder of this paper. 

We will need the notions of covering and entropy numbers. 
\begin{definition}[Covering numbers {\cite[p. 69]{rothvoss}}]
\label{defn:covering_numbers}
Let $X, Y \subset \R^n$. The covering number $\cN(X,Y)$ is the minimum number of translates of $Y$ required to cover $X$. Formally, we have
\begin{align*}
    \cN(X,Y) \coloneqq \min\inbraces{N \in \N \suchthat \text{ there exists } \vx_1,\dots,\vx_N \in \R^n \text{ such that } X \subseteq \bigcup_{i=1}^N \inparen{\vx_i + Y}}.
\end{align*}
\end{definition}

\begin{definition}[Entropy numbers {\cite[Definition 2.1]{vh15}}]
\label{defn:entropy_numbers}
Let $X, Y \subset \R^n$. The entropy number $e_N(X,Y)$ is the minimum radius $\eta$ such that $\log\cN(X,\eta \cdot Y) \le 2^N$.
\end{definition}

Sometimes, when writing $e_N$, we will write $e_N(X,\norm{\cdot})$ for some quasi-norm $\norm{\cdot}$. Here, we take $Y$ to be the object formed by the unit ball of $\norm{\cdot}$.

Finally, we state the dual Sudakov inequality.
\begin{fact}[Dual Sudakov inequality, due to \citet{ptj87}]
\label{fact:sudakov}
For a symmetric convex body $K \subset \R^n$, define
\begin{align*}
    \norm{\vx}_K \coloneqq \inf\inbraces{t > 0 \suchthat \vx/t \in K}.
\end{align*}
Let $K$ be a symmetric convex body in $\R^n$. We have the below.
\begin{align}
    \log\cN(B_2^n, \eta \cdot K) &\lesssim \eta^{-2} \cdot \exvv{\vg\sim\cN(0,\mI_n)}{\norm{\vg}_K}^2.\label{eq:sudakov_dual}
\end{align}
\end{fact}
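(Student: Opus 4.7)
The plan is to prove this via the classical maximal-packing argument of Pajor and Tomczak-Jaegermann, combined with a Gaussian shift estimate (via AM-GM) and Markov's inequality. Let $M \coloneqq \exvv{\vg\sim\cN(0,\mI_n)}{\norm{\vg}_K}$ and set $\alpha \coloneqq \max(1, 4M/\eta)$. Let $\{\vx_1,\dots,\vx_N\} \subseteq \alpha B_2^n$ be a maximal $\alpha\eta$-separated set with respect to $\norm{\cdot}_K$; by packing-covering duality together with homogeneity of $\norm{\cdot}_K$, we have $N \ge \cN(\alpha B_2^n, \alpha\eta K) = \cN(B_2^n, \eta K)$. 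The sets $A_i \coloneqq \vx_i + (\alpha\eta/2) K$ are pairwise disjoint by the separation property, so if $\gamma_n$ denotes the standard Gaussian measure on $\R^n$, then $\sum_{i=1}^N \gamma_n(A_i) \le 1$.

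The main step is to lower bound each $\gamma_n(A_i)$. Using symmetry of $K$ (so that $\int_{(\alpha\eta/2)K} f(\vz)\,d\vz = \int_{(\alpha\eta/2)K} f(-\vz)\,d\vz$) together with AM-GM applied to $e^{-\norm{\vx_i+\vz}_2^2/2}$ and $e^{-\norm{\vx_i-\vz}_2^2/2}$, whose exponents sum to $-(\norm{\vx_i}_2^2 + \norm{\vz}_2^2)$ by the parallelogram identity, I obtain the Gaussian shift estimate
\begin{align*}
\gamma_n\inparen{\vx_i + \tfrac{\alpha\eta}{2} K} \;\ge\; e^{-\norm{\vx_i}_2^2/2}\,\gamma_n\inparen{\tfrac{\alpha\eta}{2} K} \;\ge\; e^{-\alpha^2/2}\,\gamma_n\inparen{\tfrac{\alpha\eta}{2} K}.
\end{align*}

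To close, I observe that by the choice of $\alpha$ we have $\alpha\eta/2 \ge 2M$, so Markov's inequality applied to the nonnegative random variable $\norm{\vg}_K$ (with mean $M$) yields $\gamma_n((\alpha\eta/2)K) = \Pr[\norm{\vg}_K \le \alpha\eta/2] \ge 1/2$. Combining this with the previous display gives $N \cdot e^{-\alpha^2/2} \cdot (1/2) \le 1$, so $\log\cN(B_2^n, \eta K) \lesssim \alpha^2 \lesssim 1 + M^2/\eta^2$, matching the stated bound (with the harmless additive constant absorbed by $\lesssim$, since $\cN(B_2^n, \eta K) \ge 1$ trivially). The only genuinely nontrivial ingredient is the AM-GM-based Gaussian shift inequality; the scaling factor $\alpha$ is bookkeeping that lets the same argument handle both the regime $\eta \ge 4M$ (where $\alpha = 1$ and a one-line Markov argument suffices) and the regime $\eta < 4M$ (where we enlarge the packing domain to make the relevant Gaussian measure not too small).
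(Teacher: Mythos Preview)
Your proof is correct and follows the classical Pajor--Tomczak-Jaegermann argument. The paper does not actually prove this statement; it records it as a fact with a citation to \cite{ptj87} and uses it as a black box, so there is no ``paper's proof'' to compare against.

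One small remark: your justification for dropping the additive $1$ (``since $\cN(B_2^n,\eta K)\ge 1$ trivially'') is not quite the right reason. The clean way to handle it is to note that if $\cN(B_2^n,\eta K)\ge 2$ then $B_2^n\not\subseteq \eta K$, so there is some $u\in K^{\circ}$ with $\norm{u}_2>\eta$; then $\norm{\vg}_K\ge |\langle u,\vg\rangle|$ gives $M\gtrsim \eta$, hence $M^2/\eta^2\gtrsim 1$ and the additive constant is genuinely absorbed. When $\cN=1$ the left side is $0$ and there is nothing to prove. This closes the only loose end in your write-up.
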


\subsection{Probability background}

In this subsection, we review a few facts about subgaussian random variables. These are mostly derived from the presentation of \citet{vershynin_2018}.

\begin{definition}[$\subgnorm{\cdot}$ and subgaussian random variable {\cite[Definition 2.5.6]{vershynin_2018}}]
\label{defn:subgaussian}
Let $X$ be a random variable. Define $\subgnorm{X} \coloneqq \inf\inbraces{t > 0 \suchthat \exv{\expv{\nfrac{X^2}{t^2}}} \le 2}$. If $\subgnorm{X} < \infty$, we say $X$ is subgaussian.
\end{definition}

\begin{fact}[Properties of subgaussian random variables {\cite[Proposition 2.5.2]{vershynin_2018}}]
\label{fact:subgaussian_properties}
The following properties equivalently characterize a subgaussian random variable $X$ up to constants:
\begin{itemize}
    \item For all $t \ge 0$, $\prv{\abs{X} \ge t} \le 2\expv{-\frac{t^2}{\subgnorm{X}^2K_1^2}}$;
    \item For all $r \ge 1$, $\exv{\abs{X}^r} \lesssim r^{r/2}$;
    \item For all $\lambda$ such that $\abs{\lambda} \lesssim \subgnorm{X}^{-1}$, we have $\exv{\expv{\lambda^2X^2}} \le \expv{\subgnorm{X}^2\lambda^2}$.
\end{itemize}
\end{fact}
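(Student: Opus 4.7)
The plan is to establish the equivalences via a cyclic chain of implications $\text{definition} \Rightarrow \text{tail} \Rightarrow \text{moment} \Rightarrow \text{MGF} \Rightarrow \text{definition}$, starting from the characterization of $\subgnorm{X}$ given in \Cref{defn:subgaussian}. Each arrow will lose only universal multiplicative constants, which is exactly the ``up to constants'' qualification in the statement.

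For the first arrow, I would apply Markov's inequality to the non-negative random variable $\expv{X^2/\subgnorm{X}^2}$. For any $t \ge 0$,
\begin{align*}
    \prv{\abs{X} \ge t} = \prv{\expv{\nfrac{X^2}{\subgnorm{X}^2}} \ge \expv{\nfrac{t^2}{\subgnorm{X}^2}}} \le \frac{\exv{\expv{\nfrac{X^2}{\subgnorm{X}^2}}}}{\expv{\nfrac{t^2}{\subgnorm{X}^2}}} \le 2\expv{-\nfrac{t^2}{\subgnorm{X}^2}},
\end{align*}
where the last inequality uses the defining inequality $\exv{\expv{X^2/\subgnorm{X}^2}} \le 2$. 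This yields the tail bound with the universal constant $K_1$ absorbing the factor of $2$.

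For the second arrow, I would use the layer cake formula $\exv{\abs{X}^r} = \int_0^\infty r t^{r-1} \prv{\abs{X} > t}\,dt$, substitute the tail bound, and change variables $u = t^2/\subgnorm{X}^2$. The integral reduces to $\subgnorm{X}^r \cdot r \Gamma(\nfrac{r}{2})$ up to constants, and Stirling's approximation gives $\exv{\abs{X}^r} \lesssim \subgnorm{X}^r r^{r/2}$. For the third arrow, I would Taylor expand $\exv{\expv{\lambda^2 X^2}} = \sum_{k \ge 0} \nfrac{\lambda^{2k}\exv{X^{2k}}}{k!}$ and substitute the moment bound $\exv{X^{2k}} \lesssim \subgnorm{X}^{2k}(2k)^k$. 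The result is a geometric-like series in $\lambda^2 \subgnorm{X}^2$, which converges whenever $\abs{\lambda} \lesssim \subgnorm{X}^{-1}$ and sums to at most $\expv{c \lambda^2 \subgnorm{X}^2}$ for a universal constant $c$. To close the cycle, I would specialize the MGF bound to $\lambda = \nfrac{c'}{\subgnorm{X}}$ for a small enough constant $c'$ so that the right-hand side is at most $2$; this recovers an inequality of the form $\exv{\expv{X^2/(C\subgnorm{X})^2}} \le 2$, matching the definition of $\subgnorm{X}$ up to the constant $C$.

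The main obstacle is purely bookkeeping of universal constants across the four implications, ensuring that each property implies the next with only $O(1)$ multiplicative loss in $\subgnorm{X}$ and that the composed loss around the full cycle is still $O(1)$. There is no substantive probabilistic difficulty; each individual step is a direct textbook calculation, as in \citet{vershynin_2018}.
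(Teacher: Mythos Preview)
Your proposal is correct and follows the standard textbook argument. The paper does not actually give its own proof of this statement; it is recorded as a \emph{Fact} with a citation to \cite[Proposition 2.5.2]{vershynin_2018}, and the cyclic chain of implications you outline (definition $\Rightarrow$ tail via Markov, tail $\Rightarrow$ moments via layer cake and Stirling, moments $\Rightarrow$ MGF via Taylor expansion, MGF $\Rightarrow$ definition by specialization) is precisely Vershynin's proof. So your approach matches the cited source, and there is nothing to compare against in the paper itself.
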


\begin{fact}[Maximum of subgaussian random variables {\cite[Exercise 2.5.10]{vershynin_2018}}]
\label{fact:max_subgaussians}
Let $X_1,\dots,X_N$ be a sequence of (not necessarily independent) subgaussian random variables. Then
\begin{align*}
    \exv{\max_{i \in [N]} \abs{X_i}} \lesssim \max_{i \in [N]} \subgnorm{X_i}\sqrt{\log N}.
\end{align*}
\end{fact}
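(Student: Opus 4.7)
The plan is to derive the bound from the Gaussian tail estimate in \Cref{fact:subgaussian_properties} (first bullet) combined with a union bound and the tail-integration formula for the expectation of a nonnegative random variable. First I would set $K \coloneqq \max_{i \in [N]} \subgnorm{X_i}$ and invoke that first bullet for each $X_i$ to obtain $\prv{\abs{X_i} \ge t} \le 2\expv{-t^2/(K^2 K_1^2)}$ for every $t \ge 0$, where $K_1$ is the absolute constant from \Cref{fact:subgaussian_properties}. A union bound over the $N$ variables then yields $\prv{\max_{i \in [N]} \abs{X_i} \ge t} \le \min\inparen{1,\, 2N\expv{-t^2/(K^2 K_1^2)}}$ for all $t \ge 0$.

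Next I would apply $\exv{Y} = \int_0^\infty \prv{Y \ge t}\, dt$ for nonnegative $Y$, and split the integral at the threshold $t_0 \coloneqq K K_1 \sqrt{\ln(2N)}$, which is precisely the point at which $2N\expv{-t_0^2/(K^2K_1^2)} = 1$. Below $t_0$, I bound the integrand trivially by $1$, contributing exactly $t_0 \lesssim K\sqrt{\log N}$. Above $t_0$, the integral $\int_{t_0}^\infty 2N\expv{-t^2/(K^2 K_1^2)}\,dt$ is a standard Gaussian tail estimate, bounded by $O(K K_1)$ via the elementary inequality $\int_a^\infty e^{-s^2}\,ds \le \tfrac{1}{2a}e^{-a^2}$ applied after a change of variables. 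For $N \ge 2$ the $t_0$ contribution dominates, giving $\exv{\max_i \abs{X_i}} \lesssim K\sqrt{\log N}$; the $N=1$ case is trivial since the second bullet of \Cref{fact:subgaussian_properties} at $r=1$ already gives $\exv{\abs{X_1}} \lesssim \subgnorm{X_1}$.

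There is no serious obstacle here; the argument is entirely standard, and the only care required is tracking the universal constants (absorbing $K_1$ from \Cref{fact:subgaussian_properties} into $\lesssim$) and the mild bookkeeping that joint distribution of the $X_i$ plays no role because the union bound only uses marginals. An alternative route via the moment generating function, namely applying Jensen's inequality to $\expv{\lambda \max_i X_i}$, bounding $\E[\expv{\lambda X_i}] \le \expv{CK^2 \lambda^2}$ (which follows from the third bullet of \Cref{fact:subgaussian_properties}), and optimizing over $\lambda > 0$, would yield the same estimate with comparable effort; I would choose whichever proof is more in line with the level of detail used elsewhere in the preliminaries.
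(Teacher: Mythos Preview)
The paper does not actually prove this statement; it is recorded as a \emph{Fact} with a citation to \cite[Exercise 2.5.10]{vershynin_2018} and no proof is given. Your argument is correct and is the standard one (union bound on the subgaussian tails followed by integrating the tail, split at the threshold where the union bound becomes nontrivial), so there is nothing to compare against and nothing to fix.
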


\begin{fact}[Decentering]
\label{fact:uncentering}
We have
\begin{align*}
    \subgnorm{X} \lesssim \subgnorm{X - \exv{X}} + \exv{\abs{X}}.
\end{align*}
\end{fact}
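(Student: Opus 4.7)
The plan is to prove \Cref{fact:uncentering} by viewing the $\psi_2$-norm as (essentially) an Orlicz norm, exploiting the triangle inequality, and reducing the contribution of the centering constant to a simple moment estimate.

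First, I would observe that $\subgnorm{\cdot}$ is in fact a norm on the space of subgaussian random variables (up to an absolute constant, which is all that \Cref{fact:uncentering} demands). This follows from recognizing it as the Orlicz norm associated with $\psi(x)=\exp(x^2)-1$: the defining condition $\E[\exp(X^2/t^2)] \le 2$ in \Cref{defn:subgaussian} is equivalent (up to constants) to $\E[\psi(|X|/t)] \le 1$, and Orlicz norms satisfy the triangle inequality. Writing $X = (X - \E X) + \E X$ and applying the triangle inequality gives
\begin{align*}
    \subgnorm{X} \;\lesssim\; \subgnorm{X - \E X} + \subgnorm{\E X}.
\end{align*}

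Next, I would bound $\subgnorm{\E X}$. Since $\E X$ is deterministic, for any constant $c\in\R$ we have $\E[\exp(c^2/t^2)] = \exp(c^2/t^2)$, so $\subgnorm{c}$ is the smallest $t$ with $\exp(c^2/t^2) \le 2$, namely $\subgnorm{c} = |c|/\sqrt{\ln 2} \lesssim |c|$. Applied to $c = \E X$, together with Jensen's inequality $|\E X| \le \E|X|$, this yields $\subgnorm{\E X} \lesssim \E|X|$. Combining with the previous display gives exactly the claim.

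There is no real obstacle here beyond justifying the triangle inequality for $\subgnorm{\cdot}$; the only place one needs to be mildly careful is that the normalization constant in \Cref{defn:subgaussian} (the value $2$ on the right-hand side of $\E[\exp(X^2/t^2)]\le 2$) is slightly non-standard compared to the Orlicz convention, but adjusting this only costs an absolute multiplicative constant, which is absorbed into the $\lesssim$ in the statement. Since the fact is only claimed up to a universal constant, no further care is required.
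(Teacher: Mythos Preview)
Your proof is correct and follows essentially the same approach as the paper: apply the triangle inequality for $\subgnorm{\cdot}$ to the decomposition $X = (X-\E X) + \E X$, then bound $\subgnorm{\E X} \lesssim |\E X| \le \E|X|$. You simply give more justification (the Orlicz-norm viewpoint and the explicit constant $1/\sqrt{\ln 2}$) than the paper, which leaves both the triangle inequality and the bound on $\subgnorm{\E X}$ implicit.
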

\begin{proof}[Proof of \Cref{fact:uncentering}]
By the triangle inequality, we get
\begin{align*}
    \subgnorm{X} \le \subgnorm{X - \exv{X}} + \subgnorm{\exv{X}} \lesssim \subgnorm{X-\exv{X}} + \exv{\abs{X}},
\end{align*}
which is exactly the statement of \Cref{fact:uncentering}.
\end{proof}

\begin{fact}[Lipschitz functionals of Gaussians are subgaussian {\cite[Theorem 5.2.2]{vershynin_2018}}]
\label{fact:lip_concentration}
If $\vg \sim \cN(0,\mI_n)$ and if $\myfunc{f}{\R^d}{\R}$, then
\begin{align*}
    \subgnorm{f(\vg) - \exv{f(\vg)}} \lesssim \norm{f}_{\mathsf{Lip}}.
\end{align*}
\end{fact}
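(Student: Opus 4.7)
The plan is to reduce the claim to a moment generating function (MGF) bound for $f(\vg) - \exv{f(\vg)}$, then invoke \Cref{fact:subgaussian_properties} to translate that MGF bound into a bound on the $\subgnorm{\cdot}$ norm. So the concrete target is to show
\[
\exv{\expv{\lambda (f(\vg) - \exv{f(\vg)})}} \le \expv{C \norm{f}_{\mathsf{Lip}}^2 \lambda^2} \qquad \text{for all } \lambda \in \R,
\]
since this is one of the equivalent characterizations of subgaussianity listed in \Cref{fact:subgaussian_properties} (modulo absolute constants).

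First I would reduce to the case where $f$ is smooth: convolve $f$ with a Gaussian mollifier $\phi_{\eta}$ to obtain $f_{\eta} \in C^{\infty}$ with $\norm{f_{\eta}}_{\mathsf{Lip}} \le \norm{f}_{\mathsf{Lip}}$ and $f_{\eta} \to f$ pointwise, so $\norm{\nabla f_{\eta}(\vx)}_2 \le \norm{f}_{\mathsf{Lip}}$ almost everywhere. Any MGF bound we establish for $f_{\eta}$ passes to $f$ in the $\eta \to 0$ limit by Fatou.

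The main analytic input is the Gaussian log-Sobolev inequality of Gross: for every smooth $h \from \R^n \to \R_{>0}$ and $\vg \sim \cN(0, \mI_n)$,
\[
\exv{h(\vg) \ln h(\vg)} - \exv{h(\vg)} \ln \exv{h(\vg)} \le \tfrac{1}{2} \exv{\norm{\nabla h(\vg)}_2^2 / h(\vg)}.
\]
I would cite this as an off-the-shelf fact; a self-contained proof goes through either hypercontractivity of the Ornstein--Uhlenbeck semigroup or a semigroup interpolation $t \mapsto P_t(h \log h) - (P_t h) \log(P_t h)$ combined with $\partial_t P_t h = \tfrac{1}{2} \Delta P_t h - \vx \cdot \nabla P_t h$. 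Applying this with $h = \expv{\lambda f}$ gives $\nabla h = \lambda h \nabla f$ and hence
\[
\mathrm{Ent}\insquare{\expv{\lambda f(\vg)}} \le \tfrac{\lambda^2}{2} \exv{\norm{\nabla f(\vg)}_2^2 \expv{\lambda f(\vg)}} \le \tfrac{\lambda^2 \norm{f}_{\mathsf{Lip}}^2}{2} \exv{\expv{\lambda f(\vg)}}.
\]

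Now apply the Herbst argument. Set $H(\lambda) \coloneqq \exv{\expv{\lambda f(\vg)}}$ and $G(\lambda) \coloneqq \tfrac{1}{\lambda} \ln H(\lambda)$; a short computation shows the above inequality rearranges to $G'(\lambda) \le \norm{f}_{\mathsf{Lip}}^2 / 2$. Since $\lim_{\lambda \to 0^+} G(\lambda) = \exv{f(\vg)}$ (by dominated convergence, using the Gaussian moment bounds on the Lipschitz function $f$), integrating from $0$ to $\lambda > 0$ yields
\[
\ln H(\lambda) \le \lambda \exv{f(\vg)} + \tfrac{\norm{f}_{\mathsf{Lip}}^2 \lambda^2}{2},
\]
and the symmetric argument handles $\lambda < 0$. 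This is exactly the target MGF bound, so the third item of \Cref{fact:subgaussian_properties} gives $\subgnorm{f(\vg) - \exv{f(\vg)}} \lesssim \norm{f}_{\mathsf{Lip}}$. The main obstacle is the log-Sobolev inequality itself, which is nontrivial but standard; once it is granted, the rest is just the Herbst calculus lemma.
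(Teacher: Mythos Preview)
The paper does not prove this statement; it is stated as a fact and attributed to \cite[Theorem 5.2.2]{vershynin_2018} with no accompanying argument. So there is nothing in the paper to compare against.

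Your proof is correct and is one of the standard routes to Gaussian Lipschitz concentration (log-Sobolev plus the Herbst argument). One small caveat: the MGF characterization $\exv{\expv{\lambda X}} \le \expv{K^2 \lambda^2}$ that you invoke at the end is not among the three bullets actually written out in the paper's \Cref{fact:subgaussian_properties}, though it is part of the same Proposition 2.5.2 in Vershynin that the paper cites. If you want to be fully self-contained within the paper's stated facts, you could instead integrate the tail bound $\prv{\abs{f(\vg)-\exv{f(\vg)}} \ge t} \le 2\expv{-t^2/(2\norm{f}_{\mathsf{Lip}}^2)}$ (which follows from your MGF bound by Markov and optimizing over $\lambda$) and match it to the first bullet of \Cref{fact:subgaussian_properties}. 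Either way the conclusion is immediate.
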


\section{Covering number estimates}
\label{sec:coverings}

In this section, we develop our metric entropy estimates. It will be helpful to keep in mind the context and outline from \Cref{sec:overview}.

\subsection{Notation and general formula}

We begin with some definitions that are necessary for our results.

\begin{definition}[Block-constant diagonal matrix]
\label{defn:block_constant}
We say that a vector $\vm \in \R^k$ and corresponding diagonal matrix $\mM\in\R^{k\times k}$ is block-constant or ``constant down the blocks'' if for every $j_1,j_2 \in S_i$, we have $\vm_{j_1}=\vm_{j_2}$.
\end{definition}

For instance, when we define a probability measure $\vlambda \in \R^{m}$ over $[m]$, we will find it useful to extend it to a block-constant diagonal matrix $\mLambda\in\R^{k \times k}$.

\begin{definition}[Rounding matrix]
\label{defn:rounding_matrix}
For a probability measure $\vlambda$ over $[m]$, we say that a positive diagonal matrix $\mW \in \R^{k \times k}$ rounds the measure matrix $\mLambda \in \R^{k\times k}$ if for all $\vx\in\R^n$ we have $\norm{\mW^{1/2}\mLambda^{1/2-1/p}\mA\vx}_2 \le \gnorm{\mLambda^{1/2-1/p}\mA\vx}{2}$. We also denote
\begin{align*}
    \widehat{B_2} \coloneqq \inbraces{\vx\in\R^n \suchthat \norm{\mW^{1/2}\mLambda^{1/2-1/p}\mA\vx}_2 \le 1}.
\end{align*}
\end{definition}

The $\widehat{B_2}$ defined in \Cref{defn:rounding_matrix} is the linear transformation of the Euclidean ball that we will ``pass through'' to get our covering number estimates (recall \eqref{eq:intro_split_covering}).

Next, recall our notion of measure overestimates. This is a generalization of prior definitions of Lewis measure overestimates (see e.g. \cite[Definition 2.4]{jls21}, \cite[Definition 2.3]{wy22}) and group leverage score overestimates (\cite[Definition 1.1]{jls22}).

\blocklewisoverestimate*

For example, observe that when all the $S_i$ have size $1$, then \Cref{defn:block_lewis_overestimate} corresponds to standard definitions of Lewis weight overestimates, and there exist weights such that $\Fstar \lesssim n$. Furthermore, we will see that there exist a $\vlambda$ and $\vw$ such that $\Fstar = n$ (it will follow from \Cref{lemma:block_instantiation_small}).

Next, we define the vector $\valpha$, whose entries capture a notion of group importance.

\begin{definition}
\label{defn:alpha}
Let $\vlambda$ be a probability measure over $[m]$ and let $\mW$ be a rounding matrix for $\mLambda$ (\Cref{defn:rounding_matrix}). If $p_1,\dots,p_m \ge 2$, then let $\valpha \in \R^{m}$ be the vector such that for all $i \in [m]$, we have
\begin{align*}
    \valpha_i^p \coloneqq \vlambda_i^{1-p/2}\inparen{\sum_{j\in S_i}\inparen{\frac{\tau_j\inparen{\mW^{1/2}\mLambda^{1/2-1/p}\mA}}{\vw_j}}^{p_i/2}}^{p/p_i}.
\end{align*}
Equivalently, if $\mU$ is a matrix whose columns consist of the left singular vectors of $\mW^{1/2}\mLambda^{1/2-1/p}\mA$, and if we denote by $\vu_j$ the $j$th row of $\mU$ and let $\vf_j \coloneqq \vlambda_{i}^{-1/2}\vw_j^{-1/2}\vu_j$, then by \Cref{fact:whitening}, we may also write
\begin{align*}
    \valpha_i^p &\coloneqq \vlambda_i\inparen{\sum_{j \in S_i} \norm{\vf_j}_2^{p_i}}^{p/p_i}.
\end{align*}
On the other hand, if $p_1=\dots=p_m=p<2$, then let $\widehat{\vlambda}$ be a probability measure over $[k]$ and let $\widehat{\valpha}\in\R^k$ be defined as above accordingly. Finally, let $\valpha\in\R^m$ be such that
\begin{align*}
    \valpha_i^p \coloneqq \inparen{\sum_{j \in S_i}\widehat{\valpha}_j^p}^{1/p}.
\end{align*}
\end{definition}

To help ground \Cref{defn:alpha}, notice that combining \Cref{defn:block_lewis_overestimate} with \Cref{defn:alpha} gives us, for $p_1,\dots,p_m\ge 2$ and $p_1,\dots,p_m=p < 2$, respectively,
\begin{equation}
\begin{aligned}
    \valpha_i^p &\le \vlambda_i^{1-p/2}\inparen{\vlambda_i\Fstar}^{p/2} = \vlambda_i\inparen{\Fstar}^{p/2} \\
    \valpha_i^p &= \sum_{j \in S_i} \widehat{\valpha}_j^p = \sum_{j \in S_i} \widehat{\vlambda}_j^{1-p/2}\tau_j\inparen{\widehat{\mLambda}^{1/2-1/p}\mA}^{p/2} \le \sum_{j \in S_i}\widehat{\vlambda}_j\inparen{\Fstar}^{p/2} = \inparen{\sum_{j\in S_i} \widehat{\vlambda_j}}\inparen{\Fstar}^{p/2},
\end{aligned}
\label{eq:alpha_to_fstar}
\end{equation}
and that when $\Fstar \le 2n$ (say), we get $\norm{\valpha}_p^p \lesssim n^{p/2}$. Thus, at least when $p\ge 2$, we can think of $\norm{\valpha}_p^p$ as encoding the sparsity we should expect when we sample with probabilities proportional to the $\valpha_i$. Although this does not quite work when $p < 2$, a minor modification of it will.

\begin{definition}[Notation for unit balls and norms under change-of-measure]
\label{defn:unit_ball}
Let $\vlambda$ be a probability measure over $[m]$ and $\mLambda\in\R^{k\times k}$ be its corresponding block-constant diagonal matrix (\Cref{defn:block_constant}). For any $r > 0$, and $\vy\in\R^k$ we define
\begin{align*}
    \gnorml{\vy}{r} \coloneqq \inparen{\sum_{i=1}^m \vlambda_i\inparen{\sum_{j \in S_i} \abs{\vy_j}^{p_i}}^{r/p_i}}^{1/r}.
\end{align*}
We also define
\begin{align*}
    B_r \coloneqq\inbraces{\vx \in\R^n \suchthat \gnorml{\mLambda^{-1/p}\mA\vx}{r} \le 1}.
\end{align*}
\end{definition}

From \Cref{defn:unit_ball}, it is easy to verify that $\gnorml{\mLambda^{-1/p}\mA\vx}{p} = \gnorm{\mA\vx}{p}$. Indeed, we have
\begin{align*}
    \gnorml{\mLambda^{-1/p}\mA\vx}{p}^p = \sum_{i=1}^m \vlambda_i\norm{\mLambda_{S_i}^{-1/p}\mA_{S_i}\vx}_{p_i}^{p} = \sum_{i=1}^m \norm{\mA_{S_i}\vx}_{p_i}^p = \gnorm{\mA\vx}{p}^p.
\end{align*}
We will also require the crucial property that $\gnorml{\vy}{r}$ is log-convex in $1/r$. To see this, note that the vector in $\R^m$ formed by calculating all the inner norms $p_1,\dots,p_m$ is constant regardless of the outer norm, and then we can use the fact that for a fixed measure $\vmu$, the $\ell_r^m(\vmu)$ norms are log-convex in $1/r$.

We now have the language to state the main result of this section, \Cref{thm:general_covering}.

\begin{theorem}
\label{thm:general_covering}
Let $\vlambda$ be a probability measure and $\mW$ be a rounding matrix (\Cref{defn:rounding_matrix}) so that $\vlambda$ and $\mW$ form an $\Fstar$-block Lewis overestimate (\Cref{defn:block_lewis_overestimate}). Suppose at least one of the following holds:
\begin{itemize}
    \item $p > \frac{1}{\log n}$ and $\abs{S_1}=\dots=\abs{S_m}=1$;
    \item $p = p_1 = \dots = p_m$ and $p < 2$;
    \item $p \ge 1$ and $p_1,\dots,p_m \ge 2$;
    \item $p_1=\dots=p_m=2$ and $1/\log n \le p < \infty$.
\end{itemize}
If $H \ge 1$ is such that the sampling probabilities $\vrho_i$ satisfy $H\vrho_i \ge \valpha_i^p/\norm{\valpha}_p^p$ for all $i \in [m]$, and if we write $p^{\star}\coloneqq\max\inbraces{1,\max_{i}\min\inbraces{p_i,\log\abs{S_i}}}$, then (recall \Cref{defn:sampling_polyhedron} for the definition of $\polynorm{\cdot}$)
\begin{align*}
    \log\cN\inparen{B_p, \eta\inbraces{\vx\in\R^n \suchthat \polynorm{\vx} \le 1}} \lesssim \eta^{-\min(2,p)} \cdot H^{2/\max(p,2)}\cdot C(p)p^{\star}\Fstar\log\max\inbraces{\mtilde,\Fstar},
\end{align*}
where $C(p)$ is a constant that only depends on $p$.
\end{theorem}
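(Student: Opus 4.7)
The plan is to split the covering number by the triangle inequality:
\begin{align*}
    \log\cN(B_p, \eta K) \le \log\cN(B_p, t\widehat{B_2}) + \log\cN(\widehat{B_2}, (\eta/t) K),
\end{align*}
where $K = \{\vx \in \R^n : \polynorm{\vx} \le 1\}$ and $t > 0$ is a parameter to be optimized at the end. The second term is handled uniformly in $p$ by the dual Sudakov inequality (\Cref{fact:sudakov}), applied after the linear map that sends $\widehat{B_2}$ to the standard Euclidean ball; this gives
\begin{align*}
    \log\cN(\widehat{B_2}, \eta K) \lesssim \eta^{-2}\cdot\E_{\vg'}\left[\polynorm{\vg'}\right]^2,
\end{align*}
for $\vg'$ the centered Gaussian whose covariance makes $\widehat{B_2}$ isotropic. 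The main input is the block Lewis overestimate (\Cref{defn:block_lewis_overestimate}): expanding $\vy = \mW^{1/2}\mLambda^{1/2-1/p}\mA\vx$ and bounding each coordinate of $\vy_{S_i}$ by $\sqrt{\tau_j}\,\|\vy\|_2$, it shows that $\vx\mapsto\|\mA_{S_i}\vx\|_{p_i}$ is Lipschitz relative to $\|\cdot\|_{\widehat{B_2}}$ with constant of order $\valpha_i$. Hence $\vrho_i^{-1/p}\|\mA_{S_i}\vg'\|_{p_i}$ is subgaussian with parameter at most $\vrho_i^{-1/p}\valpha_i \le H^{1/p}\|\valpha\|_p$ (using the hypothesis on $\vrho$), and a Gaussian moment estimate gives $\E[\|\mA_{S_i}\vg'\|_{p_i}] \lesssim \valpha_i\sqrt{\min(p_i,\log\abs{S_i})}$, which is where $p^{\star}$ enters. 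Combining Gaussian Lipschitz concentration (\Cref{fact:lip_concentration}) with the max-of-subgaussians bound (\Cref{fact:max_subgaussians}) across $i\in S$ yields $\E[\polynorm{\vg'}]^2 \lesssim H^{2/p}\,\Fstar\, p^{\star}\log\mtilde$, using $\|\valpha\|_p^p \le \Fstar^{p/2}$ from \eqref{eq:alpha_to_fstar}.

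For the first term, the case $p\ge 2$ is free: since $\vlambda$ is a probability measure the $\gnorml{\cdot}{r}$ norms are monotone in $r$, so $\gnorml{\vy}{2}\le\gnorml{\vy}{p}$, and together with the rounding property $\|\vx\|_{\widehat{B_2}}\le\gnorml{\mLambda^{-1/p}\mA\vx}{2}$ this gives $B_p\subseteq\widehat{B_2}$; taking $t=1$ kills the first term. For $p < 2$, I adapt the compactness/interpolation argument of \citet{blm89,sz01} described in \Cref{sec:overview_measure}. For each dyadic scale $h\ge 0$, I extract from a $\widehat{B_2}$-maximally separated subset of $B_p$ a set $\cL_h\subset\widehat{B_2}$ that is $\delta_h$-separated under $\gnorml{\cdot}{r}$, where $r > 2$ is taken on the order of $\log n$ (so that the rounding factor $(\Fstar)^{1/2-1/r}$ is bounded by a constant), and $\delta_h\asymp (8^h t)^{2/(2-\theta)}$ is extracted from the log-convexity inequality
\begin{align*}
    \gnorml{\vy}{2}^2 \le \gnorml{\vy}{p}^{\theta}\,\gnorml{\vy}{r}^{2-\theta},\qquad \tfrac{1}{2}=\tfrac{\theta}{2p}+\tfrac{2-\theta}{2r}.
\end{align*}
Telescoping the packing inequalities $\log\abs{\cL_h}\le\log\cN(\widehat{B_2},\delta_h B_r)$ across $h\ge 0$ gives
\begin{align*}
    \log\cN(B_p, t\widehat{B_2}) \le \sum_{h\ge 0}\log\cN(\widehat{B_2},\delta_h B_r),
\end{align*}
and each summand is controlled by another dual Sudakov application, using the change-of-measure inclusion $\widehat{B_2}\subseteq(\Fstar)^{1/2-1/r}B_r$ (which follows from the same calculation as \eqref{eq:alpha_to_fstar} but with $r$ in place of $p$) to estimate $\E_{\vg}[\gnorml{\mLambda^{-1/p}\mA\vg}{r}]$ up to $p^\star$ and logarithmic factors. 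The geometric series in $h$ then sums to $O(t^{-p}\,p^{\star}\Fstar\log\max\{\mtilde,\Fstar\})$ after using $\theta\to p$ as $r\to\infty$.

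Optimizing $t$ in the split and collecting factors gives the claimed bound. The $\eta^{-\min(2,p)}$ scaling arises because for $p\ge 2$ only the $\eta^{-2}$ dual-Sudakov term survives, whereas for $p<2$ the $t^{-p}$ scaling of the interpolation sum, balanced against $(\eta/t)^{-2}$, is minimized when the two terms equal and yields $\eta^{-p}$. The $H^{2/\max(p,2)}$ factor tracks the appearance of $\vrho_i^{-2/p}\le H^{2/p}\|\valpha\|_p^2/\valpha_i^2$ in the Gaussian expectation for $p\ge 2$ and the corresponding $H^1$-dependence when $p<2$. The main obstacle I anticipate is the $p < 2$ interpolation step in our generality: I must ensure that the geometric series $\{(8^h t)^{2/(2-\theta)}\}_h$ telescopes cleanly into a bound matching the $\eta^{-p}$ scaling, and that the dual-Sudakov estimate of $\log\cN(\widehat{B_2},\delta_h B_r)$ loses no essential factors from the fact that in the block setting $\widehat{B_2}$ is only a rounding of $B_2$ rather than equal to it. A secondary subtlety is that the singleton-block and uniform-norm cases require $p$ to be as small as $1/\log n$: this forces $r\asymp\log n$ in the interpolation, which is precisely the regime where $(\Fstar)^{1/2-1/r}$ and $\theta=p(r-2)/(r-p)$ remain usable.
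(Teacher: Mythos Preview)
Your approach is essentially the paper's: split through $\widehat{B_2}$, use dual Sudakov (the paper's \Cref{lemma:covering_poly_to_two}) for $\log\cN(\widehat{B_2},(\eta/t)K)$, use the Bourgain--Lindenstrauss--Milman compactness/interpolation argument (the paper's \Cref{lemma:poor_mans_dualization} followed by \Cref{lemma:r_to_two_complicated}) for $\log\cN(B_p,t\widehat{B_2})$ when $p<2$, and use $B_p\subseteq\widehat{B_2}$ to kill the first term when $p\ge 2$.

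There is, however, a concrete arithmetical error in your $p<2$ step that would derail the computation if carried out as written. You claim the interpolation sum gives $\log\cN(B_p,t\widehat{B_2})\lesssim t^{-p}\cdot(\dots)$, and that balancing $t^{-p}$ against $(\eta/t)^{-2}$ yields $\eta^{-p}$. But $t^{-p}=(\eta/t)^{-2}$ gives $t=\eta^{2/(p+2)}$ and hence $\eta^{-2p/(p+2)}$, not $\eta^{-p}$. The correct exponent from the BLM argument is $t^{-2p/(2-p)}$: with $\delta_h\asymp (8^h t)^{\theta/(2-\theta)}$ (not $(8^h t)^{2/(2-\theta)}$ as you wrote) and the dual-Sudakov exponent $\delta_h^{-2r/(r-2)}$, the product $\tfrac{\theta}{2-\theta}\cdot\tfrac{2r}{r-2}$ equals $\tfrac{2p}{2-p}$ for any admissible $(\theta,r)$ pair (you can check this directly from $\theta=p(r-2)/(r-p)$). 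Balancing $t^{-2p/(2-p)}$ against $(\eta/t)^{-2}$ then gives $t=\eta^{1-p/2}$ and the desired $\eta^{-p}$. This is exactly the paper's \Cref{lemma:covering_two_to_one_clean} combined with the choice $t=\eta^{1-p/2}H^{1/2-1/p}$ at the end of the proof of \Cref{thm:general_covering}.

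Two smaller points. First, taking $r\sim\log n$ does not make $(\Fstar)^{1/2-1/r}$ bounded; it is still $\Theta(\sqrt{\Fstar})$. The paper handles the $\widehat{B_2}\to B_r$ covering not by a containment but by a second interpolation (\Cref{lemma:r_to_two_complicated}), passing through $B_q$ with $q=r\log\Fstar$; this is what produces the clean $\Fstar\log\Fstar$ factor rather than a power of $\Fstar$. Second, the case $p_1=\dots=p_m=p<2$ is handled in the paper by dropping to the row-level measure $\widehat{\vlambda}$ on $[k]$ (i.e., reducing to the singleton case), not by the block construction; your sketch does not distinguish this.
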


Although \Cref{thm:general_covering} is stated abstractly, we will see that there exists a convenient instantiation for all the parameters stated.

\begin{corollary}
\label{corollary:specific_cover}
In the same cases as in \Cref{thm:general_covering}, there exists a probability measure $\vlambda$ over $[m]$ and a rounding $\mW$ for which in the same setting as \Cref{thm:general_covering}, we have
\begin{align*}
    \log\cN\inparen{B_p, \eta\inbraces{\vx\in\R^n \suchthat \polynorm{\vx} \le 1}} \lesssim_p \eta^{-\min(2,p)}\cdot{\underset{i \in S}{\max}\ \min\inbraces{p_i,\log\abs{S_i}}n\log\mtilde}.
\end{align*}
\end{corollary}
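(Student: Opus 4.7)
The plan is to specialize \Cref{thm:general_covering} by making two choices: (a) a probability measure $\vlambda$ and rounding matrix $\mW$ that together form an $\Fstar$-block Lewis overestimate with $\Fstar \lesssim n$, and (b) sampling probabilities $\vrho_i = \valpha_i^p/\norm{\valpha}_p^p$, so that the hypothesis $H\vrho_i \ge \valpha_i^p/\norm{\valpha}_p^p$ of \Cref{thm:general_covering} is satisfied with $H = 1$ and hence $H^{2/\max(p,2)} = 1$.

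For the existence of $(\vlambda, \mW)$ with $\Fstar \lesssim n$, I would handle the four cases of \Cref{thm:general_covering} in turn. In the singleton case ($\abs{S_i}=1$), taking $\mW = \mI$ and $\vlambda$ proportional to the classical $\ell_p$ Lewis weights of $\mA$ gives $\Fstar = n$ on the nose, by the standard identity $\sum_j w_j = n$ together with the Lewis-weight fixed-point relation $w_j = \tau_j(\mW^{1/2 - 1/p}\mA)$. In the case $p \ge 1$ with $p_1,\dots,p_m \ge 2$, the required $(\vlambda, \mW)$ is extracted from the optimality conditions of the determinant-maximization program of \cite[Lemma 4.2]{jlls23}, again yielding $\Fstar = n$. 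For the two remaining cases ($p_1=\dots=p_m=p<2$, and $p_1=\dots=p_m=2$ with outer norm $p \ge 1/\log n$), I would invoke \Cref{lemma:block_instantiation_small} (proved later in the paper), which constructs $(\vlambda, \mW)$ with $\Fstar \lesssim n$ by a direct block-aware argument.

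Plugging $\Fstar \lesssim n$ and $H = 1$ into the bound of \Cref{thm:general_covering} gives
\[
  \log\cN\inparen{B_p, \eta\inbraces{\vx\in\R^n \suchthat \polynorm{\vx} \le 1}} \lesssim_p \eta^{-\min(2,p)} \cdot p^{\star} \cdot n \cdot \log\max\inbraces{\mtilde, n},
\]
where $p^{\star} = \max_{i} \min\inbraces{p_i, \log\abs{S_i}}$ and the $C(p)$ factor is absorbed into $\lesssim_p$. In any regime where the target number of samples satisfies $\mtilde \gtrsim n$, which is automatic for any meaningful sparsifier of a rank-$n$ subspace, $\log\max\inbraces{\mtilde, n}$ collapses to $\log\mtilde$ up to a constant, recovering the stated bound.

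The main anticipated obstacle is the construction in the last two cases: classical Lewis-weight existence arguments do not directly yield a change-of-measure over \emph{groups} together with a bounded rounding and the required block overestimate inequality, so this piece of the argument is substantial and is deferred to \Cref{lemma:block_instantiation_small}. Once that lemma is established, the corollary is an immediate instantiation of \Cref{thm:general_covering}.
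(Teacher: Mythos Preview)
Your approach is essentially the paper's: instantiate \Cref{thm:general_covering} with a measure--rounding pair achieving $\Fstar \lesssim n$ and take $\vrho_i = \valpha_i^p/\norm{\valpha}_p^p$ so that $H=1$. The paper's proof is the one-liner ``combine \Cref{thm:general_covering} with \Cref{lemma:block_instantiation_small} and \Cref{lemma:block_instantiation_alleq}.''

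One correction on the lemma bookkeeping: you have the roles of the instantiation lemmas swapped. \Cref{lemma:block_instantiation_small} is precisely the extraction from \cite[Lemma 4.2]{jlls23} and covers the case $p \ge 1$, $p_1,\dots,p_m \ge 2$ (as well as the singleton case, which is subsumed). The two remaining regimes---$p_1=\dots=p_m=p<2$ and $p_1=\dots=p_m=2$ with general $p$---are handled by the separate \Cref{lemma:block_instantiation_alleq}, which appeals to classical Lewis measure (for the former) and to the contraction-map construction of \Cref{alg:blw_lewismap}/\Cref{lemma:contract_alg} (for the latter). Your observation about $\log\max\{\mtilde,n\}$ versus $\log\mtilde$ is fair; the corollary as stated is implicitly in the regime $\mtilde \gtrsim n$.
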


\subsection{Block Lewis weights}
\label{sec:blw_defs_covering}

For the sake of motivation, let us first prove \Cref{corollary:specific_cover} given \Cref{thm:general_covering}. We first need \Cref{lemma:block_lewis}, which is derived from the \textit{block Lewis weights} of \citet{jlls23}.

For a nonnegative diagonal matrix $\mV$, let $\beta_i(\mV) \coloneqq \inparen{\sum_{j \in S_i} \inparen{\va_j^{\top}(\mA^{\top}\mV\mA)^{-1}\va_j}^{p_i/2}}^{1/p_i}$. We call the $\beta_i(\mV)^p$ the \textit{block Lewis weights}.

\begin{lemma}
\label{lemma:block_lewis}
If $p_i \in [2, \infty]$ and $p \in [1, \infty)$, then there exist diagonal $\mV, \mLambda \in \R^{k \times k}$ such that $\vlambda$ is a probability measure over $[m]$ and the corresponding \(\mLambda\in\R^{k\times k}\) is constant on the blocks, then
\(
\sum_{i=1}^m \beta_i(\mV)^p = n
\) and for all $\vx\in\R^n$,
\begin{align*}
    \norm{\mV^{1/2}\mA\vx}_2 \le n^{1/2-1/p}\gnorm{\mLambda^{1/2-1/p}\mA\vx}{2} \le n^{\max(0,1/2-1/p)}\gnorm{\mA\vx}{p}
\end{align*}
\end{lemma}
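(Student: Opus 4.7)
The plan is to instantiate the block Lewis weights construction of \cite{jlls23}. I would first establish the existence of a diagonal matrix $\mV$ and a probability measure $\vlambda$ (with corresponding block-constant $\mLambda$) satisfying the block Lewis fixed-point relations $\vlambda_i \propto \beta_i(\mV)^p$ via the determinant-maximization convex program of \cite[Section~4]{jlls23} on block-structured $\mV$, whose KKT conditions simultaneously encode the across-block probabilities $\vlambda_i$ and the within-block $\ell_{p_i}$-Lewis weighting. Existence follows from compactness of the simplex on $\vlambda$ together with continuity of leverage scores (alternatively, via a Brouwer fixed-point argument). The normalization $\sum_i \beta_i(\mV)^p = n$ is then obtained by rescaling $\mV \mapsto c\mV$, using that $\beta_i$ is homogeneous of degree $-1/2$ in $\mV$, so $\sum_i \beta_i(c\mV)^p = c^{-p/2}\sum_i \beta_i(\mV)^p$ can be set equal to $n$ by a unique choice of $c$.

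For the first inequality, I would expand $\norm{\mV^{1/2}\mA\vx}_2^2 = \sum_i \sum_{j \in S_i}\vv_j\ip{\va_j,\vx}^2$ and use the fixed-point relation between $\vv_j$ and the within-block leverage scores $\tau_j(\mV^{1/2}\mA)$ (whose $\ell_{p_i/2}$-aggregation gives $\beta_i(\mV)^{p_i}$), combined with a H\"older step within each block (treating $\{\tau_j(\mV^{1/2}\mA)^{p_i/2}\}_{j\in S_i}$ against $\{|\ip{\va_j,\vx}|^{p_i}\}_{j\in S_i}$ via the conjugate exponents $p_i/(p_i-2)$ and $p_i/2$), to bound each block's contribution by $n^{1-2/p}\vlambda_i^{1-2/p}\norm{\mA_{S_i}\vx}_{p_i}^2$. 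Summing over $i$ and taking square roots yields the claimed bound, with the $n^{1-2/p}$ factor absorbed from the normalization $\sum_i \beta_i(\mV)^p = n$.

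For the second inequality, I would case-split on $p$. When $p \ge 2$, the bound $\gnorm{\mLambda^{1/2-1/p}\mA\vx}{2} \le \gnorm{\mA\vx}{p}$ follows from a direct H\"older estimate with conjugate exponents $p/2$ and $p/(p-2)$ applied to $\gnorm{\mLambda^{1/2-1/p}\mA\vx}{2}^2 = \sum_i \vlambda_i^{1-2/p}\norm{\mA_{S_i}\vx}_{p_i}^2$, using $\sum_i \vlambda_i = 1$. When $p \in [1, 2)$, the required inequality $\gnorm{\mLambda^{1/2-1/p}\mA\vx}{2} \le n^{1/p-1/2}\gnorm{\mA\vx}{p}$ is the block analog of the classical Lewis rounding $n^{1/2-1/p}\widehat{B_2} \subseteq B_p$; I would derive it by invoking the fixed-point identity $\va_j^\top(\mA^\top\mV\mA)^{-1}\va_j$ from the KKT conditions together with the normalization $\sum_i \beta_i(\mV)^p = n$ and a Cauchy--Schwarz/H\"older combination to extract the factor $n^{1/p-1/2}$. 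The main obstacle will be precisely this $p < 2$ case: generic probability-measure arguments (Jensen, H\"older) give the reverse inequality, so the fixed-point identity must be used nontrivially, and getting the $n^{1/p-1/2}$ factor tight requires coordinating the across-block weights $\vlambda_i$ with the within-block $\ell_{p_i}$-Lewis structure encoded in $\mV$ rather than merely treating $\vlambda$ as an arbitrary probability measure.
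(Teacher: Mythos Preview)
Your proposal is essentially correct and mirrors the paper's approach: both extract $\mV$ and $\vlambda_i = \beta_i(\mV)^p/n$ from the block Lewis construction of \cite[Lemma~4.2]{jlls23}, obtain the first inequality from the H\"older-based estimate (4.8) there, handle $p \ge 2$ via monotonicity of norms under a probability measure, and defer the $p < 2$ case to the ``$1 \le q \le 2$'' subcase of that same proof (which is exactly the fixed-point argument you sketch). One minor point: in \cite{jlls23} the normalization $\sum_i \beta_i(\mV)^p = n$ comes directly from the constraint of the determinant program rather than a post-hoc rescaling, so your rescaling step is unnecessary (and you would otherwise need to check that the within-block fixed-point relation you use for the H\"older step is scale-invariant).
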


\begin{proof}[Proof of \Cref{lemma:block_lewis}]
The reader familiar with the work of \citet{jlls23} will notice that \Cref{lemma:block_lewis} is a strengthened variant of Lemma 4.2 from that work. 

Indeed, consider the context of the proof of Lemma 4.2 from \cite{jlls23}. There, notice that $\mW$ is initially chosen so that $\sum_{i=1}^m \beta_i(\mW)^p = n$ and \(\mU = (\mA^\top \mW \mA)^{-1/2}\). We choose $\mV$ in the same way. Next, using their choice of $\vu$, we have $\norm{\vu_{S_i}}_{p_i}^{p-2} = \inparen{\beta_i(\mV)^{p}}^{1-2/p}$. 

Restating (4.8) from \cite{jlls23} in our notation, we have for all $\vx\in\R^n$ that
\begin{align*}
    \norm{\mV^{1/2} \mA \vx}_2^2 \le \sum_{i=1}^m \norm{\vu_{S_i}}_{p_i}^{p-2} \norm{\mA_{S_i} \vx}_{p_i}^2.
\end{align*}
For each \(i \in [m]\) let \(\vlambda_i = \frac{\beta_i(\mV)^p}{n}\), so that \(\vlambda\) is a probability measure.
Then
\begin{align*}
    \norm{\mV^{1/2} \mA \vx}_2^2 &\le n^{1-2/p} \sum_{i=1}^m \vlambda_i^{1-2/p} \norm{\mA_{S_i} \vx}_{p_i}^2.
\end{align*}
Let \(\mLambda\) be a \(k \times k\) diagonal matrix, where for every \(i \in [m]\) and \(j \in S_i\),
we define \(\mLambda_{jj} = \vlambda_i\).
Because \(\vlambda_i^{1-2/p} \norm{\mA_{S_i} \vx}_{p_i}^2 = \norm{(\mLambda^{1/2-1/p} \mA)_{S_i} \vx}_{p_i}^2\), we obtain
\begin{equation}\label{eq:vax_atmost_nlax}
    \norm{\mV^{1/2}\mA\vx}_2 \le n^{1/2-1/p}\gnorm{\mLambda^{1/2-1/p}\mA\vx}{2}.
\end{equation}
Since $p$-norms taken with respect to a probability measure are increasing in $p$ we immediately get for all $p \ge 2$ that
\begin{align*}
    \norm{\mV^{1/2}\mA\vx}_2 &\stackrel{\eqref{eq:vax_atmost_nlax}}{\le} n^{1/2-1/p}\gnorm{\mLambda^{1/2-1/p}\mA\vx}{2} = n^{1/2-1/p}\gnorml{\mLambda^{-1/p}\mA\vx}{2} \\
    &\le n^{1/2-1/p}\gnorml{\mLambda^{-1/p}\mA\vx}{p} = n^{1/2-1/p}\gnorm{\mA\vx}{p}.
\end{align*}
The case where $p \le 2$ follows from the ``$1 \le q \le 2$'' subcase of the proof of Lemma 4.2 from \cite{jlls23}, which yields
\begin{align*}
    \norm{\mV^{1/2}\mA\vx}_2 \stackrel{\eqref{eq:vax_atmost_nlax}}{\le} n^{1/2-1/p}\gnorm{\mLambda^{1/2-1/p}\mA\vx}{2} \le \gnorm{\mA\vx}{p}.
\end{align*}
We therefore conclude the proof of \Cref{lemma:block_lewis}.
\end{proof}

We use \Cref{lemma:block_lewis} to give an instantiation for the parameters in \Cref{thm:general_covering}.

\begin{lemma}
\label{lemma:block_instantiation_small}
Let \(\mV, \mLambda\) be the matrices from \Cref{lemma:block_lewis} and let $\mU$ and $\vf_j$ be as defined in \Cref{defn:alpha}. Let $p \ge 1$ and $p_i \ge 2$ for all $i \in [m]$. If we choose \(\mW\) such that
\begin{align*}
    \frac{\mV^{1/2}}{n^{1/2-1/p}} = \mW^{1/2}\mLambda^{1/2-1/p},
\end{align*}
then:
\begin{itemize}
    \item $\norm{\mW^{1/2}\mLambda^{1/2-1/p}\mA\vx}_2 \le \gnorm{\mLambda^{1/2-1/p}\mA\vx}{2}$;
    \item for all $i$, $\frac{\valpha_i}{n^{1/2-1/p}} = \beta_i(\mV)$;
    \item $\norm{\valpha}_p^p = n^{p/2}$;
    \item for all $i$, $\inparen{\sum_{j \in S_i} \norm{\vf_j}_2^{p_i}}^{1/p_i} = \norm{\valpha}_p = n^{1/2}$.
    \item The rounding matrix $\mW$ and measure $\vlambda$ are an $\Fstar$-block Lewis overestimate (\Cref{defn:block_lewis_overestimate}) with $\Fstar = n$.
\end{itemize}
\end{lemma}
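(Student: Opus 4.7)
The first bullet is immediate: by the defining relation $\mW^{1/2}\mLambda^{1/2-1/p}\mA = \mV^{1/2}\mA/n^{1/2-1/p}$, the conclusion of \Cref{lemma:block_lewis} gives
\[
\norm{\mW^{1/2}\mLambda^{1/2-1/p}\mA\vx}_2 \;=\; \frac{\norm{\mV^{1/2}\mA\vx}_2}{n^{1/2-1/p}} \;\le\; \gnorm{\mLambda^{1/2-1/p}\mA\vx}{2}.
\]
So the real content of the lemma is the remaining four bullets, and my plan is to extract them all from a single explicit formula for $\tau_j/\vw_j$. By scale invariance of leverage scores (\Cref{fact:lev_score_facts}) together with the identity $\tau_j(\mV^{1/2}\mA)=\mV_{jj}\cdot \va_j^\top(\mA^\top \mV \mA)^{-1}\va_j$, the first factor in the numerator is $\tau_j(\mW^{1/2}\mLambda^{1/2-1/p}\mA)=\mV_{jj}\cdot \va_j^\top(\mA^\top \mV\mA)^{-1}\va_j$. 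The defining equation of $\mW$ yields $\vw_j = \mV_{jj}/(n^{1-2/p}\vlambda_i^{1-2/p})$ for $j\in S_i$, and so after cancellation
\[
\frac{\tau_j(\mW^{1/2}\mLambda^{1/2-1/p}\mA)}{\vw_j} \;=\; n^{1-2/p}\vlambda_i^{1-2/p}\cdot \va_j^\top(\mA^\top \mV\mA)^{-1}\va_j.
\]
Raising to the $p_i/2$ power, summing over $j\in S_i$, and using the definition of $\beta_i(\mV)$ then packages the inner sum as $n^{p_i(1/2-1/p)}\vlambda_i^{p_i(1/2-1/p)}\beta_i(\mV)^{p_i}$.

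With this core identity in hand, the remaining bullets are mechanical substitutions combined with the probability-measure choice $\vlambda_i=\beta_i(\mV)^p/n$ from the proof of \Cref{lemma:block_lewis}. For the second bullet, plugging into the definition of $\valpha_i^p$ and collecting powers gives $\valpha_i^p = n^{p/2-1}\beta_i(\mV)^p$, i.e., $\valpha_i = n^{1/2-1/p}\beta_i(\mV)$. The third bullet follows by summing and invoking $\sum_i \beta_i(\mV)^p = n$ from \Cref{lemma:block_lewis}, which yields $\norm{\valpha}_p^p = n^{p/2-1}\cdot n = n^{p/2}$. For the fifth bullet, the quantity in \Cref{defn:block_lewis_overestimate} reduces to $n^{1-2/p}\vlambda_i^{-2/p}\beta_i(\mV)^2$ after substituting the core identity; substituting $\vlambda_i=\beta_i(\mV)^p/n$ makes every factor of $\beta_i(\mV)$ cancel, leaving exactly $\Fstar = n$.

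The fourth bullet requires one extra ingredient: by \Cref{fact:whitening}, the rows $\vu_j$ of the left singular matrix of $\mW^{1/2}\mLambda^{1/2-1/p}\mA$ satisfy $\norm{\vu_j}_2^2 = \tau_j(\mW^{1/2}\mLambda^{1/2-1/p}\mA)$, so the definition $\vf_j = \vlambda_i^{-1/2}\vw_j^{-1/2}\vu_j$ gives $\norm{\vf_j}_2^{p_i} = \vlambda_i^{-p_i/2}(\tau_j/\vw_j)^{p_i/2}$. Summing over $j\in S_i$ and applying the already-established identity for the inner sum yields $(\sum_{j\in S_i}\norm{\vf_j}_2^{p_i})^{1/p_i} = n^{1/2-1/p}\vlambda_i^{-1/p}\beta_i(\mV)$, which simplifies to $n^{1/2}$ via $\vlambda_i = \beta_i(\mV)^p/n$ (so that $\vlambda_i^{-1/p}\beta_i(\mV) = n^{1/p}$).

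The only thing to be careful about is bookkeeping of the powers of $n$ and $\vlambda_i$ through the successive exponentiations by $p_i/2$ and $1/p_i$ — there is no conceptual obstacle, since every bullet is an immediate consequence of the scaling identity for $\tau_j/\vw_j$ and the normalization $\vlambda_i = \beta_i(\mV)^p/n$ built into \Cref{lemma:block_lewis}.
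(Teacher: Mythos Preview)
Your proposal is correct and follows essentially the same approach as the paper: both derive the identity $\tau_j(\mW^{1/2}\mLambda^{1/2-1/p}\mA)/\vw_j = n^{1-2/p}\vlambda_i^{1-2/p}\,\va_j^\top(\mA^\top\mV\mA)^{-1}\va_j$ from the scale-invariance of leverage scores and the defining relation for $\mW$, then combine it with the choice $\vlambda_i = \beta_i(\mV)^p/n$ from \Cref{lemma:block_lewis} and the whitening identity $\norm{\vu_j}_2^2=\tau_j$ (\Cref{fact:whitening}) to read off all five bullets by direct substitution. The only difference is organizational: the paper expresses the core identity in the equivalent form $\va_j^\top(\mA^\top\mV\mA)^{-1}\va_j = \vlambda_i^{2/p}\norm{\vf_j}_2^2/n^{1-2/p}$ and deduces the fourth bullet by writing $\valpha_i = \vlambda_i^{1/p}(\sum_j\norm{\vf_j}_2^{p_i})^{1/p_i} = (\valpha_i/\norm{\valpha}_p)(\sum_j\norm{\vf_j}_2^{p_i})^{1/p_i}$ and cancelling, whereas you go through $\vlambda_i^{-1/p}\beta_i(\mV)=n^{1/p}$ directly.
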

\begin{proof}[Proof of \Cref{lemma:block_instantiation_small}]
The first property follows immediately from \Cref{lemma:block_lewis}. Using \Cref{fact:whitening}, notice that
\begin{align*}
    \va_j^{\top}(\mA^{\top}\mV\mA)^{-1}\va_j = \frac{\tau_j(\mV^{1/2}\mA)}{\vv_j} = \frac{\tau_j(\mW^{1/2}\mLambda^{1/2-1/p}\mA)}{n^{1-2/p}\vw_j\vlambda_i^{1-2/p}} = \frac{\norm{\vu_j}_2^2}{n^{1-2/p}\vw_j\vlambda_i^{1-2/p}} = \frac{\vlambda_i^{2/p}\norm{\vf_j}_2^2}{n^{1-2/p}},
\end{align*}
so after substituting into the formula for $\beta_i(\mV)$,
\begin{align*}
    \beta_i(\mV) &= \inparen{\sum_{j \in S_i} \inparen{\va_j^{\top}(\mA^{\top}\mV\mA)^{-1}\va_j}^{p_i/2}}^{1/p_i} = \inparen{\sum_{j \in S_i} \inparen{\frac{\vlambda_i^{2/p}\norm{\vf_j}_2^2}{n^{1-2/p}}}^{p_i/2}}^{1/p_i} \\
    &= \frac{\vlambda_i^{1/p}\inparen{\sum_{j \in S_i} \norm{\vf_j}_2^{p_i}}^{1/p_i}}{n^{1/2-1/p}} = \frac{\valpha_i}{n^{1/2-1/p}},
\end{align*}
where the last equality follows from the formula for $\valpha$ stated in \Cref{thm:general_covering}. This also implies that
\begin{align*}
    \norm{\valpha}_p^p = \sum_{i=1}^m \valpha_i^p = \sum_{i=1}^p \beta_i(\mV)^pn^{p/2-1} = n^{p/2}.
\end{align*}
Finally, observe that the above calculation shows that $\vlambda_i \propto \valpha_i^p$, since we have defined $\vlambda_i \propto \beta_i(\mV)^p$ and we have just seen that $\beta_i(\mV)^p \propto \valpha_i^p$. This means we can write $\vlambda_i = \valpha_i^p/\norm{\valpha}_p^p$. Using this, we have
\begin{align*}
    \valpha_i = \vlambda_i^{1/p}\inparen{\sum_{j \in S_i} \norm{\vf_j}_2^{p_i}}^{1/p_i} = \frac{\valpha_i}{\norm{\valpha}_p} \cdot \inparen{\sum_{j \in S_i} \norm{\vf_j}_2^{p_i}}^{1/p_i}.
\end{align*}
After rearranging, we have
\begin{align*}
    \inparen{\sum_{j \in S_i} \norm{\vf_j}_2^{p_i}}^{1/p_i} = \norm{\valpha}_p = n^{1/2},
\end{align*}
and so we may take $\Fstar = n$. This concludes the proof of \Cref{lemma:block_instantiation_small}.
\end{proof}

We now handle the cases that are not covered by the block Lewis weight construction of \citet{jlls23}.

\begin{lemma}
\label{lemma:block_instantiation_alleq}
If $0 < p_1=\dots=p_m=p < 2$ or if $p_1=\dots=p_m=2$ and $1/\log m \le p < \infty$, then there exists a probability measure $\widehat{\vlambda}$ over $[k]$ and corresponding $\widehat{\valpha} \in \R^k$ such that $\widehat{\vlambda}$ is an $\Fstar$-block Lewis overestimate for $\Fstar=n$.
\end{lemma}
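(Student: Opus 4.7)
The plan is to reduce directly to classical $\ell_p$ Lewis weights of $\mA$ and to verify the overestimate condition using the singleton-group instantiation that is implicit whenever $\widehat{\vlambda}$ is taken to be a probability measure over $[k]$ rather than over $[m]$. Under this instantiation, the inner $\ell_{p_i}$ aggregation in \Cref{defn:block_lewis_overestimate} collapses pointwise, and the requirement becomes the scalar bound $\frac{1}{\widehat{\vlambda}_j}\cdot \frac{\tau_j(\mW^{1/2}\widehat{\mLambda}^{1/2-1/p}\mA)}{\vw_j} \le \Fstar$ for each $j \in [k]$. The two hypothesized parameter regimes are precisely the ones where the rounding condition of \Cref{defn:rounding_matrix} can be satisfied by the identity matrix: each forces the one-sided comparison $\gnorm{\vy}{2} \ge \norm{\vy}_2$ for all $\vy \in \R^k$.

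Concretely, let $\vu \in \R^k_{\ge 0}$ denote the standard $\ell_p$ Lewis weights of $\mA$ at exponent equal to the outer parameter $p$, so that $\vu_j = \tau_j(\diag{\vu}^{1/2-1/p}\mA)$ and $\sum_{j=1}^k \vu_j = n$. Their existence for all $p > 0$ is a classical result (via the original construction of \cite{Lewis1978} or the determinant-maximization program of \cite{sz01}), which is valid throughout both hypothesized regimes. I set $\widehat{\vlambda}_j \coloneqq \vu_j / n$, so $\widehat{\mLambda} = \diag{\vu}/n$ is a probability measure on $[k]$, and take the rounding matrix to be $\mW = \mI$.

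To verify the rounding condition $\norm{\widehat{\mLambda}^{1/2-1/p}\mA\vx}_2 \le \gnorm{\widehat{\mLambda}^{1/2-1/p}\mA\vx}{2}$, I expand the right-hand side as $\gnorm{\vy}{2}^2 = \sum_{i=1}^m \norm{\vy_{S_i}}_{p_i}^2$ with $\vy = \widehat{\mLambda}^{1/2-1/p}\mA\vx$: in the $p_i = 2$ regime this equals $\sum_i \norm{\vy_{S_i}}_2^2 = \norm{\vy}_2^2$ on the nose, while in the $p_i = p < 2$ regime the monotonicity $\norm{\cdot}_p \ge \norm{\cdot}_2$ on finite-dimensional vectors yields $\norm{\vy_{S_i}}_{p_i}^2 \ge \norm{\vy_{S_i}}_2^2$, and summing recovers $\norm{\vy}_2^2$. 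For the overestimate itself, scale invariance of leverage scores gives $\tau_j(\widehat{\mLambda}^{1/2-1/p}\mA) = \tau_j((\diag{\vu}/n)^{1/2-1/p}\mA) = \tau_j(\diag{\vu}^{1/2-1/p}\mA) = \vu_j$, so the left-hand side of the (singleton) overestimate inequality evaluates to $(n/\vu_j)\cdot \vu_j = n$, matching $\Fstar = n$. The associated $\widehat{\valpha} \in \R^k$ is then read off from the formula in \Cref{defn:alpha}.

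The only potentially delicate step is ensuring $\ell_p$ Lewis weights exist at the very low end of the first regime, where $p$ may be arbitrarily small, but each of the classical constructions cited above handles this for every $p > 0$. Beyond that, the proof is entirely structural: nothing uses the combinatorial shape of the $S_i$, and the hypothesized regimes enter only to underwrite the one-sided comparison $\gnorm{\cdot}{2} \ge \norm{\cdot}_2$, which is what allows $\mW = \mI$ to serve as a valid rounding matrix in both cases.
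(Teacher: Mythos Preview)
Your handling of the first case ($p_1=\dots=p_m=p<2$) is correct and matches the paper exactly: take Lewis's measure on the rows of $\mA$, set $\mW=\mI$, and the singleton overestimate $\tau_j(\widehat{\mLambda}^{1/2-1/p}\mA)/\widehat{\vlambda}_j=n$ follows from the Lewis fixed-point identity.

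The second case ($p_1=\dots=p_m=2$) has a genuine gap. You again reduce to singletons and invoke row-level $\ell_p$ Lewis weights, but when the inner norms are $2$ and $p\ne 2$ the objective $\gnorm{\mA\vx}{p}^p=\sum_i\norm{\mA_{S_i}\vx}_2^p$ is \emph{not} $\norm{\mA\vx}_p^p$, so the singleton reduction used in the first case is unavailable. More concretely, \Cref{defn:block_lewis_overestimate} and \Cref{defn:rounding_matrix} are stated for a measure $\vlambda$ over $[m]$ with a block-constant $\mLambda$; your $\widehat{\mLambda}$ varies within blocks, and if you aggregate via $\vlambda_i=\sum_{j\in S_i}\widehat{\vlambda}_j$ then the leverage scores $\tau_j(\mLambda^{1/2-1/p}\mA)$ are no longer the Lewis weights you computed, so the clean identity $\tau_j/\widehat{\vlambda}_j=n$ is lost. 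The downstream covering and sampling arguments (the $\valpha_i$, $\polynorm{\cdot}$, and \Cref{thm:general_covering}) are all indexed by groups $i\in[m]$ in this regime, so a row-level overestimate does not plug in.

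The lemma's phrasing (``over $[k]$'') is admittedly loose here, but what the paper actually supplies for this case is a block-constant measure over $[m]$: it forward-references the contraction map of \Cref{alg:blw_lewismap} and \Cref{lemma:contract_alg}, whose fixed point $\vb^\star\in\R^m$ satisfies $\vb^\star_i=\sum_{j\in S_i}\tau_j((\mB^\star)^{1/2-1/p}\mA)$, hence $\sum_i\vb^\star_i=n$, and then $\vlambda_i=\vb^\star_i/n$ with $\mW=\mI_k$ gives $\Fstar=n$ in \Cref{defn:block_lewis_overestimate} for the original group structure. Your row-level Lewis weights do not produce this object.
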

\begin{proof}
For the case where $0 < p_1=\dots=p_m=p < 2$, we simply use the fact that Lewis's measure tells us that there exists a measure $\widehat{\vlambda}$ such that
\begin{align*}
    \frac{\tau_j\inparen{\widehat{\mLambda}^{1/2-1/p}\mA}}{\widehat{\vlambda}_j} \le n.
\end{align*}
In the other case, we will see later that the guarantee of a natural contraction mapping (\Cref{alg:blw_lewismap} and \Cref{lemma:contract_alg}) imply that $\mW = \mI_k$ and the resulting $\vlambda$ form an $n$-block Lewis overestimate, thereby concluding the proof of \Cref{lemma:block_instantiation_alleq}.
\end{proof}

\Cref{lemma:block_instantiation_small} and \Cref{lemma:block_instantiation_alleq} easily imply \Cref{corollary:specific_cover}.

\begin{proof}[Proof of \Cref{corollary:specific_cover}]
We combine \Cref{thm:general_covering} with the instantiations in \Cref{lemma:block_instantiation_small} and \Cref{lemma:block_instantiation_alleq}, directly yielding \Cref{corollary:specific_cover}.
\end{proof}

In light of \Cref{corollary:specific_cover}, the goal of the remainder of this section is to prove \Cref{thm:general_covering}. 

It will be useful to consider a corresponding change-of-basis that arises from our setting of $\vlambda$. Let $\mU\mSigma\mV^{\top}$ be a singular value decomposition of $\mW^{1/2}\mLambda^{1/2-1/p}\mA$ where $\mU \in \R^{m \times n}$ and $\mSigma, \mV \in \R^{n \times n}$. Let $\mR$ be the invertible matrix $\mV\mSigma^{-1}$ (we assume without loss of generality that $\rank{\mA} = n$, and it is easy to extend the results of this section to the case where $\rank{\mA} < n$). We take $\mR$ as our change-of-basis matrix. Using this, it is easy to see that $\mW^{1/2}\mLambda^{1/2-1/p}\mA\mR = \mU$ consists of orthonormal columns. Furthermore, we have $\mLambda^{-1/p}\mA\mR = \mW^{-1/2}\mLambda^{-1/2}\mU$.
 
\subsection{Covering numbers for \texorpdfstring{$0 < p < 2$}{0 < p < 2}}

The goal of this section is to prove \Cref{lemma:covering_two_to_one_clean} under the notion of overestimate given by \Cref{defn:block_lewis_overestimate}. 

We are now ready to state the main result of this subsection.

\begin{lemma}
\label{lemma:covering_two_to_one_clean}
Let $\vlambda$ and $\vw$ be such that they form an $\Fstar$-block Lewis overestimate. Then,
\begin{align*}
    \log\cN(B_p,\eta \widehat{B_2}) \lesssim \eta^{-\frac{2p}{2-p}} \cdot C(p)\max_{i}\min\inparen{p_i,\log\abs{S_i}}\Fstar\log\Fstar,
\end{align*}
where $C(p)$ is a constant that only depends on $p$.
\end{lemma}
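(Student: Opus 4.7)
The plan follows the change-of-measure chaining template outlined in \Cref{sec:overview_measure}. First, I would use the telescoping identity
\begin{align*}
\log\cN(B_p,\eta\widehat{B_2}) = \sum_{h\ge 0}\log\frac{\cN(B_p,8^h\eta\widehat{B_2})}{\cN(B_p,8^{h+1}\eta\widehat{B_2})},
\end{align*}
which effectively has $O(\log\Fstar)$ nonzero terms since the rounding relation from an $\Fstar$-block Lewis overestimate (cf.\ \eqref{intro:blw_rounding}) implies $B_p \subseteq (\Fstar)^{1/p-1/2}\widehat{B_2}$, so the numerator becomes $1$ once $8^h\eta \gtrsim \sqrt{\Fstar}$. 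To bound each ratio, I would use a standard packing-covering refinement: inside any ball $c + 8^{h+1}\eta\widehat{B_2}$, pick a maximal $(8^h\eta)$-separated subset of $(c + 8^{h+1}\eta\widehat{B_2}) \cap B_p$ under $\norm{\cdot}_{\widehat{B_2}}$; its size upper-bounds the ratio. For any two points $\vx,\vy$ in this set we have $\norm{\vx-\vy}_{B_p} \le 2$ and $\norm{\vx-\vy}_{\widehat{B_2}} \ge 8^h\eta$, so log-convexity $\norm{\cdot}_{B_2}^2 \le \norm{\cdot}_{B_p}^\theta \norm{\cdot}_{B_r}^{2-\theta}$ (with $1/2 = \theta/(2p) + (1-\theta/2)/r$), combined with $B_2 \subseteq \widehat{B_2}$, forces $\norm{\vx-\vy}_{B_r} \gtrsim (8^h\eta)^{2/(2-\theta)}$. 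Therefore
\begin{align*}
\log\frac{\cN(B_p,8^h\eta\widehat{B_2})}{\cN(B_p,8^{h+1}\eta\widehat{B_2})} \lesssim \log\cN(\widehat{B_2},\eta_h B_r), \qquad \eta_h \asymp (8^h\eta)^{\theta/(2-\theta)}.
\end{align*}

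Next, I would apply the change of basis $\mR = \mV\mSigma^{-1}$ that maps $\widehat{B_2}$ to the unit Euclidean ball, under which $B_r$ becomes the gauge ball of $\vy \mapsto \gnorml{(\ip{\vf_j,\vy})_j}{r}$ with $\vf_j = \vlambda_i^{-1/2}\vw_j^{-1/2}\vu_j$ as in \Cref{defn:alpha}. The dual Sudakov inequality (\Cref{fact:sudakov}) then gives
\begin{align*}
\log\cN(\widehat{B_2},\eta_h B_r) \lesssim \eta_h^{-2} \exvv{\vg \sim \cN(0,\mI_n)}{\gnorml{(\ip{\vf_j,\vg})_{j \in [k]}}{r}}^2.
\end{align*}
The key input for bounding this Gaussian expectation is the block Lewis overestimate bound $(\sum_{j \in S_i} \norm{\vf_j}_2^{p_i})^{1/p_i} \le \sqrt{\Fstar}$ for every $i$, which is just a restatement of \Cref{defn:block_lewis_overestimate}. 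For each block, raising to the $p_i$-th power and applying Jensen controls $\exv{\norm{(\ip{\vf_j,\vg})_{j \in S_i}}_{p_i}}$ by $\sqrt{p_i \Fstar}$; when $p_i > \log\abs{S_i}$, one instead switches to an $\ell_\infty$ bound via $\norm{\cdot}_{p_i} \le e \norm{\cdot}_\infty$ plus the subgaussian maximum estimate (\Cref{fact:max_subgaussians}), yielding $\sqrt{\log\abs{S_i} \cdot \Fstar}$. Together these give a per-block bound of $\sqrt{\min(p_i,\log\abs{S_i}) \cdot \Fstar}$. Pushing this through the outer $\gnorml{\cdot}{r}$ (again by Jensen, using that $\vlambda$ is a probability measure) and accounting for a subgaussian max across blocks costs another $\sqrt{\log\Fstar}$ factor, giving the bound $\exv{\gnorml{(\ip{\vf_j,\vg})_j}{r}}^2 \lesssim p^\star \Fstar \log\Fstar$.

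Finally, summing the bounds $\sum_{h \ge 0} \eta_h^{-2} \lesssim \eta^{-2\theta/(2-\theta)}$ as a convergent geometric series and choosing $r = \Theta(\log\Fstar/(2-p))$ makes $\theta \to p$ up to an absorbable multiplicative correction in the exponent, so that $\eta^{-2\theta/(2-\theta)}$ becomes $\eta^{-2p/(2-p)}$ with only a $C(p)$ inflation. Combining with the Gaussian moment estimate delivers the claimed bound $\eta^{-2p/(2-p)} \cdot C(p) p^\star \Fstar \log\Fstar$.

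The main obstacle is tracking the Gaussian moment carefully to extract the sharp $\min(p_i,\log\abs{S_i})$ factor: a naive $\ell_{p_i}$-moment bound loses $\sqrt{p_i}$, which is fatal for large or infinite $p_i$, so one must swap to the $\ell_\infty$-route precisely where beneficial. A secondary concern is the tuning of $r$: taking $r$ too small leaves $\theta$ too far from $p$ and worsens the target exponent, while $r$ too large inflates the $r$-norm moment and fails to absorb into the $\log\Fstar$ factor, so the logarithmic-in-$\Fstar$ choice of $r$ is what makes the bookkeeping work out.
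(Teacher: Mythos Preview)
Your approach is correct and closely parallels the paper's, but the organization genuinely differs. The paper proves the lemma via \emph{two} nested interpolation steps (Lemmas 3.10 and 3.15): first it interpolates $\gnorml{\cdot}{2}$ between $\gnorml{\cdot}{p}$ and $\gnorml{\cdot}{r}$ with a \emph{bounded} $r$ (taking $\theta=p/2$ so $r=4-p$, or $\theta=1$ so $r=p/(p-1)$), and then, to bound $\log\cN(\widehat{B_2},\delta_h B_r)$, it performs a second interpolation between $B_2$ and $B_q$ with $q=r\log\Delta$. The two exponents combine \emph{exactly} to $-2p/(2-p)$, and the $\log\Fstar$ factor enters via the Gaussian moment at scale $q$. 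You instead take a single interpolation directly to large $r\asymp\log\Fstar$ and apply dual Sudakov there: the $\sqrt{r}$ in the Gaussian $r$-moment becomes your $\sqrt{\log\Fstar}$, and the residual error in the exponent (since $\theta<p$ for finite $r$) is absorbed using $B_p\subset(\Fstar)^{1/p-1/2}\widehat{B_2}$, just as the paper absorbs its own correction in Lemma 3.15 via Lemma 3.14. The two routes are essentially dual reparameterizations of the same trade-off; yours is a bit more streamlined (one interpolation instead of two), at the price that the target exponent appears only approximately rather than by construction, and your $C(p)$ carries an extra $1/(2-p)$ that the paper avoids near $p=2$ by switching to $\theta=1$. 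One small imprecision: your phrase ``subgaussian max across blocks'' for the $\sqrt{\log\Fstar}$ loss is really the $\sqrt{r}$ from the $r$-th moment of a subgaussian, and the $1/(2-p)$ in your choice of $r$ is unnecessary---$r\asymp\log\Fstar$ already suffices for the correction $(\Fstar)^{2/r}=O(1)$.
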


The goal of the rest of this subsection is to prove \Cref{lemma:covering_two_to_one_clean}. We follow the outline detailed in \Cref{sec:overview_measure}. In short, our plan is the following:
\begin{enumerate}
    \item We first reduce bounding $\log\cN(B_p, \eta\widehat{B_2})$ to bounding $\log\cN(\widehat{B_2}, \delta_h B_r)$ for all $h \ge 0$ and appropriate choices of $r$ and $\delta_h$.
    \item We then control each term $\log\cN(\widehat{B_2}, \delta_h B_r)$. To do so, we will apply the dual Sudakov inequality (\Cref{fact:sudakov}, \eqref{eq:sudakov_dual}). To actually estimate $\E\norm{\vg}_{B_r}$ where $\vg \sim \cN(0,\mI_n)$, we need to prove that every resulting summand of the form $\norm{\cdot}_{p_i}$ is subgaussian with a parameter that only depends on $p_i$. To do so, we exploit the fact that these summands are Lipschitz and then apply \Cref{fact:lip_concentration}.
    \item We finally assemble all the previous pieces together to get the desired handle on $\log\cN(B_p, \eta\widehat{B_2})$. 
\end{enumerate}

\subsubsection{Reduction to bounding \texorpdfstring{$\log\cN(\widehat{B_2},\eta B_r)$}{log N(\^{B2}, n Br)}}

As stated in \Cref{sec:overview_measure}, we begin with reducing the calculation of $\log\cN(B_p, \eta \widehat{B_2})$ to calculating $\log\cN(\widehat{B_2}, \eta B_r)$ (for a different $\eta$).

\begin{lemma}
\label{lemma:poor_mans_dualization}
Let $\theta$ and $r$ be such that $r=(2-\theta)p/(p-\theta)$. Define
\begin{align*}
    \delta_h \coloneqq \inparen{\frac{8^{h+1}\eta}{2\cdot 8^{2/\theta}}}^{\frac{\theta}{2-\theta}} = \eta^{\frac{\theta}{2-\theta}}\cdot 8^{(h+1)\cdot\frac{\theta}{2-\theta}} \cdot \inparen{2\cdot 8^{2/\theta}}^{-\frac{\theta}{2-\theta}}
\end{align*}
Then, we have
\begin{align*}
     \log\cN(B_p,\eta \widehat{B_2}) &\le \sum_{h\ge 0} \log\cN(\widehat{B_2},\delta_hB_r).
\end{align*}
\end{lemma}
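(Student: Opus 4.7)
The plan is to combine a telescoping decomposition of $\log\cN(B_p, \eta\widehat{B_2})$ into covering ratios at geometric scales, a pigeonhole packing argument that turns each such ratio into a packing inside a small $\widehat{B_2}$-ball, and the interpolation inequality afforded by log-convexity of $\gnorml{\cdot}{s}$ in $1/s$. Setting $N_h \coloneqq \cN(B_p, 8^h\eta\, \widehat{B_2})$, one has $N_h = 1$ for all sufficiently large $h$ (since $\widehat{B_2}$ contains a neighborhood of the origin and $B_p$ is bounded), so $\log\cN(B_p, \eta\widehat{B_2}) = \sum_{h \ge 0}\log(N_h/N_{h+1})$ and it suffices to prove $\log(N_h/N_{h+1}) \le \log\cN(\widehat{B_2}, \delta_h B_r)$ for each $h \ge 0$.

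For a single ratio I would take a maximal $\widehat{B_2}$-packing $x_1,\ldots,x_N$ of $B_p$ at scale $8^h\eta$ (so $N \ge N_h$ by maximality) and cover $B_p$ by $N_{h+1}$ translates of $8^{h+1}\eta\, \widehat{B_2}$; pigeonhole locates $M \ge N_h/N_{h+1}$ of the packing points inside one such translate. Centering at one of them yields $z_1 = 0, z_2,\ldots,z_M$ that (i) lie in $2\cdot 8^{h+1}\eta\, \widehat{B_2}$, (ii) remain pairwise $\widehat{B_2}$-separated by more than $8^h\eta$, and (iii) belong to $2B_p$ as differences of points in $B_p$.

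Next I would invoke the log-convexity of $1/s \mapsto \log\gnorml{\cdot}{s}$, which holds because for fixed inner norms the whole object is a weighted $\ell_s^m$-norm of the vector of inner norms. The condition $r = (2-\theta)p/(p-\theta)$ is equivalent to $\tfrac{1}{2} = \tfrac{\theta/2}{p} + \tfrac{1-\theta/2}{r}$, so for every $\vy \in \R^k$,
\begin{align*}
    \gnorml{\vy}{2}^2 \le \gnorml{\vy}{p}^{\theta}\gnorml{\vy}{r}^{2-\theta}.
\end{align*}
Applied to $\vy \coloneqq \mLambda^{-1/p}\mA(z_i - z_j)$, the rounding inequality $\|\cdot\|_{\widehat{B_2}} \le \gnorml{\mLambda^{-1/p}\mA\cdot}{2}$ combined with (ii) gives $\gnorml{\vy}{2} > 8^h\eta$, while (iii) gives $\gnorml{\vy}{p} \le 2$; rearranging yields pairwise $B_r$-separation of the $z_i$ strictly larger than $(8^h\eta)^{2/(2-\theta)}/2^{\theta/(2-\theta)}$.

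Finally, rescaling by $1/(2\cdot 8^{h+1}\eta)$ puts the $z_i$ inside $\widehat{B_2}$ and reduces their pairwise $B_r$-separation to an expression of order $(8^h\eta)^{\theta/(2-\theta)}$ times an absolute constant, which is precisely the scale of $\delta_h$ appearing in the statement; standard packing-covering duality then gives $M \le \cN(\widehat{B_2}, \delta_h B_r)$, establishing the per-level bound and hence the lemma via the telescoping sum. The main obstacle is not conceptual but arithmetic: matching the exact constant in the stated $\delta_h$ (in particular the $2 \cdot 8^{2/\theta}$ factor in the denominator) requires carefully absorbing both the $2^{\theta/(2-\theta)}$ loss from the interpolation step and the factor-of-two slack from packing-covering duality, and the choice of pigeonhole radius $8^{h+1}\eta$ rather than some other geometric scale is what makes the bookkeeping line up with the stated constant.
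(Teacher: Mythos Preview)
Your proposal is correct and follows essentially the same route as the paper: telescope over scales $8^h\eta$, pigeonhole a maximal $\widehat{B_2}$-packing of $B_p$ into a single covering ball, recentre and rescale into $\widehat{B_2}$, then use the log-convexity interpolation (together with $\|\cdot\|_{\widehat{B_2}} \le \gnorml{\mLambda^{-1/p}\mA\cdot}{2}$) to convert $\widehat{B_2}$-separation plus $B_p$-boundedness into $B_r$-separation and invoke packing-to-covering. The only cosmetic differences are that the paper recentres at the covering-ball center $\zstar_h$ rather than at a packing point (saving one factor of~$2$ in the rescaling), and writes $\max\{2^{1/p},2\}$ in place of your $2$ to accommodate the quasi-norm case $p<1$; both are exactly the constant-bookkeeping issues you already flag.
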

\begin{proof}[Proof of \Cref{lemma:poor_mans_dualization}]
For $h \in \N_{\ge 0}$, let $\cN_h$ be a maximal subset of $B_p$ such that for any two distinct elements $\vz_1, \vz_2 \in B_p$, we have $\norm{\mW^{1/2}\mLambda^{-1/p}\mA(\vz_1-\vz_2)}_{2(\vlambda)} \ge 8^h\eta$ (where by $\norm{\cdot}_{p(v\lambda)}$ we mean the $\ell_p$ norm taken with respect to the measure given by $\vlambda$). This yields $\abs{\cN_h} \ge \cN(B_p, 8^h \eta \widehat{B_2})$.

Next, since for every $h$ there are $\vz_i \in B_p$ for which $B_p \subseteq \bigcup_{i=1}^{\cN(B_p,8^{h+1}\eta \widehat{B_2})} \inbraces{\vz_i + 8^{h+1}\eta \widehat{B_2}}$, for every $h$ there must exist a $\zstar_h \in B_p$ for which
\begin{align*}
    \abs{\inbraces{\zstar_h + 8^{h+1} \eta \widehat{B_2}} \cap \cN_h} \ge \frac{\abs{\cN_h}}{\cN(B_p,8^{h+1} \eta \widehat{B_2})} \ge \frac{\cN(B_p,8^h \eta \widehat{B_2})}{\cN(B_p,8^{h+1} \eta \widehat{B_2})}.
\end{align*}
Let
\begin{align*}
    \cL_h \coloneqq \inbraces{\frac{\vz-\zstar_h}{8^{h+1} \eta} \suchthat \vz \in \inbraces{\zstar_h + 8^{h+1} \eta \widehat{B_2}} \cap \cN_h}
\end{align*}
from which we get by the sub-triangle inequality that
\begin{align*}
    \norm{\mW^{1/2}\mLambda^{-1/p}\mA\vz}_{2(\vlambda)} \le 1 \text{ and } \gnorml{\mLambda^{-1/p}\mA\vz}{p} \le \frac{\max\inbraces{2^{1/p},2}}{8^{h+1} \eta} \text{ for any } \vz \in \cL_h
\end{align*}
and
\begin{align*}
    \norm{\mW^{1/2}\mLambda^{-1/p}\mA\inparen{\vz_1-\vz_2}}_{2(\vlambda)} \ge \frac{1}{8} \text{ for any distinct } \vz_1,\vz_2 \in \cL_h.
\end{align*}
We now apply an interpolation estimate. Let $\vz_1$ and $\vz_2$ be distinct elements from $\cL_h$, set $0<\theta<2$ and $r=(2-\theta)p/(p-\theta)$, and observe that $\theta = p(r-2)/(r-p)$ and 
\begin{align*}
    \frac{1}{8^2} &\le \norm{\mW^{1/2}\mLambda^{-1/p}\mA\inparen{\vz_1-\vz_2}}_{2(\vlambda)}^2 \\
    &\le \gnorml{\mLambda^{-1/p}\mA\inparen{\vz_1-\vz_2}}{2}^2 \\
    &\le \gnorml{\mLambda^{-1/p}\mA\inparen{\vz_1-\vz_2}}{p}^{\theta}\cdot\gnorml{\mLambda^{-1/p}\mA\inparen{\vz_1-\vz_2}}{r}^{2-\theta} \\
    &\le \inparen{\frac{\max\inbraces{2^{1/p},2}}{8^{h+1}\eta}}^{\theta}\gnorml{\mLambda^{-1/p}\mA\inparen{\vz_1-\vz_2}}{r}^{2-\theta} 
\end{align*}
which means that after rearranging we have
\begin{align*}
    \gnorml{\mLambda^{-1/p}\mA\inparen{\vz_1-\vz_2}}{r} \ge \inparen{\inparen{\frac{8^{h+1}\eta}{\max\inbraces{2^{1/p},2}}}^{\theta} \cdot \frac{1}{8^2}}^{\frac{1}{2-\theta}} \ge \delta_h.
\end{align*}
The above argument gives
\begin{align*}
    \log\cN(\widehat{B_2},\delta_h B_r) \ge \log\abs{\cL_h} \ge \log\cN\inparen{B_p,8^h\eta\widehat{B_2}}-\log\cN\inparen{B_p,8^{h+1}\eta\widehat{B_2}}.
\end{align*}
We sum these inequalities over all $h \ge 0$ (noting that when $h$ is sufficiently large, we have $\log\cN(B_p,8^{h+1} \eta \widehat{B_2})=0$), and get
\begin{align*}
    \log\cN(B_p,\eta \widehat{B_2}) &\le \sum_{h\ge 0} \log\cN(\widehat{B_2},\delta_hB_r).
\end{align*}
This concludes the proof of \Cref{lemma:poor_mans_dualization}.
\end{proof}

\subsubsection{Bounding \texorpdfstring{$\log\cN(\widehat{B_2},\eta B_r)$}{log N(\^{B2}, n Br)}}

As we saw in \Cref{lemma:poor_mans_dualization}, it will be enough to understand the behavior of $\log\cN(\widehat{B_2},\eta B_r)$. Since $\widehat{B_2}$ is a linear transformation of a Euclidean ball, we will be able to apply the dual Sudakov inequality (\Cref{fact:sudakov}, \eqref{eq:sudakov_dual}).

To prepare for an application of the dual Sudakov inequality, we bound the Gaussian width of the ball $\inbraces{\vx \in \R^n \suchthat \gnorml{\mLambda^{-1/p}\mA\mR\vx}{r} \le 1}$. As we will see in a moment, the relevance of this ball arises from the fact that it is the $r$-ball with respect to the $\vlambda$ measure after a suitable linear transformation of the underlying space. In particular, it is under the invertible mapping $\vx \mapsto \mR\vx$ that we get $\mW^{1/2}\mLambda^{1/2-1/p}\mA\vx \mapsto \mW^{1/2}\mLambda^{1/2-1/p}\mA\mR\vx = \mU\vx$.

\begin{lemma}
\label{lemma:one_group_subgaussian}
Let $\vg \sim \cN(0,\mI_n)$. We have
\begin{align*}
    \subgnorm{\inparen{\sum_{j \in S_i} \abs{\ip{\vf_j,\vg}}^{p_i}}^{1/p_i}} \lesssim \inparen{1+\sqrt{p_i}}\inparen{\sum_{j \in S_i} \norm{\vf_j}_{2}^{p_i}}^{1/p_i}.
\end{align*}
\end{lemma}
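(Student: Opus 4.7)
The plan is to set \(F(\vg) \coloneqq \inparen{\sum_{j\in S_i}\abs{\ip{\vf_j,\vg}}^{p_i}}^{1/p_i}\), invoke \Cref{fact:uncentering} to split \(\subgnorm{F(\vg)}\) into a Lipschitz concentration term and a first-moment term, and bound each factor by \(\inparen{\sum_{j\in S_i}\norm{\vf_j}_2^{p_i}}^{1/p_i}\) up to the stated $(1+\sqrt{p_i})$ factor.

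For the Lipschitz constant, I would use the sub-triangle inequality \(\abs{F(\vg)-F(\vg')}\le F(\vg-\vg')\), and then apply Cauchy--Schwarz row-by-row to obtain, for any \(\vh\in\R^n\),
\begin{align*}
F(\vh)^{p_i} = \sum_{j\in S_i}\abs{\ip{\vf_j,\vh}}^{p_i} \le \sum_{j\in S_i}\norm{\vf_j}_2^{p_i}\norm{\vh}_2^{p_i},
\end{align*}
hence \(\lipnorm{F}\le\inparen{\sum_{j\in S_i}\norm{\vf_j}_2^{p_i}}^{1/p_i}\). Plugging this into \Cref{fact:lip_concentration} yields \(\subgnorm{F(\vg)-\E F(\vg)}\lesssim \inparen{\sum_{j\in S_i}\norm{\vf_j}_2^{p_i}}^{1/p_i}\).

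For the expectation, I would use concavity of \(t\mapsto t^{1/p_i}\) (Jensen's inequality) together with the Gaussian moment estimate \(\inparen{\E\abs{Z}^{p_i}}^{1/p_i}\lesssim\sqrt{p_i}\) for \(Z\sim\cN(0,1)\). Since \(\ip{\vf_j,\vg}\sim\cN(0,\norm{\vf_j}_2^2)\), this gives
\begin{align*}
\E F(\vg) \le \inparen{\E \sum_{j\in S_i}\abs{\ip{\vf_j,\vg}}^{p_i}}^{1/p_i}
= \inparen{\E\abs{Z}^{p_i}}^{1/p_i}\inparen{\sum_{j\in S_i}\norm{\vf_j}_2^{p_i}}^{1/p_i}
\lesssim \sqrt{p_i}\inparen{\sum_{j\in S_i}\norm{\vf_j}_2^{p_i}}^{1/p_i}.
\end{align*}
Combining the two estimates via \Cref{fact:uncentering} closes the argument.

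I do not anticipate any real obstacle: the whole lemma is a direct consequence of two items from \Cref{sec:background}. The only mildly non-obvious point is choosing the right Lipschitz bound—the crude bound \(\lipnorm{F}\le\inparen{\sum_j\norm{\vf_j}_2^2}^{1/2}\) (valid for \(p_i\ge 2\)) is too weak for \(p_i\) large, whereas the row-wise Cauchy--Schwarz estimate above matches the target \((\sum_j\norm{\vf_j}_2^{p_i})^{1/p_i}\) exactly and works uniformly for all \(p_i\ge 1\) (indeed for any \(p_i>0\) modulo replacing the triangle inequality by a quasi-triangle one, which is all we need for the cases listed in \Cref{thm:general_covering}).
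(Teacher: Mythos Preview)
Your proposal is correct and matches the paper's proof essentially line for line: the paper also bounds the Lipschitz constant of \(F\) via the row-wise Cauchy--Schwarz estimate \(F(\vx)\le(\sum_{j}\norm{\vf_j}_2^{p_i})^{1/p_i}\norm{\vx}_2\), applies \Cref{fact:lip_concentration}, bounds \(\E F(\vg)\) by Jensen plus the standard Gaussian \(p_i\)-moment bound, and combines via \Cref{fact:uncentering}.
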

\begin{proof}[Proof of \Cref{lemma:one_group_subgaussian}]
Observe the following Lipschitzness bound, i.e., for any $\vx$, by Cauchy-Schwarz, we have
\begin{align*}
   \inparen{\sum_{j \in S_i} \abs{\ip{\vf_j,\vx}}^{p_i}}^{1/p_i} \le \inparen{\sum_{j \in S_i} \norm{\vf_j}_{2}^{p_i}}^{1/p_i}\norm{\vx}_2
\end{align*}
which means by \Cref{fact:lip_concentration}, we get
\begin{align*}
    \subgnorm{\inparen{\sum_{j \in S_i} \abs{\ip{\vf_j,\vg}}^{p_i}}^{1/p_i}-\exvv{\vg\sim\cN(0,\mI_n)}{\inparen{\sum_{j \in S_i} \abs{\ip{\vf_j,\vg}}^{p_i}}^{1/p_i}}} \lesssim \inparen{\sum_{j \in S_i} \norm{\vf_j}_{2}^{p_i}}^{1/p_i}.
\end{align*}
Now, observe that
\begin{align*}
    \exvv{\vg\sim\cN(0,\mI_n}{\inparen{\sum_{j \in S_i} \abs{\ip{\vf_j,\vg}}^{p_i}}^{1/p_i}} \le \inparen{\sum_{j \in S_i} \norm{\vf_j}_2^{p_i}\exvv{\vg\sim\cN(0,\mI_n)}{\abs{\ip{\frac{\vf_j}{\norm{\vf_j}_2},\vg}}^{p_i}}}^{1/p_i} \asymp p_i^{1/2}\inparen{\sum_{j \in S_i} \norm{\vf_j}_{2}^{p_i}}^{1/p_i},
\end{align*}
and by \Cref{fact:uncentering},
\begin{align*}
    \subgnorm{\inparen{\sum_{j \in S_i} \abs{\ip{\vf_j,\vg}}^{p_i}}^{1/p_i}} \lesssim \inparen{1+\sqrt{p_i}}\inparen{\sum_{j \in S_i} \norm{\vf_j}_{2}^{p_i}}^{1/p_i},
\end{align*}
completing the proof of \Cref{lemma:one_group_subgaussian}.
\end{proof}

Next, we estimate the Gaussian width of $\inbraces{\vx\in\R^n\suchthat \gnorml{\mLambda^{-1/p}\mA\mR\vx}{r} \le 1}$.

\begin{lemma}
\label{lemma:khintchine}
For $r \ge 2$, we have
\begin{align*}
    \exvv{\vg\sim\cN(0,\mI_n)}{\gnorml{\mLambda^{-1/p}\mA\mR\vg}{r}} \lesssim \begin{cases} r^{1/2}\inparen{1+\sqrt{\max_i p_i}}\inparen{\sum_{i=1}^m \vlambda_i \inparen{\sum_{j \in S_i} \norm{\vf_j}_{2}^{p_i}}^{r/p_i}}^{1/r} & \text{ if } r \le \log m \\ \sqrt{\log m} \cdot \max_i \inparen{1+\sqrt{p_i}}\inparen{\sum_{j \in S_i} \norm{\vf_j}_{2}^{p_i}}^{1/p_i} & \text{ otherwise} \end{cases}
\end{align*}
\end{lemma}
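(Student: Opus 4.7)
The plan is to use the change-of-basis identity set up in \Cref{sec:blw_defs_covering} to rewrite the objective in terms of the vectors $\vf_j$, control the $r$-th moment of each per-group summand using \Cref{lemma:one_group_subgaussian}, and then combine over groups via either Jensen's inequality (small $r$) or a subgaussian maximum bound (large $r$).

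First, I would observe that $\mW^{1/2}\mLambda^{1/2-1/p}\mA\mR = \mU$, so that for $j \in S_i$ we have
\begin{align*}
(\mLambda^{-1/p}\mA\mR)_j = \vlambda_i^{-1/p}\cdot\vw_j^{-1/2}\vlambda_i^{-1/2+1/p}\vu_j = \vw_j^{-1/2}\vlambda_i^{-1/2}\vu_j = \vf_j.
\end{align*}
This lets me rewrite
\begin{align*}
\gnorml{\mLambda^{-1/p}\mA\mR\vg}{r}^r = \sum_{i=1}^m \vlambda_i Y_i^r, \qquad Y_i \coloneqq \inparen{\sum_{j \in S_i} \abs{\ip{\vf_j,\vg}}^{p_i}}^{1/p_i},
\end{align*}
and \Cref{lemma:one_group_subgaussian} already provides $\subgnorm{Y_i} \lesssim (1+\sqrt{p_i})(\sum_{j \in S_i}\norm{\vf_j}_2^{p_i})^{1/p_i}$.

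For the first branch ($r \le \log m$), I would apply Jensen's inequality for the concave map $x\mapsto x^{1/r}$ to pull the expectation inside, yielding $\exv{\gnorml{\cdot}{r}} \le (\sum_i \vlambda_i \exv{Y_i^r})^{1/r}$. Then I would invoke the subgaussian moment bound $\exv{Y_i^r}\lesssim r^{r/2}\subgnorm{Y_i}^r$ from \Cref{fact:subgaussian_properties} and substitute the estimate from \Cref{lemma:one_group_subgaussian}. Taking the $r$-th root and pulling $\max_i p_i$ out of the outer sum delivers the claimed bound.

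For the second branch ($r > \log m$), since $\vlambda$ is a probability measure, $\sum_i \vlambda_i Y_i^r \le \max_i Y_i^r$, hence $(\sum_i \vlambda_i Y_i^r)^{1/r} \le \max_i Y_i$. Then \Cref{fact:max_subgaussians} gives $\exv{\max_i Y_i} \lesssim \sqrt{\log m}\cdot \max_i \subgnorm{Y_i}$, and another application of \Cref{lemma:one_group_subgaussian} closes out this case. I don't anticipate a real obstacle here: the substantive subgaussian estimate has already been done in \Cref{lemma:one_group_subgaussian}, and the remaining work is the routine observation that in the two regimes it is natural to apply Jensen vs.\ a maximum bound, the cutoff being where $\sqrt{r}$ matches $\sqrt{\log m}$ in the typical subgaussian tail integration.
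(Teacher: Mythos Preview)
Your proposal is correct and follows essentially the same route as the paper's proof: rewrite the objective via the change-of-basis so that each group term is $Y_i=(\sum_{j\in S_i}|\langle\vf_j,\vg\rangle|^{p_i})^{1/p_i}$, invoke \Cref{lemma:one_group_subgaussian} for the per-group subgaussian norm, and then split into the two regimes using Jensen plus the subgaussian moment bound (\Cref{fact:subgaussian_properties}) for $r\le\log m$ and the probability-measure bound $(\sum_i\vlambda_iY_i^r)^{1/r}\le\max_iY_i$ together with \Cref{fact:max_subgaussians} for $r>\log m$.
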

\begin{proof}[Proof of \Cref{lemma:khintchine}]
Let $\vu_j$ denote the rows of $\mU$. Note that by \Cref{fact:whitening}, we have
\begin{align*}
    \norm{\vu_j}_2^2 = \tau_j(\mW^{1/2}\mLambda^{1/2-1/p}\mA).
\end{align*}
Now, observe that $\mLambda^{-1/p}\mA\mR = \mW^{-1/2}\mLambda^{-1/2}\mU$. By \Cref{lemma:one_group_subgaussian}, we know that
\begin{align*}
    \exvv{\vg\sim\cN(0,\mI_n)}{\inparen{\sum_{j \in S_i} \abs{\ip{\vf_j,\vg}}^{p_i}}^{r/p_i}} \lesssim r^{r/2}\inparen{1+\sqrt{p_i}}^r\inparen{\sum_{j \in S_i} \norm{\vf_j}_{2}^{p_i}}^{r/p_i}
\end{align*}
We first handle the case where $r \lesssim \log m$. Notice that
\begin{align*}
    \exvv{\vg\sim\cN(0,\mI_n)}{\gnorml{\mLambda^{-1/p}\mA\mR\vg}{r}} &= \exvv{\vg\sim\cN(0,\mI_d))}{\inparen{\sum_{i=1}^m \vlambda_i\inparen{\sum_{j \in S_i}\abs{\ip{\vw_j^{-1/2}\vlambda_i^{-1/2}\vu_j,\vg}}^{p_i}}^{r/p_i}}^{1/r}} \\
    &\le \inparen{\sum_{i=1}^m \vlambda_i\exvv{\vg\sim\cN(0,\mI_d))}{\inparen{\sum_{j \in S_i}\abs{\ip{\vw_j^{-1/2}\vlambda_i^{-1/2}\vu_j,\vg}}^{p_i}}^{r/p_i}}}^{1/r} \\
    &\lesssim \inparen{\sum_{i=1}^m \vlambda_i\inparen{r^{r/2}\inparen{1+\sqrt{p_i}}^r\inparen{\sum_{j \in S_i} \norm{\vf_j}_{2}^{p_i}}^{r/p_i}}}^{1/r} \\
    &\le r^{1/2}\inparen{1+\sqrt{\max_i p_i}}\inparen{\sum_{i=1}^m \vlambda_i \inparen{\sum_{j \in S_i} \norm{\vf_j}_{2}^{p_i}}^{r/p_i}}^{1/r}.
\end{align*}
We now handle the case where $r \gtrsim \log m$. We have
\begin{align*}
    \exvv{\vg\sim\cN(0,\mI_n)}{\gnorml{\mLambda^{-1/p}\mA\mR\vg}{r}} &= \exvv{\vg\sim\cN(0,\mI_d))}{\inparen{\sum_{i=1}^m \vlambda_i\inparen{\sum_{j \in S_i}\abs{\ip{\vw_j^{-1/2}\vlambda_i^{-1/2}\vu_j,\vg}}^{p_i}}^{r/p_i}}^{1/r}} \\
    &\lesssim \exvv{\vg\sim\cN(0,\mI_d))}{\max_i \inparen{\sum_{j \in S_i}\abs{\ip{\vw_j^{-1/2}\vlambda_i^{-1/2}\vu_j,\vg}}^{p_i}}^{1/p_i}} \\
    &\lesssim \sqrt{\log m} \cdot \max_i \inparen{1+\sqrt{p_i}}\inparen{\sum_{j \in S_i} \norm{\vf_j}_{2}^{p_i}}^{1/p_i}
\end{align*}
and conclude the proof of \Cref{lemma:khintchine} (the last line follows from \Cref{fact:max_subgaussians}).
\end{proof}

Now, we show how to relate $(\sum_{j \in S_i} \norm{\vf_j}_2^{p_i})^{1/p_i}$ to $\Fstar$.

\begin{lemma}
\label{lemma:convert_f_to_fstar}
For all $i \in [m]$, we have
\begin{align*}
    \inparen{\sum_{j \in S_i} \norm{\vf_j}_2^{p_i}}^{2/p_i} \le \Fstar.
\end{align*}
\end{lemma}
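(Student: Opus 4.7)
The plan is to reduce the claim to the defining inequality of an $\Fstar$-block Lewis overestimate (\Cref{defn:block_lewis_overestimate}) by unfolding the definition of $\vf_j$ together with \Cref{fact:whitening}. Concretely, recall from \Cref{defn:alpha} that $\vf_j = \vlambda_i^{-1/2}\vw_j^{-1/2}\vu_j$, where $j \in S_i$ and $\vu_j$ is the $j$th row of the matrix of left singular vectors $\mU$ of $\mW^{1/2}\mLambda^{1/2-1/p}\mA$. By \Cref{fact:whitening}, the squared row norms of $\mU$ are exactly the leverage scores, i.e., $\norm{\vu_j}_2^2 = \tau_j(\mW^{1/2}\mLambda^{1/2-1/p}\mA)$.

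Substituting into $\norm{\vf_j}_2^{p_i}$ yields, for every $j \in S_i$,
\[
\norm{\vf_j}_2^{p_i} = \vlambda_i^{-p_i/2}\vw_j^{-p_i/2}\,\tau_j(\mW^{1/2}\mLambda^{1/2-1/p}\mA)^{p_i/2} = \vlambda_i^{-p_i/2}\inparen{\frac{\tau_j(\mW^{1/2}\mLambda^{1/2-1/p}\mA)}{\vw_j}}^{p_i/2}.
\]
Summing over $j \in S_i$, pulling out the constant $\vlambda_i^{-p_i/2}$, and then raising to the $2/p_i$ power gives
\[
\inparen{\sum_{j \in S_i} \norm{\vf_j}_2^{p_i}}^{2/p_i} = \frac{1}{\vlambda_i}\inparen{\sum_{j \in S_i} \inparen{\frac{\tau_j(\mW^{1/2}\mLambda^{1/2-1/p}\mA)}{\vw_j}}^{p_i/2}}^{2/p_i}.
\]
The right-hand side is precisely the quantity appearing on the left-hand side of \Cref{defn:block_lewis_overestimate}, which is bounded above by $\Fstar$ by hypothesis. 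This proves the lemma.

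There is no real obstacle here: the statement is essentially a bookkeeping translation between the ``rescaled singular vector'' formulation of $\valpha$ used in \Cref{defn:alpha} and the ``leverage score'' formulation of block Lewis overestimates used in \Cref{defn:block_lewis_overestimate}. The only thing to be careful about is the factor of $\vlambda_i^{-1/2}$ in $\vf_j$ (which is constant over the group $S_i$ and hence factors out of the sum cleanly) and invoking \Cref{fact:whitening} to identify $\norm{\vu_j}_2^2$ with $\tau_j$.
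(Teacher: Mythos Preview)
Your proof is correct and essentially identical to the paper's: both unfold the definition of $\vf_j$ via \Cref{fact:whitening} to rewrite $(\sum_{j \in S_i} \norm{\vf_j}_2^{p_i})^{2/p_i}$ as $\frac{1}{\vlambda_i}(\sum_{j \in S_i} (\tau_j(\mW^{1/2}\mLambda^{1/2-1/p}\mA)/\vw_j)^{p_i/2})^{2/p_i}$ and then invoke \Cref{defn:block_lewis_overestimate}.
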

\begin{proof}[Proof of \Cref{lemma:convert_f_to_fstar}]
Recall \Cref{fact:whitening}; this gives us
\begin{align*}
    \inparen{\sum_{j \in S_i} \norm{\vf_j}_{2}^{p_i}}^{2/p_i} = \inparen{\sum_{j \in S_i} \inparen{\frac{\tau_j\inparen{\mW^{1/2}\mLambda^{1/2-1/p}\mA}}{\vw_j\vlambda_i}}^{p_i/2}}^{2/p_i} = \frac{1}{\vlambda_i}\inparen{\sum_{j \in S_i} \inparen{\frac{\tau_j\inparen{\mW^{1/2}\mLambda^{1/2-1/p}\mA}}{\vw_j}}^{p_i/2}}^{2/p_i}.
\end{align*}
We recall that $\Fstar$ satisfies \Cref{defn:block_lewis_overestimate} and conclude the proof of \Cref{lemma:convert_f_to_fstar}.
\end{proof}

We now have enough tools to build a na\"ive estimate of $\log\cN\inparen{\widehat{B_2},\eta B_r}$ via directly applying the dual Sudakov inequality.

\begin{lemma}
\label{lemma:r_to_two}
We have
\begin{align*}
    \log\cN\inparen{\widehat{B_2},\eta B_r} \lesssim \eta^{-2} \cdot  \begin{cases} r\inparen{1+\sqrt{\max_i p_i}}^2\inparen{\sum_{i=1}^m \vlambda_i \inparen{\sum_{j \in S_i} \norm{\vf_j}_{2}^{p_i}}^{r/p_i}}^{2/r} & \text{ if } r \le \log m \\ \log m \cdot \max_i \inparen{1+\sqrt{p_i}}^2\inparen{\sum_{j \in S_i} \norm{\vf_j}_{2}^{p_i}}^{2/p_i} & \text{ otherwise} \end{cases}
\end{align*}
Simply put, we may also write
\begin{align*}
    \log\cN\inparen{\widehat{B_2},\eta B_r} \lesssim \eta^{-2} \cdot r\max_{i}\min\inparen{p_i, \log\abs{S_i}}\Fstar.
\end{align*}
\end{lemma}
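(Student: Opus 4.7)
The strategy is to reduce the covering number estimate to a Gaussian-width calculation via the dual Sudakov inequality, then control the resulting expectation using \Cref{lemma:khintchine}.

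First I would exploit the change of basis $\vx \mapsto \mR\vx$ with $\mR = \mV\mSigma^{-1}$ from the preamble to this subsection. Under this linear bijection, $\mW^{1/2}\mLambda^{1/2-1/p}\mA\mR = \mU$ has orthonormal columns, so $\widehat{B_2}$ maps to the standard unit Euclidean ball $B_2^n$, while $B_r$ maps to the symmetric convex body
$K = \{\vy\in\R^n : \gnorml{\mLambda^{-1/p}\mA\mR\vy}{r} \le 1\}$
whose gauge is exactly $\vy \mapsto \gnorml{\mLambda^{-1/p}\mA\mR\vy}{r}$. Since covering numbers are invariant under linear bijections, $\log\cN(\widehat{B_2},\eta B_r) = \log\cN(B_2^n, \eta K)$, and the dual Sudakov inequality (\Cref{fact:sudakov}) gives
$\log\cN(\widehat{B_2},\eta B_r) \lesssim \eta^{-2}\,\E\,\gnorml{\mLambda^{-1/p}\mA\mR\vg}{r}^2$
for $\vg\sim\cN(0,\mI_n)$.

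Second I would upgrade from first to second moment and plug in \Cref{lemma:khintchine}. The gauge of $K$ is Lipschitz (as a map $(\R^n,\norm{\cdot}_2)\to\R$) with constant at most $\max_i (\sum_{j\in S_i}\norm{\vf_j}_2^{p_i})^{1/p_i}$, since an $r$-average with respect to a probability measure is dominated by the maximum. By Gaussian Lipschitz concentration (\Cref{fact:lip_concentration}) together with \Cref{fact:uncentering,fact:subgaussian_properties}, this yields
$\E\,\gnorml{\mLambda^{-1/p}\mA\mR\vg}{r}^2 \lesssim \bigl(\E\,\gnorml{\mLambda^{-1/p}\mA\mR\vg}{r}\bigr)^2 + \max_i \bigl(\sum_{j\in S_i}\norm{\vf_j}_2^{p_i}\bigr)^{2/p_i}.$
Squaring the two-case bound from \Cref{lemma:khintchine} and absorbing the Lipschitz correction (which is dominated by $\Fstar$ using \Cref{lemma:convert_f_to_fstar}) produces the explicit two-case expression in the lemma.

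Finally I would derive the ``simply put'' form. Applying \Cref{lemma:convert_f_to_fstar} bounds every $(\sum_{j\in S_i}\norm{\vf_j}_2^{p_i})^{2/p_i} \le \Fstar$. In case $r\le\log m$, since $\vlambda$ is a probability measure, the inner sum in case 1 of \Cref{lemma:khintchine} collapses to $\Fstar^{r/2}$, giving $\eta^{-2}\cdot r\max_i p_i\cdot\Fstar$; in case $r>\log m$, we directly obtain $\eta^{-2}\cdot\log m\cdot\max_i p_i\cdot\Fstar\le\eta^{-2}\cdot r\max_i p_i\cdot\Fstar$. To sharpen $p_i$ to $\min(p_i,\log\abs{S_i})$, I would refine \Cref{lemma:one_group_subgaussian} when $p_i\ge\log\abs{S_i}$ by noting that $\abs{S_i}^{1/p_i}\le e$, so $(\sum_{j\in S_i}\abs{\ip{\vf_j,\vg}}^{p_i})^{1/p_i}$ is comparable to $\max_{j\in S_i}\abs{\ip{\vf_j,\vg}}$; the latter has expectation $\lesssim \sqrt{\log\abs{S_i}}\cdot\max_j\norm{\vf_j}_2 \le \sqrt{\log\abs{S_i}}\cdot (\sum_j\norm{\vf_j}_2^{p_i})^{1/p_i}$ by \Cref{fact:max_subgaussians}. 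Propagating this sharpened subgaussian estimate through both branches of \Cref{lemma:khintchine} replaces each $\sqrt{p_i}$ factor with $\sqrt{\min(p_i,\log\abs{S_i})}$ and delivers the simple form. The main bookkeeping obstacle is keeping the two branches ($r$ vs.~$\log m$) cleanly aligned with the two subgaussian regimes ($p_i$ vs.~$\log\abs{S_i}$) so that the refined estimate flows through uniformly; the geometric ingredients themselves are already in place.
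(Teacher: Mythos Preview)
Your approach is essentially the same as the paper's: apply the change of basis $\mR$ so that $\widehat{B_2}$ becomes the standard Euclidean ball, invoke the dual Sudakov inequality (\Cref{fact:sudakov}), and plug in \Cref{lemma:khintchine}; the ``simply put'' then follows from \Cref{lemma:convert_f_to_fstar} together with the observation that one may take $p_i \le \log\abs{S_i}$ without loss of generality.

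One simplification: your second step is unnecessary. The dual Sudakov inequality as stated in \Cref{fact:sudakov} already bounds $\log\cN$ by $\eta^{-2}(\E\norm{\vg}_K)^2$, i.e., the \emph{square of the first moment}, not the second moment $\E\norm{\vg}_K^2$. So you can square the bound from \Cref{lemma:khintchine} directly and skip the Lipschitz-concentration detour entirely; this is exactly what the paper does. Your treatment of the $\min(p_i,\log\abs{S_i})$ refinement is more explicit than the paper's (which only cites \Cref{lemma:convert_f_to_fstar}), but it is the same idea the paper uses later in \Cref{lemma:covering_poly_to_two}.
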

\begin{proof}[Proof of \Cref{lemma:r_to_two}]
Since $\mR$ is invertible, it will be enough to bound the covering number
\begin{align*}
    \cN \coloneqq \cN\inparen{\inbraces{\vx\in\R^n \suchthat \norm{\mU\vx}_2 \le 1}, \eta\inbraces{\vx\in\R^n \suchthat \gnorml{\mLambda^{-1/p}\mA\mR\vx}{r} \le 1}}.
\end{align*}
Because $\norm{\mU\vx}_2 = \norm{\vx}_2$, we can apply the dual Sudakov Inequality (\Cref{fact:sudakov}, \eqref{eq:sudakov_dual}). This means we get
\begin{align*}
    \log\cN \lesssim \eta^{-2} \inparen{\exvv{\vg\sim\cN(0,\mI_n)}{\gnorml{\mLambda^{-1/p}\mA\mR\vg}{r}}}^2.
\end{align*}
We plug in the result from \Cref{lemma:khintchine} and conclude the proof of \Cref{lemma:r_to_two}. The statement after the ``simply put'' follows from \Cref{lemma:convert_f_to_fstar}.
\end{proof}

Although the calculation in \Cref{lemma:r_to_two} works pretty well for small $r$, this degrades quite rapidly once $r$ is large (say, larger than $\log n$).

To resolve this, we build another estimate for $\log\cN\inparen{\widehat{B_2},\eta B_r}$ that performs better when $r$ is larger than $\log n$ or so. We will be able to do this after an interpolation step and a simple geometric observation relating $\widehat{B_2}$ and $B_r$.

\begin{lemma}
\label{lemma:r_ellipsoid}
Let $\Delta_i$ be defined such that 
\begin{align*}
    \Delta_i^{1/2} \coloneqq \max_{\vx\in\R^n\setminus\inbraces{0}} \frac{\vlambda_i^{-1/p}\norm{\mA_{S_i}\vx}_{p_i}}{\norm{\mW^{1/2}\mLambda^{1/2-1/p}\mA\vx}_2},
\end{align*}
and let $\Delta \coloneqq \max_{i \in [m]} \Delta_i$.

For all $\vx\in\R^n$ and $r > 2$, if $\abs{S_i} = 1$ for all $i$, then we have
\begin{align*}
    \norm{\mLambda^{-1/2}\mU\vx}_{r(\vlambda)} \le \Delta^{1/2-1/r} \cdot \norm{\mU\vx}_2 \le \Delta^{1/2-1/r} \cdot \norm{\mLambda^{-1/2}\mU\vx}_{r(\vlambda)}.
\end{align*}
Moreover, if there exists at least one $S_i$ for which $\abs{S_i} > 1$, then we have
\begin{align*}
    \gnorml{\mLambda^{-1/p}\mA\vx}{r} \le \Delta^{1/2}\norm{\mW^{1/2}\mLambda^{1/2-1/p}\mA\vx}_2.
\end{align*}
\end{lemma}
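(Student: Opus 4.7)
I would prove the two inequalities separately, with the ``moreover'' clause being a direct rewriting of the definition of $\Delta_i$, and the singleton-blocks case requiring a short interpolation argument to save the extra factor of $\Delta^{1/r}$.

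For the general (``moreover'') clause, the bound is essentially just the definition of $\Delta_i$ summed against the measure. Starting from
\begin{align*}
\gnorml{\mLambda^{-1/p}\mA\vx}{r}^r \;=\; \sum_{i=1}^m \vlambda_i \inparen{\vlambda_i^{-1/p}\norm{\mA_{S_i}\vx}_{p_i}}^r,
\end{align*}
I would apply the pointwise bound $\vlambda_i^{-1/p}\norm{\mA_{S_i}\vx}_{p_i} \le \Delta_i^{1/2}\norm{\mW^{1/2}\mLambda^{1/2-1/p}\mA\vx}_2 \le \Delta^{1/2}\norm{\mW^{1/2}\mLambda^{1/2-1/p}\mA\vx}_2$, which is just the definition of $\Delta_i$; pull the (now $i$-independent) bound out of the sum; and use $\sum_i \vlambda_i = 1$ to conclude after taking an $r$-th root.

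For the singleton case, the improvement to $\Delta^{1/2-1/r}$ comes from interpolating $|y|^r = |y|^{r-2} \cdot |y|^2$, which saves the $\Delta^{-1/r}$ factor. With $\abs{S_i}=1$ for all $i$, note that $\mW=\mI$ is an admissible rounding, since $\gnorm{\mLambda^{1/2-1/p}\mA\vx}{2} = \norm{\mLambda^{1/2-1/p}\mA\vx}_2$ in the singleton regime; I would take this choice so that the $\vu_i$ and $\va_i$ pictures align cleanly. Writing $\vu_i$ for the rows of $\mU$, I would expand
\begin{align*}
\norm{\mLambda^{-1/2}\mU\vx}_{r(\vlambda)}^r \;=\; \sum_i \vlambda_i^{1-r/2} |\ip{\vu_i,\vx}|^r \;=\; \sum_i |\ip{\vu_i,\vx}|^2 \cdot \inparen{\vlambda_i^{-1/2}|\ip{\vu_i,\vx}|}^{r-2}.
\end{align*}
Translating the definition of $\Delta_i$ into the $\mU$-picture via the change of basis $\mR=\mV\mSigma^{-1}$ (which sends $\mW^{1/2}\mLambda^{1/2-1/p}\mA \mapsto \mU$ and preserves $\ell_2$-norms by orthonormality of the columns of $\mU$) yields the uniform pointwise bound $\vlambda_i^{-1/2}|\ip{\vu_i,\vx}| \le \Delta^{1/2}\norm{\mU\vx}_2 = \Delta^{1/2}\norm{\vx}_2$. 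Plugging this into the $(r-2)$-nd power and recognizing the remaining $\sum_i |\ip{\vu_i,\vx}|^2$ as $\norm{\mU\vx}_2^2$, the total becomes $\Delta^{(r-2)/2}\norm{\mU\vx}_2^r$, and an $r$-th root gives the desired bound.

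The second inequality in the singleton case is immediate: the identity $\norm{\mU\vx}_2^2 = \sum_i \vlambda_i \cdot (\vlambda_i^{-1/2}|\ip{\vu_i,\vx}|)^2 = \norm{\mLambda^{-1/2}\mU\vx}_{2(\vlambda)}^2$ combined with monotonicity of $L_q(\vlambda)$-norms in $q$ (since $\vlambda$ is a probability measure and $r\ge 2$) gives $\norm{\mU\vx}_2 \le \norm{\mLambda^{-1/2}\mU\vx}_{r(\vlambda)}$, and the factor $\Delta^{1/2-1/r}\ge 1$ may be inserted for free. The main obstacle is purely bookkeeping: one needs the pointwise bound $\vlambda_i^{-1/2}|\ip{\vu_i,\vx}| \le \Delta^{1/2}\norm{\vx}_2$ in the $\abs{S_i}=1$ regime, which is precisely where the choice $\mW=\mI$ does the work --- with a generic rounding, an extra $\vw_i^{1/2}$ factor would appear that is not a priori controlled by $\Delta$.
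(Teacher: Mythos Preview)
Your proposal is correct and follows essentially the same approach as the paper: the general case is handled by applying the definition of $\Delta_i$ termwise and summing against the probability measure $\vlambda$, while the singleton case uses the same $|y|^r = |y|^2 \cdot |y|^{r-2}$ interpolation together with the pointwise bound $\vlambda_i^{-1/2}|\ip{\vu_i,\vx}| \le \Delta^{1/2}\norm{\vx}_2$ (under the choice $\mW=\mI$), and the second inequality comes from monotonicity of $L_q(\vlambda)$-norms. The only difference is that you spell out the change-of-basis translation of $\Delta_i$ into the $\mU$-picture more explicitly than the paper does.
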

\begin{proof}[Proof of \Cref{lemma:r_ellipsoid}]
For the sake of intuition and an interpretation, the reader may think $\Delta \approx n$.

Note that for the case where all the $S_i$ are singletons, we may assume $\mW = \mI_m$.

Since $\vlambda$ is a probability measure, we have for any $r \ge 2$ and for all $\vx\in\R^n$ that
\begin{align*}
    \norm{\mU\vx}_2 = \norm{\mLambda^{-1/2}\mU\vx}_{2(\vlambda)} \le \norm{\mLambda^{-1/2}\mU\vx}_{r(\vlambda)}.
\end{align*}
We now prove the lower bound. We have
\begin{align*}
    \norm{\mLambda^{-1/2}\mU\vx}_{r(\vlambda)} &= \inparen{\sum_{i=1}^m \vlambda_i\abs{\ip{\vlambda_i^{-1/2}\vu_i,\vx}}^{r}}^{1/r} =  \inparen{\sum_{i=1}^m \vlambda_i\abs{\ip{\vlambda_i^{-1/2}\vu_i,\vx}}^{2}\cdot\abs{\ip{\vlambda_i^{-1/2}\vu_i,\vx}}^{r-2}}^{1/r} \\
    &\le \inparen{\norm{\mU\vx}_2^2 \cdot \max_i \abs{\ip{\vlambda_i^{-1/2}\vu_i,\vx}}^{r-2}}^{1/r} \le \inparen{\norm{\mU\vx}_2^2 \cdot \inparen{\Delta^{1/2}\norm{\vx}_2}^{r-2}}^{1/r} \\
    &= \Delta^{1/2-1/r} \norm{\vx}_2.
\end{align*}
We now move onto the more general case where the $S_i$ are allowed to have multiple elements. We write
\begin{align*}
    \gnorml{\mLambda^{-1/p}\mA\vx}{r} &= \inparen{\sum_{i=1}^m \vlambda_i\norm{\mLambda_{S_i}^{-1/p}\mA_{S_i}\vx}_{p_i}^r}^{1/r} \\
    &\le \inparen{\sum_{i=1}^m \vlambda_i\Delta_i^{r/2}\norm{\mW^{1/2}\mLambda^{1/2-1/p}\mA\vx}_{2}^{r}}^{1/r} \le \Delta^{1/2}\norm{\mW^{1/2}\mLambda^{1/2-1/p}\mA\vx}_2,
\end{align*}
which concludes the proof of \Cref{lemma:r_ellipsoid}.
\end{proof}

At last, we have the tools we need to give a characterization of $\log\cN\inparen{\widehat{B_2},\eta B_r}$ when $r \gtrsim \log n$.

\begin{lemma}
\label{lemma:r_to_two_complicated}
If all the $S_i$ have size $1$, then
\begin{align*}
    \log\cN\inparen{\widehat{B_2}, \eta B_r} \lesssim \inparen{\frac{\eta}{2}}^{-\frac{2r}{r-2}} \cdot r\Fstar\log\Fstar,
\end{align*}
and if there is at least one $S_i$ larger than $1$, then
\begin{align*}
    \log\cN\inparen{\widehat{B_2}, \eta B_r} \lesssim \inparen{\frac{\eta}{2}}^{-\frac{2r}{r-2}} \cdot \frac{2r^2}{r-2}\max_i \min(p_i+1, \log\abs{S_i})\Fstar\log\Fstar.
\end{align*}
\end{lemma}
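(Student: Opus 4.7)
The plan is to reduce $\log\cN(\widehat{B_2}, \eta B_r)$ to $\log\cN(\widehat{B_2}, (\delta/2)\,B_{r_0})$ for an intermediate $r_0 < r$, and then invoke \Cref{lemma:r_to_two}. This parallels the maximal-packing trick of \Cref{lemma:poor_mans_dualization}, but the interpolation here is between $r$ and $\infty$ (rather than between $p$ and $r$), which is what lets us trade the $\log m$ factor produced by \Cref{lemma:r_to_two} for large $r$ into a $\log\Fstar$-only factor.

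First I would extend \Cref{lemma:r_ellipsoid} to $r=\infty$ by taking a limit (in the multi-block case the right-hand side is already $\Delta^{1/2}$ uniformly in $r$, and in the singleton case $\Delta^{1/2-1/r}\to\Delta^{1/2}$), obtaining
\[
\gnorml{\mLambda^{-1/p}\mA\vx}{\infty}\;\le\;\Delta^{1/2}\,\norm{\mW^{1/2}\mLambda^{1/2-1/p}\mA\vx}_2
\]
for all $\vx\in\R^n$. In particular, for any two points $\vz_1,\vz_2\in\widehat{B_2}$ the difference $\vy=\mLambda^{-1/p}\mA(\vz_1-\vz_2)$ satisfies $\gnorml{\vy}{\infty}\le 2\Delta^{1/2}$. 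Next, I would take a maximal $\eta$-packing $\{\vz_1,\dots,\vz_N\}$ in $\widehat{B_2}$ with respect to the $B_r$-gauge, so that $N\gtrsim\cN(\widehat{B_2},\eta B_r)$ and every pair satisfies $\gnorml{\vy}{r}\ge\eta$. By log-convexity of $L^{r}(\vlambda)$-norms in $1/r$ (noting that $\gnorml{\vy}{r}^r=\sum_i\vlambda_i\norm{\vy_{S_i}}_{p_i}^r$ is the $L^r(\vlambda)$-norm of the fixed nonnegative sequence $(\norm{\vy_{S_i}}_{p_i})_i$), for any $r_0\in[2,r]$,
\[
\gnorml{\vy}{r}\;\le\;\gnorml{\vy}{r_0}^{\,r_0/r}\,\gnorml{\vy}{\infty}^{\,1-r_0/r},
\]
and rearranging gives $\gnorml{\vy}{r_0}\ge\eta^{r/r_0}/(2\Delta^{1/2})^{r/r_0-1}$. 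Thus the packing is also $\delta$-separated in the $B_{r_0}$-gauge with $\delta=\eta^{r/r_0}/(2\Delta^{1/2})^{r/r_0-1}$, and packing-covering duality yields $\log\cN(\widehat{B_2},\eta B_r)\lesssim\log\cN(\widehat{B_2},(\delta/2)B_{r_0})$.

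I would then set $r_0=r-2$ so that the resulting exponent on $\eta$ equals $-2r/(r-2)$. Applying \Cref{lemma:r_to_two} to the right-hand side and substituting $\delta$ back gives
\[
\log\cN(\widehat{B_2},\eta B_r)\;\lesssim\;(\eta/2)^{-2r/(r-2)}\,(2\Delta^{1/2})^{4/(r-2)}\,(r-2)\,\max_i\min\inparen{p_i,\log\abs{S_i}}\Fstar.
\]
The singleton case follows because the inner $(1+\sqrt{p_i})$-style factor in \Cref{lemma:r_to_two} is unnecessary when $\abs{S_i}=1$, collapsing the $\max_i\min(\ldots)$ term to a constant and yielding the claimed $r\,\Fstar\log\Fstar$-form after the conversion described next.

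The main obstacle is converting $(2\Delta^{1/2})^{4/(r-2)}$ into the claimed $\log\Fstar$. Since the $\Fstar$-block Lewis overestimate hypothesis forces $\Delta\le c\Fstar$, one has $\Delta^{2/(r-2)}\le c\log\Fstar$ whenever $r-2\gtrsim\log\Fstar/\log\log\Fstar$; for smaller $r$, the $(\eta/2)^{-2r/(r-2)}$ factor grows fast enough in $1/\eta$ to absorb the remaining $\Delta^{2/(r-2)}$ excess via a standard case split based on whether $\eta$ lies below or above a small power of $\Delta$. The minor asymmetry between the $(r-2)$ factor that appears in my direct substitution and the $(2r^2/(r-2))$ multiplier claimed in the general $\abs{S_i}>1$ case arises from an additional $r/(r-2)$ factor that must be tracked when carrying the ratio $r/r_0=r/(r-2)$ through the inner-norm-dependent Jensen step inside \Cref{lemma:r_to_two}; accounting for that finishes the proof.
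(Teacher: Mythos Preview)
Your interpolation goes in the wrong direction. You pass from $r$ \emph{down} to $r_0=r-2$ via the $\infty$-norm, whereas the paper passes from $r$ \emph{up} to $q=r\log D>r$ via the $2$-norm. The difference matters because the norm you use to ``anchor'' the interpolation must be bounded by a constant on $\widehat{B_2}-\widehat{B_2}$: the $2$-norm is (by $\le 2$), while the $\infty$-norm is only bounded by $2\Delta^{1/2}\asymp(\Fstar)^{1/2}$, which is large. This is exactly why your bound carries the factor $\Delta^{2/(r-2)}$. For $r$ near $2$ (say $r=3$) that factor is of order $\Fstar^{2}$, not $\log\Fstar$, and since your bound and the claimed bound have the \emph{same} power of $\eta$, the discrepancy $\Delta^{2/(r-2)}/\log\Fstar$ is independent of $\eta$ and cannot be ``absorbed'' by any case split on $\eta$. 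Your final paragraph does not survive this observation. Moreover, $r_0=r-2<2$ whenever $r<4$, so \Cref{lemma:r_to_two} (which needs $r_0\ge 2$) is not even applicable there.

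The paper's route is: write $\tfrac{1}{r}=\tfrac{1-\theta}{2}+\tfrac{\theta}{q}$ and use log-convexity to get $\gnorml{\cdot}{r}\le 2\,\gnorml{\cdot}{q}^{\theta}$ on differences of points in the ball; hence $\log\cN(\widehat{B_2},\eta B_r)\le\log\cN(\widehat{B_2},(\eta/2)^{1/\theta}B_q)\le (\eta/2)^{-2/\theta}M(\mF,q)$. Now set $q=r\log D$ with $D=\Delta$ (singletons) or $D=\Delta^{2r/(r-2)}$ (general blocks). Then $M(\mF,q)\lesssim q\cdot p^{\star}\Fstar\lesssim r\log\Fstar\cdot p^{\star}\Fstar$, which is where the $\log\Fstar$ actually comes from. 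The $\eta$-exponent is $-2/\theta=-\tfrac{2r}{r-2}+\tfrac{4}{(r-2)\log D}$, and the small correction $(\eta/2)^{4/((r-2)\log D)}$ is bounded by a constant precisely because \Cref{lemma:r_ellipsoid} restricts the relevant range of $\eta$ to $\eta\lesssim\Delta^{1/2-1/r}$ (singletons) or $\eta\lesssim\Delta^{1/2}$ (general). The $\tfrac{2r^2}{r-2}$ multiplier in the general case arises from $q=r\cdot\tfrac{2r}{r-2}\log\Delta$, not from tracking $r/r_0$ through a Jensen step.
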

\begin{proof}[Proof of \Cref{lemma:r_to_two_complicated}]
The reader familiar with the work of \citet{blm89} can think of the present Lemma as a generalization of (7.13) of Proposition 7.2 of that work.

Define
\begin{align*}
    M(\mF,r) &\coloneqq \begin{cases} r\inparen{1+\sqrt{\max_i p_i}}^2\inparen{\sum_{i=1}^m \vlambda_i \inparen{\sum_{j \in S_i} \norm{\vf_j}_{2}^{p_i}}^{r/p_i}}^{2/r} & \text{ if } r \le \log m \\ \log m \cdot \max_i \inparen{1+\sqrt{p_i}}^2\inparen{\sum_{j \in S_i} \norm{\vf_j}_{2}^{p_i}}^{2/p_i} & \text{ otherwise} \end{cases}.
\end{align*}

Let $q > r$ and $0 < \theta < 1$ be such that
\begin{align*}
    \frac{1}{r} = \frac{1-\theta}{2} + \frac{\theta}{q}.
\end{align*}
By interpolation, observe that we have
\begin{align*}
    \gnorml{\mLambda^{-1/p}\mA(\vx_1-\vx_2)}{r} &\le \gnorml{\mLambda^{-1/p}\mA(\vx_1-\vx_2)}{2}^{1-\theta} \cdot \gnorml{\mLambda^{-1/p}\mA(\vx_1-\vx_2)}{q}^{\theta} \\
    &\le 2\gnorml{\mLambda^{-1/p}\mA(\vx_1-\vx_2)}{q}^{\theta}
\end{align*}
which means that
\begin{align*}
    \log\cN\inparen{B_2,\eta B_r} \le \log\cN\inparen{B_2,(\eta/2)^{1/\theta}B_q} \le \inparen{\frac{\eta}{2}}^{-2/\theta}M(\mF,q).
\end{align*}
Let us set $q = r\log D$, where we will choose $D$ in a moment. Then, notice that
\begin{align*}
    \inparen{\frac{\eta}{2}}^{-2/\theta} = \inparen{\frac{\eta}{2}}^{-2r(q-2)/(q(r-2))} = \inparen{\frac{\eta}{2}}^{-\frac{2r}{r-2}\inparen{1-\frac{2}{r\log D}}} = \inparen{\frac{\eta}{2}}^{-\frac{2r}{r-2} + \frac{4}{(r-2)\log D}}.
\end{align*}
It is now sufficient to identify $D$ such that whenever $\eta$ is small enough to have $\log\cN > 0$, we have
\begin{align*}
    \inparen{\frac{\eta}{2}}^{\frac{4}{(r-2)\log D}} \lesssim 1.
\end{align*}
To identify this $D$, notice that \Cref{lemma:r_ellipsoid} implies that if all the $S_i$ have size $1$, then only values of $\eta$ such that $\eta \le \Delta^{1/2-1/r}$ contribute to $\log\cN$. In the more general setting, observe only $\eta \le \Delta^{1/2}$ counts.

Hence, if all the $S_i$s are singletons, we choose $D = \Delta$. For any $\eta \le 2\Delta^{1/2-1/r} = 2D^{1/2-1/r}$, we see that
\begin{align*}
    \inparen{\frac{\eta}{2}}^{\frac{4}{(r-2)\log D}} \le \Delta^{\frac{r-2}{2r} \cdot \frac{4}{(r-2)\logv{\Delta}}} = \Delta^{\frac{2}{r\log\Delta}} = 2^{2/r} \le 2.
\end{align*}
Similarly, for the case where the $S_i$ are more generally sized, we choose $D = \Delta^{2r/(r-2)}$. Now, for any $\eta \le 2\Delta^{1/2}$, we get
\begin{align*}
    \inparen{\frac{\eta}{2}}^{\frac{4}{(r-2)\log D}} \le \Delta^{\frac{4}{(r-2)\logv{\Delta^{(2r/(r-2))}}}} = \Delta^{\frac{2}{r\log\Delta}} = 2^{2/r} \le 2.
\end{align*}
Putting everything together, if all the $S_i$s are singletons, we get
\begin{align*}
    \log\cN\inparen{\widehat{B_2}, \eta B_r} \lesssim \inparen{\frac{\eta}{2}}^{-\frac{2r}{r-2}} \cdot M(\mF,q) \lesssim \inparen{\frac{\eta}{2}}^{-\frac{2r}{r-2}} \cdot r\Fstar\log\Fstar,
\end{align*}
and in the more general case,
\begin{align*}
    \log\cN\inparen{\widehat{B_2}, \eta B_r} \lesssim \inparen{\frac{\eta}{2}}^{-\frac{2r}{r-2}} \cdot M(\mF,q) \lesssim \inparen{\frac{\eta}{2}}^{-\frac{2r}{r-2}} \cdot \frac{2r^2}{r-2}\max_i \min(p_i+1, \log\abs{S_i})\Fstar\log\Fstar.
\end{align*}
This concludes the proof of \Cref{lemma:r_to_two_complicated}.
\end{proof}

\subsubsection{Putting everything together}

We are finally ready to combine all the tools we have built in the last few subsections to prove our entropy estimate when $0 < p < 2$.

Below, we state \Cref{lemma:covering_two_to_one}, which more precisely characterizes the behavior of the dependence on $p$ referred to by \Cref{lemma:covering_two_to_one_clean}. 

\begin{lemma}
\label{lemma:covering_two_to_one}
We have
\begin{align*}
    \log\cN(B_p,\eta \widehat{B_2}) \lesssim \eta^{-\frac{2p}{2-p}} \cdot C(p)\max_{i}\min\inparen{p_i+1,\log\abs{S_i}}\Fstar\log\Fstar,
\end{align*}
where $C(p)$ is a constant that only depends on $p$. The constant $C(p)$ is defined as follows. For $0 < \theta < p$, let $r=(2-\theta)p/(p-\theta)$. Then, we define
\begin{align}
    \widehat{C}(p,\theta) &\coloneqq \inparen{\frac{\inparen{2\cdot 8^{2/\theta}}^{-\frac{\theta}{2-\theta}}}{2}}^{-\frac{2r}{r-2}}\nonumber \\
    \widehat{\widehat{C}}(p) &\coloneqq \begin{cases} \min\inbraces{\widehat{C}(p,p/2), \widehat{C}(p,1)} & \text{ if } 1 \le p < 2 \\ \widehat{C}(p,p/2) & \text{ if } 0 < p < 1 \end{cases}\label{eq:hathat} \\
    C(p) &\coloneqq \widehat{\widehat{C}}(p)\begin{cases} r & \text{ if } \abs{S_i} = 1 \text{ for all } i \\ 2r + \frac{4r}{r-2} & \text{ otherwise} \end{cases}\label{eq:choosing_c_from_hathat},
\end{align}
where, in an abuse of notation, $r$ in \eqref{eq:choosing_c_from_hathat} is chosen according to the value of $\theta$ that is selected by $\widehat{\widehat{C}}(p)$ in \eqref{eq:hathat}.
\end{lemma}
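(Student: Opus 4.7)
\textbf{Proof proposal for \Cref{lemma:covering_two_to_one}.} The plan is to invoke the reduction of \Cref{lemma:poor_mans_dualization} together with the entropy estimate of \Cref{lemma:r_to_two_complicated}, and then sum the resulting geometric series. Concretely, fix any $0 < \theta < p$ and set $r = (2-\theta)p/(p-\theta) > 2$. Then \Cref{lemma:poor_mans_dualization} gives
\begin{align*}
    \log\cN(B_p, \eta\widehat{B_2}) \;\le\; \sum_{h \ge 0} \log\cN\!\left(\widehat{B_2}, \delta_h B_r\right), \qquad \delta_h = \eta^{\theta/(2-\theta)} \cdot 8^{(h+1)\theta/(2-\theta)} \cdot \bigl(2 \cdot 8^{2/\theta}\bigr)^{-\theta/(2-\theta)}.
\end{align*}
By \Cref{lemma:r_to_two_complicated}, each summand is at most $(\delta_h/2)^{-2r/(r-2)}$ times a factor which, up to constants, equals $r\Fstar\log\Fstar$ when every $|S_i|=1$ and $\tfrac{r^2}{r-2}\max_i\min(p_i{+}1,\log|S_i|)\Fstar\log\Fstar$ otherwise.

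The main book-keeping step is the identity $\tfrac{2r}{r-2} = \tfrac{2(2-\theta)p}{\theta(2-p)}$, which follows directly from the definition of $r$. Substituting, one gets
\begin{align*}
    \delta_h^{-2r/(r-2)} \;=\; \eta^{-\frac{2p}{2-p}} \cdot 8^{-(h+1)\cdot \frac{2p}{2-p}} \cdot \bigl(2 \cdot 8^{2/\theta}/2\bigr)^{\frac{2(2-\theta)p}{\theta(2-p)}}.
\end{align*}
Since $2p/(2-p) > 0$, the factor $8^{-(h+1)\cdot 2p/(2-p)}$ decays geometrically in $h$, so summing over $h \ge 0$ produces only a constant depending on $p$. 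Collecting the $\theta$-dependent constant into $\widehat{C}(p,\theta)$ exactly as defined in the statement, we obtain
\begin{align*}
    \log\cN(B_p, \eta\widehat{B_2}) \;\lesssim\; \eta^{-\frac{2p}{2-p}} \cdot \widehat{C}(p,\theta) \cdot \begin{cases} r & \text{if all } |S_i|=1 \\ \bigl(2r + \tfrac{4r}{r-2}\bigr)\max_i\min(p_i{+}1,\log|S_i|) & \text{otherwise}\end{cases} \cdot \Fstar\log\Fstar,
\end{align*}
where the ``$\lesssim$'' absorbs the universal constant from \Cref{lemma:r_to_two_complicated} and the geometric-series sum.

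The last step is to choose $\theta$ so that $C(p) = \widehat{\widehat{C}}(p) \cdot (\dots)$ matches the formula in the lemma. For $0 < p < 1$ the only option consistent with $\theta < p$ that keeps $r$ bounded away from $2$ is $\theta = p/2$; for $1 \le p < 2$ both $\theta = p/2$ and $\theta = 1$ are admissible and we simply take whichever gives the smaller constant. The main (modest) obstacle is purely bookkeeping: checking that after the change of variable the exponent on $\eta$ collapses to $-2p/(2-p)$ and that the $\theta$-dependent prefactors line up with $\widehat{C}(p,\theta)$. Everything else is an application of lemmas already proved, so no new geometric input is needed beyond what \Cref{lemma:poor_mans_dualization} and \Cref{lemma:r_to_two_complicated} already provide.
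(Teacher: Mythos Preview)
Your proposal is correct and follows essentially the same approach as the paper: invoke \Cref{lemma:poor_mans_dualization}, bound each summand via \Cref{lemma:r_to_two_complicated}, use the identity $\tfrac{\theta}{2-\theta}\cdot\tfrac{2r}{r-2}=\tfrac{2p}{2-p}$ to collapse the exponent on $\eta$, sum the geometric series in $h$, and then specialize to $\theta=p/2$ (and additionally $\theta=1$ when $1<p<2$ to keep $C(p)$ bounded as $p\to 2$). The only minor slip is in your displayed expression for $\delta_h^{-2r/(r-2)}$, where the factor should read $\bigl((2\cdot 8^{2/\theta})^{-\theta/(2-\theta)}/2\bigr)^{-2r/(r-2)}$ rather than $(2\cdot 8^{2/\theta}/2)^{\cdots}$, but this is exactly $\widehat{C}(p,\theta)$ as defined, so your conclusion is unaffected.
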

\begin{proof}[Proof of \Cref{lemma:covering_two_to_one} and \Cref{lemma:covering_two_to_one_clean}]
This time, following \Cref{lemma:r_to_two_complicated}, we define
\begin{align*}
    M(\mF,r) &\coloneqq \begin{cases} r\Fstar\log\Fstar & \text{ if } \abs{S_i}=1 \\ (2r + \frac{4r}{r-2})\inparen{\max_i \min(p_i+1, \log\abs{S_i})}\Fstar\log\Fstar & \text{ otherwise} \end{cases}.
\end{align*}
Use \Cref{lemma:poor_mans_dualization} and \Cref{lemma:r_to_two} to write
\begin{align}
    \log\cN(B_p,\eta \widehat{B_2}) &\le \sum_{h\ge 0} \log\cN(\widehat{B_2},\delta_hB_r) \le M(\mF,r)\sum_{h \ge 0} \inparen{\frac{\delta_h}{2}}^{-\frac{2r}{r-2}}\nonumber \\
    &= M(\mF,r) \sum_{h \ge 0} \inparen{\eta^{\frac{\theta}{2-\theta}}\cdot 8^{(h+1)\cdot\inparen{\frac{\theta}{2-\theta}}} \cdot \frac{\inparen{2\cdot 8^{2/\theta}}^{-\frac{\theta}{2-\theta}}}{2}}^{-\frac{2r}{r-2}} \nonumber \\
    &= \eta^{-\frac{\theta}{2-\theta}\cdot\frac{2r}{r-2}} \cdot \underbrace{\inparen{\frac{\inparen{2\cdot 8^{2/\theta}}^{-\frac{\theta}{2-\theta}}}{2}}^{-\frac{2r}{r-2}}}_{= \widehat{C}(p,\theta)} M(\mF,r)\sum_{h \ge 0} 8^{(h+1) \cdot \inparen{-\frac{\theta}{2-\theta}}\cdot\frac{2r}{r-2}} \label{eq:substitute_theta}.
\end{align}
We now make the substitution $\theta=p/2$. By the formula in \Cref{lemma:poor_mans_dualization}, this means that $r = 4-p$ and $-\theta/(2-\theta) \cdot 2r/(r-2) = -2p/(2-p)$. We continue.
\begin{align*}
    \log\cN(B_p,\eta B_2) &\le \eta^{-\frac{\theta}{2-\theta}\cdot\frac{2r}{r-2}} \cdot \widehat{C}(p,\theta) M(\mF,r)\sum_{h \ge 0} 8^{(h+1) \cdot -\frac{\theta}{2-\theta}\cdot\frac{2r}{r-2}} \\
    &= \eta^{-\frac{2p}{2-p}} \cdot \widehat{C}(p,p/2) M(\mF,4-p)\sum_{h \ge 0} 8^{(h+1) \cdot -\frac{2p}{2-p}} \\
    &\lesssim \eta^{-\frac{2p}{2-p}} \cdot \widehat{C}(p,p/2) M(\mF,4-p).
\end{align*}
A regrettable consequence of the above calculation is that the ``constant'' $C(p,\theta)=C(p,p/2)$ explodes as $p\rightarrow 2$, as observed by \citet{sz01}. To fix this, we perform a slightly different variant of this calculation in the regime where $1 < p < 2$. We resume from \eqref{eq:substitute_theta} except we use $\theta = 1$. Here, again using \Cref{lemma:poor_mans_dualization}, we check that $r=p/(p-1)$ and $-\theta/(2-\theta) \cdot 2r/(r-2) = -2p/(2-p)$ (note that now $r$ is the conjugate exponent of $p$). This means that
\begin{align*}
    \log\cN(B_p,\eta \widehat{B_2}) &\le \eta^{-\frac{\theta}{2-\theta}\cdot\frac{2r}{r-2}} \cdot \widehat{C}(p,\theta) M(\mF,r)\sum_{h \ge 0} 8^{(h+1) \cdot -\frac{\theta}{2-\theta}\cdot\frac{2r}{r-2}} \\
    &\lesssim \eta^{-\frac{2p}{2-p}} \cdot \widehat{C}(p,1)\sum_{h \ge 0} 8^{(h+1) \cdot -\frac{2p}{2-p}} \lesssim \eta^{-\frac{2p}{2-p}}M\inparen{\mF,\frac{p}{p-1}}.
\end{align*}
Taking the minimum over all the cases (of course where applicable) and expanding out the definition of $M(\mF,r)$ concludes the proof of \Cref{lemma:covering_two_to_one} and \Cref{lemma:covering_two_to_one_clean}.
\end{proof}

\subsection{Covering numbers for \texorpdfstring{$p \ge 2$}{p > 2}}

We will see that compared to the previous section, our task when $p\ge 2$ is far easier. The main technical lemma we need is \Cref{lemma:covering_poly_to_two}, which we need for both regimes of $p$.

We first state and prove \Cref{lemma:covering_poly_to_two}.

\begin{lemma}
\label{lemma:covering_poly_to_two}
Let $\mW$ and $\mLambda$ be chosen according to \Cref{thm:general_covering}. Suppose that $H \ge 1$ is such that $H\vrho_i \ge \nfrac{\valpha_i^p}{\norm{\valpha}_p^p}$ for all $i \in [m]$. Then,
\begin{align*}
    \log\cN\inparen{\widehat{B_2}, \eta\inbraces{\vx\in\R^n \suchthat \polynorm{\vx} \le 1}} \lesssim \eta^{-2}\cdot{\underset{i \in S}{\max}\ \min\inbraces{p_i,\log\abs{S_i}}H^{2/p}\norm{\valpha}_p^2\log\mtilde}.
\end{align*}
\end{lemma}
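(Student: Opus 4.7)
The plan is to apply the dual Sudakov inequality (\Cref{fact:sudakov}) after passing to the change-of-basis $\mR$ introduced earlier. Under this change of basis, $\widehat{B_2}$ becomes the Euclidean unit ball (since $\mW^{1/2}\mLambda^{1/2-1/p}\mA\mR = \mU$ has orthonormal columns), and it remains only to understand the Gaussian width of the body $K' \coloneqq \mR^{-1}\{\vx : \polynorm{\vx} \le 1\}$. So the main task is to estimate $\exvv{\vg\sim\cN(0,\mI_n)}{\polynorm{\mR\vg}}$ and square it.

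First I would unwind $\polynorm{\mR\vx}$ in terms of the normalized row vectors $\vf_j = \vlambda_i^{-1/2}\vw_j^{-1/2}\vu_j$ from \Cref{defn:alpha}. A direct calculation using $\mA\mR = \mW^{-1/2}\mLambda^{-1/p+1/2}\mU$ and $\vu_j = \vlambda_i^{1/2}\vw_j^{1/2}\vf_j$ shows
\begin{align*}
    \polynorm{\mR\vx} = \max_{i \in S} \vrho_i^{-1/p}\vlambda_i^{1/p}\inparen{\sum_{j \in S_i}\abs{\ip{\vf_j,\vx}}^{p_i}}^{1/p_i}.
\end{align*}
Next I would use the hypothesis $H\vrho_i \ge \valpha_i^p/\norm{\valpha}_p^p$ together with the definition $\valpha_i^p = \vlambda_i(\sum_{j \in S_i}\norm{\vf_j}_2^{p_i})^{p/p_i}$ to rewrite the prefactor as $\vrho_i^{-1/p}\vlambda_i^{1/p} \le H^{1/p}\norm{\valpha}_p / (\sum_{j \in S_i}\norm{\vf_j}_2^{p_i})^{1/p_i}$. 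Substituting, it suffices to bound
\begin{align*}
    \exvv{\vg}{\max_{i \in S} X_i(\vg)}, \quad \text{where } X_i(\vg) \coloneqq \frac{\inparen{\sum_{j \in S_i}\abs{\ip{\vf_j,\vg}}^{p_i}}^{1/p_i}}{\inparen{\sum_{j \in S_i}\norm{\vf_j}_2^{p_i}}^{1/p_i}}.
\end{align*}

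The main technical step is to show $\subgnorm{X_i} \lesssim 1 + \sqrt{\min(p_i,\log\abs{S_i})}$. In the regime $p_i \le \log\abs{S_i}$, this follows directly from \Cref{lemma:one_group_subgaussian} after normalization. In the opposite regime $p_i > \log\abs{S_i}$, the trick is to pass to the $\ell_\infty$ bound: $(\sum_{j}\abs{\ip{\vf_j,\vg}}^{p_i})^{1/p_i} \le \abs{S_i}^{1/p_i}\max_j\abs{\ip{\vf_j,\vg}} \le e\max_j\abs{\ip{\vf_j,\vg}}$, while the denominator is at least $\max_j\norm{\vf_j}_2$; then $X_i \lesssim \max_j\abs{\ip{\vf_j/\norm{\vf_j}_2,\vg}}$, which is $1$-Lipschitz in $\vg$, so by Gaussian concentration (\Cref{fact:lip_concentration}), \Cref{fact:max_subgaussians}, and decentering (\Cref{fact:uncentering}) has subgaussian norm $\lesssim 1 + \sqrt{\log\abs{S_i}}$. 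Once the subgaussian bound on each $X_i$ is in hand, \Cref{fact:max_subgaussians} applied to the max over $i \in S$ (with $\abs{S} \le \mtilde$) gives $\exv{\max_{i}X_i} \lesssim \sqrt{p^{\star}\log\mtilde}$, and plugging into dual Sudakov yields the stated bound $\eta^{-2} H^{2/p}\norm{\valpha}_p^2 p^{\star}\log\mtilde$.

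The subtle part I expect to spend the most care on is the case split for $\subgnorm{X_i}$ and threading the $p^{\star}$ factor through cleanly — in particular ensuring that the ``$1+$'' contribution (which covers small $p_i$ or $\abs{S_i}=1$) can be absorbed into the definition $p^{\star} = \max\{1,\max_i\min(p_i,\log\abs{S_i})\}$ so that the final bound is clean.
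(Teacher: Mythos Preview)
Your proposal is correct and follows essentially the same approach as the paper: change basis via $\mR$ so $\widehat{B_2}$ becomes the Euclidean ball, apply dual Sudakov, and bound the Gaussian width by showing each $\vrho_i^{-1/p}\norm{\mA_{S_i}\mR\vg}_{p_i}$ is subgaussian with parameter $\lesssim (1+\sqrt{\min(p_i,\log\abs{S_i})})H^{1/p}\norm{\valpha}_p$ via Lipschitz concentration, then take a max over $\abs{S}\le\mtilde$ terms. The one omission is the sub-case $p_1=\dots=p_m=p<2$, where \Cref{defn:alpha} uses a measure $\widehat{\vlambda}$ over $[k]$ (not $[m]$) and $\mW=\mI$, so your formula $\valpha_i^p=\vlambda_i(\sum_{j\in S_i}\norm{\vf_j}_2^{p_i})^{p/p_i}$ does not apply verbatim; the paper handles this with a separate short paragraph using the same mechanism (the Lipschitz bound \eqref{eq:alpha_alleq_lipschitz}), and your outline extends there without new ideas.
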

\begin{proof}[Proof of \Cref{lemma:covering_poly_to_two}]
Following the proof of \Cref{lemma:r_to_two} and the references therein, our goal here is to analyze the quantity
\begin{align*}
    \log\cN\inparen{\inbraces{\vx\in\R^n \suchthat \norm{\mW^{1/2}\mLambda^{1/2-1/p}\mA\vx}_2 \le 1}, \eta\inbraces{\vx\in\R^n \suchthat \polynorm{\vx} \le 1}}.
\end{align*}
Using the same type of linear transformation argument as in \Cref{lemma:r_to_two} (so, replacing every $\vx$ above with $\mR\vx$), we find that it is in fact sufficient to analyze
\begin{align*}
    \log\cN \coloneqq \log\cN\inparen{\inbraces{\vx \suchthat \norm{\mU\vx}_2 \le 1}, \eta\inbraces{\vx \suchthat \max_{i \in S}\ \vrho_i^{-1/p}\norm{\mW_{S_i}^{-1/2}\mLambda^{1/p-1/2}_{S_i}\mU\vx}_{p_i} \le 1}}.
\end{align*}
Recall that $\norm{\mU\vx}_2 = \norm{\vx}_2$, so a natural plan is to apply the dual Sudakov inequality (\Cref{fact:sudakov}, \eqref{eq:sudakov_dual}). We first consider the quantity (when $2 \le p_i \le \log\abs{S_i}$)
\begin{align*}
    \exvv{\vg\sim\cN(0,\mI_n)}{\norm{\mW_{S_i}^{-1/2}\mLambda^{1/p-1/2}_{S_i}\mU\vg}_{p_i}} &= \exvv{\vg\sim\cN(0,\mI_n)}{\inparen{\sum_{j \in S_i} \abs{\vlambda_i^{1/p}\ip{\vw_{j}^{-1/2}\vlambda_i^{-1/2}\vu_j,\vg}}^{p_i}}^{1/p_i}} \\
    &\le \inparen{\sum_{j\in S_i} \inparen{\vlambda_i^{1/p}\norm{\vw_j^{-1/2}\vlambda_i^{-1/2}\vu_j}_2}^{p_i}\exvv{g\sim\cN(0,1)}{\abs{g}^{p_i}}}^{1/p_i} \\
    &\lesssim p_i^{1/2} \inparen{\sum_{j\in S_i} \inparen{\vlambda_i^{1/p}\norm{\vw_j^{-1/2}\vlambda_i^{-1/2}\vu_j}_2}^{p_i}}^{1/p_i} \\
    &= p_i^{1/2}\inparen{\sum_{j \in S_i} \inparen{\vlambda_i^{1/p}\norm{\vf_j}_2}^{p_i}}^{1/p_i} = p_i^{1/2}\vlambda_i^{1/p}\inparen{\sum_{j \in S_i} \norm{\vf_j}_2^{p_i}}^{1/p_i} = p_i^{1/2}\valpha_i.
\end{align*}
On the other hand, if $p_i \ge \log\abs{S_i}$, then we get
\begin{align*}
    \exvv{\vg\sim\cN(0,\mI_n)}{\norm{\mW^{-1/2}_{S_i}\mLambda^{1/p-1/2}_{S_i}\mU\vg}_{p_i}} &\lesssim \exvv{\vg\sim\cN(0,\mI_n)}{\norm{\mW^{-1/2}_{S_i}\mLambda^{1/p-1/2}_{S_i}\mU\vg}_{\infty}} \\
    &\lesssim  \vlambda_i^{1/p}\max_{j\in S_i}\norm{\vf_j}_2\sqrt{\log\abs{S_i}} \asymp\valpha_i\sqrt{\log\abs{S_i}}.
\end{align*}
Finally, when $p_1=\dots=p_m=p < 2$, we have
\begin{align*}
    \exvv{\vg\sim\cN(0,\mI_n)}{\norm{\widehat{\mLambda}_{S_i}^{1/p-1/2}\mU\vg}_p} &= \exvv{\vg\sim\cN(0,\mI_n)}{\inparen{\sum_{j \in S_i}\widehat{\vlambda}_j\abs{\ip{\widehat{\vlambda}_j^{-1/2}\vu_j,\vg}}^{p}}^{1/p}} \\
    &\le \inparen{\sum_{j\in S_i} \widehat{\vlambda}_j\norm{\widehat{\vlambda}_j^{-1/2}\vu_j}_2^p\exvv{g\sim\cN(0,1)}{\abs{g}^{p}}}^{1/p} \lesssim p^{1/2}\inparen{\sum_{j \in S_i} \widehat{\valpha}^p}^{1/p} = p^{1/2}\valpha_i.
\end{align*}
This means that throughout the rest of the proof, we assume without loss of generality that $p_i \le \log\abs{S_i}$. Next, observe that for any $\vx\in\R^n$ and when $p_1,\dots,p_m \ge 2$,
\begin{align*}
    \norm{\mW^{-1/2}_{S_i}\mLambda^{1/p-1/2}_{S_i}\mU\vx}_{p_i} = \inparen{\sum_{j \in S_i} \abs{\vlambda_i^{1/p}\ip{\vf_j,\vx}}^{p_i}}^{1/p_i} \le \inparen{\sum_{j \in S_i} \abs{\vlambda_i^{1/p}\norm{\vf_j}_2\norm{\vx}_2}^{p_i}}^{1/p_i} = \valpha_i\norm{\vx}_2.
\end{align*}
Similarly, when $p_1=\dots=p_m=p$, we first observe
\begin{align*}
    \abs{\ip{\widehat{\vlambda}_j^{1/p-1/2}\vu_j,\vx}} \le \widehat{\valpha}_j\norm{\vx}_2.
\end{align*}
We therefore get
\begin{align}
    \norm{\widehat{\mLambda}_{S_i}^{1/p-1/2}\mU\vx}_p \le \inparen{\sum_{j \in S_i} \widehat{\valpha}_j^p}^{1/p}\norm{\vx}_2 = \valpha_i\norm{\vx}_2.\label{eq:alpha_alleq_lipschitz}
\end{align}

Hence, after applying \Cref{fact:lip_concentration} and \Cref{fact:uncentering}, we notice that $\valpha_i(H\vrho_i)^{-1/p}\le\norm{\valpha}_p$ and get
\begin{align*}
    \subgnorm{\vrho_i^{-1/p}\norm{\mLambda^{1/p-1/2}_{S_i}\mU\vg}_{p_i}} \lesssim \inparen{1 + \sqrt{p_i}}H^{1/p}\norm{\valpha}_p.
\end{align*}
All of this implies that for any subset $S$ of size $\mtilde$ (see Exercise 2.5.10 of \cite{vershynin_2018}),
\begin{align*}
    \exvv{\vg\sim\cN(0,\mI_n)}{\max_{i \in S} \vrho_i^{-1/p}\norm{\mLambda^{1/p-1/2}_{S_i}\mU\vg}_{p_i}} \lesssim \max_{i \in S} p_i^{1/2}H^{1/p}\norm{\valpha}_p\sqrt{\log\mtilde}.
\end{align*}
Thus,
\begin{align*}
    \log\cN \lesssim \eta^{-2}\inparen{\inparen{\max_{i \in S} p_i+1}H^{2/p}\norm{\valpha}_p^2\log\mtilde}.
\end{align*}
This concludes the proof of \Cref{lemma:covering_poly_to_two}.
\end{proof}

We are finally ready to prove \Cref{thm:general_covering}.

\begin{proof}[Proof of \Cref{thm:general_covering}]
For notational simplicity in this proof, write
\begin{align*}
    K \coloneqq \inbraces{\vx\in\R^n\suchthat \polynorm{\vx} \le 1}.
\end{align*}
Let us first handle the case where $p \ge 2$. By our choice of $\mW$, we have for all $\vx\in\R^n$ that
\begin{align*}
     \norm{\mW^{1/2}\mLambda^{1/2-1/p}\mA\vx}_{2} \le \gnorml{\mLambda^{-1/p}\mA\vx}{p} = \gnorm{\mA\vx}{p}.
\end{align*}
This implies the containment $B_p \subseteq \widehat{B_2}$, and thus
\begin{align*}
    \log\cN\inparen{B_p, \eta K} \le \log\cN\inparen{\widehat{B_2}, \eta K} \le \eta^{-2}\max_{i}\min\inbraces{p_i,\log\abs{S_i}}H^{2/p}\Fstar\log\mtilde.
\end{align*}
The desired result now follows immediately from \Cref{lemma:covering_poly_to_two}.

For the case where $p < 2$ and $p_i \ge 2$ for all $i$, we require a bit more work. By our choice of $\mW$, we have
\begin{align*}
    \norm{\mW^{1/2}\mLambda^{1/2-1/p}\mA\vx}_{2} \le \gnorml{\mLambda^{-1/p}\mA\vx}{2} = \norm{\mLambda^{1/2-1/p}\mA\vx}_2,
\end{align*}
and for any $t > 0$ (after remembering \eqref{eq:alpha_to_fstar} which tells us $\norm{\valpha}_p \le \sqrt{\Fstar}$),
\begin{align*}
    \log\cN\inparen{B_p, \eta K} &\le \log\cN\inparen{B_p, t\widehat{B_2}} + \log\cN\inparen{t\widehat{B_2}, \eta K} = \log\cN\inparen{B_p, t\widehat{B_2}} + \log\cN\inparen{\widehat{B_2}, \frac{\eta}{t}\cdot K} \\
    &\le t^{-\frac{2p}{2-p}} \cdot C(p)\max_{i}\min\inbraces{p_i+1,\log\abs{S_i}}\Fstar\log\Fstar \\
    &\quad+\inparen{\frac{\eta}{t}}^{-2}\max_{i}\min\inbraces{p_i+1,\log\abs{S_i}}H^{2/p}\Fstar\log\mtilde,
\end{align*}
where the last line follows from \Cref{lemma:covering_two_to_one_clean} and \Cref{lemma:covering_poly_to_two}.
Choose $t=\eta^{1-p/2} \cdot H^{1/2-1/p}$, and for simplicity let $p^{\star} \coloneqq \max_i\min\inbraces{p_i+1,\log\abs{S_i}}$. We write
\begin{align*}
    \log\cN\inparen{B_p, \eta K} &\le t^{-\frac{2p}{2-p}} \cdot C(p)p^{\star}\Fstar\log\Fstar +\inparen{\frac{\eta}{t}}^{-2}p^{\star}H^{2/p}\Fstar\log\mtilde \\
    &\lesssim \eta^{-p}\cdot H \cdot C(p)\inparen{p^{\star}\Fstar\log\max\inbraces{\mtilde,\Fstar}}.
\end{align*}
Finally, we need to address the case where $p_1=\dots=p_m=p < 2$, as this is not covered by the construction of the block Lewis weights (\Cref{lemma:block_lewis}).

To do so, notice that $\gnorm{\mA\vx}{p} = \norm{\mA\vx}_p$ for all $\vx\in\R^n$. So, we reduce to the case where all the $S_i$ have size $1$. In particular, let $\widehat{\vlambda}$ be a probability measure over $[k]$ such that for all $j\in [k]$,
\begin{align*}
    \Fstar \ge \frac{\tau_j\inparen{\widehat{\mLambda}^{1/2-1/p}\mA}}{\widehat{\vlambda}_j},
\end{align*}
and let
\begin{align*}
    \widehat{B_2} = \inbraces{\vx\in\R^n\suchthat \norm{\widehat{\mLambda}^{1/2-1/p}\mA\vx}_2 \le 1}.
\end{align*}
Notice that this setup is in accordance with \Cref{defn:alpha}. By \Cref{lemma:covering_two_to_one_clean}, we get
\begin{align*}
    \log\cN(B_p,t\widehat{B_2}) \lesssim t^{-\frac{2p}{2-p}} \cdot C(p)\Fstar\log\Fstar.
\end{align*}
It remains to bound $\log\cN(\widehat{B_2},(\eta/t)K)$. Let $\vlambda$ be a probability measure over $[m]$, where $\vlambda_i = \sum_{j \in S_i} \widehat{\vlambda}_j$. Define $\widehat{\valpha} \in \R^k$ analogously to $\widehat{\vlambda}$. Notice that, for these choices of $\widehat{\valpha}$ and $\widehat{\vlambda}$, we see that the conclusion of \Cref{lemma:covering_poly_to_two} still holds, and we have
\begin{align*}
    \log\cN\inparen{\widehat{B_2},\inparen{\frac{\eta}{t}} \cdot K} \lesssim \inparen{\frac{\eta}{t}}^{-2}H^{2/p}\Fstar\log\mtilde.
\end{align*}
Now, the calculation is the same as before, and we conclude the proof of \Cref{thm:general_covering}.
\end{proof}

\subsection{Volume-based metric entropy}

In this subsection, we prove \Cref{lemma:covering_easy}, which is an easy consequence of a volume-based argument to obtain a covering number guarantee.

We start with \Cref{lemma:containments}.

\begin{lemma}
\label{lemma:containments}
Let $S \subseteq [m]$ have size $\mtilde$. Let $H \ge 1$ be such that $H\vrho_i \ge \nfrac{\valpha_i^p}{\norm{\valpha}_p^p}$ for all $i \in [m]$. For all $\vx\in\R^n$, we have
\begin{align*}
    \max_{i \in S} \frac{\norm{\mA_{S_i}\vx}_{p_i}}{\inparen{\Fstar}^{\max(1/2,1/p)}\vrho_i^{1/p}H^{1/p}} &\le \gnorm{\mA\vx}{p}.
\end{align*}
\end{lemma}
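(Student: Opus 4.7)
The plan is to bound $\norm{\mA_{S_i}\vx}_{p_i}$ by passing through the auxiliary quantity $T(\vx) \coloneqq \norm{\mW^{1/2}\mLambda^{1/2-1/p}\mA\vx}_2$. Specifically, I will establish
\begin{align*}
    \norm{\mA_{S_i}\vx}_{p_i} \le \valpha_i\cdot T(\vx) \qquad\text{and}\qquad T(\vx) \le (\Fstar)^{\max(0,1/p-1/2)}\gnorm{\mA\vx}{p},
\end{align*}
and then combine these with the hypothesis $H\vrho_i \ge \valpha_i^p/\norm{\valpha}_p^p$ together with $\norm{\valpha}_p \le \sqrt{\Fstar}$ to obtain the desired bound.

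For the first inequality, I would perform the change of basis $\vy \coloneqq \mSigma\mV^{\top}\vx$, so that $T(\vx) = \norm{\vy}_2$ and $\mA\mR = \mW^{-1/2}\mLambda^{1/p-1/2}\mU$, giving $\ip{\va_j,\vx} = \vw_j^{-1/2}\vlambda_i^{1/p-1/2}\ip{\vu_j,\vy}$ for each $j \in S_i$. Applying Cauchy-Schwarz termwise and invoking $\norm{\vu_j}_2^2 = \tau_j(\mW^{1/2}\mLambda^{1/2-1/p}\mA)$, the scalar prefactors collect exactly into $\valpha_i = \vlambda_i^{1/p-1/2}\bigl(\sum_{j \in S_i}(\tau_j/\vw_j)^{p_i/2}\bigr)^{1/p_i}$ from \Cref{defn:alpha}. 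For the case $p_1=\cdots=p_m=p<2$, the same estimate holds using the singleton-based alternative for $\valpha$ given in \Cref{defn:alpha}, applied row-wise within each block and then aggregated via an $\ell_p$ sum.

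For the second inequality I split on $p$. When $p \ge 2$, the rounding property of $\mW$ gives $T(\vx) \le \gnorml{\mLambda^{-1/p}\mA\vx}{2}$, and the inclusion of $L^r(\vlambda)$ norms for the probability measure $\vlambda$ yields $\gnorml{\cdot}{2} \le \gnorml{\cdot}{p}$, so $T(\vx) \le \gnorm{\mA\vx}{p}$ with no $\Fstar$ factor. When $p \le 2$, I would redo the calculation from \Cref{sec:overview_measure} around \eqref{intro:blw_rounding}: expand
\[
T(\vx)^2 \le \sum_i \vlambda_i\norm{\vlambda_i^{-1/p}\mA_{S_i}\vx}_{p_i}^{p}\norm{\vlambda_i^{-1/p}\mA_{S_i}\vx}_{p_i}^{2-p},
\]
pull out the max of the last factor over $i$, and use the first inequality combined with the $\Fstar$-block Lewis overestimate bound $\vlambda_i^{-1/p}\valpha_i \le \sqrt{\Fstar}$ to control it. This produces the self-referential estimate $T(\vx)^2 \le \gnorm{\mA\vx}{p}^p(\Fstar)^{(2-p)/2}\,T(\vx)^{2-p}$, which rearranges to $T(\vx) \le (\Fstar)^{1/p-1/2}\gnorm{\mA\vx}{p}$.

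Combining the two inequalities with $\valpha_i \le H^{1/p}\vrho_i^{1/p}\norm{\valpha}_p$ (from the hypothesis on $H\vrho_i$) and $\norm{\valpha}_p^p \le \sum_i\vlambda_i(\Fstar)^{p/2} = (\Fstar)^{p/2}$ (from \eqref{eq:alpha_to_fstar} and $\vlambda$ being a probability measure) produces an exponent on $\Fstar$ of $\max(0,1/p-1/2) + 1/2 = \max(1/2,1/p)$, matching the statement. The main (minor) subtlety is bookkeeping the case split on $p$ so that the factor of $(\Fstar)^{1/p-1/2}$ arising only in the $p<2$ regime combines with the $\sqrt{\Fstar}$ from $\norm{\valpha}_p$ to give exactly the claimed exponent in both regimes.
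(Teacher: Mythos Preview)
Your proposal is correct and follows essentially the same approach as the paper: both pass through the Euclidean quantity $T(\vx)=\norm{\mW^{1/2}\mLambda^{1/2-1/p}\mA\vx}_2$, establish $\norm{\mA_{S_i}\vx}_{p_i}\le \valpha_i T(\vx)$ via Cauchy--Schwarz in the whitened basis, handle $p\ge 2$ by monotonicity of $L_r(\vlambda)$ norms, and for $p<2$ run the self-referential factoring argument to obtain $T(\vx)\le (\Fstar)^{1/p-1/2}\gnorm{\mA\vx}{p}$, finishing with $\valpha_i\le H^{1/p}\vrho_i^{1/p}\norm{\valpha}_p$ and $\norm{\valpha}_p\le\sqrt{\Fstar}$. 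The only cosmetic difference is that the paper introduces an intermediate quantity $\delta_i$ and carries the self-referential estimate on $\gnorm{\mLambda^{1/2-1/p}\mA\vx}{2}$ rather than directly on $T(\vx)$; your version is slightly more streamlined but mathematically equivalent.
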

\begin{proof}[Proof of \Cref{lemma:containments}]
Consider the invertible mapping $\vx \mapsto \mR\vx$, and write
\begin{align}
    \norm{\mW^{-1/2}_{S_i}\mLambda^{1/p-1/2}_{S_i}\mU\vx}_{p_i} = \inparen{\sum_{j \in S_i} \abs{\vlambda_i^{1/p}\ip{\vf_j,\vx}}^{p_i}}^{1/p_i} \le \inparen{\sum_{j \in S_i} \abs{\vlambda_i^{1/p}\norm{\vf_j}_2\norm{\vx}_2}^{p_i}}^{1/p_i} = \valpha_i\norm{\vx}_2.\label{eq:alpha_importance_proof}
\end{align}
This means that when $p \ge 2$,
\begin{align*}
    \norm{\mA_{S_i}\vx}_{p_i} \le \valpha_i\norm{\mW^{1/2}\mLambda^{1/2-1/p}\mA\vx}_2 \le \valpha_i\gnorm{\mLambda^{1/2-1/p}\mA\vx}{2} \le \valpha_i\gnorm{\mA\vx}{p} \le \norm{\valpha}_p\vrho_i^{1/p}H^{1/p}\gnorm{\mA\vx}{p}.
\end{align*}
Dividing both sides by $\norm{\valpha}_p\vrho_i^{1/p}H^{1/p}$ and then recalling \eqref{eq:alpha_to_fstar} yields the desired conclusion (in particular, we see that $\norm{\valpha}_p \le \inparen{\Fstar}^{1/2}$).

We now analyze what happens when $p \le 2$ and $p_1,\dots,p_m \ge 2$. Let
\begin{align*}
    \delta_i^{1/2} \coloneqq \max_{\vx\in\R^n\setminus\inbraces{0}}\frac{\vlambda_i^{-1/p}\norm{\mA_{S_i}\vx}_{p_i}}{\gnorm{\mLambda^{1/2-1/p}\mA\vx}{2}}.
\end{align*}
Let $\delta \coloneqq \max_{i \in [m]} \delta_i$. We now see that
\begin{align*}
    &\ \gnorm{\mLambda^{1/2-1/p}\mA\vx}{2} = \inparen{\sum_{i=1}^m \vlambda_i\norm{\mLambda_{S_i}^{-1/p}\mA_{S_i}\vx}_{p_i}^2}^{1/2} = \inparen{\sum_{i=1}^m \vlambda_i\norm{\mLambda_{S_i}^{-1/p}\mA_{S_i}\vx}_{p_i}^{p}\cdot\norm{\mLambda_{S_i}^{-1/p}\mA_{S_i}\vx}_{p_i}^{2-p}}^{1/2} \\
    &\le \inparen{\gnorm{\mA\vx}{p}^p\inparen{\delta^{1/2} \cdot \gnorm{\mLambda^{1/2-1/p}\mA\vx}{2}}^{2-p}}^{1/2} = \gnorm{\mA\vx}{p}^{p/2}\inparen{\delta^{1/2} \cdot \gnorm{\mLambda^{1/2-1/p}\mA\vx}{2}}^{1-p/2}.
\end{align*}
Rearranging and taking the $2/p$-power gives
\begin{align*}
    \gnorm{\mLambda^{1/2-1/p}\mA\vx}{2} \le \gnorm{\mA\vx}{p} \delta^{1/p-1/2}.
\end{align*}
Observe that this yields the inequalities
\begin{align*}
     \frac{\norm{\mA_{S_i}\vx}_{p_i}}{\delta^{1/p-1/2}} &\le \frac{\valpha_i}{\delta^{1/p-1/2}}\norm{\mW^{1/2}\mLambda^{1/2-1/p}\mA\vx}_2 \le \frac{\valpha_i}{\delta^{1/p-1/2}}\gnorm{\mLambda^{1/2-1/p}\mA\vx}{2} \\
     &\le \valpha_i\gnorm{\mA\vx}{p} \le \norm{\valpha}_p\vrho_i^{1/p}H^{1/p}\gnorm{\mA\vx}{p}.
\end{align*}
It remains to bound the $\delta_i$. By the same sort of argument from the $p\ge 2$ case (i.e., \eqref{eq:alpha_importance_proof}), we get
\begin{align*}
    \vlambda_i^{1/p}\delta_i^{1/2} \le \valpha_i = \vlambda_i^{1/p}\inparen{\sum_{j \in S_i} \norm{\vf_j}_2^{p_i}}^{1/p_i}
\end{align*}
which means that
\begin{align*}
    \delta \le \max_{i \in [m]} \inparen{\sum_{j \in S_i} \norm{\vf_j}_2^{p_i}}^{2/p_i} \le \Fstar.
\end{align*}
Now, again using the fact that $\norm{\valpha}_p \le \inparen{\Fstar}^{1/2}$, we see that $\delta^{1/p-1/2}\norm{\valpha}_p \le \inparen{\Fstar}^{1/p}$. 

Finally, we analyze the case where $p_1=\dots=p_m=p < 2$. Using \eqref{eq:alpha_alleq_lipschitz}, and the same sort of argument from above, we have
\begin{align*}
    \norm{\mA_{S_i}\vx}_p \le \valpha_i\norm{\widehat{\mLambda}^{1/2-1/p}\mA\vx}_2 \le \valpha_i\inparen{\Fstar}^{1/p-1/2}\gnorm{\mA\vx}{p} \le \norm{\valpha}_pH^{1/p}\vrho_i^{1/p}\inparen{\Fstar}^{1/p-1/2}\gnorm{\mA\vx}{p}.
\end{align*}
Once again, we use $\norm{\valpha}_p \le \inparen{\Fstar}^{1/2}$.

We have covered all our cases and may conclude the proof of \Cref{lemma:containments}.
\end{proof}

\Cref{lemma:containments} also suggests a useful sanity check, as the denominator points to a sparsity of $\sum_{i \le m} \inparen{\inparen{\Fstar}^{\max(1/2,1/p)}\vrho_i^{1/p}H^{1/p}}^p = H\inparen{\Fstar}^{\max(1,p/2)}$. And, recall that we should be able to set $\Fstar \sim n$, which indeed gives us the dependence on $n$ we see in \Cref{thm:one_shot_lewis}. 

\begin{lemma}
\label{lemma:covering_easy}
Let $S \subseteq [m]$ have size $\mtilde$. We have
\begin{align*}
    \log\cN\inparen{B_p,\eta\inbraces{\vx\in\R^n\suchthat\polynorm{\vx}\le 1}} \le n\logv{\frac{4H^{1/p}\inparen{\Fstar}^{\max(1/p,1/2)}}{\eta}}
\end{align*}
\end{lemma}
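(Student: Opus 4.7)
The plan is to chain two observations: first, a geometric containment of $B_p$ inside a scaled copy of the sampling body $K\coloneqq\{\vx\in\R^n:\polynorm{\vx}\le 1\}$, and second, the classical volumetric covering bound for a symmetric convex body by its own scaled copies.

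The containment is essentially already proved in \Cref{lemma:containments}. Rewriting that inequality in the form
\begin{align*}
    \polynorm{\vx} = \max_{i \in S} \vrho_i^{-1/p}\norm{\mA_{S_i}\vx}_{p_i} \le (\Fstar)^{\max(1/2,1/p)}\, H^{1/p}\cdot \gnorm{\mA\vx}{p},
\end{align*}
we see that, if we put $R \coloneqq H^{1/p}(\Fstar)^{\max(1/p,1/2)}$, then $B_p \subseteq R\cdot K$. This is the single input we need from the rest of the section.

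Next I would invoke the volumetric covering estimate for a centrally symmetric convex body by scaled translates of itself: for any symmetric convex $K\subset\R^n$ (or symmetric convex set modulo $\ker(\mA)$, since both $B_p$ and $K$ are invariant under translations in $\ker(\mA)$ and the argument may be performed in the quotient space of dimension $\le n$), and any $\eta \le R$,
\begin{align*}
    \cN(R K, \eta K) = \cN\!\inparen{K, \tfrac{\eta}{R}K} \le \inparen{1+\tfrac{2R}{\eta}}^{n} \le \inparen{\tfrac{4R}{\eta}}^{n}.
\end{align*}
This is the standard packing/covering bound, obtained by taking a maximal $(\eta/R)K$-separated subset of $K$ and comparing volumes of disjoint scaled bodies inside $(1+\eta/(2R))K$. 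Combining with $B_p \subseteq R K$,
\begin{align*}
    \cN(B_p,\eta K) \le \cN(R K,\eta K) \le \inparen{\tfrac{4R}{\eta}}^{n}.
\end{align*}
Taking logarithms and substituting $R = H^{1/p}(\Fstar)^{\max(1/p,1/2)}$ yields exactly the claimed bound. If $\eta > R$, then $B_p \subseteq R K \subseteq \eta K$ and the covering number equals $1$, so the bound is trivial (the right-hand side is nonnegative for $\eta \le 4R$ and larger $\eta$ are irrelevant).

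There is no real obstacle here: both ingredients are off-the-shelf once the correct scaling $R$ is identified. The only mild subtlety is ensuring that the volume argument is applied in a space where $K$ is actually bounded, which is handled by quotienting out $\ker(\mA)$ (the dimension only drops), so the factor $n$ on the right-hand side is a valid upper bound on the effective dimension.
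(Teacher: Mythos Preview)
Your proposal is correct and follows essentially the same argument as the paper: use \Cref{lemma:containments} to obtain the containment $B_p \subseteq R\cdot K$ with $R = H^{1/p}(\Fstar)^{\max(1/p,1/2)}$, then apply the standard volumetric covering bound $\cN(K,\tfrac{\eta}{R}K)\le (4R/\eta)^n$. Your explicit remark about passing to the quotient by $\ker(\mA)$ to make $K$ bounded is a nice touch that the paper leaves implicit.
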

\begin{proof}[Proof of \Cref{lemma:covering_easy}]
Define
\begin{align*}
    K &\coloneqq \inbraces{\vx\in\R^n\suchthat\polynorm{\vx}\le 1}.
\end{align*}
Let $C$ be a value such that for all $i \in S$ and $\vx\in\R^n$, we have $(C\vrho_i)^{-1/p}\norm{\mA_{S_i}\vx}_{p_i} \le \gnorm{\mA\vx}{p}$. This means that $B_p \subseteq C^{1/p} K$. Then,
\begin{align*}
    \log\cN\inparen{B_p,\eta K} \le \log\cN\inparen{C^{1/p} K, \eta K} = \log\cN\inparen{K, \frac{\eta}{C^{1/p}} \cdot K} \le n\logv{\frac{4C^{1/p}}{\eta}}.
\end{align*}
By \Cref{lemma:containments}, when $p\ge 2$, we can choose $C = H\inparen{\Fstar}^{\max(1/p,1/2)}$.
This concludes the proof of \Cref{lemma:covering_easy}. 
\end{proof}

\section{Concentration analysis}
\label{sec:generic_chaining}

In this section, we prove Theorem \ref{thm:concentration}. Theorem \ref{thm:concentration} states our main result in its fullest generality. \Cref{thm:one_shot_lewis} follows easily from this, as we show in \Cref{sec:applications}.

We first state \Cref{thm:concentration}.

\begin{restatable}[General concentration result]{mainthm}{concentration}
\label{thm:concentration}
Let $\cG = (\mA \in \R^{k \times n}, S_1,\dots,S_m, p_1,\dots,p_m)$ where $S_1,\dots,S_m$ form a partition of $[k]$. Suppose at least one of the following holds:
\begin{itemize}
    \item $1 \le p < \infty$ and $p_1,\dots,p_m \ge 2$;
    \item $1/\log n \le p_1 = \dots = p_m = p < \infty$;
    \item $p_1 = \dots = p_m = 2$ and $1/\log n \le p < \infty$.
\end{itemize}
Let $P \coloneqq \max\inparen{1, \max_{i \in [m]} \min(p_i,\log\abs{S_i})}$. Suppose that $\vlambda \in \R^m$ is a probability measure over $[m]$, let $\mW$ be a rounding matrix (\Cref{defn:rounding_matrix}) such that we get an $\Fstar$-block Lewis overestimate (\Cref{defn:block_lewis_overestimate}), and define $\valpha$ according to \Cref{defn:alpha}. Let $\cD = \inparen{\vrho_1,\dots,\vrho_m}$ be a probability distribution over $[m]$ and $H \ge 1$ be such that $H\vrho_i \ge \valpha_i^p/\norm{\valpha}_p^p$.

If
\begin{align*}
    \mtilde &= \Omega\inparen{\logv{\nfrac{1}{\delta}}\vbrho \cdot H \cdot P \inparen{\Fstar}^{\max(1,p/2)}},
\end{align*}
and if we sample $\cM \sim \cD^{\mtilde}$, then, with probability $\ge 1-\delta$, we have:
\begin{align*}
    \text{for all } \vx \in \R^n,\quad (1-\eps)\gnorm{\mA\vx}{p}^p \le \frac{1}{\mtilde}\sum_{i \in \cM} \frac{1}{\vrho_i} \cdot \norm{\mA_{S_i}\vx}_{p_i}^p \le (1+\eps)\gnorm{\mA\vx}{p}^p
\end{align*}
\end{restatable}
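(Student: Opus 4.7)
The plan is to follow the symmetrization-plus-chaining strategy outlined in \Cref{sec:overview_concentration}, with the covering-number estimates of \Cref{sec:coverings} (specifically \Cref{thm:general_covering} and \Cref{lemma:covering_easy}) supplying the main geometric input. By homogeneity of $\gnorm{\mA\vx}{p}^p$, it suffices to bound
\[
\Delta \coloneqq \sup_{\vx\in B_p}\Bigl|\frac{1}{\mtilde}\sum_{h=1}^{\mtilde}\frac{\norm{\mA_{S_{i_h}}\vx}_{p_{i_h}}^p}{\vrho_{i_h}} - \gnorm{\mA\vx}{p}^p\Bigr|
\]
by $\eps$. A standard Ledoux--Talagrand symmetrization reduces bounding $\E[\Delta]$ to bounding $\E_R\sup_{\vx\in B_p}\bigl|\sum_h R_h f_{i_h}(\vx)\bigr|$, where $f_i(\vx)\coloneqq\norm{\mA_{S_i}\vx}_{p_i}^p/\vrho_i$ and the $R_h$ are independent Rademacher signs. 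I would then upgrade the in-expectation bound to a $1-\delta$ tail via Talagrand's functional inequality for empirical process suprema (or a bounded-differences argument, using \Cref{lemma:containments} to guarantee that each $f_{i_h}$ is bounded by $H\inparen{\Fstar}^{\max(1,p/2)}$ on $B_p$), producing the $\log(1/\delta)$ factor in the sparsity.

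Next, I would verify that $X_\vx\coloneqq\sum_h R_h f_{i_h}(\vx)$ is a subgaussian process under a pseudo-metric proportional to $\sqrt{\mtilde}\cdot\polynorm{\cdot}$. For $p\ge 1$, the elementary inequality $|a^p-b^p|\le p\cdot\max(a,b)^{p-1}|a-b|$ combined with the $\polynorm{\cdot}$-diameter bound $\norm{\mA_{S_i}\vx}_{p_i}\le\inparen{\Fstar}^{\max(1/2,1/p)}\vrho_i^{1/p}H^{1/p}\gnorm{\mA\vx}{p}$ from \Cref{lemma:containments} yields, for $\vx,\vy\in B_p$,
\[
|f_i(\vx)-f_i(\vy)|\lesssim_p H^{(p-1)/p}\inparen{\Fstar}^{(p-1)\max(1/p,1/2)}\cdot\vrho_i^{-1/p}\norm{\mA_{S_i}(\vx-\vy)}_{p_i}\le C_p(H,\Fstar)\cdot\polynorm{\vx-\vy},
\]
so by Hoeffding's inequality $\subgnorm{X_\vx-X_\vy}\lesssim\sqrt{\mtilde}\cdot C_p(H,\Fstar)\cdot\polynorm{\vx-\vy}$. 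For $0<p<1$ (possible only in the second or third regime), I would instead use $|a^p-b^p|\le|a-b|^p$, giving a $\psi_p$-type process controlled by an analogous chaining with $\polynorm{\cdot}^p$. Applying Dudley's entropy bound then gives
\[
\E_R\sup_{\vx\in B_p}|X_\vx|\lesssim\sqrt{\mtilde}\cdot C_p(H,\Fstar)\int_0^{D}\sqrt{\log\cN\bigl(B_p,\eta\{\vx\in\R^n:\polynorm{\vx}\le 1\}\bigr)}\,d\eta,
\]
where $D\lesssim H^{1/p}\inparen{\Fstar}^{\max(1/p,1/2)}$ is the $\polynorm{\cdot}$-diameter of $B_p$ supplied by \Cref{lemma:containments}.

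Finally, I would split the entropy integral at a carefully chosen threshold $\eta_\star$ and use the two available covering bounds: for $\eta\le\eta_\star$, the volume bound $\log\cN\lesssim n\log(D/\eta)$ from \Cref{lemma:covering_easy}; for $\eta\ge\eta_\star$, the dual-Sudakov bound $\log\cN\lesssim\eta^{-\min(2,p)}\cdot H^{2/\max(p,2)}\cdot P\cdot\Fstar\cdot\log\max(\mtilde,\Fstar)$ from \Cref{thm:general_covering}. Optimizing $\eta_\star$ produces the $\eps^{-2}(\log n)^2\log(n/\eps)$ polylogarithmic factor hidden in $\vbrho$, and taking $\mtilde$ to be at least the claimed $\Omega(\log(1/\delta)\cdot\vbrho\cdot H\cdot P\cdot\inparen{\Fstar}^{\max(1,p/2)})$ forces the chaining integral to be at most $\eps\mtilde/C_p(H,\Fstar)$, closing the argument. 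The main obstacle I anticipate is polylogarithmic bookkeeping through the two-regime chaining: the $\log\max(\mtilde,\Fstar)$ prefactor inside the dual-Sudakov estimate creates a mild self-referential dependence on $\mtilde$ that must be handled by a bootstrapping / fixed-point step, and in the $p<1$ regime the $\psi_p$-chaining requires simultaneously tracking $\dtwo$ and a companion $\dinf$ metric (to absorb the now-weaker $|a^p-b^p|\le|a-b|^p$ bound), with the two estimates combined via a Talagrand-style mixed-tail inequality.
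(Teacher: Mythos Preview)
Your overall architecture (symmetrization, subgaussian increments, Dudley-style chaining split between the dual-Sudakov estimate of \Cref{thm:general_covering} and the volume bound of \Cref{lemma:covering_easy}) matches the paper. However, the way you control the increments has a genuine quantitative gap that costs you exactly the factor $H\cdot(\Fstar)^{\max(1,p/2)}$ in the final sparsity, i.e.\ you end up with $\mtilde\gtrsim (\Fstar)^{\max(2,p)}$ instead of $(\Fstar)^{\max(1,p/2)}$.

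The problem is in your Lipschitz step. Bounding $\max(g_i(\vx),g_i(\vy))^{p-1}$ by the \emph{worst-case} quantity $((\Fstar)^{\max(1/2,1/p)}H^{1/p}\vrho_i^{1/p})^{p-1}$ from \Cref{lemma:containments} pays for this factor in full up front. If you track the resulting Dudley integral (e.g.\ for $1\le p<2$ you get $\int_0^{D}\eta^{-p/2}\,d\eta\asymp D^{1-p/2}$ with $D\asymp H^{1/p}(\Fstar)^{1/p}$), the product $C_p(H,\Fstar)\cdot D^{1-p/2}\cdot\sqrt{HP\Fstar\log(\cdot)}$ collapses to $H\cdot\Fstar\cdot\sqrt{P\log(\cdot)}$, so setting $\E[\Delta]\le\eps$ forces $\mtilde\gtrsim \eps^{-2}H^{2}(\Fstar)^{2}$; the same over-pay happens for $p\ge 2$. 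The paper avoids this by writing $g^{p}-\hat g^{p}=(g^{p/2}-\hat g^{p/2})(g^{p/2}+\hat g^{p/2})$, bounding the first factor by $\polynorm{\vx-\xhat}^{\min(p/2,1)}$ (times a constant involving $(\Fstar)^{\max(0,p/4-1/2)}$ when $p>2$), and keeping the second factor as the \emph{data-dependent} quantity $\sqrt{\mtilde}\,(\max_{\vx\in B_p}\|\vx\|_{\cG'}^{p})^{1/2}$ rather than its a-priori bound. This puts you in the self-bounding framework of \Cref{thm:generic_chaining_dirksen} (Lemma~2.6 of \cite{jlls23}), whose hypothesis is $\gamma_2(B_p,d_2/\mtilde)\lesssim D\,(\max_{\vx}\|\vx\|_{\cG'}^{p})^{1/2}$ and whose conclusion is $\E[\Delta]\lesssim D$; this ``solving for the implicit factor'' is precisely what recovers the missing $H\cdot(\Fstar)^{\max(1,p/2)}$. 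The same framework supplies the high-probability tail directly via the $\widehat D$ diameter bound, so you do not need a separate Talagrand/bounded-differences step. Note also that because the chaining now runs against $\polynorm{\cdot}^{\min(p/2,1)}$, the relevant entropy-number sum is $\sum_{N}2^{N/2}e_N(B_p,\polynorm{\cdot})^{\min(p/2,1)}$, which is exactly balanced by the $\eta^{-\min(2,p)}$ exponent in \Cref{thm:general_covering}; your anticipated $\psi_p$ / mixed-tail machinery for $p<1$ is unnecessary once you use this factorization.
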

The goal of the rest of this section is to prove \Cref{thm:concentration}. It may be helpful to recall the argument sketch given in \Cref{sec:overview_concentration}. 

To formalize the idea given there, we first introduce the following notation (recall that $\vrho_i$ is the probability that we choose group $i$ in a round of sampling and that \Cref{defn:entropy_numbers} defines the $e_N$).
\begin{align}
    g_i(\vx) &\coloneqq \norm{\mA_{S_i}\vx}_{p_i}\label{eq:chaining_g_defn} \\
    \dtwo(\vx, \xhat) &\coloneqq \inparen{\sum_{h=1}^{\mtilde} \inparen{\frac{g_{i_h}(\vx)^p}{\vrho_{i_h}} - \frac{g_{i_h}(\xhat)^p}{\vrho_{i_h}}}^2}^{1/2}\label{eq:chaining_dtwo_defn} \\
    \gamma_2(B_{p},\dtwo) &\coloneqq \inf_{\substack{\abs{T_N} \le 2^{2^N};\\ T_N  \subset B_{p}}} \sup_{\vx \in B_{p}} \sum_{N \ge 0} 2^{N/2} \cdot d_2(\vx,T_N)\label{eq:chaining_gamma2_defn}
\end{align}
The goal is to control $\gamma_2(B_p, \dtwo)$. This quantity represents the worst-case approximation error that one incurs by using the discretization scheme given by the $T_N$, where the discretization is taken with respect to the $\dtwo$ metric.

Towards this goal, we first apply a standard symmetrization reduction. Informally, this reduction (\Cref{lemma:symmetrization}) states that it is enough to analyze the average fluctuations of a Rademacher average of any set of $\mtilde$ (not necessarily distinct) reweighted groups.
\begin{lemma}[Symmetrization reduction]
\label{lemma:symmetrization} 
Let $R_1,\dots,R_{\mtilde}$ be independent Rademacher random variables (i.e., $\mathsf{Unif}\inparen{\pm 1}$). We have
\begin{align*}
    \exvv{\cG'}{\abs{\gnorm{\mA\vx}{p}^p - \norm{\vx}_{\cG'}^p}} \le 2 \underset{\cG'}{\E} \exvv{R_1, \ldots, R_{\mtilde}}{\frac{1}{\mtilde}\abs{\sum_{h=1}^{\mtilde} R_h \frac{g_{i'_h}(\vx)^p}{\vrho_{i'_h}}}}.
\end{align*}
\end{lemma}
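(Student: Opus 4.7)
The plan is to use the standard ghost-sample symmetrization argument. First, introduce an independent copy $\cG'' = (i''_1, \ldots, i''_{\mtilde})$ of the sampled multiset $\cG'$, with $i''_h \sim \vrho$ independently. Since $\exvv{i \sim \vrho}{g_i(\vx)^p/\vrho_i} = \sum_{i=1}^m g_i(\vx)^p = \gnorm{\mA\vx}{p}^p$, we have the identity $\gnorm{\mA\vx}{p}^p = \exvv{\cG''}{\norm{\vx}_{\cG''}^p}$, which lets us rewrite
\begin{align*}
    \gnorm{\mA\vx}{p}^p - \norm{\vx}_{\cG'}^p = \exvv{\cG''}{\norm{\vx}_{\cG''}^p - \norm{\vx}_{\cG'}^p}.
\end{align*}

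Next, I would apply Jensen's inequality to pull the absolute value inside the expectation over $\cG''$, yielding
\begin{align*}
    \exvv{\cG'}{\abs{\gnorm{\mA\vx}{p}^p - \norm{\vx}_{\cG'}^p}} \le \exvv{\cG',\cG''}{\abs{\frac{1}{\mtilde}\sum_{h=1}^{\mtilde} \inparen{\frac{g_{i''_h}(\vx)^p}{\vrho_{i''_h}} - \frac{g_{i'_h}(\vx)^p}{\vrho_{i'_h}}}}}.
\end{align*}
Now the key observation is that for each $h$, the pair $(i'_h, i''_h)$ is exchangeable (both are i.i.d.\ from $\vrho$), so the joint distribution of the summands is invariant under independently swapping $i'_h$ and $i''_h$. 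Equivalently, multiplying the $h$th summand by an independent Rademacher variable $R_h$ does not change the distribution of the sum. Hence the right-hand side equals
\begin{align*}
    \exvv{\cG',\cG'',R}{\abs{\frac{1}{\mtilde}\sum_{h=1}^{\mtilde} R_h\inparen{\frac{g_{i''_h}(\vx)^p}{\vrho_{i''_h}} - \frac{g_{i'_h}(\vx)^p}{\vrho_{i'_h}}}}}.
\end{align*}

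Finally, I would apply the triangle inequality to split the sum into two pieces — one involving only $\cG'$ and one only $\cG''$ — and use that both pieces have identical distributions to collect the factor of $2$. This gives
\begin{align*}
    \exvv{\cG'}{\abs{\gnorm{\mA\vx}{p}^p - \norm{\vx}_{\cG'}^p}} \le 2\,\exvv{\cG',R}{\abs{\frac{1}{\mtilde}\sum_{h=1}^{\mtilde} R_h\frac{g_{i'_h}(\vx)^p}{\vrho_{i'_h}}}},
\end{align*}
which is the claimed inequality. There is no real obstacle here — this is the textbook symmetrization trick, and the only thing to be careful about is the exchangeability step justifying the insertion of the Rademacher signs.
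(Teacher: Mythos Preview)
Your proof is correct and follows essentially the same approach as the paper's own proof: introduce an independent ghost copy of $\cG'$, apply Jensen's inequality to pull the absolute value inside, use exchangeability of the i.i.d.\ pairs to insert the Rademacher signs, and split via the triangle inequality to obtain the factor of $2$. The paper's argument is identical up to notation (it calls the ghost copy $\tilde{\cG}$ rather than $\cG''$).
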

\begin{proof}[Proof of \Cref{lemma:symmetrization}]
We follow the proof of Lemma 3.1 due to \citet{lee22}. Let \(\tilde{\cG}\) be an independent copy of \(\cG'\). Fixing \(\cG'\), we have by Jensen's inequality that
\begin{align*}
    \abs{\gnorm{\mA\vx}{p}^p - \norm{\vx}_{\cG'}^p} &= \abs{\exvv{\tilde{\cG}}{\norm{\vx}_{\tilde{\cG}}^p} - \norm{\vx}_{\cG'}^p}
    \leq \exvv{\tilde{\cG}}{\abs{\norm{\vx}_{\tilde{\cG}}^p - \norm{\vx}_{\cG'}^p}}.
\end{align*}
Thus, taking expectation over \(\cG'\),
\begin{align*}
    \exvv{\cG'}{\abs{\gnorm{\mA\vx}{p}^p - \norm{\vx}_{\cG'}^p}} &\leq \exvv{\cG', \tilde{\cG}}{\abs{\norm{\vx}_{\tilde{\cG}}^p - \norm{\vx}_{\cG'}^p}}
    = \exvv{\cG', \tilde{\cG}}{\frac{1}{\mtilde}\abs{\sum_{h=1}^{\mtilde} \frac{g_{\tilde{i}_h}(\vx)^p}{\vrho_{\widetilde{i}_h}} -\frac{g_{i'_h}(\vx)^p}{\vrho_{i'_h}}}}.
\end{align*}
Observe that \(\frac{g_{\tilde{i}_h}(\vx)^p}{\vrho_{\widetilde{i}_h}} -\frac{g_{i'_h}(\vx)^p}{\vrho_{i'_h}}\) is symmetric,
and so is distributed the same as \(R_h \inparen{\frac{g_{\tilde{i}_h}(\vx)^p}{\vrho_{\widetilde{i}_h}} -\frac{g_{i'_h}(\vx)^p}{\vrho_{i'_h}}}\)
where \(R_h\) is an independent Rademacher variable. Then,
\begin{align*}
    \exvv{\cG', \tilde{\cG}}{\abs{\frac{1}{\mtilde}\sum_{h=1}^{\mtilde} \inparen{\frac{g_{\tilde{i}_h}(\vx)^p}{p_{\tilde{i}_h}} -\frac{g_{i'_h}(\vx)^p}{p_{i'_h}}}}} &= \underset{R_1, \ldots, R_{\mtilde}}{\E} \exvv{\cG', \tilde{\cG}}{\frac{1}{\mtilde}\abs{\sum_{h=1}^{\mtilde} R_h \inparen{\frac{g_{\tilde{i}_h}(\vx)^p}{\vrho_{\widetilde{i}_h}} -\frac{g_{i'_h}(\vx)^p}{\vrho_{i'_h}}}}} \\
    &\leq 2 \underset{\cG'}{\E} \exvv{R_1, \ldots, R_M}{\frac{1}{\mtilde}\abs{\sum_{h=1}^{\mtilde} R_h \frac{g_{i'_h}(\vx)^p}{\vrho_{i'_h}}}}.
\end{align*}
This concludes the proof of \Cref{lemma:symmetrization}.
\end{proof}
With \Cref{lemma:symmetrization} in hand, we set up our chaining argument in \Cref{lemma:chaining_setup}. We first confirm that our random process is subgaussian with respect to our choice of $\dtwo$. 
\begin{lemma}[Choosing the distance]
\label{lemma:chaining_setup}
The random process $\sum_{h=1}^{\mtilde} R_{i_h}\cdot\nfrac{g_{i_h}(\vx)^p}{\vrho_{i_h}}$  is subgaussian with respect to $\dtwo$ as defined in  \eqref{eq:chaining_dtwo_defn}.
\end{lemma}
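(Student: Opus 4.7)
The plan is to verify this by direct computation, relying on the classical Hoeffding-type subgaussian bound for weighted Rademacher sums. For a fixed choice of $i_1,\dots,i_{\mtilde}$, the process $\vx \mapsto Z_{\vx} \coloneqq \sum_{h=1}^{\mtilde} R_h \cdot \nfrac{g_{i_h}(\vx)^p}{\vrho_{i_h}}$ is a sum of independent mean-zero terms (once we fix the indices $i_h$ and only randomize over the Rademachers $R_h$). To show this process is subgaussian with respect to $\dtwo$, I need to show that for every pair $\vx, \xhat \in \R^n$, the increment $Z_{\vx} - Z_{\xhat}$ is subgaussian with parameter bounded by $\dtwo(\vx,\xhat)$.

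First, I would expand the increment:
\[
Z_{\vx} - Z_{\xhat} = \sum_{h=1}^{\mtilde} R_h \cdot c_h(\vx,\xhat), \quad\text{where}\quad c_h(\vx,\xhat) \coloneqq \frac{g_{i_h}(\vx)^p - g_{i_h}(\xhat)^p}{\vrho_{i_h}}.
\]
Since the $R_h$ are independent Rademachers and the coefficients $c_h(\vx,\xhat)$ are deterministic (after conditioning on the indices $i_h$), the standard Hoeffding moment generating function bound gives
\[
\exvv{R_1,\dots,R_{\mtilde}}{\expv{\lambda\inparen{Z_{\vx}-Z_{\xhat}}}} \le \expv{\frac{\lambda^2}{2}\sum_{h=1}^{\mtilde} c_h(\vx,\xhat)^2} \quad \text{for all } \lambda \in \R.
\]
By the equivalence between subgaussian MGF bounds and the $\subgnorm{\cdot}$ norm (\Cref{fact:subgaussian_properties}), this immediately yields
\[
\subgnorm{Z_{\vx}-Z_{\xhat}} \lesssim \inparen{\sum_{h=1}^{\mtilde} c_h(\vx,\xhat)^2}^{1/2}.
\]

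Finally, I would note that by the definition of $\dtwo$ in \eqref{eq:chaining_dtwo_defn}, the right-hand side is precisely $\dtwo(\vx,\xhat)$. This is exactly the statement that the process is subgaussian with respect to $\dtwo$, completing the proof. There is no real obstacle here: the entire content of the lemma is that we have \emph{chosen} $\dtwo$ to be the natural metric under which the weighted Rademacher increment has the Hoeffding subgaussian bound, so the verification amounts to matching the definitions.
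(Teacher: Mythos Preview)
Your proposal is correct and follows essentially the same approach as the paper: both compute the increment $Z_{\vx}-Z_{\xhat}$ as a weighted Rademacher sum and invoke the standard Hoeffding-type subgaussian bound to conclude $\subgnorm{Z_{\vx}-Z_{\xhat}}\lesssim \dtwo(\vx,\xhat)$. The paper phrases this via the additivity of $\subgnorm{\cdot}^2$ for independent terms and then reads off the tail bound, but the content is identical.
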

\begin{proof}[Proof of \Cref{lemma:chaining_setup}]
Let
\begin{align*}
    P &\coloneqq \abs{\sum_{h=1}^{\mtilde} R_{h}\inparen{\frac{g_{i_h}(\vx)^p}{\vrho_{i_h}} - \frac{g_{i_h}(\xhat)^p}{\vrho_{i_h}}}}.
\end{align*}
Let us first calculate $\subgnorm{P}$. Using the fact that every term in this sum is independent and \Cref{fact:subgaussian_properties}, we get
\begin{align*}
    \subgnorm{P}^2 = \sum_{h=1}^{\mtilde} \subgnorm{R_{h}\inparen{\frac{g_{i_h}(\vx)^p}{\vrho_{i_h}} - \frac{g_{i_h}(\xhat)^p}{\vrho_{i_h}}}}^2 \le \sum_{h=1}^{\mtilde} 2\inparen{\frac{g_{i_h}(\vx)^p}{\vrho_{i_h}} - \frac{g_{i_h}(\xhat)^p}{\vrho_{i_h}}}^2
\end{align*}
By \Cref{fact:subgaussian_properties}, we have
\begin{align*}
    \prvv{R_{h}}{P \ge v\dtwo(\vx,\xhat)} &\le 2\expv{-\frac{v^2\dtwo(\vx,\xhat)^2}{4\sum_{h=1}^{\mtilde} \inparen{\frac{g_{i_h}(\vx)^p}{\vrho_{i_h}} - \frac{g_{i_h}(\xhat)^p}{\vrho_{i_h}}}^2}} = 2\expv{-\frac{v^2}{2}}.
\end{align*}
This concludes the proof of \Cref{lemma:chaining_setup}.
\end{proof}
\Cref{lemma:chaining_setup} tells us that $\dtwo$ is a choice of distance on $B_{p}$ that allows us to use the subgaussian form of chaining to analyze our random process. Along with the way we have set up our sampling process, we have enough to apply Theorem \ref{thm:generic_chaining_dirksen}. This is simply a restatement of Lemma 2.6 of \cite{jlls23} for our setting.

\begin{theorem}[Restatement of Lemma 2.6 due to \citet{jlls23}, $\alpha=2$]
\label{thm:generic_chaining_dirksen}
Recall $\dtwo$ (\ref{eq:chaining_dtwo_defn}) and $\gamma_2$ (\ref{eq:chaining_gamma2_defn}). Suppose that for some $D$ and for every choice of $i_1,\dots,i_{\mtilde}$, we have
\begin{align*}
    \gamma_2\inparen{B_p,\frac{\dtwo}{\mtilde}} \lesssim D\inparen{\max_{\vx\in B_p} \norm{\vx}_{\cG'}^p}^{1/2},
\end{align*}
where
\begin{align*}
    \norm{\vx}_{\cG'}^p = \frac{1}{\mtilde}\sum_{h=1}^{\mtilde} \frac{g_{i_h}(\vx)^p}{\vrho_{i_h}}.
\end{align*}
Then, we have the following.
\begin{align*}
    \exvv{\cD}{\sup_{\vx\in B_p} \abs{\gnorm{\mA\vx}{p}-\norm{\vx}_{\cG'}^p}} &\lesssim D.
\end{align*}
If we also have for all choices $i_1,\dots,i_{\mtilde}$ and for some $\widehat{D}$ that
\begin{align*}
    \mathsf{diam}\inparen{B_p, \frac{\dtwo}{\mtilde}} \lesssim \widehat{D}\inparen{\max_{\vx\in B_p} \norm{\vx}_{\cG'}^p}^{1/2},
\end{align*}
then there exists a universal constant $C > 0$ such that for all $0 \le t \le \nfrac{1}{2K\widehat{D}}$,
\begin{align*}
    \prvv{\cD}{\sup_{\vx\in B_p} \abs{\gnorm{\mA\vx}{p}^p-\norm{\vx}_{\cG'}^p} \ge C(D + t\widehat{D})} \le \expv{-\frac{Kt^2}{4}}.
\end{align*}
\end{theorem}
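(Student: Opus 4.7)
The plan is to apply the generic chaining framework of \Cref{thm:generic_chaining_dirksen} to the subgaussian process from \Cref{lemma:chaining_setup}, after first performing the symmetrization reduction of \Cref{lemma:symmetrization}. For each realization of the sampled multiset $i_1,\ldots,i_{\mtilde}$, this reduces the task to bounding $\gamma_2(B_p,\dtwo/\mtilde)$ and $\mathsf{diam}(B_p,\dtwo/\mtilde)$ by appropriate multiples of $(\max_{\vx\in B_p}\norm{\vx}_{\cG'}^p)^{1/2}$, and then calibrating the resulting $D,\widehat{D}$ so that $C(D+t\widehat{D})\le\eps$ with $t=\Theta(\sqrt{\log(1/\delta)})$ in the tail bound of that theorem.

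The first step is to replace the random, sample-dependent distance $\dtwo$ by a deterministic comparison to the polytope norm $\polynorm{\cdot}$ of \Cref{defn:sampling_polyhedron}. For $p\ge 1$, combining $|a^p-b^p|\le p\,\max(a,b)^{p-1}|a-b|$ with $|g_i(\vx)-g_i(\xhat)|\le g_i(\vx-\xhat)$ and the trivial bound $g_i(\cdot)/\vrho_i^{1/p}\le\polynorm{\cdot}$ yields, for any $\vx,\xhat\in B_p$,
$$\frac{\dtwo(\vx,\xhat)}{\mtilde}\lesssim \frac{p}{\sqrt{\mtilde}}\cdot R^{p-1}\cdot\polynorm{\vx-\xhat},\qquad R\coloneqq\max_{\vy\in B_p}\polynorm{\vy},$$
with $R\lesssim H^{1/p}(\Fstar)^{\max(1/p,1/2)}$ by \Cref{lemma:containments}. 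For $p<1$ (which can occur in the second and third regimes of the hypotheses), the subadditivity inequality $|a^p-b^p|\le|a-b|^p$ replaces the Lipschitz step and yields $\dtwo/\mtilde\lesssim\polynorm{\vx-\xhat}^p/\sqrt{\mtilde}$; a change of variables reduces this to an analogous entropy-number comparison, at the cost of a different power of the radius.

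Next I would bound $\gamma_2(B_p,\polynorm{\cdot})$ via the Dudley-type estimate $\gamma_2\lesssim\sum_{N\ge 0}2^{N/2}e_N$, using the two complementary covering-number bounds of \Cref{sec:coverings}. \Cref{thm:general_covering} gives the sharp small-$\eta$ estimate
$$\log\cN\inparen{B_p,\eta K}\lesssim \eta^{-\min(2,p)}\cdot H^{2/\max(2,p)}\cdot C(p)\cdot P\cdot\Fstar\cdot\log\max\inbraces{\mtilde,\Fstar},$$
where $K=\inbraces{\vx:\polynorm{\vx}\le 1}$, while \Cref{lemma:covering_easy} gives the large-$\eta$ volumetric estimate $\log\cN(B_p,\eta K)\le n\log(4H^{1/p}(\Fstar)^{\max(1/2,1/p)}/\eta)$. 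I would split the Dudley sum at $2^N\asymp n$: the volumetric piece contributes a rapidly convergent geometric series beyond the cutoff, while the sharp piece dominates and contributes roughly $\sqrt{H\cdot P\cdot\Fstar\cdot\log\max(\mtilde,\Fstar)}\cdot\log n$ (the $\log n$ factor arising from summing the geometric series when $p\ge 2$, and a similar but $p$-dependent factor in the $p<2$ case). The diameter $\mathsf{diam}(B_p,\polynorm{\cdot})$ is controlled directly by $R$ from \Cref{lemma:containments}.

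Plugging these estimates back through the Lipschitz comparison identifies $D$ and $\widehat{D}$ in \Cref{thm:generic_chaining_dirksen}; solving $C(D+t\widehat{D})\le\eps$ for $\mtilde$ after absorbing all constant and polylogarithmic factors produces the announced $\mtilde=\Omega(\eps^{-2}(\log n)^2\log(n/\eps)\cdot H\cdot P\cdot(\Fstar)^{\max(1,p/2)}\log(1/\delta))$. The main obstacle will be the careful bookkeeping of the extra $R^{p-1}$ and $(\max_{\vx\in B_p}\norm{\vx}_{\cG'}^p)^{1/2}$ factors from the Lipschitz step: \Cref{thm:generic_chaining_dirksen} insists that the bound be stated as a pure multiple of $(\max_{\vx\in B_p}\norm{\vx}_{\cG'}^p)^{1/2}$, whereas the comparison naturally produces powers of $R$, so we must reconcile these using $\norm{\vx}_{\cG'}^p\lesssim R^p$ (with the correct power of $\Fstar$ absorbed). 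A secondary concern is the apparent self-reference through $\log\max\inbraces{\mtilde,\Fstar}$ inside \Cref{thm:general_covering}: this can be resolved by a standard bootstrap, substituting a loose a priori bound on $\mtilde$ (for instance $\mtilde\le\poly(n,m,\eps^{-1},\delta^{-1})$), verifying the resulting estimate closes, and iterating at most once.
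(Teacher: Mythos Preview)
You are proving the wrong statement. The theorem in question, \Cref{thm:generic_chaining_dirksen}, is explicitly labeled as a \emph{restatement} of Lemma~2.6 of \cite{jlls23} (specialized to $\alpha=2$); the paper does not supply a proof but merely quotes it as a black-box chaining/concentration tool. There is therefore nothing to compare your proposal to in the paper itself.

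What you have written is instead an outline of the proof of \Cref{thm:concentration} (the general concentration result). That proof \emph{invokes} \Cref{thm:generic_chaining_dirksen} after symmetrization and the distance comparison of \Cref{lemma:concentration_g2_rewrite_d}, controls the Dudley sum via the two covering-number estimates, and solves for $\mtilde$---exactly the steps you describe. Your sketch of that argument is essentially faithful to the paper's approach, modulo one point: your Lipschitz comparison for $\dtwo$ factors out $R^{p-1}=\max_{\vy\in B_p}\polynorm{\vy}^{p-1}$, whereas the paper (in \Cref{lemma:concentration_g2_rewrite_d}) instead factors out the sharper quantity $(\max_{\vx\in B_p}\norm{\vx}_{\cG'}^p)^{1/2}$ directly, by writing $a^p-b^p=(a^{p/2}-b^{p/2})(a^{p/2}+b^{p/2})$ and bounding the second factor pointwise inside the $\ell_2$ sum. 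This is precisely the ``reconciliation'' issue you flagged as an obstacle; the paper's factoring avoids it entirely and is what makes the hypotheses of \Cref{thm:generic_chaining_dirksen} line up without an extra appeal to $\norm{\vx}_{\cG'}^p\lesssim R^p$.

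But none of this addresses the statement you were asked to prove. A proof of \Cref{thm:generic_chaining_dirksen} itself would need to proceed via symmetrization and a generic-chaining tail bound for subgaussian processes (as in Dirksen's framework or \cite{jlls23}), combined with a bootstrapping/self-bounding argument to pass from the $\gamma_2$ bound in terms of $(\max_{\vx}\norm{\vx}_{\cG'}^p)^{1/2}$ to an unconditional bound on the supremum deviation.
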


With Theorem \ref{thm:generic_chaining_dirksen} in our arsenal, our task becomes to compute $\gamma_2(B_p,\dtwo)$ (which we will then divide by $\mtilde$ so that we can apply \Cref{thm:generic_chaining_dirksen}). Hereafter, we will simply abbreviate $\gamma_2(B_p,\dtwo)$ as $\gamma_2$. We will first weaken the definition of $\gamma_2$, which is essentially equivalent to Dudley's integral. Recall the definition of the entropy numbers $e_N$ (\Cref{defn:entropy_numbers}) and notice that
\begin{align*}
    \gamma_2 \le \sum_{N \ge 0} 2^{N/2}e_N(B_{p},\dtwo). 
\end{align*}
We now rewrite $\dtwo$ in a form that will be more convenient for us.

\begin{lemma}
\label{lemma:concentration_g2_rewrite_d}
We have
\begin{align*}
    \dtwo(\vx, \xhat) \le \max(p,2) \inparen{\Fstar}^{\max(0,p/4-1/2)}{\mtilde}^{1/2} \cdot \polynorm{\vx-\xhat}^{\min(p/2,1)} \cdot \inparen{\max_{\vx \in B_{p}}\norm{\vx}_{\cG'}^p}^{1/2}
\end{align*}
and therefore
\begin{align*}
    \gamma_2 &\le {\mtilde}^{1/2}\max(p,2) \inparen{H^{1/p}\inparen{\Fstar}^{1/2}}^{\max(0,p/2-1)}\inparen{\max_{\vx \in B_{p}}\norm{\vx}_{\cG'}^p}^{1/2}\sum_{N \ge 0} 2^{N/2}e_N\inparen{B_p, \polynorm{\cdot}}^{\min(p/2,1)}.
\end{align*}
\end{lemma}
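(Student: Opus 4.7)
The plan is to derive the pointwise bound on $\dtwo$ by treating each summand separately, via an elementary numerical inequality for $(a^p - b^p)^2$, and then convert to the $\gamma_2$ estimate by a standard covering-number comparison. The key numerical lemma, split by range of $p$, is the following: for $a, b \ge 0$, we have $(a^p - b^p)^2 \le p^2 (a-b)^p (a^p + b^p)$ when $1 \le p \le 2$ (proved via the mean-value bound $|a^p - b^p| \le p \max(a,b)^{p-1}|a-b|$ and the estimate $\max(a,b)^{2p-2}(a-b)^{2-p} \le \max(a,b)^p \le a^p + b^p$); the same form with constant $1$ for $0 < p < 1$ (via concavity, $|a^p - b^p| \le |a-b|^p$, and $|a-b|^p \le \max(a,b)^p \le a^p + b^p$); and $(a^p - b^p)^2 \le p^2 (a-b)^2 (a^p+b^p) \max(a,b)^{p-2}$ when $p \ge 2$ (again from the mean-value bound together with $\max(a,b)^{2p-2} \le (a^p+b^p)\max(a,b)^{p-2}$).

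Next, I would instantiate this with $a = g_{i_h}(\vx)$, $b = g_{i_h}(\xhat)$, using the sub-triangle inequality $|g_{i_h}(\vx) - g_{i_h}(\xhat)| \le g_{i_h}(\vx-\xhat)$ to replace every $|a-b|$-factor by $g_{i_h}(\vx-\xhat)$. For $p \le 2$, dividing by $\vrho_{i_h}^2$ and pulling $g_{i_h}(\vx-\xhat)^p/\vrho_{i_h} \le \polynorm{\vx-\xhat}^p$ out front leaves $\sum_h (g_{i_h}(\vx)^p + g_{i_h}(\xhat)^p)/\vrho_{i_h} \le 2\mtilde \max_{\vx\in B_p}\norm{\vx}_{\cG'}^p$, which after taking a square root yields the claim with exponent $\min(p/2,1) = p/2$ and no $\Fstar$ factor. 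For $p \ge 2$, I distribute the factor $\vrho_{i_h}^{-2} = \vrho_{i_h}^{-1}\cdot\vrho_{i_h}^{-(p-2)/p}\cdot\vrho_{i_h}^{-2/p}$, converting $g_{i_h}(\vx-\xhat)^2/\vrho_{i_h}^{2/p} \le \polynorm{\vx-\xhat}^2$ and $\max(g_{i_h}(\vx),g_{i_h}(\xhat))^{p-2}/\vrho_{i_h}^{(p-2)/p} \le \max(\polynorm{\vx},\polynorm{\xhat})^{p-2}$, and invoke \Cref{lemma:containments} to bound $\polynorm{\vx},\polynorm{\xhat} \le H^{1/p}(\Fstar)^{1/2}$ for $\vx,\xhat \in B_p$. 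The exponent $(p-2)/4 = \max(0,p/4-1/2)$ of $\Fstar$ then falls out, along with the corresponding power of $H^{1/p}$ that appears (absorbed into) the $\gamma_2$ bound.

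For the second inequality, the plan is to use that $\gamma_2 \le \sum_{N \ge 0} 2^{N/2}e_N(B_p,\dtwo)$ and observe that any pointwise bound $\dtwo(\vx,\xhat) \le C\cdot\polynorm{\vx-\xhat}^{\alpha}$ with $\alpha = \min(p/2,1)$ means that any cover of $B_p$ by $\polynorm{\cdot}$-balls of radius $r$ is also a cover by $\dtwo$-balls of radius $Cr^{\alpha}$. Hence $e_N(B_p,\dtwo) \le C \cdot e_N(B_p,\polynorm{\cdot})^{\alpha}$, and substituting the constant $C$ extracted from the first part yields the advertised $\gamma_2$ estimate. The main obstacle is bookkeeping: matching the two regimes $p \le 2$ and $p \ge 2$ to a single formula with $\min(p/2,1)$ and $\max(0,p/4-1/2)$ exponents, while correctly tracking where the $H$-factor appears (it gets introduced only in the $p \ge 2$ branch via \Cref{lemma:containments}, which is why it shows up in the $\gamma_2$ bound but is implicit in the $\polynorm{\cdot}$ normalization of the $\dtwo$ inequality).
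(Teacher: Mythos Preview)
Your proposal is correct and follows essentially the same approach as the paper. The only cosmetic difference is that the paper factors $a^p-b^p=(a^{p/2}-b^{p/2})(a^{p/2}+b^{p/2})$ and then bounds the first factor (by $|a-b|^{p/2}$ when $p<2$, by $\tfrac{p}{2}\max(a,b)^{p/2-1}|a-b|$ when $p\ge 2$), whereas you bound $(a^p-b^p)^2$ directly; the resulting constants differ by at most a factor of $\sqrt{2}$, and your careful tracking of the $H^{1/p}$ factor (which the paper's $\dtwo$ statement drops but its $\gamma_2$ statement retains) is exactly right.
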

\begin{proof}[Proof of \Cref{lemma:concentration_g2_rewrite_d}]
We have two cases. We first address the case $0 < p < 2$. Recall that in this regime, we have $\abs{a^{p/2}-b^{p/2}} \le \abs{a-b}^{p/2}$. Since $g_{i_h}(\vx)$ is a norm, the triangle inequality tells us that $\abs{g_{i_h}(\vx) - g_{i_h}(\xhat)} \le g_{i_h}(\vx-\xhat)$. We use these and write
\begin{align*}
    \dtwo(\vx, \xhat) &= \inparen{\sum_{h=1}^{\mtilde} \inparen{\frac{g_{i_h}(\vx)^{p/2}}{\sqrt{\vrho_{i_h}}} - \frac{g_{i_h}(\xhat)^{p/2}}{\sqrt{\vrho_{i_h}}}}^2\inparen{\frac{g_{i_h}(\vx)^{p/2}}{\sqrt{\vrho_{i_h}}} + \frac{g_{i_h}(\xhat)^{p/2}}{\sqrt{\vrho_{i_h}}}}^2}^{1/2} \\
    &\le \inparen{\sum_{h=1}^{\mtilde} \inparen{\frac{g_{i_h}(\vx-\xhat)^{p/2}}{\sqrt{\vrho_{i_h}}}}^2\inparen{\frac{g_{i_h}(\vx)^{p/2}}{\sqrt{\vrho_{i_h}}} + \frac{g_{i_h}(\xhat)^{p/2}}{\sqrt{\vrho_{i_h}}}}^2}^{1/2} \\
    &\le \polynorm{\vx-\xhat}^{p/2}\cdot\inparen{\sum_{h=1}^{\mtilde} \inparen{\frac{g_{i_h}(\vx)^{p/2}}{\sqrt{\vrho_{i_h}}} + \frac{g_{i_h}(\xhat)^{p/2}}{\sqrt{\vrho_{i_h}}}}^2}^{1/2} \\
    &\le 2{\mtilde}^{1/2} \cdot \polynorm{\vx-\xhat}^{p/2}\inparen{\max_{\vx \in B_{p}}\norm{\vx}_{\cG'}^p}^{1/2}
\end{align*}
which concludes the proof in the range $0 < p < 2$.

We now move onto the case where $p \ge 2$. Recall that by Lipschitzness, we get
\begin{align*}
    \abs{a^{p/2}-b^{p/2}} \le \frac{p}{2} \cdot \max(a,b)^{p/2-1}\abs{a-b}.
\end{align*}
Next, by \Cref{lemma:containments}, we know that for all $i$, 
\begin{align*}
    \norm{\mA_{S_i}\vx}_{p_i} \le \inparen{\Fstar}^{1/2}\vrho_i^{1/p}H^{1/p}\gnorm{\mA\vx}{p} \le \inparen{\Fstar}^{1/2}\vrho_i^{1/p}H^{1/p}.
\end{align*}
We use these to rewrite $\dtwo(\vx,\xhat)$.
\begin{align*}
    \dtwo(\vx, \xhat) &= \inparen{\sum_{h=1}^{\mtilde} \inparen{\frac{g_{i_h}(\vx)^{p/2}}{\sqrt{\vrho_{i_h}}} - \frac{g_{i_h}(\xhat)^{p/2}}{\sqrt{\vrho_{i_h}}}}^2\inparen{\frac{g_{i_h}(\vx)^{p/2}}{\sqrt{\vrho_{i_h}}} + \frac{g_{i_h}(\xhat)^{p/2}}{\sqrt{\vrho_{i_h}}}}^2}^{1/2} \\
    &\le \frac{p}{2}\cdot H^{1/2-1/p}\inparen{\Fstar}^{p/4-1/2}\inparen{\sum_{h=1}^{\mtilde} \inparen{\vrho_{i_h}^{-1/p}g_{i_h}(\vx-\xhat)}^2\inparen{\frac{g_{i_h}(\vx)^{p/2}}{\sqrt{\vrho_{i_h}}} + \frac{g_{i_h}(\xhat)^{p/2}}{\sqrt{\vrho_{i_h}}}}^2}^{1/2} \\
    &\le \frac{p}{2}\cdot H^{1/2-1/p}\inparen{\Fstar}^{p/4-1/2}\polynorm{\vx-\xhat}\cdot\inparen{\sum_{h=1}^{\mtilde} \inparen{\frac{g_{i_h}(\vx)^{p/2}}{\sqrt{\vrho_{i_h}}} + \frac{g_{i_h}(\xhat)^{p/2}}{\sqrt{\vrho_{i_h}}}}^2}^{1/2} \\
    &\le p \cdot H^{1/2-1/p}\inparen{\Fstar}^{p/4-1/2}{\mtilde}^{1/2} \cdot \polynorm{\vx-\xhat}\inparen{\max_{\vx \in B_{p}}\norm{\vx}_{\cG'}^p}^{1/2}
\end{align*}
We conclude the proof of \Cref{lemma:concentration_g2_rewrite_d}.
\end{proof}

In light of \Cref{lemma:concentration_g2_rewrite_d}, observe that it is enough to calculate each term of the sum $\sum_{N \ge 0} 2^{N/2}e_N(B_p, \polynorm{\cdot})^{\min(p/2,1)}$. We begin this analysis with \Cref{lemma:concentration_g2_two_one_volume}.

\begin{lemma}
\label{lemma:concentration_g2_two_one_volume}
For all $N \ge 0$, we have
\begin{align*}
    2^{N/2}e_N\inparen{B_p, \polynorm{\cdot}}^{\min(p/2,1)} \le 2^{N/2 + \min(p/2,1)(2+(1/p)\log C - 2^N/n)},
\end{align*}
where $C$ is such that for all $i \in S$ and $\vx\in\R^n$, we have $\inparen{C\vrho_i}^{-1/p}\norm{\mA_{S_i}\vx}_{p_i} \le \gnorm{\mA\vx}{p}$.
\end{lemma}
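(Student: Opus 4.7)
The proof is a direct application of the volume-based covering number bound from \Cref{lemma:covering_easy} combined with unraveling the definition of the entropy numbers.

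The plan is as follows. First, I recall that by the argument in \Cref{lemma:covering_easy} (specifically, the containment $B_p \subseteq C^{1/p}K$ where $K \coloneqq \inbraces{\vx\in\R^n\suchthat \polynorm{\vx}\le 1}$, which follows from the hypothesis that $(C\vrho_i)^{-1/p}\norm{\mA_{S_i}\vx}_{p_i} \le \gnorm{\mA\vx}{p}$ for all $i$ and $\vx$), we obtain
\begin{align*}
    \log\cN\inparen{B_p,\eta K} \;\le\; n\logv{\frac{4C^{1/p}}{\eta}}.
\end{align*}
This uses only the generic volume argument and does not depend on the specific values of $H$ and $\Fstar$ used in the corollary stated in the previous lemma.

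Next, I invert this bound to estimate $e_N(B_p,K)$. Recall from \Cref{defn:entropy_numbers} that $e_N(B_p,K)$ is the smallest $\eta$ for which $\log\cN(B_p,\eta K) \le 2^N$. Setting $n\logv{\nfrac{4C^{1/p}}{\eta}} \le 2^N$ and solving for $\eta$ yields
\begin{align*}
    e_N\inparen{B_p,K} \;\le\; 4C^{1/p}\cdot 2^{-2^N/n} \;=\; 2^{2 + (1/p)\log C - 2^N/n}.
\end{align*}

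Finally, raising to the power $\min(p/2,1)$ (which is a nonnegative quantity, so the inequality is preserved) and multiplying by $2^{N/2}$ gives
\begin{align*}
    2^{N/2}e_N\inparen{B_p,\polynorm{\cdot}}^{\min(p/2,1)} \;\le\; 2^{N/2}\cdot 2^{\min(p/2,1)(2 + (1/p)\log C - 2^N/n)},
\end{align*}
which is exactly the claimed inequality.

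There is essentially no obstacle here: the lemma is simply a bookkeeping restatement of \Cref{lemma:covering_easy} in the language of entropy numbers, together with the elementary fact that $e_N(X,Y) = \inf\{\eta \suchthat \log\cN(X,\eta Y) \le 2^N\}$. The only mild care needed is to keep track of where the exponent $\min(p/2,1)$ enters, which arises downstream in \Cref{lemma:concentration_g2_rewrite_d} when bounding $\dtwo$ in terms of $\polynorm{\cdot}$.
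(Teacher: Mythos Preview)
Your proposal is correct and follows essentially the same approach as the paper: both invoke the volume-based bound from \Cref{lemma:covering_easy} (in its general form with the constant $C$ rather than the specialized $H,\Fstar$ version), invert it to obtain $e_N(B_p,K) \le 2^{2+(1/p)\log C - 2^N/n}$, and then raise to the power $\min(p/2,1)$ and multiply by $2^{N/2}$. The only cosmetic difference is that the paper sets $\eta = 2^{2+(1/p)\log C - 2^N/n}$ and verifies the covering bound is at most $2^N$, whereas you phrase it as solving the inequality for $\eta$; these are the same step.
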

\begin{proof}[Proof of \Cref{lemma:concentration_g2_two_one_volume}]
Recall that by \Cref{lemma:covering_easy}, we have
\begin{align*}
    \log\cN\inparen{B_p,\eta\inbraces{\vx\in\R^n\suchthat\polynorm{\vx}\le 1}} \le n\logv{\frac{4C^{1/p}}{\eta}}.
\end{align*}
We set $\eta = 2^{2+(1/p)\log C - 2^N/n}$ so that
\begin{align*}
    \log\cN\inparen{B_p,\eta\inbraces{\vx\in\R^n\suchthat\polynorm{\vx}\le 1}} \le 2^N.
\end{align*}
Then,
\begin{align*}
    2^{N/2}e_N\inparen{B_p, \polynorm{\cdot}}^{\min(p/2,1)} &\le 2^{N/2 + \min(p/2,1)(2+(1/p)\log C - 2^N/n)},
\end{align*}
concluding the proof of Lemma \ref{lemma:concentration_g2_two_one_volume}.
\end{proof}

Using \Cref{lemma:concentration_g2_two_one_volume}, we get a rapidly converging tail in our summation for large values of $N$. See \Cref{lemma:g2_tail}.

\begin{lemma}
\label{lemma:g2_tail}
Let $N_S \coloneqq \logv{6n\log n}$. We have
\begin{align*}
    \sum_{N \ge 0} 2^{N/2}e_N\inparen{B_p, \polynorm{\cdot}}^{\min(p/2,1)} &\lesssim \sqrt{nH^{1/\max(2,p)}\inparen{\Fstar}^{1/2}\log n}\\
    &\quad+\sum_{N \le N_S} 2^{N/2}e_N\inparen{B_p, \polynorm{\cdot}}^{\min(p/2,1)}.
\end{align*}
\end{lemma}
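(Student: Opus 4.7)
The plan is to split the sum at $N_S = \log(6n\log n)$ and bound the tail $\sum_{N > N_S}$ using the volume-based entropy estimate from \Cref{lemma:concentration_g2_two_one_volume}. The sum $\sum_{N \le N_S}$ appears verbatim on the right-hand side of the claim, so all the work is in bounding the tail.

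First, I would invoke \Cref{lemma:concentration_g2_two_one_volume} with $C$ chosen as in \Cref{lemma:covering_easy}, for which $C^{1/p} = H^{1/p}(\Fstar)^{\max(1/p,1/2)}$. This gives
\begin{align*}
2^{N/2} e_N(B_p,\polynorm{\cdot})^{\min(p/2,1)} \le 2^{N/2 + c\bigl(2 + (1/p)\log C - 2^N/n\bigr)}
\end{align*}
with $c = \min(p/2,1)$. Unifying the two regimes $p \ge 2$ (where $c = 1$) and $p < 2$ (where $c = p/2$) via the identity $\min(1/p, 1/2) = 1/\max(p,2)$, one checks that the $N$-independent factor $4^c C^{c/p}$ simplifies in both cases to at most a constant times $H^{1/\max(p,2)}(\Fstar)^{1/2}$. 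Thus every summand is bounded by a constant multiple of $H^{1/\max(p,2)}(\Fstar)^{1/2} \cdot 2^{N/2 - c \cdot 2^N/n}$.

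Second, I would bound the purely geometric tail $\sum_{N > N_S} 2^{N/2 - c \cdot 2^N/n}$. The hypothesis $p \ge 1/\log n$ carried over from \Cref{thm:concentration} forces $c \ge 1/(2\log n)$, hence $c \cdot 2^{N_S}/n = 6c \log n \ge 3$. Substituting $N = N_S + j$ for $j \ge 1$, the decay exponent $c \cdot 2^N/n \ge 3 \cdot 2^j$ doubles each step while $2^{N/2}$ only multiplies by $\sqrt{2}$, so the terms are doubly-exponentially decreasing. The tail therefore reduces to $\sqrt{6n \log n} \cdot \sum_{j \ge 1} 2^{j/2 - 3 \cdot 2^j}$, which is dominated by the $j = 1$ term and totals $O(\sqrt{n \log n})$. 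Combining with the constant factor from the previous step yields a bound of the form appearing as the first summand on the right-hand side of the claim.

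The main obstacle is uniformity across the full allowed range $p \ge 1/\log n$: when $p$ is at the lower extreme, $c$ is as small as $1/(2\log n)$ and the decay factor $2^{-c \cdot 2^N/n}$ is weakest. The choice $N_S = \log(6n \log n)$ is calibrated exactly so that $c \cdot 2^{N_S}/n \ge 3$ holds uniformly over this worst case, ensuring the doubly-exponential contraction of the tail kicks in immediately past $N_S$. Any smaller $N_S$ would leave the sum dominated by terms where the exponential decay $2^{-c \cdot 2^N/n}$ has not yet overpowered the geometric growth of $2^{N/2}$, and the tail would fail to converge uniformly in $p$.
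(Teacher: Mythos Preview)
Your proposal is correct and follows essentially the same route as the paper's proof: both use the volume-based entropy bound of \Cref{lemma:concentration_g2_two_one_volume} for $N>N_S$, both identify the same threshold condition (the paper phrases it as $N_S\ge\lceil\log(3n/\min(p,2))\rceil$, you phrase it as $c\cdot 2^{N_S}/n\ge 3$; these are equivalent), and both reduce to evaluating the leading term at $N_S$. Your substitution $N=N_S+j$ and the resulting doubly-exponential decay is just a slightly different packaging of the paper's observation that the tail is dominated by a geometric series with ratio $1/2$; the computations agree.
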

\begin{proof}[Proof of \Cref{lemma:g2_tail}]
For now, let $C$ be such that for all $i \in S$ and $\vx\in\R^n$, we have $\inparen{C\vrho_i}^{-1/p}\norm{\mA_{S_i}\vx}_{p_i} \le \gnorm{\mA\vx}{p}$.

Let $N_S$ be a threshold such that for all $N \ge N_S$, we use the entropy number bound given by \Cref{lemma:concentration_g2_two_one_volume} (as the volume-based covering number bound is much better for small values of $e_N$). Let us enforce the constraint $N_S \ge \ceil{\logv{\nfrac{3n}{\min(p,2)}}}$, so that the entropy number bound in \Cref{lemma:concentration_g2_two_one_volume} is decreasing in $N$ and is dominated from above by a geometric series with common ratio $\nfrac{1}{2}$.

We now set $N_S = \logv{6n\log n}$. Since $p \ge \nfrac{1}{\log n}$, we know that $N_S \ge \ceil{\logv{\nfrac{3n}{\min(p,2)}}}$. Now, since $2^{N/2}e_N$ is bounded above by a geometric series with common ratio $\nfrac{1}{2}$, the summation for all $N \ge N_S$ is dominated by the first term. Let us evaluate this. We first observe that
\begin{align*}
    \frac{N_S}{2} + \min\inparen{\frac{p}{2},1}\inparen{-\frac{2^{N_S}}{n} + 2 + \frac{\log C}{p}} &= \frac{\logv{6n\log n}}{2} + \min\inparen{\frac{p}{2},1}\inparen{-\frac{2^{\logv{6n\log n}}}{n} + 2 +\frac{\log C}{p}} \\
    &= \frac{\logv{6n\log n}}{2} + \min\inparen{\frac{p}{2},1}\inparen{-6\log n + 2 +\frac{\log C}{p}} \\
    &\le \frac{\logv{6n\log n}}{2} + \min\inparen{\frac{p}{2},1}\inparen{-6\log n + 2 +\frac{\log C}{p}} \\
    &\le \frac{\logv{6n\log n}}{2} + \frac{\log C}{\max(2,p)} \le \frac{6nC\log n}{2}
\end{align*}
By \Cref{lemma:concentration_g2_two_one_volume}, we see that
\begin{align*}
    2^{N_S/2}e_{N_S}\inparen{B_p, \polynorm{\cdot}}^{\min(p/2,1)} \lesssim \sqrt{nC^{1/\max(2,p)}\log n},
\end{align*}
and by \Cref{lemma:containments}, we can choose
\begin{align*}
    C = H\inparen{\Fstar}^{\max(1,p/2)}.
\end{align*}
We plug this in, account for the remaining terms in the summation, and conclude the proof of \Cref{lemma:g2_tail}.
\end{proof}

We now give another way to evaluate the terms of our summation when the indices $N$ are such that the entropy numbers are rather large. See \Cref{lemma:concentration_g2_two_one_sudakov}.

\begin{lemma}
\label{lemma:concentration_g2_two_one_sudakov}
For all $N \ge 0$, we have
\begin{align*}
    2^{N/2}e_N\inparen{B_p, \polynorm{\cdot}}^{\min(p/2,1)} \le \inparen{C(p) \cdot \max_i\min\inbraces{p_i,\log\abs{S_i}}H^{2/\max(2,p)}\Fstar\logv{\Fstar+\mtilde}}^{1/2},
\end{align*}
where $C(p)$ is a constant that only depends on $p$.
\end{lemma}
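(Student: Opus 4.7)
The plan is to invert the covering number estimate from Theorem \ref{thm:general_covering} and plug in. Recall that Theorem \ref{thm:general_covering} (with $p^{\star} = \max_i \min\{p_i, \log|S_i|\}$) establishes
\begin{align*}
    \log\cN\inparen{B_p, \eta\inbraces{\vx\suchthat\polynorm{\vx}\le 1}} \lesssim \eta^{-\min(2,p)} \cdot H^{2/\max(p,2)} \cdot C(p)\, p^{\star}\Fstar \log\max\{\mtilde,\Fstar\}.
\end{align*}
By the definition of the entropy number $e_N$ (Definition \ref{defn:entropy_numbers}), it suffices to pick the smallest $\eta$ for which the right-hand side is at most $2^N$, and this value will be an upper bound on $e_N(B_p, \polynorm{\cdot})$. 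Solving, one may take
\begin{align*}
    \eta \;=\; \inparen{\frac{C(p)\, p^{\star}\, H^{2/\max(p,2)}\, \Fstar\, \log\max\{\mtilde,\Fstar\}}{2^N}}^{1/\min(2,p)}.
\end{align*}

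The second step is to multiply by $2^{N/2}$ and raise to the power $\min(p/2,1)$, splitting into two cases. When $p \ge 2$, we have $\min(2,p) = 2$, $\max(p,2)=p$, and $\min(p/2,1)=1$, so
\begin{align*}
    2^{N/2}\, e_N^{\min(p/2,1)} \;\le\; 2^{N/2}\cdot \eta \;=\; \inparen{C(p)\, p^{\star}\, H^{2/p}\, \Fstar\, \log\max\{\mtilde,\Fstar\}}^{1/2},
\end{align*}
because the $2^{N/2}$ prefactor cancels exactly the $2^{-N/2}$ coming out of the $1/\min(2,p)=1/2$ exponent. When $p<2$, we instead have $\min(2,p)=p$, $\max(p,2)=2$, $\min(p/2,1)=p/2$, so
\begin{align*}
    2^{N/2}\, e_N^{\min(p/2,1)} \;\le\; 2^{N/2}\cdot \eta^{p/2} \;=\; \inparen{C(p)\, p^{\star}\, H\, \Fstar\, \log\max\{\mtilde,\Fstar\}}^{1/2},
\end{align*}
where again the $2^{N/2}$ cancels the $2^{-N(p/2)/p}=2^{-N/2}$ factor. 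In both cases the bound is independent of $N$, and combining them under the single exponent $H^{2/\max(2,p)}$ gives the stated estimate, after using $\log\max\{\mtilde,\Fstar\} \le \log(\mtilde + \Fstar)$.

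There is no real obstacle here: the whole content is the exponent bookkeeping that makes the $N$-dependence cancel, which happens precisely because the exponents in $\dtwo$ (the $\min(p/2,1)$) and in the covering-number estimate (the $\min(2,p)$) are conjugate in the appropriate sense. The proof is thus a direct substitution into Theorem \ref{thm:general_covering} with a case split on whether $p \ge 2$ or $p<2$, and checking that $2^{N/2} \cdot (2^{-N})^{\min(p/2,1)/\min(2,p)} = 2^{N/2}\cdot 2^{-N/2} = 1$.
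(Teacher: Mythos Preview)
Your proposal is correct and follows essentially the same approach as the paper: invert the covering-number estimate of \Cref{thm:general_covering} to bound $e_N$, then observe that the exponents are arranged so that $2^{N/2}\cdot(2^{-N})^{\min(p/2,1)/\min(2,p)}=1$, making the bound uniform in $N$. The paper compresses your case split into the single choice $\eta = C_0\cdot 2^{-N/\min(p,2)}f^{\max(1/2,1/p)}$ and leaves the exponent cancellation implicit, but the content is identical.
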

\begin{proof}[Proof of \Cref{lemma:concentration_g2_two_one_sudakov}]
For now, let
\begin{align*}
    f(\Fstar,\cG) \coloneqq C(p) \cdot \max_i\min\inbraces{p_i,\log\abs{S_i}}H^{2/\max(2,p)}\Fstar\logv{\Fstar+\mtilde}.
\end{align*}
By \Cref{thm:general_covering}, we have
\begin{align*}
    \log \cN\inparen{B_{p}, \eta \cdot \inbraces{\vy \in \R^n \suchthat \polynorm{\vx} \le 1}} \lesssim \eta^{-\min(2,p)} \cdot f(\Fstar,\cG).
\end{align*}
so if we choose, for some universal constant $C_0$,
\begin{align*}
    \eta = C_0 \cdot 2^{-N/\min(p,2)}\inparen{f(\Fstar,\cG)}^{\max(1/2,1/p)},
\end{align*}
then we get
\begin{align*}
    \log \cN\inparen{B_{p}, \eta \cdot \inbraces{\vy \in \R^n \suchthat \polynorm{\vx} \le 1}} \le 2^N.
\end{align*}
Thus, $e_N\inparen{B_p, \polynorm{\cdot}} \le \eta$. Exponentiating and substituting the definition of $f(\Fstar,\cG)$ concludes the proof of \Cref{lemma:concentration_g2_two_one_sudakov}.
\end{proof}

We now show how to complete the sum by combining \Cref{lemma:concentration_g2_two_one_sudakov} and \Cref{lemma:g2_tail}.

\begin{lemma}
\label{lemma:concentration_subgaussian}
We have
\begin{align*}
    \sum_{N \ge 0} 2^{N/2}e_N\inparen{B_p, \polynorm{\cdot}}^{\min(p/2,1)} &\lesssim \sqrt{nH^{1/\max(2,p)}\inparen{\Fstar}^{1/2}\log n} \\
    &\quad +\log n\inparen{C(p) \cdot \max_i\min\inbraces{p_i,\log\abs{S_i}}H\Fstar\logv{\Fstar+\mtilde}}^{1/2}.
\end{align*}
\end{lemma}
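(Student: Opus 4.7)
The plan is to combine the two preceding lemmas in a direct way; the hard technical work has already been done. The summation $\sum_{N \ge 0} 2^{N/2} e_N(B_p, \polynorm{\cdot})^{\min(p/2,1)}$ naturally splits into two regimes: a ``tail'' regime $N > N_S$ with $N_S = \logv{6n\log n}$, and a ``head'' regime $N \le N_S$. In the tail the entropy numbers $e_N$ are small enough that the crude volume-based bound suffices, while in the head the entropy numbers could be larger and we need the sharper dual-Sudakov-style bound.

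For the tail, I would invoke \Cref{lemma:g2_tail} directly. That lemma is tailored to exactly this splitting: it uses \Cref{lemma:concentration_g2_two_one_volume} (the volume-based entropy number estimate), whose decay dominates the $2^{N/2}$ growth by a geometric factor of ratio $1/2$ once $N \ge N_S$. The resulting tail contribution is already of the form $\sqrt{nH^{1/\max(2,p)}\inparen{\Fstar}^{1/2}\log n}$, matching the first term in the desired bound.

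For the head, the key observation is that \Cref{lemma:concentration_g2_two_one_sudakov} provides a per-term bound on $2^{N/2}e_N(B_p,\polynorm{\cdot})^{\min(p/2,1)}$ that is \emph{independent of $N$}. So I would apply this bound uniformly to each of the at most $N_S + 1 = O(\log n)$ summands and sum. This produces a factor of $\log n$ multiplying $\inparen{C(p)\max_i\min\inbraces{p_i,\log\abs{S_i}}H^{2/\max(2,p)}\Fstar\logv{\Fstar+\mtilde}}^{1/2}$. Using $H \ge 1$ together with $\max(2,p) \ge 2$, so that $2/\max(2,p) \le 1$ and hence $H^{2/\max(2,p)} \le H$, gives precisely the second term of the claimed inequality.

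There is no real obstacle in this step; the only things worth verifying are that $N_S + 1 \lesssim \log n$ (immediate from $N_S = \logv{6n\log n}$) and that the $\log n$ factor arises solely from the number of head terms, not from any hidden dependence on $\Fstar$ or $\mtilde$ in the per-term estimate. Both are apparent from the statements of \Cref{lemma:g2_tail} and \Cref{lemma:concentration_g2_two_one_sudakov}, so the proof reduces to stitching the two bounds together and applying the monotonicity $H^{2/\max(2,p)} \le H$.
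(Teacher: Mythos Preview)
Your proposal is correct and follows essentially the same approach as the paper: split at $N_S = \logv{6n\log n}$, invoke \Cref{lemma:g2_tail} for the tail, apply \Cref{lemma:concentration_g2_two_one_sudakov} uniformly to the $O(\log n)$ head terms, and use $H^{2/\max(2,p)} \le H$ to match the stated bound. The paper's own proof is in fact terser than yours, simply noting $N_S \lesssim \log n$ and combining the two lemmas.
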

\begin{proof}[Proof of \Cref{lemma:concentration_subgaussian}]
Noting that $N_S = \logv{6n\log n} \lesssim \log n$, we combine the conclusions of \Cref{lemma:concentration_g2_two_one_sudakov} and \Cref{lemma:g2_tail} to obtain the statement of \Cref{lemma:concentration_subgaussian}.
\end{proof}

Finally, we translate Lemma \ref{lemma:concentration_subgaussian} into an upper bound on the process we started with by using \Cref{thm:generic_chaining_dirksen}. We then use this to complete the proof of Theorem \ref{thm:concentration}.

\begin{proof}[Proof of Theorem \ref{thm:concentration}]
As we have done in previous proofs, as a shorthand, we define
\begin{align*}
    p^{\star} \coloneqq \max_{i}\min\inbraces{p_i,\log\abs{S_i}}.
\end{align*}
We first weaken the statement of \Cref{lemma:concentration_subgaussian} to read
\begin{align*}
    &\quad\sum_{N \ge 0} 2^{N/2}e_N\inparen{B_p, \polynorm{\cdot}}^{\min(p/2,1)} \\
    &\lesssim \max\inbraces{n,\Fstar}H^{1/2\max(2,p)}\sqrt{\log n} \\
    &\quad+ \log n \inparen{C(p) \cdot H^{2/\max(2,p)}p^{\star}\max\inbraces{n,\Fstar}\logv{n + \Fstar + \mtilde}}^{1/2} \\
    &\lesssim \log n \inparen{C(p) \cdot H^{2/\max(2,p)}p^{\star}\max\inbraces{n,\Fstar}\logv{n + \Fstar + \mtilde}}^{1/2}.
\end{align*}
Let $V$ denote the right hand side of the above. Combining this rewrite with \Cref{lemma:concentration_g2_rewrite_d}, we get
\begin{align*}
    \gamma_2 \lesssim {\mtilde}^{1/2}\max(p,2) \inparen{H^{1/p}\inparen{\Fstar}^{1/2}}^{\max(0,p/2-1)}V\inparen{\max_{\vx \in B_{p}}\norm{\vx}_{\cG'}^p}^{1/2}.
\end{align*}
By the symmetrization reduction (\Cref{lemma:symmetrization}) and \Cref{thm:generic_chaining_dirksen}, we have
\begin{align*}
    \exv{\sup_{\vx\in B_p} \abs{\gnorm{\mA\vx}{p}-\norm{\vx}_{\cG'}^p}} \lesssim \frac{\max(p,2) \inparen{H^{1/p}\norm{\valpha}_p}^{\max(0,p/2-1)}V}{\mtilde^{1/2}}
\end{align*}
and so to make the RHS upper-bounded by $\eps$, it is sufficient to set $\mtilde$ according to
\begin{align*}
    \mtilde &\asymp \frac{\max(p,2)^2 \inparen{H^{1/p}\inparen{\Fstar}^{1/2}}^{\max(0,p-2)}V^2}{\eps^2}.
\end{align*}
This means that when $p < 2$, we have
\begin{align*}
    \mtilde &\asymp \frac{(\log n)^2\cdot C(p) \cdot Hp^{\star}\max\inbraces{n,\Fstar}\logv{n + \Fstar + \mtilde}}{\eps^2} \\
    &\asymp \frac{(\log n)^2\logv{\max\inbraces{n,\Fstar}/\eps}\cdot C(p) \cdot Hp^{\star}\max\inbraces{n,\Fstar}}{\eps^2},
\end{align*}
which is what we desired.

For $p \ge 2$, we have
\begin{align*}
    \mtilde &\asymp \frac{p^2H^{1-2/p}\inparen{\Fstar}^{p/2-1}\inparen{(\log n)^2H^{2/p}p^{\star}\max\inbraces{n,\Fstar}\logv{n+\Fstar+\mtilde}}}{\eps^2} \\
    &\asymp \frac{(\log n)^2\logv{\max\inbraces{n,\Fstar}/\eps}\cdot p^2 \cdot Hp^{\star}\max\inbraces{n,\Fstar}^{p/2}}{\eps^2}
\end{align*}
To bound $\mathsf{diam}(B_p,\dtwo)$, by the triangle inequality, it is enough to estimate $\dtwo(\vx,0)$ for all $\vx \in B_p$. Recalling \Cref{lemma:containments}, we have
\begin{align*}
    \dtwo(\vx,0) &= \inparen{\sum_{h=1}^{\mtilde} \inparen{\frac{g_{i_h}(\vx)^p}{\vrho_{i_h}} - \frac{g_{i_h}(0)^p}{\vrho_{i_h}}}^2}^{1/2} = \inparen{\sum_{h=1}^{\mtilde} \inparen{\frac{g_{i_h}(\vx)^p}{\vrho_{i_h}}}\cdot\inparen{\frac{g_{i_h}(\vx)^p}{\vrho_{i_h}}}}^{1/2} \\
    &\le \inparen{\sum_{h=1}^{\mtilde} H\inparen{\Fstar}^{\max(1,p/2)}\cdot\inparen{\frac{g_{i_h}(\vx)^p}{\vrho_{i_h}}}}^{1/2} \le \mtilde^{1/2}H^{1/2}\inparen{\Fstar}^{\max(1/2,p/4)}\max_{\vx\in B_p}\inparen{\norm{\vx}_{\cG'}^{p}}^{1/2},
\end{align*}
which means that we may set $\widehat{D}$ in \Cref{thm:generic_chaining_dirksen} according to
\begin{align*}
    \widehat{D} \asymp \frac{H^{1/2}\inparen{\Fstar}^{\max(1/2,p/4)}\max_{\vx\in B_p}\inparen{\norm{\vx}_{\cG'}^{p}}^{1/2}}{\mtilde^{1/2}}.
\end{align*}
We now verify that if we choose
\begin{align*}
    \mtilde \asymp \vbrho \cdot H\max_{i \in [m]} \min(p_i,\log\abs{S_i}) \inparen{\Fstar}^{\max(1,p/2)}\logv{\nfrac{1}{\delta}},
\end{align*}
that we indeed get for some universal constant $C$ that
\begin{align*}
    \prvv{\cD}{\max_{\vx\in B_p} \abs{\gnorm{\mA\vx}{p}^p-\norm{\vx}_{\cG'}^p} \ge C\eps} \lesssim \delta.
\end{align*}
We rescale $\eps$ appropriately and conclude the proof of \Cref{thm:concentration}.
\end{proof}

\section{Applications and algorithms}
\label{sec:applications}
At this point in the paper, we are ready to prove our main results (\Cref{thm:one_shot_lewis} and \Cref{thm:computeblw}).

\subsection{Block norm approximations via block Lewis weights (Proof of Theorem \ref{thm:one_shot_lewis})}
\label{sec:lewis}

We restate and prove the main result of the paper.
\oneshotlewis*
\begin{proof}[Proof of Theorem \ref{thm:one_shot_lewis}]
Observe that \Cref{lemma:block_instantiation_small} proves the existence of a probability measure $\vlambda$ over $[m]$ and a rounding $\mW$ that are $\Fstar$-block Lewis overestimates for $\Fstar = n$ if $p_i \ge 2$, and \Cref{lemma:block_instantiation_alleq} proves the existence of a probability measure $\widehat{\vlambda}$ over $[k]$ and corresponding $\widehat{\valpha} \in \R^k$ such that we get an $\Fstar = n$-Lewis overestimate.

We now apply \Cref{thm:concentration} and conclude the proof of Theorem \ref{thm:one_shot_lewis}.
\end{proof}

\subsection{Efficient computation of block Lewis weight overestimates (Proof of \texorpdfstring{\Cref{thm:computeblw}}{Theorem 2})}
\label{sec:alg}

In this subsection, we restate and prove \Cref{thm:computeblw}.
\computeblw*
We break up the proof into two sections -- one where $p_1=\dots=p_m=2$ and $p > 0$ and another where $p=2$ and $p_1,\dots,p_m \ge 2$.

\subsubsection{Special case -- \texorpdfstring{$p > 0, p_1=\dots=p_m=2$}{p>0,p1=...=pm=2}}

Following \Cref{defn:block_lewis_overestimate} and the discussion in \Cref{sec:blw_defs_covering}, observe that when $p_1=\dots=p_m=2$, we have \[\beta_i(\mV) \coloneqq \inparen{\sum_{j \in S_i} \va_j^{\top}(\mA^{\top}\mV\mA)^{-1}\va_j}^{1/2}.\]
As before, we call \(\beta_i(\mV)^p\) block Lewis weights.
Given \(\vb \in \R^m\), we let \(\mB \in \R^{k}\) be a diagonal matrix given by
\(\mB_{jj} = \vb_i\) for all \(i \in [m]\), \(j \in S_i\).
First, let us specialize the definition of block Lewis weight overestimates (recall \Cref{defn:block_lewis_overestimate}) to this special case.
\begin{definition}\label{def:lev_score_over}
    For \(\nu \ge 0\), we say \(\vb \in \R_{\geq 0}^m\) is a vector of \(\nu\)-bounded block Lewis weight overestimates for \(\mA\) if
    \[\|\vb\|_1 \le \nu,\]
    and for all \(i \in [m]\),
    \[\vb_i \ge \beta_i(\mB^{1-2/p})^p.\]
\end{definition}

We can think of the definition of block Lewis weight overestimates as being a relaxation of the fixed point condition for block Lewis weights that is described in \cite[Page 31, Proof of Lemma 4.2]{jlls23}. 

As a primitive, our algorithm will use \textit{leverage score overestimates} (see \cite[Definition 2.2]{jls22}). They are approximate forms of leverage scores \(\tau_j(\mM)\).
\begin{definition}
    For \(\nu \ge 0\), we say \(\widetilde{\tau} \in \R^k_{\ge 0}\) is a vector of \(\nu\)-bounded leverage score overestimates for \(\mM \in \R^{k \times n}\) if
    \[\|\widetilde{\tau}\|_1 \le \nu\]
    and for all \(i \in [k]\),
    \[\widetilde{\tau}_i \ge \tau_i(\mM).\]
\end{definition}

There are known efficient algorithms for computing leverage score overestimates or reducing leverage score computations to linear system solves.

\begin{theorem}[\protect{\cite[Theorem 3]{jls22}}]\label{thm:lev_ove}
There is an algorithm \(\textsc{OverLev}\) that, given \(\mM \in \R^{k \times n}\), produces \(O(n)\)-bounded leverage score overestimates for \(\mM\) in \(\widetilde{O}(\nnz{\mM} + n^\omega)\) time, where \(\omega\) is the matrix multiplication exponent.
\end{theorem}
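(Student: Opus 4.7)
Since the theorem is stated as a known result from prior work, my plan is to recover it via the standard subspace-embedding-plus-Johnson-Lindenstrauss recipe for leverage score estimation. Recall (see \Cref{fact:whitening}) that if $\mU \in \R^{k \times n}$ has orthonormal columns spanning $\mathrm{col}(\mM)$, then $\tau_j(\mM) = \norm{\vu_j}_2^2$; so the task reduces to approximating, up to a multiplicative constant, the squared row norms of such a $\mU$.

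First, I would apply a sparse oblivious subspace embedding $\mS \in \R^{s \times k}$ (for instance a CountSketch with $s = \mathrm{poly}(n)$ rows, applicable in $O(\nnz{\mM})$ time) to form $\mS \mM \in \R^{s \times n}$. Computing a QR-type factorization of $\mS\mM$ produces an invertible $\mR \in \R^{n \times n}$ such that $\mS\mM\mR$ has orthonormal columns; this step costs $O(s n^{\omega - 1})$, which is $\widetilde{O}(n^\omega)$ for appropriate $s$. The subspace embedding guarantee then ensures that the columns of $\mM \mR$ are approximately orthonormal, and hence the row norms of $\mM\mR$ are within constant factors of those of the true $\mU$.

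Second, I would apply a Johnson-Lindenstrauss sketch: pick a Gaussian matrix $\mJ \in \R^{n \times r}$ with $r = O(\log k)$ and form $\mM \mR \mJ$ in $\widetilde{O}(\nnz{\mM})$ time. With probability at least $1 - 1/\mathrm{poly}(k)$, every squared row norm of $\mM\mR\mJ$ is within a constant factor of the corresponding row norm of $\mM\mR$. Setting $\widetilde{\tau}_j$ to a small constant multiple of this estimate yields $\widetilde{\tau}_j \geq \tau_j(\mM)$ for all $j$, while $\sum_j \widetilde{\tau}_j \lesssim \sum_j \tau_j(\mM) \leq n$, giving the required $O(n)$-boundedness.

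The main obstacle is bookkeeping the runtime so that the dominant costs are only the $O(\nnz{\mM})$ application of $\mS$, the $O(\nnz{\mM} \log k)$ application of $\mR \mJ$ to $\mM$, and the $O(n^\omega)$ dense factorization of $\mS \mM$. Everything else is standard concentration (JL for the row norm estimates, oblivious subspace embedding for the approximate orthonormality), so no new technical machinery is required.
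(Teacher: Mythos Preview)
The paper does not actually prove this statement: \Cref{thm:lev_ove} is quoted verbatim as \cite[Theorem 3]{jls22} and used as a black box, with no argument given in the present paper. So there is nothing to compare your proposal against here.

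That said, your sketch is the standard route to this result (oblivious sparse subspace embedding to get an approximate change-of-basis $\mR$, then JL on $\mM\mR$ to read off approximate row norms), and it is essentially how the cited literature establishes fast leverage score overestimates. The only minor point worth tightening is the cost accounting for applying $\mR\mJ$ to $\mM$: you need to first compute $\mR\mJ \in \R^{n \times r}$ (cost $O(n^2 \log k)$, dominated by $n^\omega$) and then multiply $\mM$ by this $n \times O(\log k)$ matrix in $O(\nnz{\mM}\log k)$ time, which you do mention. Nothing substantive is missing.
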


We will use two different algorithms depending on the value of $p$. If $p \le 2$, then we present a contractive scheme reminiscent of the algorithm of \citet{cp15}. If $p > 2$, we present an algorithm similar to those of \citet{ccly19} and \citet{jls21}.

We begin with the case where $0 < p \le 2$ (in fact, we will see that this algorithm yields guarantees where $p < 4$). The main objects of interest here are \Cref{alg:blw_lewismap} and \Cref{lemma:contract_alg}.

\begin{algorithm}
\caption{Algorithm to compute block Lewis weight overestimates, $0 < p < 4$ and $p_1=\dots=p_m=2$.}
\begin{algorithmic}[1]\label{alg:blw_lewismap}
\State \textbf{Input:} \(\mA \in \R^{k \times n}\), outer norm $0 < p < 4$, group structure \((S_1,\dots,S_m,2,\dots,2)\).
\State Initialize \(\vb^{(0)} = \frac{n}{m} \cdot \onev_m\).
\State Define $\psi$ such that\label{line:psi_def}
\begin{align*}
    \psi_i(\vb) \coloneqq \inparen{\inparen{\sum_{j \in S_i} \inparen{\va_j^{\top}\inparen{\mA^{\top}\mB^{1-2/p}\mA}^{-1}\va_j}^{p_i/2}}^{2/p_i}}^{p/2}.
\end{align*}
\For{\(t = 1, \ldots, T\)}
    \State \(\vb^{(t)} = \max(\psi(\vb^{(t-1)}),\onev_{m} \cdot 1/m)\) \Comment{The $\max$ is taken elementwise here.}
\EndFor
\State \Return \(1.1\vb^{(T)}\)
\end{algorithmic}
\end{algorithm}

\begin{lemma}
\label{lemma:contract_alg}
Let
\begin{align*}
    T \ge \frac{\ln\inparen{\frac{\ln\inparen{\frac{m}{n}}}{\ln\inparen{1+\eps}}}}{\ln\inparen{\abs{\frac{2}{2-p}}}}.
\end{align*}
Then, the weights $1.1\vb^{(T)}$ output by \Cref{alg:blw_lewismap}  are a $1.1n$-block Lewis overestimate (\cref{defn:block_lewis_overestimate}). Furthermore, computing $\vb^{(T)}$ requires at most $T$ computations of the vector whose entries are the $\va_j^{\top}\inparen{\mA^{\top}\mD\mA}^{-1}\va_j$ for all $j$, where $\mD$ is a diagonal matrix.
\end{lemma}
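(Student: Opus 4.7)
The plan is to view Algorithm~\ref{alg:blw_lewismap} as a Banach-type fixed-point iteration that contracts in the $\ell_\infty$-norm on logarithms with rate $|1-p/2|$, and then to check that inflating the converged iterate by $1.1$ turns an approximate fixed point into a valid $1.1n$-block Lewis overestimate. The first step is to rewrite $\psi$ so its fixed-point structure is transparent. By scale-invariance of leverage scores, for $j\in S_i$ we have $\va_j^\top(\mA^\top\mB^{1-2/p}\mA)^{-1}\va_j=\vb_i^{2/p-1}\tau_j(\mB^{1/2-1/p}\mA)$. Setting $\eta_i(\vb):=\sum_{j\in S_i}\va_j^\top(\mA^\top\mB^{1-2/p}\mA)^{-1}\va_j$ and $\sigma_i(\vb):=\sum_{j\in S_i}\tau_j(\mB^{1/2-1/p}\mA)$, the map from Line~\ref{line:psi_def} becomes $\psi_i(\vb)=\eta_i(\vb)^{p/2}$, with $\sigma_i(\vb)=\vb_i^{1-2/p}\eta_i(\vb)$. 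Any fixed point obeys $\vb^\star_i=\sigma_i(\vb^\star)$, so summing over $i$ and using $\sum_{j\in[k]}\tau_j\le n$ gives $\lVert\vb^\star\rVert_1\le n$.

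For the contraction, let $\vb'=\vb\cdot e^{\Delta}$ componentwise. Spectral monotonicity of $\mA^\top(\cdot)\mA$ under multiplicative diagonal perturbation gives
\[
e^{-|1-2/p|\,\|\Delta\|_\infty}\,\mA^\top\mB^{1-2/p}\mA\ \preceq\ \mA^\top(\mB')^{1-2/p}\mA\ \preceq\ e^{|1-2/p|\,\|\Delta\|_\infty}\,\mA^\top\mB^{1-2/p}\mA,
\]
hence $|\log\eta_i(\vb')-\log\eta_i(\vb)|\le|1-2/p|\,\|\Delta\|_\infty$. Since $\log\psi_i=(p/2)\log\eta_i$, this yields $\lVert\log\psi(\vb')-\log\psi(\vb)\rVert_\infty\le|1-p/2|\,\lVert\log(\vb'/\vb)\rVert_\infty$, which is a strict contraction for $0<p<4$. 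The clamp $\vb\mapsto\max(\vb,\onev_m/m)$ is coordinatewise projection onto a half-space in log-scale, hence nonexpansive, so the clamped iteration inherits the same contraction rate. Banach's fixed-point theorem then yields a unique limit $\vb^\star$ in the clamped domain with $\vb^\star_i\in[1/m,n]$ and $\lVert\vb^\star\rVert_1\le n$, to which $\vb^{(t)}$ converges geometrically.

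The initialization $\vb^{(0)}=(n/m)\onev_m$ lies within log-$\ell_\infty$ distance $\log(m/n)$ of $\vb^\star$ (using the clamped box together with the sum bound $\lVert\vb^\star\rVert_1\le n$), so $T\ge\ln\!\bigl(\ln(m/n)/\ln(1+\varepsilon)\bigr)/\ln|2/(2-p)|$ iterations drive the log-$\ell_\infty$ error below $\log(1+\varepsilon)$. To obtain the overestimate guarantee, take $\mW=\mI_k$ and $\vlambda=\vb/\lVert\vb\rVert_1$; then \Cref{defn:block_lewis_overestimate} (with $p_i=2$ and scale-invariance) collapses to
\[
F^\star\ \ge\ \frac{\lVert\vb\rVert_1}{\vb_i}\,\sigma_i(\vb)\ =\ \lVert\vb\rVert_1\bigl(\psi_i(\vb)/\vb_i\bigr)^{2/p}.
\]
Homogeneity $\psi_i(c\vb)/(c\vb_i)=c^{-p/2}\psi_i(\vb)/\vb_i$ shows that scaling by $c=1.1$ multiplies the right-hand side by $1.1\cdot 1.1^{-1}=1$, so combined with $\lVert\vb^\star\rVert_1\le n$ and the $(1+\varepsilon)$-closeness of $\vb^{(T)}$ to $\vb^\star$, a sufficiently small $\varepsilon$ (as a function of $p$) yields $F^\star\le 1.1n$ for $1.1\vb^{(T)}$. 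Each iteration recomputes the $k$-vector $\bigl(\va_j^\top(\mA^\top\mB^{1-2/p}\mA)^{-1}\va_j\bigr)_{j\in[k]}$ a single time, giving the claimed cost of $T$ such leverage-score-style computations.

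The main obstacle is tracking the clamp and initial distance carefully enough to have $\log(m/n)$ rather than a looser $\log m$ appear as the initial log-$\ell_\infty$ error, and choosing $\varepsilon$ uniformly enough in $p$ so that the $1.1$ headroom absorbs both the iterative approximation error and the homogeneity slack across the full range $0<p<4$---especially as $p\to 0$, where the homogeneity factor $1.1^{-p/2}\to 1$ becomes weak and the contraction rate $|1-p/2|\to 1$ simultaneously degrades.
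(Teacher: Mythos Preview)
Your proposal follows the same strategy as the paper: prove $\psi$ is a contraction in the log-$\ell_\infty$ metric with rate $|1-p/2|$ (the paper's \Cref{lemma:lewis_contract_easy}, via the same spectral-monotonicity argument), apply Banach's fixed-point theorem, handle the $1/m$ clamp, and count iterations to reach a $(1+\eps)$-multiplicative approximation. Two refinements you add are not in the paper: first, that the clamp is nonexpansive in log-scale, so the clamped iteration remains a contraction with its own fixed point (the paper instead tracks distance to $\max(\vb^\star,\onev_m/m)$ for the \emph{unclamped} $\vb^\star$, without quite justifying why this suffices); second, that the condition of \Cref{defn:block_lewis_overestimate} with $\mW=\mI_k$ and $\vlambda=\vb/\|\vb\|_1$ is scale-invariant in $\vb$. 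This second observation is correct and implies that the $1.1$ multiplier on the output is actually irrelevant for the lemma's conclusion---the $1.1n$ bound on $F^\star$ must come entirely from the iterative error. (Your closing paragraph then partly retreats from this, worrying about ``the homogeneity factor $1.1^{-p/2}$''; that factor matters only for the per-coordinate condition $\vb_i\ge\psi_i(\vb)$ of \Cref{def:lev_score_over}, not for \Cref{defn:block_lewis_overestimate}.) The loose ends you flag---getting the initial log-distance down to $\log(m/n)$ rather than $\log m$, and uniformity of the final constant in $p$---are real, but they are equally present and unaddressed in the paper's own proof.
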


To prove \Cref{lemma:contract_alg}, we first state and prove \Cref{lemma:lewis_contract_easy}.

\begin{lemma}
\label{lemma:lewis_contract_easy}
Let $\psi$ be as defined in Line \ref{line:psi_def} in \Cref{alg:blw_lewismap}. For all $\vu \in \R^{m}_{\ge 0}$ and $\vv \in \R^{m}_{\ge 0}$, we have
\begin{align*}
    \max_{1 \le i \le m} \abs{\lnv{\frac{\psi_i(\vu)}{\psi_i(\vv)}}} \le \abs{\frac{p}{2}-1}\max_{1 \le i \le m} \abs{\lnv{\frac{\vu_i}{\vv_i}}},
\end{align*}
and therefore $\psi(\vu)$ is a contraction whenever $0 < p < 4$.
\end{lemma}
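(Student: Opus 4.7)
The plan is a standard ``log-Lipschitz'' contraction argument driven by pushing an entrywise multiplicative sandwich through the maps $\vb \mapsto \mB^{1-2/p}$, $\mM \mapsto \mA^\top \mM \mA$, $\mM \mapsto \mM^{-1}$, $\mM \mapsto \va_j^\top \mM \va_j$, summation over $j \in S_i$, and finally the $p/2$-power. The only place the exponent changes is at the very first and very last step, and the product of those two scalars is exactly $(p/2)\cdot\abs{1-2/p} = \abs{p/2-1}$, which is what we want.

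First, I would set $\rho \coloneqq \max_i \abs{\lnv{\vu_i/\vv_i}}$ (assuming, harmlessly by continuity, that all $\vu_i,\vv_i>0$); this is equivalent to the entrywise sandwich $e^{-\rho}\vv_i \le \vu_i \le e^{\rho}\vv_i$. Raising to the power $1-2/p$ and observing that the sign of $1-2/p$ only flips the direction of the inequality gives the uniform bound
\begin{align*}
e^{-\rho\abs{1-2/p}}\,\mB_{\vv}^{1-2/p} \preceq \mB_{\vu}^{1-2/p} \preceq e^{\rho\abs{1-2/p}}\,\mB_{\vv}^{1-2/p}
\end{align*}
as diagonal PSD matrices. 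Conjugating by $\mA$ preserves the PSD order, and inverting flips it but preserves the same multiplicative constants, so the same sandwich (up to a relabeling) holds for $(\mA^\top \mB^{1-2/p}\mA)^{-1}$. Applying the linear functional $\mM \mapsto \va_j^\top \mM \va_j$ preserves it as a scalar inequality, as does summing over $j \in S_i$ (this is where I crucially use $p_1=\dots=p_m=2$, which collapses the inner $p_i$-norm to a plain sum). Finally raising to the power $p/2>0$ gives
\begin{align*}
e^{-\rho(p/2)\abs{1-2/p}}\,\psi_i(\vv) \le \psi_i(\vu) \le e^{\rho(p/2)\abs{1-2/p}}\,\psi_i(\vv),
\end{align*}
i.e.\ $\abs{\lnv{\psi_i(\vu)/\psi_i(\vv)}} \le (p/2)\abs{1-2/p}\cdot\rho = \abs{p/2-1}\cdot\rho$, which is the claimed bound after taking the max over $i$.

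The contraction statement is then immediate: $\abs{p/2-1} < 1$ iff $0 < p < 4$. There is no real obstacle here; the only thing one has to keep track of carefully is the bookkeeping of the sign of $1-2/p$ so that the constants come out as $\abs{1-2/p}$ rather than $1-2/p$, and the algebraic identity $(p/2)\abs{1-2/p} = \abs{p/2-1}$ that makes the constant line up with the statement. Everything else is just monotonicity of the operations involved (conjugation, inversion, evaluation on a fixed vector, summation, power by a positive exponent) under the PSD or scalar order.
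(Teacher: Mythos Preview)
Your proposal is correct and follows essentially the same approach as the paper's proof: both push a multiplicative sandwich $e^{-\rho}\vv \le \vu \le e^{\rho}\vv$ through the chain of operations defining $\psi_i$, tracking the exponent $\abs{1-2/p}$ picked up at the $\mB^{1-2/p}$ step and the $p/2$ at the final power, to get the Lipschitz constant $\abs{p/2-1}$. Your write-up is in fact more explicit than the paper's (which compresses the PSD-order bookkeeping into ``This easily implies that\ldots''), and you correctly flag that $p_1=\dots=p_m=2$ is what makes the inner aggregation a plain sum.
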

\begin{proof}[Proof of \Cref{lemma:lewis_contract_easy}]
Fix some index $i \le m$. For notational simplicity in this proof, let $\alpha$ be such that $\lnv{\alpha} \coloneqq \triangle(\vu,\vv)$.

This easily implies that
\begin{align*}
    \frac{1}{\alpha^{\abs{1-2/p}}} \cdot \va_j^{\top}\inparen{\mA^{\top}\mV^{1-2/p}\mA}^{-1}\va_j \le \va_j^{\top}\inparen{\mA^{\top}\mU^{1-2/p}\mA}^{-1}\va_j \le \alpha^{\abs{1-2/p}} \cdot \va_j^{\top}\inparen{\mA^{\top}\mV^{1-2/p}\mA}^{-1}\va_j.
\end{align*}
We take the $p_i/2$-norm and take the $p/2$ power, which tells us that for all $1 \le i \le m$,
\begin{align*}
    \frac{1}{\alpha^{\abs{p/2-1}}} \cdot \psi_i(\vv) \le \psi_i(\vu) \le \alpha^{\abs{p/2-1}} \cdot \psi_i(\vv).
\end{align*}
Hence,
\begin{align*}
    \max_{1 \le i \le m} \abs{\lnv{\frac{\psi_i(\vu)}{\psi_i(\vv)}}} \le \abs{\frac{p}{2}-1}\lnv{\alpha} = \abs{\frac{p}{2}-1}\max_{1 \le i \le m} \abs{\lnv{\frac{\vu_i}{\vv_i}}},
\end{align*}
completing the proof of \Cref{lemma:lewis_contract_easy}.
\end{proof}

We are now ready to complete the proof of \Cref{lemma:contract_alg}.

\begin{proof}[Proof of \Cref{lemma:contract_alg}]
The computational complexity guarantee is immediate, so we focus on the approximation guarantee.

By \Cref{lemma:lewis_contract_easy} and the Banach fixed point theorem, we know that $\psi$ has a unique fixed point. Denote this fixed point by $\vb^{\star}$. We would like to argue that since the convergence in the $\ln$-metric is linear, it takes roughly $\log\log(1+\eps)/\log\abs{2/(2-p)}$ applications of $\psi$ to reach a $(1+\eps)$-multiplicative approximation to $\vb^{\star}$. An annoying technicality is that if $\vb^{\star}$ has some elements arbitrarily close to $0$, then the convergence rate could be very slow. To fix this, we simply enforce that the coordinates of the iterates never drop below $1/m$. It is easy to see that this only overestimates the true weights and therefore does not affect our sampling guarantees (in particular, $\norm{\max(\vb^{\star}, \onev_m \cdot 1/m)}_1 \le n + 1$).

More precisely, let $\vb \coloneqq \max(\vb^{\star}, \onev_m \cdot 1/m)$. Notice that for all $1 \le i \le m$, $\abs{\ln\inparen{\nfrac{\vb_i^{(0)}}{\vb_i}}} \le \ln\inparen{\nfrac{m}{n}}$. This means that after $T$ iterations, we have
\begin{align*}
    \max_{1 \le i \le m} \abs{\ln\inparen{\frac{\vb^{(T)}_i}{\vb_i}}} \le \abs{\frac{p}{2}-1}^T\max_{1 \le i \le m} \abs{\ln\inparen{\frac{\vb_i^{(0)}}{\vb_i}}} \le  \abs{\frac{p}{2}-1}^T\ln\inparen{\frac{m}{n}}.
\end{align*}
Choosing
\begin{align*}
    T \ge \frac{\ln\inparen{\frac{\ln\inparen{\frac{m}{n}}}{\ln\inparen{1+\eps}}}}{\ln\inparen{\abs{\frac{2}{2-p}}}}
\end{align*}
and observing that for sampling that it is sufficient to choose $\eps = 0.1$ implies that $\vb^{(T)}$ is an entrywise $1.1$-approximation to $\vb$. As this is sufficient to get the concentration in the setting of \Cref{thm:concentration}, we may  complete the proof of \Cref{lemma:contract_alg}.
\end{proof}

Now, we move onto the case where $p \ge 2$. This covers the cases of $p$ where \Cref{alg:blw_lewismap} is not a contraction (whenever $p \ge 4$). In this setting, we have \Cref{alg:blw}. At a high level, observe that Line \ref{line:alg_iter_next} of \Cref{alg:blw} performs a fixed point iteration on the stationary condition \(\vb_i = \beta_i(\mB^{1-2/p})^p\) that holds for the optimal choice of block Lewis weights (see \cite[proof of Lemma 4.2 and (4.5)]{jlls23}).

\begin{algorithm}[H]
\caption{Algorithm to compute block Lewis weight overestimates, $p \ge 2$}
\begin{algorithmic}[1]\label{alg:blw}
\State \textbf{Input:} \(\mA \in \R^{k \times n}\)
\State \textbf{Output:}
\State Initialize \(\vb^{(1)} = \frac{n}{m} \cdot \onev\) \label{line:alg_init}
\For{\(t = 1, \ldots, T - 1\)}
    \State \(\mB^{(t)}_{jj} = \vb^{(t)}_i\) for all \(i \in [m]\), \(j \in S_i\)
    \State \(\widetilde{\tau}^{(t)} = \textsc{OverLev}((\mB^{(t)})^{1/2 - 1/p} \mA)\)\label{line:alg_iter_lev}
    \State \(\vb^{(t+1)}_i = \sum_{j \in S_i} \widetilde{\tau_j}\) for all \(i \in [m]\)\label{line:alg_iter_next}
\EndFor
\State \(\overline{\vb} = \frac{1}{T} \sum_{t=1}^T \vb^{(t)}\)
\State \Return \(\vb = \frac{3}{2} \overline{\vb}\)
\end{algorithmic}
\end{algorithm}

The guarantee we obtain for \Cref{alg:blw} is captured by \Cref{thm:blw}.

\begin{lemma}
\label{thm:blw}
The return value \(\vb\) of  \Cref{alg:blw} is a vector of \(O(n)\)-bounded block Lewis weight overestimates.
Further, this vector of overestimates is found in \(\text{polylog}(k, m, n)\) leverage score overestimate computations.
\end{lemma}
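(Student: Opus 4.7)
The plan is to verify the two claims---the \(\ell_1\) bound and the pointwise overestimate property---separately. The \(\ell_1\) bound is immediate: \Cref{thm:lev_ove} gives \(\|\widetilde\tau^{(t)}\|_1 \le O(n)\), and Line~\ref{line:alg_iter_next} gives \(\|\vb^{(t+1)}\|_1 = \sum_i\sum_{j\in S_i}\widetilde\tau^{(t)}_j = \|\widetilde\tau^{(t)}\|_1\), so combined with \(\|\vb^{(1)}\|_1 = n\) we obtain \(\|\vb\|_1 = (3/2)\|\overline{\vb}\|_1 \le O(n)\). For the pointwise condition \(\vb_i \ge \beta_i(\mB^{1-2/p})^p\), setting \(\varphi_i(\vb) \coloneqq \sum_{j\in S_i}\tau_j(\mB^{1/2-1/p}\mA)\) and using the identity \(\tau_j(\mB^{1/2-1/p}\mA) = \vb_i^{1-2/p}\va_j^\top(\mA^\top\mB^{1-2/p}\mA)^{-1}\va_j\) for \(j \in S_i\) (since \(p_i = 2\)), a direct calculation gives \(\beta_i(\mB^{1-2/p})^p = \vb_i^{1-p/2}\varphi_i(\vb)^{p/2}\), so the condition reduces to \(\vb_i \ge \varphi_i(\vb)\). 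Since leverage scores are invariant under positive rescaling of the matrix, \(\varphi\) is scale-invariant, and it suffices to show \((3/2)\overline{\vb}_i \ge \varphi_i(\overline{\vb})\). The fixed point \(\vb^*_i = \varphi_i(\vb^*)\) is precisely the KKT point of the concave program \(\max\{\log\det(\mA^\top\mB^{1-2/p}\mA) : \|\vb\|_1 = n,\, \vb \ge 0\}\) (concave for \(p \ge 2\) because \(\vb_i \mapsto \vb_i^{1-2/p}\) is concave and \(\log\det\) is operator-monotone and operator-concave), and the iteration \(\vb^{(t+1)} = \varphi(\vb^{(t)})\) is a mirror-descent-style multiplicative update on this program: \(\varphi_i(\vb) = (1-2/p)^{-1}\vb_i\nabla_i\log\det(\mA^\top\mB^{1-2/p}\mA)\).

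To bound \(\varphi_i(\overline{\vb})\) from above I would use two matrix inequalities. Operator concavity of \(\mB \mapsto \mB^{1-2/p}\) (valid for \(p \ge 2\)) gives \(\overline{\mB^{1-2/p}} \preceq \overline{\mB}^{1-2/p}\), which on diagonals is just Jensen for the scalar-concave \(x^{1-2/p}\); operator convexity of \(\mM \mapsto \mM^{-1}\) then gives \((\mA^\top\overline{\mB^{1-2/p}}\mA)^{-1} \preceq \tfrac{1}{T}\sum_t(\mA^\top\mB^{(t),1-2/p}\mA)^{-1}\). Chaining these, multiplying by \(\overline{\vb}_i^{1-2/p}\), summing over \(j \in S_i\), and finally applying the iteration \(\vb^{(t+1)}_i \ge \varphi_i(\vb^{(t)})\) yields
\begin{align*}
    \varphi_i(\overline{\vb}) \le \frac{\overline{\vb}_i^{1-2/p}}{T}\sum_{t=1}^T (\vb_i^{(t)})^{2/p-1}\vb_i^{(t+1)}.
\end{align*}
At the fixed point \(\vb^{(t+1)}_i = \vb^{(t)}_i\) the right-hand side collapses to \(\overline{\vb}_i^{1-2/p}\cdot\tfrac{1}{T}\sum_t (\vb_i^{(t)})^{2/p} \le \overline{\vb}_i\) by Jensen for the concave \(x^{2/p}\) (valid for \(p \ge 2\)), recovering exactly \(\varphi_i(\overline{\vb}) \le \overline{\vb}_i\).

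The main obstacle will be making this quantitative: after \(T = \text{polylog}(k,m,n)\) iterations, the averaged ratio deviations \(\vb_i^{(t+1)}/\vb_i^{(t)}\) must be close enough to \(1\) that the remaining error is absorbed by the \(3/2\) slack in \(\vb = (3/2)\overline{\vb}\). I would handle this via a regret analysis of the mirror-descent interpretation of the iteration on the concave \(\log\det\) potential above. Because individual leverage scores are neither convex nor concave in \(\vb\) (already a \(2\times 1\) matrix \(\mA\) yields a Hessian with mixed eigenvalues), Jensen cannot be applied directly to \(\varphi_i\), which is precisely why the argument must route through matrix-level operator inequalities together with a global potential. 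Finally, the computational claim is immediate: each outer iteration makes exactly one call to \(\textsc{OverLev}\) at Line~\ref{line:alg_iter_lev}, so the total is \(T - 1 = \text{polylog}(k,m,n)\) leverage score overestimate computations.
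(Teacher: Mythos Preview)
Your reduction is correct: the \(\ell_1\) bound is immediate, and the pointwise overestimate condition indeed reduces to showing \((3/2)\overline{\vb}_i \ge \varphi_i(\overline{\vb})\) where \(\varphi_i(\vb) = \sum_{j\in S_i}\tau_j(\mB^{1/2-1/p}\mA)\). Your operator-inequality chain is also valid and yields \(\varphi_i(\overline{\vb}) \le \overline{\vb}_i^{\,1-2/p}\cdot\tfrac{1}{T}\sum_t (\vb_i^{(t)})^{2/p-1}\vb_i^{(t+1)}\). But there is a genuine gap after this point: the right-hand side does not telescope, and you have given no argument that it is at most \((3/2)\overline{\vb}_i\) after \(T=\mathrm{polylog}\) iterations. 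The ``regret analysis of the mirror-descent interpretation'' is not spelled out, and a global \(\log\det\) potential controls only an aggregate quantity, not the per-coordinate overestimate \(\varphi_i(\overline{\vb}) \le (3/2)\overline{\vb}_i\) you need for every \(i\).

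The paper closes exactly this gap with one additional observation you are missing: the \emph{log} potential \(\phi_i(\vb)=\ln(\varphi_i(\vb)/\vb_i)\) is convex in \(\vb\) (this is nontrivial; it combines convexity of \(\vb\mapsto\ln(\va_j^\top(\mA^\top\mB^{1-2/p}\mA)^{-1}\va_j)\) with convexity of log-sum-exp). Once you have convexity of \(\phi_i\), Jensen gives \(\phi_i(\overline{\vb})\le\tfrac{1}{T}\sum_t\phi_i(\vb^{(t)})\le\tfrac{1}{T}\sum_t\ln(\vb_i^{(t+1)}/\vb_i^{(t)})\), and now the sum \emph{does} telescope to \(\tfrac{1}{T}\ln(\vb_i^{(T+1)}/\vb_i^{(1)})\le\tfrac{1}{T}\ln(m\nu/n)\). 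Taking \(T=O(\ln m)\) makes this at most \(\ln(3/2)\), i.e.\ \(\varphi_i(\overline{\vb})\le(3/2)\overline{\vb}_i\). Your remark that ``Jensen cannot be applied directly to \(\varphi_i\)'' is correct, but the point is that it \emph{can} be applied to \(\ln(\varphi_i/\vb_i)\), and this per-coordinate convex potential is precisely what your global \(\log\det\) approach lacks.
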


The goal of the rest of this section is to analyze \Cref{alg:blw} and to prove \Cref{thm:blw}. Let us briefly describe the analysis of \Cref{alg:blw}. We first give a collection of potential functions \(\phi_i(\vb)\), with the goal of showing the potential of \(\phi_i(\overline{\vb})\) decreases with \(T\). A low potential will also imply that \(\overline{\vb}\) is nearly a vector of block Lewis weight overestimates.
For each \(i \in [m]\) define \(\phi_i \colon \R^m \to \R\) by
\[\phi_i(\vb) \coloneqq \ln\inparen{\frac{1}{\vb_i} \sum_{j \in S_i} \tau_j(\mB^{1/2 - 1/p} \mA)}.\]

The key property of this potential is convexity, which we now show.
\begin{lemma}
\label{lemma:alg_phi_convex}
    For each \(i \in [m]\), \(\phi_i\) is convex.
\end{lemma}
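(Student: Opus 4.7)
Set $s := 1 - 2/p \in [0,1]$, $\mN_l := \mA_{S_l}^{\top}\mA_{S_l}$, and $\mM(\vb) := \mA^{\top}\mB^{1-2/p}\mA = \sum_{l=1}^m \vb_l^s\mN_l$. Because the rows of $\mB^{1/2-1/p}\mA$ indexed by $j \in S_i$ are $\vb_i^{1/2-1/p}\va_j$, the leverage score identity (\Cref{defn:lev_score}) gives $\tau_j(\mB^{1/2-1/p}\mA) = \vb_i^{s}\va_j^{\top}\mM(\vb)^{-1}\va_j$, so
\[
\phi_i(\vb) \;=\; -\tfrac{2}{p}\ln\vb_i \;+\; \ln h_i(\vb), \qquad h_i(\vb) \;:=\; \sum_{j\in S_i}\va_j^{\top}\mM(\vb)^{-1}\va_j.
\]
The first summand is a nonnegative multiple of the convex function $-\ln(\cdot)$, so the remaining task is to show $\vb\mapsto \ln h_i(\vb)$ is convex on $\R^m_{>0}$.

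For this I would first prove the auxiliary claim that $f(\vu):=\sum_{j\in S_i}\va_j^{\top}(\sum_l\vu_l\mN_l)^{-1}\va_j$ is log-convex in $\vu\in\R^m_{>0}$, using the variational identity
\[
\va^{\top}\Bigl(\sum_l\vu_l\mN_l\Bigr)^{-1}\va \;=\; \sup_{\vz\neq 0}\;\frac{(\vz^{\top}\va)^2}{\sum_l\vu_l\,(\vz^{\top}\mN_l\vz)}
\]
(which is just Cauchy--Schwarz equality in the $\mM(\vu)$-inner product, attained at $\vz\propto\mM(\vu)^{-1}\va$). For each fixed $\vz$ the right-hand side is a constant divided by a positive linear function of $\vu$, so its logarithm equals $2\ln|\vz^{\top}\va| - \ln\sum_l\vu_l(\vz^{\top}\mN_l\vz)$, which is convex in $\vu$; hence each $g_\vz(\vu)$ is log-convex. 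The pointwise supremum of log-convex functions is log-convex (the sup commutes with $\log$ and yields a sup of convex functions), and finite sums of log-convex functions are log-convex by H\"older's inequality. Applying these two preservation facts yields the claim.

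The final step is to transfer log-convexity of $f$ from $\vu$ to log-convexity of $h_i(\vb) = f(\vb^s)$ in $\vb$, where $\vb^s$ denotes the componentwise $s$-th power. Here is the main obstacle: for $s\in[0,1]$ the map $\vb\mapsto\vb^s$ is componentwise \emph{concave}, so a naive composition of convex-in-$\vu$ with concave-in-$\vb$ does not immediately give convexity in $\vb$. The resolution is to combine this with the monotonicity $\vu\ge\vu'\Rightarrow f(\vu)\le f(\vu')$, which holds because $\mM(\vu)\succeq\mM(\vu')$ implies $\mM(\vu)^{-1}\preceq\mM(\vu')^{-1}$ in the Loewner order, and tracing against PSD matrices preserves the inequality. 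For $\vb=\lambda\vb'+(1-\lambda)\vb''$ with $\lambda\in(0,1)$, componentwise concavity of $t\mapsto t^s$ gives $\vb^s\ge\lambda(\vb')^s+(1-\lambda)(\vb'')^s$, and monotonicity of $f$ followed by convexity of $\ln f$ in $\vu$ sandwich to give
\[
\ln h_i(\vb) \;=\; \ln f(\vb^s) \;\le\; \ln f\bigl(\lambda(\vb')^s+(1-\lambda)(\vb'')^s\bigr) \;\le\; \lambda\ln f((\vb')^s) + (1-\lambda)\ln f((\vb'')^s),
\]
which is the desired convexity of $\ln h_i$. The $p=2$ case is degenerate ($s=0$, so $\mM$ and $h_i$ are constant in $\vb$) and handled trivially.
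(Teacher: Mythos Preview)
Your proof is correct. Both you and the paper begin with the same decomposition $\phi_i(\vb)=-\tfrac{2}{p}\ln\vb_i+\ln h_i(\vb)$, but the routes to the convexity of $\ln h_i$ differ in organization.

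The paper treats each summand separately: it cites \cite[Lemma A.2]{jls21} for the convexity of $h_j(\vb):=\ln\bigl(\va_j^\top(\mA^\top\mB^{1-2/p}\mA)^{-1}\va_j\bigr)$ directly as a function of $\vb$, and then combines the summands via the identity $\ln h_i=\ln\sum_j\exp(h_j)$ together with monotonicity and convexity of log-sum-exp. You instead first establish log-convexity of the whole sum $f(\vu)=\sum_j\va_j^\top(\sum_l\vu_l\mN_l)^{-1}\va_j$ in a \emph{linear} parameter $\vu$ (via the Cauchy--Schwarz variational representation, writing each summand as a supremum of reciprocals of linear forms, then using closure of log-convexity under suprema and sums), and only afterwards handle the nonlinear reparametrization $\vu=\vb^s$ by combining coordinatewise monotonicity of $f$ with concavity of $t\mapsto t^s$. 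Your argument is fully self-contained, whereas the paper's pushes the $\vb\mapsto\vb^s$ nonlinearity into the cited single-term lemma; the ``sum of log-convex is log-convex via H\"older'' step you invoke is exactly the log-sum-exp step the paper writes out. The two proofs are of comparable length and difficulty.
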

\begin{proof}[Proof of \Cref{lemma:alg_phi_convex}]
     Our argument for the convexity of this function is similar to the one given in \cite[Lemma A.2]{jls21}.
     First, notice that by the definition of \(\tau_j\), \(\phi(\vb)\) is equal to
     \begin{align*}
         \phi_i(\vb) = \ln\inparen{\frac{1}{\vb_i^{2/p}} \sum_{j \in S_i} \va_j^\top \inparen{\mA^\top \mB^{1 - 2/p} \mA}^{-1} \va_j}.
     \end{align*}
     Since \(- \frac{2}{p} \ln(\vb_i)\) is convex, it suffices to show the convexity of
     \[f(\vb) \coloneqq \ln\inparen{\sum_{j \in S_i} \va_j^\top \inparen{\mA^\top \mB^{1 - 2/p} \mA}^{-1} \va_j)}.\]
     Now we define for each \(j \in [k]\) the function
     \[h_j(\vb) \coloneqq \ln \inparen{\va_j^\top (\mA^\top \mB^{1-2/p} \mA)^{-1} \va_j}.\]
     A version of this function without repeated entries in \(\mB\) was shown to be convex in \cite[Lemma A.2]{jls21}, but \(h_j(\vb)\) is still convex.
     Next, notice that we may write \[
     f(\vb) = \ln\inparen{\sum_{j \in S_i} \exp(h_j(\vb))}.
     \]
     Now taking \(\vb, \vb' \in \R^m\) and \(\lambda \in [0, 1]\) we have
     \begin{align}
        f(\lambda \vb + (1-\lambda) \vb') &= \ln\inparen{\sum_{j \in S_i} \exp(h_j(\lambda \vb + (1-\lambda) \vb'))} \nonumber
        \\
     &\le \ln\inparen{\sum_{j \in S_i} \exp(\lambda h_j(\vb) + (1-\lambda) h_j(\vb'))} \label{eq:alg_ph_cvx_1}\\
     &\le \lambda \ln\inparen{\sum_{j \in S_i} \exp(h_j(\vb))} + (1-\lambda) \ln\inparen{\sum_{j \in S_i} \exp(h_j(\vb'))} \label{eq:alg_ph_cvx_2} \\
     &= \lambda f(\vb) + (1-\lambda) f(\vb'), \nonumber
     \end{align}
     where \eqref{eq:alg_ph_cvx_1} follows from the convexity of \(h_j\) and the monotonicity of log-sum-exp, and \eqref{eq:alg_ph_cvx_2} is due to the convexity of log-sum-exp (see e.g. \cite[Section 3.1.5]{boyd2004convex}). Hence, we may conclude the proof of \Cref{lemma:alg_phi_convex}.
\end{proof}

We now give an argument that \(\phi_i(\overline{\vb}) = \widetilde{O}(1/T)\) using the convexity of \(\phi_i\).

\begin{lemma}\label{lemma:alg_analysis1}
    Assume that \(\textsc{OverLev}\) returns \(\nu\)-bounded leverage score overestimates.
   Then after \Cref{alg:blw} runs for \(T\) iterations, we have for all \(i \in [m]\) that
    \[\phi_i(\overline{\vb}) \le \frac{1}{T} \ln\inparen{\frac{m \nu}{n}}.\]
\end{lemma}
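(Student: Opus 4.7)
My plan is to prove this by combining Jensen's inequality (using convexity of $\phi_i$ from \Cref{lemma:alg_phi_convex}) with a telescoping argument driven by the definition of the iteration in \Cref{alg:blw}.

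First, by the convexity of $\phi_i$ established in \Cref{lemma:alg_phi_convex}, I would apply Jensen's inequality directly to the average $\overline{\vb} = \frac{1}{T}\sum_{t=1}^T \vb^{(t)}$:
\[
\phi_i(\overline{\vb}) \le \frac{1}{T} \sum_{t=1}^T \phi_i(\vb^{(t)}).
\]
This is the conceptual workhorse: it converts a bound on a single point (the average) into a sum of bounds we can telescope.

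Next, I would bound each individual term $\phi_i(\vb^{(t)})$ using the overestimate guarantee in Line~\ref{line:alg_iter_lev}. By definition,
\[
\phi_i(\vb^{(t)}) = \ln\inparen{\frac{1}{\vb^{(t)}_i} \sum_{j \in S_i} \tau_j\inparen{(\mB^{(t)})^{1/2-1/p}\mA}}.
\]
Since $\widetilde{\tau}^{(t)}_j \ge \tau_j((\mB^{(t)})^{1/2-1/p}\mA)$ for all $j$ (the overestimate property), and since Line~\ref{line:alg_iter_next} sets $\vb^{(t+1)}_i = \sum_{j \in S_i}\widetilde{\tau}^{(t)}_j$, monotonicity of $\ln$ gives
\[
\phi_i(\vb^{(t)}) \le \ln\inparen{\frac{\vb^{(t+1)}_i}{\vb^{(t)}_i}}.
\]
Summing these bounds for $t = 1, \ldots, T$ produces a telescoping sum that collapses to $\ln(\vb^{(T+1)}_i/\vb^{(1)}_i)$. (The algorithm as written runs the update only for $t \le T-1$; I would either implicitly define $\vb^{(T+1)}$ through one additional leverage-score overestimate query in the analysis, or equivalently extend the loop by one iteration since this costs only one extra $\polylog$ computation and does not affect the stated complexity.)

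Finally, I would plug in the initial condition and the $\nu$-boundedness. Since $\vb^{(1)} = (n/m)\onev_m$ from Line~\ref{line:alg_init}, we have $\vb^{(1)}_i = n/m$. Since $\widetilde{\tau}^{(T)}$ is a $\nu$-bounded overestimate, we have
\[
\vb^{(T+1)}_i = \sum_{j \in S_i} \widetilde{\tau}^{(T)}_j \le \sum_{j=1}^k \widetilde{\tau}^{(T)}_j = \|\widetilde{\tau}^{(T)}\|_1 \le \nu.
\]
Combining these with the telescoped bound gives $\phi_i(\overline{\vb}) \le \frac{1}{T}\ln(m\nu/n)$, which is the claim. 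The main conceptual obstacle is really just setting up the telescoping correctly so that the overestimate $\widetilde{\tau}$ at step $t$ directly controls $\vb^{(t+1)}$ and thus the single-step decrease of $\phi_i$; the convexity lemma already does all the heavy lifting by letting us pass from the average $\overline{\vb}$ to the individual iterates.
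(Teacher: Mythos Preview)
Your proposal is correct and follows essentially the same argument as the paper: Jensen via convexity of $\phi_i$, the overestimate bound $\phi_i(\vb^{(t)}) \le \ln(\vb^{(t+1)}_i/\vb^{(t)}_i)$, telescoping, and then the initial condition together with $\|\widetilde{\tau}^{(T)}\|_1 \le \nu$. The paper handles the $\vb^{(T+1)}$ issue exactly as you suggest, defining it ``for the sake of analysis'' as if one more iteration were executed.
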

\begin{proof}[Proof of \Cref{lemma:alg_analysis1}]
    We have
    \begin{align*}
        \phi_i(\overline{\vb}) &\le \frac{1}{T} \sum_{t=1}^T \phi_i(\vb^{(t)}) &\text{Jensen's inequality} \\
        &= \frac{1}{T} \sum_{t=1}^T \ln \inparen{\frac{1}{\vb_i^{(t)}} \sum_{j \in S_i} \tau_j((\mB^{(t)})^{1/2 - 1/p} \mA)} &\text{Definition of \(\phi_i\)}\\
        &\le \frac{1}{T} \sum_{t=1}^T \ln \inparen{\frac{1}{\vb_i^{(t)}} \sum_{j \in S_i} \widetilde{\tau}_j^{(t)}} &\text{Definition of \(\tau_j\)}\\
        &= \frac{1}{T} \sum_{t=1}^T \ln \inparen{\frac{\vb_i^{(t+1)}}{\vb_i^{(t)}}} &\text{Line \ref{line:alg_iter_next}} \\
        &= \frac{1}{T} \ln\inparen{\frac{\vb_i^{(T+1)}}{\vb_i^{(1)}}} \\
        &= \frac{1}{T} \ln\inparen{m/n} + \frac{1}{T} \ln(\vb_i^{(T+1)}). &\text{Line \ref{line:alg_init}}
    \end{align*}
    In the above, for the sake of analysis we define \(\vb_i^{(T+1)}\) to be as if the algorithm executed \(T+1\) iterations. Now, \(\vb_i^{(T+1)} \le \|\widetilde{\tau}\|_1 \le \nu\) by the fact that \(\textsc{OverLev}\) returns \(\nu\)-bounded leverage score overestimates. We therefore conclude the proof of \Cref{lemma:alg_analysis1}.
\end{proof}

Now, we show that \(\text{polylog}(k,m,n)\) leverage score overestimate computations suffice to find the \((\nfrac{3}{2}\cdot n\))-bounded block Lewis weight overestimates. This proves \Cref{thm:blw}.

\begin{proof}[Proof of \Cref{thm:blw}]
By \Cref{thm:lev_ove}, \textsc{OverLev} returns \(O(n)\)-bounded leverage score overestimates.
We take \(T = O\inparen{\ln\inparen{m}}\); clearly this satisfies the desired runtime guarantee. Further, using \Cref{lemma:alg_analysis1} and taking the constant in \(T\) large enough, we have for each \(i\) that \(\phi_i(\overline{\vb}) \le \ln\inparen{\frac{3}{2}}\).
Thus we have
\begin{equation}\label{eq:alg_b_guar}
    \frac{3}{2} \cdot \overline{\vb_i} \ge \sum_{j \in S_i} \tau_j(\overline{\mB}^{1/2 - 1/p} \mA).
\end{equation}
And so
\begin{align*}
    \vb_i &\ge \frac{3}{2} \overline{\vb_i} \\
    &\ge \sum_{j \in S_i} \tau_j(\overline{\mB}^{1/2 - 1/p} \mA) &\text{by \eqref{eq:alg_b_guar}} \\
    &= \sum_{j \in S_i} \tau_j(\mB^{1/2 - 1/p} \mA). &\text{\Cref{fact:lev_score_facts}}
\end{align*}
We now manipulate this guarantee into the desired guarantee of block Lewis weight overestimates
by some algebra.
Splitting powers on the left hand side of the above, we get
\[
\vb_i^{2/p} \ge \frac{1}{\vb_i^{1-2/p}} \sum_{j \in S_i} \tau_j(\mB^{1/2 - 1/p} \mA).
\]
Taking this to the \(p/2\)th power, we obtain
\begin{align*}
    \vb_i \ge \inparen{\frac{\sum_{j \in S_i} \tau_j(\mB^{1/2-1/p} \mA)}{\vb_i^{1-2/p}}}^{p/2} = \beta_i(\mB^{1-2/p})^p,
\end{align*}
as desired. To bound \(\norm{\vb}_1\), notice
\begin{align*}
    \norm{\vb}_1 = \frac{3}{2} \|\overline{\vb}\|_1 \le \frac{3}{2} \frac{1}{T} \sum_{t=1}^T \|\vb^{(t)}\|_1 \le \frac{3}{2} \nu \le O(n).
\end{align*}
Finally, to obtain the concentration statement, we observe that the above implies that we get a measure $\vlambda$ and a rounding $\mW$ that are $\Fstar$-block Lewis estimates for $\Fstar=O(n)$. This concludes the proof of \Cref{thm:blw}.
\end{proof}

\subsubsection{Special case -- \texorpdfstring{$p = 2, p_1,\dots,p_m\ge2$}{p=2,p1,...,pm>=2}}

Finally, we are ready to introduce and analyze the algorithm for the case where $p=2$ and $p_1,\dots,p_m \ge 2$. See \Cref{alg:blw_base}. The main property of \Cref{alg:blw_base} is given in \Cref{thm:blw_base}.

\begin{algorithm}[ht]
\caption{Algorithm to compute block Lewis weight overestimates, $p=2$ and $p_1,\dots,p_m \ge 2$}
\begin{algorithmic}[1]\label{alg:blw_base}
\State \textbf{Input:} \(\mA \in \R^{k \times n}\), group structure \((S_1,\dots,S_m,p_1,\dots,p_m)\).
\State Initialize \(\vb^{(0)} = \frac{n}{m} \cdot \onev\) and \(\vu^{(0)}\) such that for all $1 \le i \le m$, \(\vu^{(0)}_j = 1/\abs{S_i}\) for all \(j \in S_i\)
\For{\(t = 1, \ldots, T - 1\)}
    \State \(\widetilde{\tau}^{(t)} = \textsc{OverLev}(\mV(\vu^{(t-1)})^{1/2}\mA)\) \Comment{$\mV(\vu)$ is such that $\vv_j = \vu_j^{1-2/p_i}$ for all $1 \le i \le m$ and $j \in S_i$}
    \State \(\vb_{i}^{(t)} = \sum_{j \in S_i} \widetilde{\tau}_j^{(t)}\)
    \State \(\vu^{(t)}_j = \widetilde{\tau}_j^{(t)}/(\sum_{j' \in S_i} \widetilde{\tau}_{j'}^{(t)})\) for all $1 \le i \le m$ and all $j \in S_i$
\EndFor
\State \(\overline{\vb} = \abs{S_i}^{1/T}\cdot\frac{1}{T}\sum_{t=1}^T \vb^{(t)}\)
\State \(\overline{\vu} = \frac{1}{T}\sum_{t=0}^{T-1} \vu^{(t)}\)
\State \Return \((\overline{\vb},\overline{\vu})\)
\end{algorithmic}
\end{algorithm}

\begin{lemma}
\label{thm:blw_base}
If $\textsc{OverLev}$ is a routine that returns leverage score overestimates whose sum is at most $\nu$, then \Cref{alg:blw_base} returns $\Fstar$-block Lewis overestimates (\Cref{defn:block_lewis_overestimate}) with $\Fstar = \max_{1 \le i \le m} \abs{S_i}^{1/T} \cdot \nu$.

In particular, if $T = \max_{1 \le i \le m} \log\abs{S_i}$ and $\nu \le (4/e)n$, then we get $\Fstar = 4n$.
\end{lemma}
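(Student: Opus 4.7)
The proof must establish two properties of the output $(\overline{\vb}, \overline{\vu})$: (i) $\mV(\overline{\vu})$ is a valid rounding matrix in the sense of \Cref{defn:rounding_matrix}, and (ii) the pair $\inparen{\overline{\vb}/\norm{\overline{\vb}}_1,\,\mV(\overline{\vu})}$ is an $\Fstar$-block Lewis overestimate (\Cref{defn:block_lewis_overestimate}) with $\Fstar = \max_i \abs{S_i}^{1/T} \cdot \nu$. For (i), the normalization $\vu^{(t)}_j = \widetilde{\tau}^{(t)}_j/\vb^{(t)}_i$ forces $\sum_{j \in S_i}\vu^{(t)}_j = 1$ at every iteration and every block, hence $\sum_{j \in S_i}\overline{\vu}_j = 1$. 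Applying H\"older with conjugate exponents $p_i/(p_i-2)$ and $p_i/2$ to $\sum_{j \in S_i}\overline{\vu}_j^{1-2/p_i}\abs{\ip{\va_j,\vx}}^2$ gives the pointwise bound $\norm{\mV(\overline{\vu})^{1/2}\mA_{S_i}\vx}_2^2 \le \norm{\mA_{S_i}\vx}_{p_i}^2$; summing over $i$ yields the rounding property.

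For (ii), the identity $\tau_j(\mV(\vu)^{1/2}\mA)/\vu_j^{1-2/p_i} = \va_j^{\top}(\mA^{\top}\mV(\vu)\mA)^{-1}\va_j \eqqcolon c_j(\vu)$ reduces the block Lewis overestimate condition to $B_i(\overline{\vu}) \le \Fstar\,\vlambda_i$, where $B_i(\vu) \coloneqq \inparen{\sum_{j \in S_i} c_j(\vu)^{p_i/2}}^{2/p_i}$. Because $\norm{\overline{\vb}}_1 \le \max_i \abs{S_i}^{1/T}\cdot \nu$, it suffices to prove the cleaner inequality $B_i(\overline{\vu}) \le \overline{\vb}_i$. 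I would then carry out the following four-step analysis. First, show that $\ln B_i$ is convex in $\ln \vu$ by the same log-sum-exp argument as in \Cref{lemma:alg_phi_convex}, using that $\ln \vv_j = (1-2/p_i)\ln \vu_j$ is linear in $\ln \vu_j$ and that $\ln\va_j^\top(\mA^\top\mV\mA)^{-1}\va_j$ is convex in $\ln \vv$. Second, observe that $B_i$ is coordinatewise decreasing in $\vu$, so combining $\text{GM}(\vu^{(t)}) \le \overline{\vu}$ (coordinatewise AM-GM) with Jensen gives $\ln B_i(\overline{\vu}) \le \ln B_i(\text{GM}(\vu^{(t)})) \le \tfrac{1}{T}\sum_{t=0}^{T-1}\ln B_i(\vu^{(t)})$. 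Third, use $\widetilde{\tau}^{(t+1)}_j = \vu^{(t+1)}_j \vb^{(t+1)}_i \ge \vu^{(t),1-2/p_i}_j c_j(\vu^{(t)})$ to derive $B_i(\vu^{(t)}) \le \vb^{(t+1)}_i \cdot \chi_t^{2/p_i}$, where $\chi_t \coloneqq \sum_{j \in S_i}(\vu^{(t+1)}_j)^{p_i/2}(\vu^{(t)}_j)^{1-p_i/2}$. Fourth, establish the telescoping bound $\sum_{t=0}^{T-1}\ln \chi_t \le (p_i/2)\ln\abs{S_i}$; then AM-GM on the $\vb^{(t+1)}_i$'s yields $\ln B_i(\overline{\vu}) \le \ln\inparen{\tfrac{1}{T}\sum_t \vb^{(t+1)}_i} + (\ln\abs{S_i})/T = \ln \overline{\vb}_i$. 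The ``in particular'' clause is immediate once step (ii) is proved, since $T = \max_i \log\abs{S_i}$ gives $\max_i \abs{S_i}^{1/T} \le 2$, so $\Fstar \le 2\nu \le (8/e)n < 4n$.

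The main obstacle is the fourth step. Each $\ln \chi_t$ equals $(p_i/2-1)\,D_{p_i/2}(\vu^{(t+1)}\,\|\,\vu^{(t)})$, a R\'enyi divergence that can be large at any single step when $\vu^{(t)}$ concentrates on a few coordinates. However, the specific update $\vu^{(t+1)}_j \propto \vu^{(t),1-2/p_i}_j c_j(\vu^{(t)})$ is structured, and in the idealized regime $\widetilde{\tau} = \tau$ the H\"older step in (iii) is tight, giving $\chi_t^{2/p_i} = B_i(\vu^{(t)})/\vb^{(t+1)}_i$. My plan to prove the telescoping is to identify a potential such as $\Psi_t \coloneqq -\sum_{j \in S_i}\ln\vu^{(t)}_j$, which is nonnegative, bounded above by $\abs{S_i}\ln\abs{S_i}$ along iterates of the algorithm (starting from $\Psi_0 = \abs{S_i}\ln\abs{S_i}$), and whose one-step decrement dominates $\ln \chi_t$ up to lower-order slack absorbed by the inequality $\widetilde{\tau} \ge \tau$. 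Standard convexity of $\ln \chi$ together with the monotonicity properties of the update then converts this pointwise per-step bound into the desired product bound $\prod_t \chi_t \le \abs{S_i}^{p_i/2}$.
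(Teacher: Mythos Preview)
Your rounding step (i) matches the paper. For (ii), however, you have taken a harder road than necessary, and Step~4 is a genuine gap.

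The paper does \emph{not} aggregate into the block quantity $B_i(\vu)$ before telescoping. Instead it works coordinate-by-coordinate with the potential
\[
\phi_j(\vu) \coloneqq \ln\inparen{\frac{\tau_j(\mV(\vu)^{1/2}\mA)}{\vu_j}},
\]
which is convex directly in $\vu$ (not in $\ln\vu$). Jensen then gives $\phi_j(\overline{\vu}) \le \tfrac{1}{T}\sum_{t=0}^{T-1}\phi_j(\vu^{(t)})$. The update rule yields $\widetilde{\tau}_j^{(t+1)}/\vu_j^{(t)} = (\vu_j^{(t+1)}/\vu_j^{(t)}) \cdot \vb_i^{(t+1)}$, so
\[
\frac{1}{T}\sum_{t=0}^{T-1}\phi_j(\vu^{(t)}) \le \frac{1}{T}\ln\frac{\vu_j^{(T)}}{\vu_j^{(0)}} + \frac{1}{T}\sum_{t=0}^{T-1}\ln\vb_i^{(t+1)} \le \frac{\ln\abs{S_i}}{T} + \ln\inparen{\abs{S_i}^{-1/T}\overline{\vb}_i} = \ln\overline{\vb}_i.
\]
Hence $\tau_j(\mV(\overline{\vu})^{1/2}\mA)/\overline{\vu}_j \le \overline{\vb}_i$ for every $j\in S_i$, which is exactly $c_j(\overline{\vu}) \le \overline{\vb}_i\,\overline{\vu}_j^{2/p_i}$; raising to the $p_i/2$ power and summing over $j$ using $\sum_j\overline{\vu}_j=1$ gives $B_i(\overline{\vu})\le\overline{\vb}_i$ in one line. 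The telescoping you sought in Step~4 is thus completely absorbed by the single-coordinate ratio $\vu_j^{(T)}/\vu_j^{(0)} \le \abs{S_i}$.

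Your Step~4 plan, as written, does not go through: the potential $\Psi_t=-\sum_{j\in S_i}\ln\vu_j^{(t)}$ is \emph{not} bounded above by $\abs{S_i}\ln\abs{S_i}$ along the iterates, since nothing prevents some $\vu_j^{(t)}$ from becoming arbitrarily small (unlike in \Cref{alg:blw_lewismap}, there is no floor here). The claim that its one-step decrement dominates $\ln\chi_t$ is likewise unsupported; in fact $\ln\chi_t=(p_i/2-1)D_{p_i/2}(\vu^{(t+1)}\Vert\vu^{(t)})$ can be large precisely when $\Psi_t$ \emph{increases}. The fix is not to repair this argument but to drop the block-level quantity $B_i$ and work with $\phi_j$ instead, aggregating only at the end.
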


As with the analysis of \Cref{alg:blw}, we first prove that a relevant potential is convex and then show that we can control it effectively.

\begin{lemma}
\label{lemma:blw_convex_potential_general}
Let $\mU \in \R^{k \times k}$ be a nonnegative diagonal matrix. Let $\mV(\vu)$ denote the matrix such that for all $i$ and $j \in S_i$, we have $\vv_j = \vu_j^{1-2/p_i}$. Then, $\phi$ as defined below is log-convex in $\vu$.
\begin{align*}
    \text{for all } j \in S_1 \cup \dots \cup S_m:\quad\quad\phi_j(\vu) = \lnv{\frac{\tau_j\inparen{(\mV(\vu))^{1/2}\mA}}{\vu_j}}
\end{align*}
\end{lemma}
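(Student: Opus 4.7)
The plan is to expand $\phi_j(\vu)$ via the standard leverage-score identity and verify convexity (which is equivalent to log-convexity of $\tau_j((\mV(\vu))^{1/2}\mA)/\vu_j$) of each summand separately. Writing $i = i(j)$ for the block containing $j$ and using the identity $\tau_j((\mV(\vu))^{1/2}\mA) = \vv_j \cdot \va_j^{\top}(\mA^{\top}\mV(\vu)\mA)^{-1}\va_j$ with $\vv_j = \vu_j^{1-2/p_i}$, I get
\[
\phi_j(\vu) = -\frac{2}{p_i}\lnv{\vu_j} + \lnv{\va_j^{\top}(\mA^{\top}\mV(\vu)\mA)^{-1}\va_j}.
\]
The first summand is convex in $\vu_j$ since $-\ln$ is convex and $2/p_i > 0$, so the work lies entirely in the second summand.

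For the second summand the plan is to factor it through the auxiliary variable $\vb_{j'}(\vu) \coloneqq \vu_{j'}^{1-2/p_{i(j')}}$, so that $\lnv{\va_j^{\top}(\mA^{\top}\mV(\vu)\mA)^{-1}\va_j} = \ln h(\vb(\vu))$ for the function $h(\vb) \coloneqq \va_j^{\top}(\mA^{\top}\diag{\vb}\mA)^{-1}\va_j$ on $\R^k_{>0}$, regarded as having $k$ independent entries. I will invoke three ingredients:
\begin{enumerate}[label=(\roman*)]
    \item \emph{Log-convexity of $h$ in $\vb$.} The function $\vb \mapsto \ln h(\vb)$ is convex on $\R^k_{>0}$. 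This is \cite[Lemma A.2]{jls21} specialized to $p=\infty$ (so that the exponent $1-2/p$ appearing in their statement equals $1$), and is the same ingredient used inside the proof of \Cref{lemma:alg_phi_convex}.
    \item \emph{Monotonicity of $h$.} $h$ is coordinate-wise non-increasing in $\vb$: raising $\vb_{j'}$ increases $\mA^{\top}\diag{\vb}\mA$ in the PSD order, which decreases $(\mA^{\top}\diag{\vb}\mA)^{-1}$ in the PSD order, so decreases the quadratic form $\va_j^{\top}(\mA^{\top}\diag{\vb}\mA)^{-1}\va_j$.
    \item \emph{Concavity of $\vb$ in $\vu$.} Each coordinate $\vb_{j'}(\vu) = \vu_{j'}^{1-2/p_{i(j')}}$ depends only on $\vu_{j'}$ and is concave there, since $p_{i(j')} \ge 2$ forces the exponent to lie in $[0,1]$.
\end{enumerate}

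Given (i)--(iii), the finishing step is the standard ``monotone, log-convex composed with coordinate-wise concave'' chain. For any $\vu^{(1)}, \vu^{(2)} \in \R^k_{>0}$ and $\lambda \in [0,1]$, set $\vu \coloneqq \lambda\vu^{(1)} + (1-\lambda)\vu^{(2)}$. By (iii), $\vb(\vu) \succeq \lambda\,\vb(\vu^{(1)}) + (1-\lambda)\,\vb(\vu^{(2)})$ coordinate-wise, so by (ii) and monotonicity of $\ln$,
\[
\ln h(\vb(\vu)) \;\le\; \ln h\!\inparen{\lambda\,\vb(\vu^{(1)}) + (1-\lambda)\,\vb(\vu^{(2)})},
\]
and by (i) the right-hand side is at most $\lambda \ln h(\vb(\vu^{(1)})) + (1-\lambda)\ln h(\vb(\vu^{(2)}))$. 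Combining shows $\vu \mapsto \ln h(\vb(\vu))$ is convex, and summing with the convex first summand gives convexity of $\phi_j$ in $\vu$, i.e., log-convexity of $\tau_j((\mV(\vu))^{1/2}\mA)/\vu_j$.

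The main obstacle is conceptual rather than computational: because the exponents $1-2/p_{i(j')}$ vary across blocks, no single-$p$ version of the Lewis-weight convexity result (such as \Cref{lemma:alg_phi_convex} or the directly-stated $\vb \mapsto \ln(\va_j^{\top}(\mA^{\top}\mB^{1-2/p}\mA)^{-1}\va_j)$ from \cite{jls21}) applies to $\phi_j$ as a function of $\vu$ directly. The remedy is to invoke the $p=\infty$ version of \cite[Lemma A.2]{jls21} at the level of the auxiliary variable $\vb$, which treats each $\vb_{j'}$ as an independent variable, and to transport the statement back to $\vu$ through the monotone/concave composition above. Every other step (the leverage-score decomposition, monotonicity of $h$, concavity of each $\vb_{j'}$, and the convexity of $-\ln \vu_j$) is routine.
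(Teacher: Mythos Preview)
Your proof is correct and follows essentially the same approach as the paper: both decompose $\phi_j$ into $-\tfrac{2}{p_i}\ln\vu_j$ plus $\ln\bigl(\va_j^{\top}(\mA^{\top}\mV(\vu)\mA)^{-1}\va_j\bigr)$, use concavity of $t\mapsto t^{1-2/p_i}$ (valid since $p_i\ge 2$ in this subsection) together with monotonicity of the matrix inverse, and then appeal to a known log-convexity fact. The only cosmetic difference is that the paper invokes convexity of $\mM\mapsto\ln(\va_j^{\top}\mM^{-1}\va_j)$ over PSD matrices (citing \cite{ccly19}) whereas you invoke the equivalent diagonal-variable version from \cite[Lemma A.2]{jls21} at $p=\infty$; since $\vb\mapsto\mA^{\top}\diag{\vb}\mA$ is linear, these are interchangeable.
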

\begin{proof}[Proof of \Cref{lemma:blw_convex_potential_general}]
We expand the above definition after taking the $\ln$ of both sides.
\begin{align*}
    \phi_j(\vu) &= \lnv{\frac{\tau_j\inparen{(\mV(\vu))^{1/2}\mA}}{\vu_j}} = \lnv{\va_j^{\top}\inparen{\mA^{\top}\mV(\vu)\mA}^{-1}\va_j} + \lnv{\frac{\vv_j}{\vu_j}} \\
    &= \lnv{\va_j^{\top}\inparen{\mA^{\top}\mV(\vu)\mA}^{-1}\va_j} - \frac{2}{p_i}\lnv{\vu_j}.
\end{align*}
The last term is convex, so it is sufficient to argue that the first term is convex. To see this, first observe that by concavity of $x^{1-2/p_i}$, we have
\begin{align*}
    \lambda\mV(\vu^{(1)}) + (1-\lambda)\mV(\vu^{(2)}) \preceq \mV(\lambda\vu^{(1)} + (1-\lambda)\vu^{(2)}),
\end{align*}
which implies
\begin{align*}
    \lnv{\va_j^{\top}\inparen{\mA^{\top}\mV(\lambda\vu^{(1)} + (1-\lambda)\vu^{(2)})\mA}^{-1}\va_j} &\le \lnv{\va_j^{\top}\inparen{\lambda\mA^{\top}\mV(\vu^{(1)})\mA + (1-\lambda)\mA^{\top}\mV(\vu^{(2)})\mA}^{-1}\va_j} \\
    &\le \lambda\lnv{\va_j^{\top}\inparen{\mA^{\top}\mV(\vu^{(1)})\mA}^{-1}\va_j} \\
    &\quad + (1-\lambda)\lnv{\va_j^{\top}\inparen{\mA^{\top}\mV(\vu^{(2)})\mA}^{-1}\va_j}.
\end{align*}
Above, the last line follows from the well-known (see, e.g., \cite[Lemma 3.4]{ccly19}) fact that $\lnv{\va_j^{\top}\mM^{-1}\va_j}$ is convex in $\mM$ where $\mM$ is symmetric positive-semidefinite. This completes the proof of \Cref{lemma:blw_convex_potential_general}.
\end{proof}

Next, we will show that any choice of nonnegative $\vu$ such that $\sum_{j \in S_i} \vu_j = \vb_i^{\inparen{1-\frac{2}{p}}\cdot\inparen{\frac{p_i}{p_i-2}}}$ can be used to find a rounding matrix $\mW$.

\begin{lemma}
\label{lemma:blw_rounding_matrix}
For all $\vx\in\R^n$ and all nonnegative $\vu$ such that $\sum_{j \in S_i} \vu_j = \vb_i^{\inparen{1-\frac{2}{p}}\cdot\inparen{\frac{p_i}{p_i-2}}}$, we have
\begin{align*}
    \frac{\norm{\mV(\vu)^{1/2}\mA\vx}_2}{\inparen{\sum_{i' \le m} \vb_{i'}}^{1/2-1/p}} \le \gnorm{\mLambda^{1/2-1/p}\mA\vx}{2}.
\end{align*}
\end{lemma}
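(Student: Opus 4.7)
The plan is to reduce the inequality to a per-block Hölder inequality together with a clean identification of the probability measure hiding in $\mLambda$. First I would expand the left-hand side as
\begin{align*}
\norm{\mV(\vu)^{1/2}\mA\vx}_2^2 = \sum_{i=1}^m \sum_{j \in S_i} \vu_j^{1-2/p_i}\abs{\ip{\va_j,\vx}}^2,
\end{align*}
and then for each block $i$ with $p_i > 2$ apply Hölder's inequality to the inner sum with the pair of conjugate exponents $p_i/(p_i-2)$ and $p_i/2$, yielding
\begin{align*}
\sum_{j \in S_i} \vu_j^{1-2/p_i}\abs{\ip{\va_j,\vx}}^2 \le \inparen{\sum_{j \in S_i} \vu_j}^{(p_i-2)/p_i} \norm{\mA_{S_i}\vx}_{p_i}^2.
\end{align*}
The edge case $p_i = 2$ is immediate since then $\vu_j^{1-2/p_i} = 1$ for every $j \in S_i$ and both sides of the block contribution collapse to $\norm{\mA_{S_i}\vx}_2^2$.

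Next I would invoke the hypothesis $\sum_{j \in S_i} \vu_j = \vb_i^{(1-2/p)\,p_i/(p_i-2)}$, which is precisely engineered so that the outer factor simplifies:
\begin{align*}
\inparen{\sum_{j \in S_i} \vu_j}^{(p_i-2)/p_i} = \vb_i^{1-2/p}.
\end{align*}
Summing the resulting per-block inequality over $i \in [m]$ produces
\begin{align*}
\norm{\mV(\vu)^{1/2}\mA\vx}_2^2 \le \sum_{i=1}^m \vb_i^{1-2/p} \norm{\mA_{S_i}\vx}_{p_i}^2.
\end{align*}

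Finally, the probability measure implicit in $\mLambda$ is $\vlambda_i = \vb_i / \sum_{i'} \vb_{i'}$ (so that $\mLambda$ is block-constant and $\vlambda$ sums to one, as required by \Cref{defn:block_constant}). Substituting $\vb_i = \vlambda_i \cdot (\sum_{i'} \vb_{i'})$ and pulling the common factor out gives
\begin{align*}
\sum_{i=1}^m \vb_i^{1-2/p} \norm{\mA_{S_i}\vx}_{p_i}^2 = \inparen{\sum_{i'} \vb_{i'}}^{1-2/p} \sum_{i=1}^m \vlambda_i^{1-2/p} \norm{\mA_{S_i}\vx}_{p_i}^2 = \inparen{\sum_{i'} \vb_{i'}}^{1-2/p} \gnorm{\mLambda^{1/2-1/p}\mA\vx}{2}^2.
\end{align*}
Taking square roots and dividing by $\inparen{\sum_{i'} \vb_{i'}}^{1/2-1/p}$ yields the claimed inequality.

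I do not anticipate any substantive obstacle: Hölder's inequality does all the analytic work, and the awkward-looking exponent $(1-2/p)\,p_i/(p_i-2)$ in the hypothesis is chosen exactly so that raising to the power $(p_i-2)/p_i$ cleanly produces $\vb_i^{1-2/p}$. The only care needed is in tracking these exponents and confirming the identification of $\vlambda$ from the surrounding algorithmic context.
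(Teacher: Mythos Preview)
Your proposal is correct and follows essentially the same approach as the paper: expand the squared norm, apply H\"older's inequality blockwise with exponents $p_i/(p_i-2)$ and $p_i/2$, use the hypothesis on $\sum_{j\in S_i}\vu_j$ to produce the factor $\vb_i^{1-2/p}$, and then normalize by $\vlambda_i = \vb_i/\sum_{i'}\vb_{i'}$. The paper's proof is terser (it compresses your steps 2--4 into a single line and says ``Normalizing concludes the proof''), but the content is identical; your explicit handling of the $p_i=2$ edge case and the identification of $\vlambda$ are useful elaborations rather than a different route.
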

\begin{proof}[Proof of \Cref{lemma:blw_rounding_matrix}]
We start with the LHS.
\begin{align*}
    \norm{\mV(\vu)^{1/2}\mA\vx}_2^2 &= \sum_{i=1}^m \sum_{j \in S_i} \vu_j^{1-2/p_i}\abs{\ip{\va_j,\vx}}^2 \\
    &\le \sum_{i=1}^m \vb_i^{1-2/p}\norm{\mA_{S_i}\vx}_{p_i}^2 & \text{ by H\"older's Inequality with powers } \frac{p_i}{p_i-2}, \frac{p_i}{2}
\end{align*}
Normalizing concludes the proof of \Cref{lemma:blw_rounding_matrix}.
\end{proof}

Now, we show that finding uniform overestimates $\tau_j/\vu_j \le \vb_i^{\inparen{\frac{2}{p}-\frac{2}{p_i}}\cdot\frac{p_i}{p_i-2}}$ is enough to satisfy \Cref{defn:block_lewis_overestimate} with $\Fstar = \sum_{i \le m} \vb_i$.

\begin{lemma}
\label{lemma:blw_overestimates_conversion}
If we have $\vu$ such that $\tau_j(\mV(\vu)^{1/2}\mA)/\vu_j \le \vb_i^{\inparen{\frac{2}{p}-\frac{2}{p_i}}\cdot\frac{p_i}{p_i-2}}$ for all $i$, then the rounding matrix $\mW = \mV(\vu)\mB^{2/p-1}$ and measure $\vlambda_i = \vb_i/(\sum_{i'\le m} \vb_{i'})$ is an $\Fstar$-block Lewis overestimate (\Cref{defn:block_lewis_overestimate}) with $\Fstar = \sum_{i'\le m} \vb_{i'}$.
\end{lemma}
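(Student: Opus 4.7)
The plan is to verify the defining inequality of \Cref{defn:block_lewis_overestimate} by direct algebraic substitution, since $p=2$ causes several factors to collapse nicely. First I would substitute $p=2$ throughout the statement: this gives $\mLambda^{1/2-1/p}=\mI_k$ and $\mB^{2/p-1}=\mI_k$, so $\mW$ simplifies to $\mV(\vu)$ with diagonal entries $\vw_j=\vu_j^{1-2/p_i}$ for $j\in S_i$. The exponent appearing in the hypothesis simplifies as
\[
  \inparen{\tfrac{2}{p}-\tfrac{2}{p_i}}\cdot\tfrac{p_i}{p_i-2}\ =\ \tfrac{p_i-2}{p_i}\cdot\tfrac{p_i}{p_i-2}\ =\ 1,
\]
turning the hypothesis into the clean pointwise bound $\tau_j(\mV(\vu)^{1/2}\mA)\le \vu_j\vb_i$ for every $j\in S_i$.

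Next I would check the two ``structural'' requirements to be an $\Fstar$-block Lewis overestimate. That $\vlambda_i=\vb_i/(\sum_{i'}\vb_{i'})$ is a probability measure is immediate. That $\mW$ is a rounding matrix (in the sense of \Cref{defn:rounding_matrix}) follows from \Cref{lemma:blw_rounding_matrix}, whose hypothesis at $p=2$ reduces to $\sum_{j\in S_i}\vu_j=\vb_i^{(1-2/p)p_i/(p_i-2)}=1$. This normalization is exactly what is enforced by \Cref{alg:blw_base} in constructing each $\vu^{(t)}$ (and hence the average $\overline{\vu}$), and should be noted explicitly as the ambient assumption on $\vu$.

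Then I would execute the main calculation. Using the pointwise bound and $\vw_j=\vu_j^{1-2/p_i}$,
\[
  \sum_{j\in S_i}\inparen{\frac{\tau_j(\mV(\vu)^{1/2}\mA)}{\vw_j}}^{p_i/2}
  \ \le\ \sum_{j\in S_i}\inparen{\vu_j^{2/p_i}\vb_i}^{p_i/2}
  \ =\ \vb_i^{p_i/2}\sum_{j\in S_i}\vu_j
  \ =\ \vb_i^{p_i/2}.
\]
Raising to the $2/p_i$ power yields $\vb_i$, and dividing by $\vlambda_i=\vb_i/(\sum_{i'}\vb_{i'})$ gives $\sum_{i'}\vb_{i'}$, which is the desired $\Fstar$.

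The argument is essentially a bookkeeping exercise, so the main ``obstacle'' is just exponent tracking: the cancellation $(2/p-2/p_i)p_i/(p_i-2)\to 1$ and the fact that $\vw_j^{-1}\cdot\vu_j=\vu_j^{2/p_i}$ are the two places where the $p=2$ assumption is really used. Everything else is a direct application of the hypothesis and of \Cref{lemma:blw_rounding_matrix}; in particular, no concentration or chaining is needed here, since this lemma is purely a deterministic conversion of a pointwise leverage-score inequality into the block Lewis overestimate form.
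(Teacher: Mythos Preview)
Your proof is correct and follows essentially the same route as the paper's: both verify the defining inequality of \Cref{defn:block_lewis_overestimate} by direct algebraic substitution, using the normalization $\sum_{j\in S_i}\vu_j=\vb_i^{(1-2/p)\cdot p_i/(p_i-2)}$ (which is $1$ at $p=2$). The only cosmetic difference is that the paper carries the general-$p$ notation through the exponent arithmetic before the final collapse to $\vb_i$, whereas you set $p=2$ at the outset and thereby avoid most of that bookkeeping; your explicit invocation of \Cref{lemma:blw_rounding_matrix} to confirm the rounding-matrix property is also a nice addition that the paper leaves implicit.
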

\begin{proof}[Proof of \Cref{lemma:blw_overestimates_conversion}]
Observe that we must have
\begin{align*}
    \frac{\tau_j\inparen{\mV^{1/2}\mA}}{\vv_j} \le \vv_j^{\frac{2}{p_i-2}} \cdot \vb_i^{\inparen{\frac{2}{p}-\frac{2}{p_i}}\cdot\frac{p_i}{p_i-2}}.
\end{align*}
Let $T = \sum_{i' \le m} \vb_{i'}$. Following \Cref{lemma:block_lewis}, let $\vlambda_i = \vb_i/T$ and $\mW$ be such that
\begin{align*}
    \frac{\mV^{1/2}}{T^{1/2-1/p}} = \mW^{1/2}\mLambda^{1/2-1/p}.
\end{align*}
In particular, this means that
\begin{align*}
    \vw_j = \frac{\vv_j}{(T\vlambda_i)^{1-2/p}} = \frac{\vv_j}{\vb_i^{1-2/p}}.
\end{align*}
Hence,
\begin{align*}
    \inparen{\sum_{j \in S_i} \inparen{\frac{\tau_j\inparen{\mV^{1/2}\mA}}{\vw_j}}^{p_i/2}}^{2/p_i} &= \vb_i^{1-2/p}\inparen{\sum_{j \in S_i} \inparen{\frac{\tau_j\inparen{\mV^{1/2}\mA}}{\vv_j}}^{p_i/2}}^{2/p_i} \\
    &\le \vb_i^{1-2/p} \cdot \vb_i^{\inparen{\frac{2}{p}-\frac{2}{p_i}}\cdot\frac{p_i}{p_i-2}} \cdot \inparen{\sum_{j \in S_i} \vv_{j}^{\frac{2}{p_i-2} \cdot \frac{p_i}{2}}}^{2/p_i} \\
    &= \vb_i^{1-2/p}\cdot\vb_i^{\inparen{\frac{2}{p}-\frac{2}{p_i}}\cdot\frac{p_i}{p_i-2}} \cdot \inparen{\sum_{j \in S_i} \vu_j}^{2/p_i} \\
    &= \vb_i^{1-2/p}\cdot\vb_i^{\inparen{\frac{2}{p}-\frac{2}{p_i}}\cdot\frac{p_i}{p_i-2}} \cdot \inparen{\sum_{j \in S_i} \vu_j}^{2/p_i} \\
    &= \vb_i^{1-2/p}\cdot\vb_i^{\inparen{\frac{2}{p}-\frac{2}{p_i}}\cdot\frac{p_i}{p_i-2}} \cdot \inparen{\vb_i^{\inparen{1-\frac{2}{p}}\cdot\inparen{\frac{p_i}{p_i-2}}}}^{2/p_i} = \vb_i = \vlambda_i\inparen{\sum_{i'\le m} \vb_{i'}},
\end{align*}
which is exactly the statement of \Cref{lemma:blw_overestimates_conversion}.
\end{proof}

We now have the tools we need to prove \Cref{thm:blw_base}.

\begin{proof}[Proof of \Cref{thm:blw_base}]
Note that \Cref{alg:blw_base} and \Cref{thm:blw_base} are reminiscent of \cite[Algorithm 2 and Theorem 4]{jls22}.

Using \Cref{lemma:blw_convex_potential_general}, we begin with using the convexity of the potential $\phi$. For all $j \in S_i$, we have
\begin{align*}
    \phi_j(\overline{\vu}) &\le \frac{1}{T}\sum_{t=0}^{T-1} \phi\inparen{\vu^{(t)}} = \frac{1}{T}\sum_{t=0}^{T-1} \lnv{\frac{\tau_j\inparen{\mV(\vu^{(t)})^{1/2}\mA}}{\vu_j^{(t)}}} \le \frac{1}{T}\sum_{t=0}^{T-1}\lnv{\frac{\widetilde{\tau}_j^{(t+1)}}{\vu_j^{(t)}}} \\
    &= \frac{1}{T}\sum_{t=0}^{T-1}\inparen{\lnv{\frac{\vu_j^{(t+1)}}{\vu_j^{(t)}}} + \lnv{\sum_{j' \in S_i} \widetilde{\tau}_{j'}^{(t+1)}}} = \frac{1}{T}\lnv{\frac{\vu_j^{(T)}}{\vu_j^{(0)}}} + \frac{1}{T}\sum_{t=0}^{T-1}\lnv{\sum_{j' \in S_i} \widetilde{\tau}_{j'}^{(t+1)}} \\
    &\le \frac{1}{T}\lnv{\frac{\vu_j^{(T)}}{\vu_j^{(0)}}} + \lnv{\frac{1}{T}\sum_{t=0}^{T-1}\sum_{j' \in S_i} \widetilde{\tau}_{j'}^{(t+1)}} = \frac{1}{T}\lnv{\frac{\vu_j^{(T)}}{\vu_j^{(0)}}} + \lnv{\abs{S_i}^{-1/T}\overline{\vb}_{i}} \le \ln\overline{\vb}_i.
\end{align*}
We now apply \Cref{lemma:blw_overestimates_conversion} and see that the measure $\vlambda_i = \overline{\vb}_i/\norm{\overline{\vb}}_1$ is a $\Fstar$-block Lewis overestimate (\Cref{defn:block_lewis_overestimate}) with $\Fstar = \norm{\overline{\vb}}_1$.

Now, observe that
\begin{align*}
    \norm{\overline{\vb}}_1 \le \max_{1 \le i \le m} \abs{S_i}^{1/T} \cdot \frac{1}{T}\sum_{t=1}^T\norm{\vb^{(t)}}_1 \le \max_{1 \le i \le m} \abs{S_i}^{1/T} \cdot \nu \le 4n,
\end{align*}
where we use our setting of $T$ and the fact that $\textsc{OverLev}$ returns leverage score estimates whose sum is at most $(4/e)n$. This completes the proof of \Cref{thm:blw_base}.
\end{proof}

We are finally ready to give the proof of \Cref{thm:computeblw}.

\begin{proof}[Proof of \Cref{thm:computeblw}]
We have three cases.
\begin{itemize}
    \item If $0 < p < 4$ and $p_1=\dots=p_m=2$, then the algorithm and guarantee on the weights follow from \Cref{alg:blw_lewismap} and \Cref{lemma:contract_alg}.
    \item If $p \ge 2$ and $p_1=\dots=p_m=2$, then the algorithm and guarantee on the weights follow from \Cref{alg:blw} and \Cref{thm:blw}.
    \item If $p = 2$ and $p_1,\dots,p_m\ge 2$, then the algorithm and guarantee on the weights follow from \Cref{alg:blw_base} and \Cref{thm:blw_base}.
\end{itemize}
We plug these guarantees into \Cref{thm:concentration} and conclude the proof of \Cref{thm:computeblw}.
\end{proof}

\subsection{Minimizing sums of Euclidean norms (Proof of \texorpdfstring{\Cref{thm:msnalg}}{Theorem 3})}
\label{sec:applications_msn}

Recall the minimizing sums of Euclidean norms (MSN) problem \eqref{eq:msn}.
Given \(\mA \in \R^{k \times n}\), a partition \(S_1, \ldots, S_m\) of \([k]\), and \(\vb_1 \in \R^{|S_1|}, \ldots, \vb_m \in \R^{|S_m|}\), we would like to find $\widehat{\vx}$ such that
\[\sum_{i=1}^m \|\mA_{S_i} \widehat{\vx} - \vb_i\|_2\le (1+\eps)\min_{\vx \in \R^n} \sum_{i=1}^m \|\mA_{S_i} \vx - \vb_i\|_2.\]
\citet{xy97} give an algorithm with iteration complexity $\widetilde{O}(\sqrt{m}\logv{\nfrac{1}{\eps}})$ for the above problem (though for an additive approximation guarantee instead of a multiplicative one), where each iteration reduces to solving linear systems in matrices $\mA^{\top}\mD\mA$ for block-diagonal matrices, where each block has size $(\abs{S_i}+1) \times (\abs{S_i}+1)$. Their algorithm is based on the primal-dual interior point method framework.


Each system solve takes the following form. Let \(\widetilde{\mA}\) be a block matrix with \(- \mI_m\) in one block, and \(\mA\) in another.
The goal is to find \(\vy\) in the system
\[\widetilde{\mA}^\top \mD \widetilde{\mA} \vy = \vz\]
where \(\mD\) is a block matrix, with one \((1 + |S_i|) \times (1 + |S_i|)\) sized block for each group
(see \cite[equation (4.13)]{xy97}).
Alternatively, using some algebra, \cite{xy97}
describe a reformulation of the system that can be set up and solved in time \(O(n^3 + m n^2 \max_i |S_i|)\).

Our main result of this section is \Cref{thm:msnalg}, which gives an improved iteration complexity for \eqref{eq:msn} when $m \gg n$.

\msnalg*



We prove \Cref{thm:msnalg} by sparsifying the objective \eqref{eq:msn} using \Cref{thm:computeblw} and then applying the primal-dual interior point method from \cite{xy97}. We state the guarantee of this algorithm in \Cref{lemma:ipm_log_barrier}.

\begin{lemma}
\label{lemma:ipm_log_barrier}
Let $\mA \in \R^{k \times n}$ and \(\vb \in \R^k\), and  $S_1, \dots, S_m$ be a partition of $k$. There exists an algorithm that returns $\widehat{\vx}$ such that
\begin{align*}
    \sum_{i=1}^m \norm{\mA_{S_i}\widehat{\vx}-\vb_{S_i}}_2 \le (1+\eps)\min_{\vx\in\R^n} \sum_{i=1}^m \norm{\mA_{S_i} \vx-\vb_{S_i}}_2.
\end{align*}
The algorithm runs in $\widetilde{O}\inparen{\sqrt{m}\logv{\nfrac{1}{\eps}}}$ calls to a linear system solver in matrices of the form $\mA^{\top}\mD\mA$ for block-diagonal matrices $\mD$, where each block has size $(\abs{S_i}+1) \times (\abs{S_i}+1)$.
\end{lemma}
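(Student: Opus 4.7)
The plan is to invoke the primal-dual interior point method of \cite{xy97} on an SOCP reformulation of \eqref{eq:msn} and then convert their additive-error guarantee into the desired multiplicative-error guarantee. First I would reformulate the MSN problem by introducing auxiliary variables $t_1,\ldots,t_m \in \R$, giving
\begin{align*}
    \min_{\vx \in \R^n,\, \vt \in \R^m} \sum_{i=1}^m t_i \quad \text{subject to} \quad \norm{\mA_{S_i} \vx - \vb_{S_i}}_2 \le t_i \text{ for all } i \in [m].
\end{align*}
Each constraint is a rotated/standard second-order cone constraint of dimension $|S_i|+1$, so the feasible region is the intersection of $m$ second-order cones of sizes $|S_i|+1$ with a linear map of $(\vx,\vt)$. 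The standard logarithmic barrier $\phi_i(\vx,\vt) = -\log(t_i^2 - \norm{\mA_{S_i}\vx - \vb_{S_i}}_2^2)$ is a self-concordant barrier of parameter $2$ for each cone, so $\Phi(\vx,\vt) := \sum_{i=1}^m \phi_i(\vx,\vt)$ is a self-concordant barrier of parameter $\nu = 2m$ for the whole feasible region.

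Next, I would apply Nesterov--Nemirovski--style short-step primal-dual IPM along the central path $(\vx^\star(\mu), \vt^\star(\mu))$ defined by $\mu \to 0$. Standard IPM theory (or the specialized analysis for SOCP from \cite{xy97}) gives iteration complexity $O(\sqrt{\nu}\,\log(\mu_0/\mu_T)) = O(\sqrt{m}\,\log(\mu_0/\mu_T))$, where each iteration is a Newton step on $\Phi$ plus a complementarity update. The Newton step requires solving a linear system for the update direction. Writing out the KKT conditions and eliminating the dual and slack variables in the standard way, the Schur complement that remains acts on the primal variable $\vx$ and has the form $\mA^\top \mD \mA$, where $\mD$ is block-diagonal with one $(|S_i|+1)\times(|S_i|+1)$ block per group encoding the current primal-dual iterate on that cone; this is precisely the system described in \cite[equation (4.13)]{xy97}.

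To convert the additive guarantee of \cite{xy97} into a multiplicative one, I would first compute a crude two-sided bound $L \le \mathrm{OPT} \le U$ with $\log(U/L) = O(\log(k n /\eps))$ by any standard means (e.g., $\ell_2$ regression with appropriate rounding, as done for similar reductions in $\ell_p$ settings). A feasible primal-dual starting pair at duality gap $O(U)$ is easy to construct by translating $\vx = 0$ and setting $t_i \gtrsim \norm{\vb_{S_i}}_2$, then driving the gap down to $\eps \cdot L$ using $O(\sqrt{m}\,\log(U/(\eps L))) = \widetilde{O}(\sqrt{m}\,\log(1/\eps))$ IPM iterations. Any primal feasible point with duality gap at most $\eps \cdot \mathrm{OPT}$ yields an iterate $\widehat{\vx}$ satisfying $\sum_i \norm{\mA_{S_i}\widehat{\vx} - \vb_{S_i}}_2 \le (1+\eps)\mathrm{OPT}$, which is what we want.

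The main obstacle is the bookkeeping around the multiplicative-to-additive conversion and the construction of a valid warm-start with controlled initial duality gap; both are standard but mildly tedious. The other potentially subtle point, which is in fact immediate once the SOCP reformulation is written down, is verifying that the Schur complement really does take the claimed block-diagonal form $\mA^\top \mD \mA$ with blocks of size $(|S_i|+1)\times(|S_i|+1)$, rather than coupling across groups; this follows because the barrier $\Phi$ is separable across the $m$ cones and each cone involves only the rows of $\mA$ indexed by $S_i$ together with the single scalar $t_i$.
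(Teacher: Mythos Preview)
Your proposal is correct and follows essentially the same approach as the paper: invoke the primal-dual interior point method of \cite{xy97} (whose per-iteration system has the stated block structure) and convert its additive guarantee to a multiplicative one via a lower bound on $\mathrm{OPT}$ coming from $\ell_2$ regression. The only real difference is emphasis: you re-derive the SOCP barrier structure that the paper simply cites, while the paper is more explicit about the additive-to-multiplicative conversion, taking $\vx_0 = \arg\min_{\vx}\norm{\mA\vx-\vb}_2^2$, setting $V = \norm{\mA\vx_0-\vb}_2$, and showing that after translating by $\vx_0$ and rescaling by $1/V$ the new objective has $\mathrm{OPT}\ge 1$ and offset norms at most $\sqrt{m}$, which is exactly the polynomial $U/L$ bound you assert exists ``by standard means.''
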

\begin{proof}[Proof of \Cref{lemma:ipm_log_barrier}]
The guarantee we will reduce to is \cite[Theorem 5.2]{xy97}. However, the guarantee there is stated for an additive approximation, and we desire a multiplicative approximation. We therefore apply a few transformations to our problem so that we can apply this guarantee.

Let
\begin{align*}
    \vx_0 &\coloneqq \argmin{\vx\in\R^n} \norm{\mA\vx-\vb}_2^2.
\end{align*}
This can be found in one linear system solve. Let $V \coloneqq \norm{\mA\vx_0-\vb}_2$ and consider the following modified optimization problem:
\begin{align}
    \min_{\vx-\vx_0\in\R^n} \frac{1}{V}\sum_{i=1}^m \norm{\mA_{S_i}(\vx-\vx_0)-(\vb_{S_i}-\mA_{S_i} \vx_0)}_2.\label{eq:modified_msn}
\end{align}
We will invoke \cite[Theorem 5.2]{xy97} on the above problem \eqref{eq:modified_msn}, folding the \(\frac{1}{V}\) factor into \(\mA\) and \(\vb\). Clearly, this problem is equivalent to the problem we started with. Next, let $\xstar$ be given by
\begin{align*}
    \xstar &\coloneqq \argmin{\vx\in\R^n} \sum_{i=1}^m \norm{\mA_{S_i}\vx-\vb_{S_i}}_2,
\end{align*}
where we use \(\vb \in \R^k\) for the vector formed by stacking \(\vb_{S_1}, \ldots, \vb_{S_m}\).
Let $\opt \coloneqq \nfrac{1}{V} \cdot \sum_{i=1}^m \norm{\mA_{S_i}\xstar-\vb_{S_i}}_2$. 
Because \(\|\cdot\|_2 \le \|\cdot\|_1\), we have
\begin{align*}
    1=\frac{\norm{\mA\vx_0-\vb}_2}{V} \le \frac{\norm{\mA\xstar-\vb}_2}{V} \le \frac{1}{V}\sum_{i=1}^m \norm{\mA_{S_i}\xstar-\vb_{S_i}}_2 = \opt.
\end{align*}
Furthermore, we have by \(\|\cdot\|_1 \le \sqrt{m}\|\cdot\|_2\) (applied by considering the summation over \(i\) as an \(\ell_1\) norm) that
\begin{align*}
    \max_{1 \le i \le m} \frac{1}{V}\norm{\vb_{S_i}-\mA_{S_i}\vx_0}_2 \le \frac{1}{V}\sum_{i=1}^m \norm{\vb_{S_i}-\mA_{S_i}\vx_0}_2 \le \sqrt{m} \cdot \frac{\norm{\mA\vx_0-\vb}_2}{V} = \sqrt{m}.
\end{align*}
This implies that all the offset vectors of our transformed problem \eqref{eq:modified_msn} have polynomially bounded norm. This suffices for our iteration complexity bound, because the iteration complexity of the algorithm in \cite{xy97} depends logarithmically on the maximum norm of these vectors. Now, using their method we can solve \eqref{eq:modified_msn} up to $\eps$ additive error. This means we find $\widehat{\vx}$ such that
\begin{align*}
    \sum_{i=1}^m \norm{\mA_{S_i}(\widehat{\vx}-\vx_0)-(\vb_{S_i}-\mA_{S_i} \vx_0)}_2 &\le \sum_{i=1}^m \norm{\mA_{S_i}(\xstar-\vx_0)-(\vb_{S_i}-\mA_{S_i} \vx_0)}_2 + V\eps \\
    &= V \cdot \opt\inparen{1+\frac{\eps}{\opt}} \le V \cdot \opt\inparen{1+\eps}.
\end{align*}
where in the last inequality, we use that \(\opt \ge 1\).
Since $V \cdot \opt$ is the original optimal objective value, this completes the proof of \Cref{lemma:ipm_log_barrier}.
\end{proof}

We remark that instead of the $\sqrt{m}$ above, one can get a $\sqrt{n}$-factor relationship between the optimal objective for a least squares relaxation of our problem by using the block Lewis weights with $p_1=\dots=p_m=2$ and $p=1$. However, this will only impact lower order terms.

We are now ready to prove \Cref{thm:msnalg}.

\begin{proof}[Proof of \Cref{thm:msnalg}]
We apply \Cref{thm:computeblw} for \(p_1 = \cdots = p_m = 2\) and \(p = 1\) with approximation $\eps/3$ to the group matrices $\insquare{\mA_{S_i} \vert \vb_{S_i}}$ to find a sparsified objective
with \(\widetilde{m} = O(\eps^{-2} \cdot n(\log n)^2 \log(\nfrac{n}{\eps}))\) terms.
This requires \(\widetilde{O}(1)\) linear system solves.
This implies a \((1+\eps)\) approximation to an un-sparsified objective over any vector in \(\R^{n+1}\),
and the approximation we need comes by only considering vectors in \(\R^{n+1}\) whose last entry is \(-1\).
We plug this into the guarantee of \Cref{lemma:ipm_log_barrier} with approximation $\eps/3$. Since $(1+\eps/3)^2 \le 1+\eps$ and $(1+\eps/3)/(1-\eps/3) \le 1+\eps$, this returns a $(1+\eps)$-approximate minimizer to \eqref{eq:msn}, completing the proof of \Cref{thm:msnalg}.
\end{proof}

\paragraph{Acknowledgments} We thank Antares Chen, Meghal Gupta, Yang P. Liu, Yury Makarychev, Aaron Sidford, Darshan Thaker, Madhur Tulsiani, Taisuke Yasuda, and anonymous referees for useful discussions and feedback.

\printbibliography

\appendix

\section{Improved guarantees for sensitivity sampling}
\label{sec:sensitivity}

In this Appendix, we give a self-contained analysis of another natural row sampling scheme -- sampling proportional to $\ell_p$ sensitivities. This appeared in the body in an earlier version of the manuscript, but it was pointed out to us by anonymous referees that the relation between sensitivities and Lewis weights was already established by \citet[Theorem 2.1]{cd21} (though it was not directly used for sampling). Nonetheless, we include it, partly because our proof of the relationship between sensitivities and Lewis weights is different from that of \cite{cd21} and is, in our opinion, simpler. We remark that a subsequent work by \citet{mo24_notus} shows that the dependence we obtain here is in fact tight.

\paragraph{Improved $\ell_p$ sensitivity sampling for $1 \le p < 2$.} We consider the problem of matrix row sampling proportional to $\ell_p$ sensitivity scores where $1 \le p \le 2$. See \Cref{defn:sensitivity}.

\begin{definition}[$\ell_p$ sensitivity {\cite[Section 1.3.1]{mmwy21}}]
\label{defn:sensitivity}
Let $\mA \in \R^{m \times n}$. The $\ell_p$ sensitivity of row $i$, denoted as $\vs_{i,p}(\mA)$, is
\begin{align*}
    \vs_{i,p}(\mA) \coloneqq \max_{\vx\in\R^n\setminus \inbraces{0}} \frac{\abs{\ip{\va_i,\vx}}^p}{\norm{\mA\vx}_{p}^p}.
\end{align*}
When $p$ and $\mA$ are clear from context, we omit them and simply write $\vs_i$ in place of $\vs_{i,p}(\mA)$.
\end{definition}

Essentially, $\vs_i$ gives an upper bound to the maximum possible relative contribution that the component $\abs{\ip{\va_i,\vx}}^p$ could make to the function $\norm{\mA\vx}_{p}^p$. 

Our main result in this setting is Theorem \ref{thm:main_sensitivity}.

\begin{restatable}[Sensitivity sampling for $1 \le p \le 2$]{mainthm}{sensitivity}
\label{thm:main_sensitivity}
Let $\cG = (\mA \in \R^{k \times n}, S_1,\dots,S_m, 1,\dots,1)$ where $\abs{S_1}=\dots=\abs{S_m}$. Let $p \ge 1$ (note that $k=m$ here).

For the probability distribution $\cD = \inparen{\vrho_1,\dots,\vrho_m}$ where $\vrho_i \propto \vs_{i}$, if
\begin{align*}
    \mtilde &= \Omega\inparen{\logv{\nfrac{1}{\delta}}\vbrho \cdot\inparen{\sum_{i=1}^m \vs_i}n^{1-p/2}},
\end{align*}
and if we sample $\cM \sim \cD^{\mtilde}$, then, with probability $\ge 1-\delta$, we have:
\begin{align*}
    \text{for all } \vx \in \R^n,\quad (1-\eps)\norm{\mA\vx}_{p}^p \le \frac{1}{\mtilde}\sum_{i \in \cM} \frac{1}{\vrho_i} \cdot \abs{\ip{\va_i,\vx}}^p \le (1+\eps)\norm{\mA\vx}_{p}^p
\end{align*}
\end{restatable}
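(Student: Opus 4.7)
The plan is to deduce \Cref{thm:main_sensitivity} as a direct corollary of \Cref{thm:concentration}, using the classical $\ell_p$ Lewis weights as the underlying measure and showing that sensitivity sampling dominates Lewis weight sampling up to a factor of $n^{1-p/2}$. Since every block is a singleton, \Cref{lemma:block_instantiation_alleq} applied to ordinary Lewis weights for $1 \le p \le 2$ produces a probability measure $\vlambda_i = \vw_i/n$ that forms an $\Fstar$-block Lewis overestimate with $\Fstar = n$, with the accompanying $\valpha$ satisfying $\valpha_i^p/\norm{\valpha}_p^p = \vw_i/n$. So to apply \Cref{thm:concentration} with $\vrho_i = \vs_i/\sum_j \vs_j$, it suffices to find an $H$ with $H \ge (\vw_i/\vs_i)\inparen{\sum_j \vs_j}/n$ for every $i$, reducing the task to a pointwise comparison between $\vw_i$ and $\vs_i$.

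The crucial technical step is the bound $\vw_i \le n^{1-p/2}\vs_i$ for $1 \le p \le 2$, which I would derive from one half of the Lewis rounding relations \eqref{intro:rounding}. Specifically, for $\vlambda_i = \vw_i/n$, the inclusion $\widehat{B_2} \subseteq B_p$ (valid for $p \le 2$) is equivalent, in the singleton case, to $\norm{\mA\vx}_p \le \norm{\mLambda^{1/2-1/p}\mA\vx}_2$ for every $\vx \in \R^n$. Plugging into the definition of sensitivity,
\begin{align*}
    \vs_i \;=\; \max_{\vx \neq 0}\frac{\abs{\ip{\va_i,\vx}}^p}{\norm{\mA\vx}_p^p} \;\ge\; \max_{\vx \neq 0}\frac{\abs{\ip{\va_i,\vx}}^p}{\norm{\mLambda^{1/2-1/p}\mA\vx}_2^p} \;=\; \vlambda_i^{1-p/2}\,\tau_i(\mLambda^{1/2-1/p}\mA)^{p/2},
\end{align*}
where the last equality is the variational formula for leverage scores (\Cref{fact:lev_score_facts}). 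Combining this with the classical Lewis fixed-point identity $\tau_i(\mLambda^{1/2-1/p}\mA) = \vw_i$ (implicit in the construction underlying \Cref{lemma:block_lewis}) and $\vlambda_i = \vw_i/n$ collapses the bound to $\vs_i \ge \vw_i/n^{1-p/2}$.

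With the pointwise comparison established, I set $H = \inparen{\sum_j \vs_j}/n^{p/2}$, which satisfies $H\vrho_i \ge \vw_i/n$ for all $i$. Substituting into \Cref{thm:concentration} using $\Fstar = n$, $P = 1$ (singletons), and $\max(1,p/2) = 1$ for $p \le 2$, the required sparsity becomes
\begin{align*}
    \mtilde \;=\; \Omega\inparen{\logv{\nfrac{1}{\delta}}\vbrho\cdot H \cdot \Fstar} \;=\; \Omega\inparen{\logv{\nfrac{1}{\delta}}\vbrho\cdot\inparen{\sum_{i=1}^m \vs_i}n^{1-p/2}},
\end{align*}
matching the claimed bound. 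The only substantive obstacle is the pointwise inequality $\vw_i \le n^{1-p/2}\vs_i$; once this is in hand, the rest is a clean reduction through the existing machinery. I expect no real difficulty in the application of \Cref{thm:concentration} itself, since all the relevant quantities $(\Fstar, P, \valpha, H)$ are controlled in closed form by the Lewis measure in the singleton regime.
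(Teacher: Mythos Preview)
Your proposal is correct and follows essentially the same route as the paper: both establish the pointwise inequality $\vw_i \le n^{1-p/2}\vs_i$ via the Lewis rounding relation $\norm{\mA\vx}_p \le \norm{\mLambda^{1/2-1/p}\mA\vx}_2$ together with the Lewis fixed-point identity $\tau_i(\mLambda^{1/2-1/p}\mA)=\vw_i$, then set $H=(\sum_j\vs_j)/n^{p/2}$ and invoke \Cref{thm:concentration} with $\Fstar=n$. The only small omission is that \Cref{thm:concentration} requires $H\ge 1$; the paper covers this by citing the bound $\sum_j \vs_j \gtrsim n^{p/2}$ from \cite[Theorem 1.6]{wy23_sens}, which you should note as well.
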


To prove \Cref{thm:main_sensitivity}, we first need the following lower bounds on sensitivities. These were already proven in \cite[Theorem 2.1]{cd21}, but we believe our proof is more straightforward.

\begin{lemma}
\label{lemma:sens_lb}
Let $\vlambda$ denote Lewis's measure for $\mA$. For all $i \in [m]$, we have $\vlambda_i \le \vs_i n^{-p/2}$.
\end{lemma}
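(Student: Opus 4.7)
The plan is to exploit the Lewis fixed-point identity $\tau_i(\mLambda^{1/2-1/p}\mA) = n\vlambda_i$ -- which characterizes the normalized Lewis weights -- and to turn the maximizer in the variational form of that leverage score into a ``test vector'' witnessing a lower bound on $\vs_i$.

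First I would fix $i$, set $\mU \coloneqq \mLambda^{1/2-1/p}\mA$, and let $\vy^{\star}$ attain
\[
\tau_i(\mU) \;=\; \max_{\vy \neq \vzero} \frac{\abs{\ip{\vu_i,\vy}}^2}{\norm{\mU\vy}_2^2}.
\]
Writing $z_j \coloneqq \abs{\ip{\va_j,\vy^{\star}}}$ and unfolding $\vu_j = \vlambda_j^{1/2-1/p}\va_j$, the right-hand side becomes $\vlambda_i^{1-2/p}z_i^2 / \sum_j \vlambda_j^{1-2/p} z_j^2$, so the Lewis identity reads
\[
n\vlambda_i \;=\; \frac{\vlambda_i^{1-2/p} z_i^2}{\sum_j \vlambda_j^{1-2/p} z_j^2}.
\]

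Next, plugging this same $\vy^{\star}$ into the definition of $\vs_i$ gives $\vs_i \ge z_i^p / \sum_j z_j^p$, so I would try to upper bound the denominator by $(\sum_j \vlambda_j^{1-2/p} z_j^2)^{p/2}$. That is, the key inequality is
\[
\sum_j z_j^p \;\le\; \inparen{\sum_j \vlambda_j^{1-2/p} z_j^2}^{p/2},
\]
which is valid whenever $p \le 2$. I would prove it by rewriting both sides as $L_p(\vlambda)$ and $L_2(\vlambda)$ norms (to the $p$th and $2$nd powers) of the single vector $\vlambda^{-1/p}\vz$, so the bound follows from monotonicity of $L_r(\vlambda)$ norms in $r$ for a probability measure.

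Finally, solving the Lewis identity above for $z_i^p$ yields
\[
z_i^p \;=\; (n\vlambda_i)^{p/2}\,\vlambda_i^{1-p/2}\inparen{\sum_j \vlambda_j^{1-2/p}z_j^2}^{p/2},
\]
and combining with the two displays above collapses the denominators and gives
\[
\vs_i \;\ge\; (n\vlambda_i)^{p/2}\vlambda_i^{1-p/2} \;=\; n^{p/2}\vlambda_i,
\]
which rearranges to the claimed $\vlambda_i \le \vs_i n^{-p/2}$. I do not expect any genuine obstacle: the only place the hypothesis $p \le 2$ is actually used is to orient the $L_p \le L_2$ comparison, and the sole substantive move is recognizing that the variational maximizer for $\tau_i$ simultaneously serves as a good test vector for $\vs_i$.
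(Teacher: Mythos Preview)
Your proposal is correct and is essentially the same argument as the paper's: both hinge on the Lewis fixed-point identity $\tau_i(\mLambda^{1/2-1/p}\mA)=n\vlambda_i$ together with the $L_p(\vlambda)\le L_2(\vlambda)$ monotonicity for $p\le 2$. The only cosmetic difference is direction---you pick the leverage-score maximizer $\vy^\star$ and plug it into the sensitivity ratio, whereas the paper bounds $|\langle\va_i,\vx\rangle|/\|\mW^{1/2-1/p}\mA\vx\|_2$ for all $\vx$ (via the definition of $\vs_i$ and the same norm monotonicity) and then takes the supremum to invoke the Lewis property; the algebra collapses to the same inequality $\vs_i\ge n^{p/2}\vlambda_i$.
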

\begin{proof}[Proof of \Cref{lemma:sens_lb}]
Let $\mW \coloneqq n\mLambda$. By monotonicity of $p$-norms under probability measures and the definition of the $\vs_i$, notice that we have for all $i \in [m]$ and all $\vx\in\R^n$ that
\begin{align*}
    \frac{\abs{\ip{\va_i,\vx}}}{\vs_i^{1/p}n^{1/p-1/2}} \le \frac{\norm{\mA\vx}_p}{n^{1/p-1/2}} \le \norm{\mW^{1/2-1/p}\mA\vx}_2.
\end{align*}
Next, recall that a standard (but fundamental) property of Lewis's measure is that it satisfies
\begin{align*}
    \max_{\vx\in\R^n\setminus\inbraces{0}} \frac{\abs{\ip{\va_i,\vx}}}{\norm{\mW^{1/2-1/p}\mA\vx}_2} = \tau_i(\mW^{1/2-1/p}\mA)^{1/2} \cdot \vw_i^{1/p-1/2} = \vw_i^{1/p}.
\end{align*}
This means that
\begin{align*}
    \vs_i^{1/p}n^{1/p-1/2} \ge \vw_i^{1/p},
\end{align*}
so taking the $p$th power of both sides yields $\vw_i \le \vs_i \cdot n^{1-p/2}$. Recalling the definition of $\vw_i$ concludes the proof of \Cref{lemma:sens_lb}.
\end{proof}

We are now ready to prove \Cref{thm:main_sensitivity}.

\begin{proof}[Proof of Theorem \ref{thm:main_sensitivity}]
Let $\mLambda$ be Lewis's measure on $[m]$, and recall the conclusion of \Cref{lemma:sens_lb}.

Next, in order to plug this into our framework, we observe that $\vrho_i = \vs_i/(\sum_{j \le m} \vs_j)$. We now establish the overestimate
\begin{align*}
    n^{-p/2}\vrho_i = \frac{n^{-p/2}\vs_i}{\sum_{j \le m} \vs_j} \ge \frac{\vlambda_i}{\sum_{j \le m} \vs_j},
\end{align*}
and so after rearranging, we see that
\begin{align*}
    \inparen{n^{-p/2}\sum_{j \le m} \vs_j}\vrho_i \ge \vlambda_i.
\end{align*}
In particular, this means that we can set $H =  C\inparen{n^{-p/2}\sum_{j \le m} \vs_j}$ for some universal constant $C$ (as \cite[Theorem 1.6]{wy23_sens} shows that $\sum_{j \le m} \vs_j \gtrsim n^{p/2}$) and choose $\vlambda$ according to Lewis's measure. Then, since $H \ge 1$ and $\valpha_i^p = \vlambda_i n^{p/2}$ (which in particular means that $\valpha_i^p/\norm{\valpha}_p^p = \vlambda_i$), by \Cref{thm:concentration}, we get
\begin{align*}
    \mtilde \lesssim \vbrho\inparen{\sum_{i=1}^m \vs_i}n^{1-p/2},
\end{align*}
which completes the proof of \Cref{thm:main_sensitivity}.
\end{proof}

\paragraph{Discussion.} Arguably the most natural measure of row importance is when $\vrho_i \propto \vs_i$ (recall \Cref{defn:sensitivity} for the definition of $\vs_i$). However, it is not known whether the sparsity of this instantiation of $\vrho_i$ has $n$-dependence $n^{\max(1,p/2)}$. This is the optimal dependence on $n$ and is achieved by choosing the $\vrho_i$ to be proportional to Lewis's measure. The previous best analysis for $\vrho_i \propto \vs_i$ yielded a sparsity $\widetilde{O}(\eps^{-2}(\sum \vs_i)^{2/p})$ when $1 \le p \le 2$, due to \citet{wy23_sens}. To contrast this against our \Cref{thm:main_sensitivity}, we start by noting that by \cite[Theorem 1.6]{wy23_sens}, we have $\sum_{i\le m} \vs_i \gtrsim n^{p/2}$, so after raising both sides to the $2/p-1$-power and then multiplying both sides by $\sum_{i \le m} \vs_i$, we get
\begin{align*}
    \inparen{\sum_{i=1}^m \vs_i}^{2/p} \gtrsim \inparen{\sum_{i=1}^m \vs_i}n^{1-p/2}.
\end{align*}
Notice that the bound given by \citet{wy23_sens} occurs on the left and ours occurs on the right. In turn, this shows that our \Cref{thm:main_sensitivity} further sharpens the best-known analysis of row sampling by $\ell_p$ sensitivities when $1 \le p \le 2$. Furthermore, our worst-case guarantees improve over that given in \cite{wy23_sens} by a factor of $n^{2/p+p/2-2}$; in particular, when $p=1$, this amounts to an improvement of a factor of $n^{1/2}$.

For $p=2$, we note that $\vs_i = \tau_i$ where $\tau_i$ is the $i$th leverage score of $\mA$. Therefore, setting $\vrho_i \propto \vs_{i,2}$ yields optimal sparsity bounds. Finally, a recent result due to \citet{pwz23} gives fast algorithms for estimating the $\vs_i$ in various parameter ranges.

\end{document}